\colorlet{mylinkcolor}{MidnightBlue}
\newtheorem{theorem}{Theorem}[section]
\newtheorem{cor}[theorem]{Corollary}
\newtheorem{lemma}[theorem]{Lemma}
\newtheorem{prop}[theorem]{Proposition}
\newtheorem*{theorem*}{Theorem}
\theoremstyle{definition}
\newtheorem{defin}[theorem]{Definition}
\newtheorem{fact}[theorem]{Fact}
\newtheorem{exa}[theorem]{Example}
\newtheorem{que}[theorem]{Question}
\theoremstyle{remark}
\newtheorem*{rem}{Remark}
\newtheorem*{claim}{Claim}
\DeclarePairedDelimiter{\set}{\{}{\}}
\DeclarePairedDelimiterX\setnew[2]{\{}{\}}{#1 \nonscript\, \colon \nonscript\, #2}
\newcommand{\onx}[1]{\widehat{#1}}
\newcommand{\interior}{\mathrm{int}}
\newcommand{\sym}{\mathrm{Sym}}
\newcommand{\age}{\mathrm{Age}}
\newcommand{\fr}{Fra\"iss\'e }
\renewcommand{\phi}{\varphi}
\newcommand{\emb}{\mathrm{Emb}}
\newcommand{\aut}{\mathrm{Aut}}
\newcommand{\homeo}{\mathrm{Homeo}}
\newcommand{\ran}{\mathrm{ran}}
\newcommand{\im}{\mathrm{Im}}
\newcommand{\hatf}{\,\hat{\rule{-0.5ex}{1.5ex}\smash{f}}}
\renewcommand{\b}[1]{\mathbf{#1}}
\renewcommand{\cal}[1]{\mathcal{#1}}
\newcommand{\bb}[1]{\mathbb{#1}}
\newcommand{\ap}{\mathrm{AP}}
\newcommand{\lip}{\mathrm{Lip}}
\renewcommand{\S}{\mathrm{S}}
\newcommand{\M}{\mathrm{M}}
\def\-{\raisebox{.30pt}{-}}
\begin{document}

	\title{Topological dynamics beyond Polish groups}
	\author{Gianluca Basso and Andy Zucker}

\date{}

	\maketitle

	\begin{abstract}
		When $G$ is a Polish group, metrizability of the universal minimal flow has been shown to be a robust dividing line in the complexity of the topological dynamics of $G$. We introduce a class of groups, the CAP groups, which provides a neat generalization of this to all topological groups. We prove a number of characterizations of this class, having very different flavors, and use these to prove that the class of CAP groups enjoys a number of nice closure properties. As a concrete application, we compute the universal minimal flow of the homeomorphism groups of several scattered topological spaces, building on recent work of Gheysens.
		\let\thefootnote\relax\footnote{2020 Mathematics Subject Classification. Primary: 37B05. Secondary: 22F50.}
		\let\thefootnote\relax\footnote{The first author's work was partially conducted within the program ``Investissements d'Avenir'' (ANR-16-IDEX-0005) operated by the French National Research Agency (ANR). The second author was supported by NSF Grant no.\ DMS 1803489.}
	\end{abstract}

	\section{Introduction}

	This paper is a contribution to the study of abstract topological dynamics; see Subsection~\ref{SubSec:TopologicalDynamics} for definitions. A classical theorem of Ellis \cite{Ellis} shows that every topological group $G$ admits a \emph{universal minimal flow}, or \emph{UMF}. This is a minimal flow which admits a $G$-map onto any other minimal flow. It is unique up to isomorphism and denoted $\M(G)$. Often, $\M(G)$ is extremely large, for instance when $G$ is an infinite discrete group. But there are examples of \emph{extremely amenable} topological groups, i.e.\ groups $G$ where $\M(G)$ is a singleton, as is the case for the group $\aut(\bb Q)$ of order-preserving bijections of the rationals under the topology of pointwise convergence \cite{Pes}. Other times, $\M(G)$ is non-trivial, but still metrizable and easy to describe; as an example, when $G = \sym(\omega)$, the group of permutations of $\omega$ with the pointwise convergence topology, then we have $\M(G) \cong \mathrm{LO}(\omega)$, the space of linear orders on $\omega$ \cite{GW}.

	Recall that a topological group is \emph{Polish} if its underlying topological space is Polish, i.e.\ separable and admitting a compatible, complete metric. Metrizabilty of the universal minimal flow has emerged as a meaningful dividing line in the topological dynamics of Polish groups. Starting with the seminal paper of Kechris, Pestov, and Todor\v{c}evi\v{c} \cite{KPT} and with further work by Melleray-Nguyen Van Th\'e-Tsankov \cite{MNT}, Zucker \cite{ZucAut}, and Ben Yaacov-Melleray-Tsankov \cite{BYMT}, the structure of $\M(G)$ is more-or-less completely understood when $G$ is Polish and $\M(G)$ is metrizable. In this case, one can find a closed extremely amenable subgroup $H\subseteq G$ so that $\M(G)\cong \onx{G/H}$, the right completion of the space of left cosets. When $G = \aut(\b{K})$ for some countable, first order structure $\b{K}$, this can be given a combinatorial interpretation. For instance, when $G = \sym(\omega)$, one can let $H = \aut(\langle \omega, \preceq\rangle)$, where $\preceq$ is some dense linear order without endpoints on $\omega$, and we have $\widehat{G/H} \cong \mathrm{LO}(\omega)$.

	However, the metrizability of $\M(G)$ stops being a relevant indicator for ``nice dynamics" when $G$ is not Polish. Indeed, the work by Barto\v{s}ov\'a \cite{Bar} on groups of automorphisms of uncountable structures shows that $\M(\aut(\b K))$ may have a concrete representation while being far from metrizable. As an example, for any cardinal $\kappa$, one can form the group $\sym(\kappa)$ of permutations of $\kappa$, again with the pointwise convergence topology. Then, as also noted in \cite{Pes2}, $\M(\sym(\kappa)) = \mathrm{LO}(\kappa)$, neatly generalizing the result of \cite{GW}.

	In this paper we generalize and extend several of the aforementioned results and provide a framework for understanding and classifying the dynamics of general topological groups. Most notably, we isolate a notion which coincides with metrizability of the UMF for Polish groups and captures when the group has nice dynamics. We prove that this is a robust notion: the criterion is equivalent to a variety of statements, which come in different flavors, and the class of groups which satisfy it is well behaved and enjoys strong closure properties.

	Given a $G$-flow $X$, one can look at the collection of points which belong to minimal subflows of $X$.
	These points are called \emph{almost periodic} and are denoted by $\ap_G(X)$.
	While $\ap_G(X)$ is clearly invariant under the action of $G$, it is not in general a subflow, because it might not be closed.
	We say that a topological group $G$ has \emph{Closed AP}, or is \emph{CAP}, if $\ap_G(X)$ is closed for each $G$-flow $X$.
	It is easy to see that all pre-compact groups are CAP and that no locally compact non-compact groups are CAP.
	The results of Barto\v{s}ov\'a-Zucker, appearing in \cite{ZucThesis}, and Jahel-Zucker in \cite{JZ} show that for $G$ Polish, $\M(G)$ is metrizable if and only if $G$ is CAP. We show that this notion is still relevant also for groups beyond Polish, unlike metrizabilty of the UMF.
	In general, it is open whether the class of CAP groups can be characterized by the topology of $\M(G)$ alone; see \Cref{Que:TopologyMGCAP}.

	In \cite{BYMT} it is shown that for each Polish group $G$, one can endow $\M(G)$ with a metric which is in general finer than the compact topology, but interacts with the compact topology in non trivial ways: together they form a \emph{topo-metric} space. They show that this metric is compatible exactly when $\M(G)$ is metrizable. It is shown in \cite{ZucMHP} that this finer metric on $\M(G)$ is entirely canonical, and does not depend on various choices made during the construction. In this work, we introduce \emph{topo-uniform} spaces, which are sets endowed with a topology and a uniformity which, while not generally compatible with the topology, interacts with it in key ways. We describe a canonical uniform structure on $\M(G)$, the \emph{UEB uniformity}, and show that together with the usual compact topology it forms a topo-uniform space. One of our main results is the following, contained in \Cref{Thm:EquivalentInG}.
	\vspace{2 mm}

	\begin{theorem*}
		Let $G$ be a topological group. Then the following are equivalent.
		\begin{enumerate}
			\item
			$G$ is CAP.
			\item
			The UEB uniformity and the compact uniformity coincide on $\M(G)$.
		\end{enumerate}
	\end{theorem*}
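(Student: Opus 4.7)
Since $\M(G)$ is compact Hausdorff in its compact topology, the compact uniformity is the unique uniformity inducing that topology, and by construction the UEB uniformity is at worst finer than it. Consequently (2) is equivalent to the UEB and compact topologies on $\M(G)$ coinciding, which, using the canonical description of the UEB uniformity as generated by $G$-maps into $G$-flows, reformulates as the assertion that the family $\mathcal{F}$ of all $G$-maps $\phi \colon \M(G) \to X$ (with $X$ ranging over $G$-flows) is compact-equicontinuous on $\M(G)$.

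For (2) $\Rightarrow$ (1) I would run a lift-and-limit argument. Given a $G$-flow $X$ and a net $x_\alpha \to x$ with each $x_\alpha \in \ap_G(X)$, pick $G$-epimorphisms $\phi_\alpha \colon \M(G) \twoheadrightarrow \overline{G \cdot x_\alpha}$ and preimages $p_\alpha$ of $x_\alpha$. Passing to a subnet, we may assume $p_\alpha \to p$ in $\M(G)$ and $\phi_\alpha \to \phi$ pointwise in $X^{\M(G)}$; then $\phi$ is a $G$-map (pointwise limits preserve equivariance), and $\phi(\M(G))$ is a minimal subflow of $X$ as the continuous $G$-image of the minimal flow $\M(G)$. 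The compact-equicontinuity of $\{\phi_\alpha\}$ granted by (2) upgrades pointwise convergence to uniform convergence on compact $\M(G)$, yielding $x = \lim \phi_\alpha(p_\alpha) = \phi(p) \in \phi(\M(G)) \subseteq \ap_G(X)$.

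For (1) $\Rightarrow$ (2) I would argue by contrapositive. If $\mathcal{F}$ fails to be compact-equicontinuous at some $p \in \M(G)$, extract witnesses: a net $p_\alpha \to p$, $G$-maps $\phi_\alpha \colon \M(G) \to X$ for a common target $X$ (obtained by embedding into a product of the ambient flows), and an open entourage $V \subseteq X \times X$ of the diagonal with $(\phi_\alpha(p_\alpha), \phi_\alpha(p)) \notin V$. After passing to a subnet, $\phi_\alpha \to \phi$ pointwise and $\phi_\alpha(p_\alpha) \to z$ with $(\phi(p), z) \notin V$; each $\phi_\alpha(p_\alpha)$ lies in the minimal subflow $\phi_\alpha(\M(G))$ and is therefore almost periodic. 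To produce a contradiction with CAP I would assemble these witnesses into a coupled $G$-flow---for instance an orbit closure inside $\M(G) \times X$ under the diagonal action, or a suitable subshift built from the $\phi_\alpha$---so that the pairs $(p_\alpha, \phi_\alpha(p_\alpha))$ remain almost periodic while their limit $(p, z)$ cannot be.

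The principal obstacle is engineering this last step: raw CAP on $X$ merely yields $z \in \ap_G(X)$, which is not yet absurd since $X$ may contain many distinct minimal subflows. The right construction should generalize the non-metrizability-implies-non-CAP arguments of Barto\v{s}ov\'a--Zucker and Jahel--Zucker from the Polish setting, with the UEB uniformity replacing the UEB metric; the topo-uniform interaction between the finer UEB structure and the coarser compact topology should force the limit point of the lifted almost-periodic sequence to land outside $\ap_G$ of the coupled flow.
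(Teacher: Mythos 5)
Your direction $(2)\Rightarrow(1)$ is essentially the paper's argument (there the lift-and-limit is run with the induced pseudo-metrics $\partial_d$ in place of equicontinuity, following Jahel--Zucker), but note that your opening reduction --- that coincidence of the uniformities is equivalent to compact-equicontinuity of the family of all $G$-maps $\M(G)\to X$ --- is not free: you need to observe that every $G$-map out of $\M(G)$ has the form $p\mapsto px$ for some $x\in X$, and that for each $f\in C(X)$ the family $\{g\mapsto f(gx): x\in X\}$ is a UEB subset of $\mathrm{RUC}_b(G)$, which is exactly what lets a single entourage $[H,\epsilon]$ control all such maps simultaneously. Granting that, the rest of that direction goes through (the equicontinuity also supplies continuity of the pointwise limit $\phi$, which you use implicitly and which does not follow from pointwise convergence alone).

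The direction $(1)\Rightarrow(2)$ is genuinely missing, and you have correctly diagnosed why: knowing that limits of almost periodic points are almost periodic, whether in $X$ or in a coupled flow such as $\M(G)\times X$, produces no contradiction, and the orbit closure of $(p,z)$ in $\M(G)\times X$ can perfectly well be minimal even when $z\neq\phi(p)$, so no product or joining construction of the kind you gesture at is used in the paper. Instead, the paper first shows (\Cref{Lem:ANWDBall}, using adequacy of the topo-uniform structure on $\M(G)$) that if the UEB uniformity is strictly finer than the compact one, then some ball $B_\partial(p,1)$ of an induced pseudo-metric $\partial=\partial_d$ is nowhere dense in $\M(G)$. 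The witness flow is then $\lip(d)$ under right translation: the functions $g\mapsto\psi_\partial(g\cdot q)$, obtained by $\partial$-regularizing continuous $\psi\colon\M(G)\to[0,1]$, are almost periodic points of $\lip(d)$ because $q\mapsto q_\psi$ is a $G$-map from the minimal flow $\M(G)$; the specific function $f(g)=\partial(gp,p)$ lies in the closure of such points (\Cref{Lem:FunctionInClosureAP}); and $f$ fails to be almost periodic because for any $\xi\in\ap(\lip(d))$ every nonempty preimage $\xi^{-1}(U)$ of an open set is syndetic (\Cref{Lem:SyndeticPreimage}), whereas syndeticity of $f^{-1}([0,1/2))$ would force $S\cdot p$ to be somewhere dense inside the nowhere dense ball $B_\partial(p,1)$. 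These three ingredients --- the nowhere dense ball, the flow $\lip(d)$ with its $\psi_\partial$-built almost periodic points, and the syndeticity obstruction --- constitute the hard direction of \Cref{Thm:EquivalentInG} and are absent from your sketch.
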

\vspace{2 mm}

	Having both characterizations of CAP groups to work with allows us to prove a variety of properties about them. The class of CAP groups is closed under quotients, group extensions, surjective inverse limits and arbitrary products.
	Generalizing results in \cite{MNT}, we find sufficient conditions for being CAP in terms of the existence of well behaved subgroups, conditions which are known to be necessary when $G$ is Polish; we do not know whether these are necessary in general. We also prove the following peculiar characterization of CAP groups, \Cref{Cor:CAPviaProducts}, in terms of whether the UMF respects product.
	\vspace{2 mm}

\begin{theorem*}
	Let $G$ be a topological group. Then the following are equivalent:
	\begin{enumerate}
		\item
		$G$ is CAP.
		\item
		$\M(G\times G)\cong \M(G)\times \M(G)$.
	\end{enumerate}
\end{theorem*}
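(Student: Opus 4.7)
For $i \in \{1, 2\}$, viewing $\M(G)$ as a $G \times G$-flow via the $i$-th coordinate projection $G \times G \to G$ and applying universality of $\M(G \times G)$, one obtains canonical continuous $G \times G$-equivariant maps $\pi_i \colon \M(G \times G) \to \M(G)$. Their product $\pi = \pi_1 \times \pi_2 \colon \M(G \times G) \to \M(G) \times \M(G)$ is continuous and $G \times G$-equivariant. The target, with the product action, is $G \times G$-minimal: given $(p, q), (p', q') \in \M(G) \times \M(G)$, independent $G$-minimality of each factor yields nets with $g_\alpha p \to p'$ and $h_\beta q \to q'$, so $(g_\alpha, h_\beta)(p, q) \to (p', q')$. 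Thus $\pi$ is always surjective, and the theorem reduces to characterizing when $\pi$ is injective.

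\paragraph*{Forward direction.} Assume $G$ is CAP. Since CAP is closed under finite products (shown earlier in the paper), $G \times G$ is also CAP, and by \Cref{Thm:EquivalentInG} the UEB and compact uniformities coincide on $\M(G \times G)$. A key functoriality lemma to establish is that the UEB uniformity on $\M(G) \times \M(G)$ viewed as a $G \times G$-flow equals the product of the UEB uniformities on each factor viewed as a $G$-flow; combined with $G$ CAP on each factor, this gives UEB $=$ compact uniformity on $\M(G) \times \M(G)$ as well. Then $\pi$ is a $G \times G$-equivariant continuous surjection between two minimal $G \times G$-flows both carrying the same kind of rigid uniform structure. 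Because the product decomposition equips $\M(G) \times \M(G)$ with the analogous UEB-universal property of the UMF, uniqueness of the UMF up to $G \times G$-isomorphism forces $\pi$ to be an isomorphism.

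\paragraph*{Reverse direction and main obstacle.} Conversely, suppose $\pi$ is an isomorphism. Transporting uniform structures through the iso and using again the product decomposition of the UEB uniformity on $\M(G) \times \M(G)$, the equality UEB $=$ compact on $\M(G \times G)$ translates into the coordinate-wise equality on each factor $\M(G)$, and by \Cref{Thm:EquivalentInG} this yields $G$ CAP. The precise translation requires care: $\pi$ preserves the compact uniformity automatically, and because it intertwines the $G \times G$-actions it also transfers the UEB uniformity, so the identification sharpens into the factor-wise coincidence only after invoking the functoriality lemma. The technical heart of the argument in both directions is precisely this functoriality statement for the UEB uniformity under products of groups and flows; verifying it amounts to analyzing the UEB compactification of $G \times G$ in terms of those of each copy of $G$, and is the step I expect to be the main obstacle to a full proof.
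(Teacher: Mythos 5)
Your reduction to the injectivity of the canonical map $\pi\colon \M(G\times G)\to \M(G)\times\M(G)$ is sound (minimality of the target is correct, and coalescence of the UMF lets you pass from an abstract isomorphism to $\pi$ itself), but both directions after that point have genuine gaps, and neither matches the paper's route. In the forward direction, the principle you invoke --- that a continuous $G\times G$-equivariant surjection between two minimal flows ``both carrying the same kind of rigid uniform structure'' must be an isomorphism --- is false. For example, $\sym(\omega)$ is CAP, its UMF $\mathrm{LO}(\omega)$ is a metrizable minimal flow, and it surjects non-injectively onto proper minimal metrizable factors (e.g.\ the space of circular orders); both domain and codomain there have UEB equal to the compact uniformity. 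What one actually needs is injectivity of $\pi$, which the paper proves in \Cref{Prop:ProductFormula} by a completely different mechanism: taking the normal subgroup $H=G\times\{1\}$, using the claims from \Cref{Thm:CAPShortExactSeq} to identify $\M(G\times G)/E_H$ with the UMF of the second factor, and observing that the first-coordinate map restricted to any $H$-minimal subflow is an isomorphism onto $\M(H)$; injectivity of the combined map then falls out. Your ``functoriality of the UEB uniformity under products'' lemma, which you flag as the main obstacle, is not what is needed --- even granting it, the argument does not close.

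The reverse direction as written is circular: you transport ``the equality UEB $=$ compact on $\M(G\times G)$'' to the factors, but that equality is exactly the statement that $G\times G$ is CAP, which (by closure under products and quotients) is equivalent to the conclusion that $G$ is CAP; hypothesis (2) gives you no access to it. The paper's argument (\Cref{Thm:NotCAPNotProductive}) instead uses an invariant that $\M(G\times G)$ always possesses and that $\M(G)\times\M(G)$ provably lacks when $G$ is not CAP: the MHP property (\Cref{Fact:MGandSGareMPH}). Concretely, \Cref{Prop:BadOpensNotCAP} produces, for a non-CAP $G$, open sets $A_n\subseteq\M(G)$ and $U\in\cal{N}_G$ with $\{UA_n\}$ pairwise disjoint, and the set $W=\bigcup_n A_n\times A_n$ then witnesses the failure of MHP for the product flow. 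If you want to salvage your approach, you would need to extract from hypothesis (2) alone some uniform or topological consequence for $\M(G)$; the MHP obstruction is the paper's way of doing precisely that, and I would recommend adopting it rather than trying to run the UEB uniformity backwards through the isomorphism.
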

\vspace{2 mm}

More generally, we show that the UMF of an arbitrary product of CAP groups is simply the product of the UMFs.

One source of interesting CAP groups comes from considering the automorphism groups of $\omega$-homogeneous structures; see Section~\ref{Sec:Structures} for the definitions. The first systematic study of the automorphism groups of uncountable $\omega$-homogeneous structures is the work of Barto\v{s}ov\'a \cites{BarMore, Bar, Bar2}, where similar criteria as those developed in \cite{KPT} are used to compute the universal minimal flows of several such automorphism groups. In the countable case, it is shown in \cite{ZucAut} that for the automorphism group of a countable $\omega$-homogeneous structure, the combinatorial property of having \emph{finite Ramsey degrees} characterizes when the UMF is metrizable. We generalize this to the uncountable setting in \Cref{Thm:FiniteRamseyIffCAP}.
\vspace{2 mm}

\begin{theorem*}
	Let $\b K$ be a $\omega$-homogeneous relational structure.
	Then $\aut(\b K)$ is CAP if and only if  $\age(\b K)$ has finite Ramsey degrees.
\end{theorem*}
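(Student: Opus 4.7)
The plan is to adapt Zucker's argument in the countable case from \cite{ZucAut}, where finite Ramsey degrees characterise metrizability of $\M(\aut(\b K))$, by systematically replacing ``metrizable UMF'' with ``CAP'' and invoking the new characterisations of CAP developed in earlier sections, most crucially the UEB-uniformity equivalence quoted in \Cref{Thm:EquivalentInG}.

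For the implication $(\Leftarrow)$, assume $\age(\b K)$ has finite Ramsey degrees. Applying the standard structural Ramsey machinery (with canonical choices of expansion per finite substructure), one builds a precompact expansion class $\cal K^*$ of $\age(\b K)$ satisfying both the Ramsey property and the expansion property, realised as the age of an $\omega$-homogeneous expansion $\b K^*$ of $\b K$. The subgroup $H := \aut(\b K^*) \leq G := \aut(\b K)$ is then extremely amenable by the uncountable \fr/KPT correspondence, in the form developed in Barto\v{s}ov\'a's work \cite{Bar}, and the expansion property identifies $\M(G)$ with $\onx{G/H}$. From there, I invoke the sufficient conditions for CAP via extremely amenable subgroups with precompact completion (established earlier in the paper) to conclude that $G$ is CAP.

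For the implication $(\Rightarrow)$, I would argue by contrapositive: suppose $\b A \in \age(\b K)$ lacks a finite Ramsey degree. For each $n$, this failure yields a finite colour set $r_n$ and a natural $G$-flow $Y_n \subseteq r_n^{\emb(\b A, \b K)}$ whose minimal subflows exhibit Ramsey failure at level $n$, in the sense that they cannot be consolidated using fewer than $n$ canonical colour partitions. Combining the $Y_n$ in a product or inverse limit and passing to the orbit closure of a diagonal point yields a single $G$-flow $X$ in which one exhibits a convergent net of almost periodic points --- each living in its own minimal subflow at a different Ramsey level --- whose limit has strictly larger orbit closure, and so is not almost periodic, violating CAP.

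The main obstacle is the forward direction. In the countable case, Zucker exploits the separable, second-countable structure of a metrizable $\M(G)$ to index minimal subflows cleanly, and such an enumeration is no longer available in our setting. The fix is to use \Cref{Thm:EquivalentInG} as the technical lever: the failure of finite Ramsey degree must be arranged to produce almost periodic points in $X$ that are UEB-close in the canonical uniformity on $\M(G)$ --- since they arise from very similar local colourings of $\b A$ --- yet lie in distinct minimal subflows of $X$; their UEB-Cauchy limit then lies outside $\ap_G(X)$, yielding failure of CAP via the UEB characterisation. Making this quantitative, so that UEB-closeness on $\M(G)$ can be read off from combinatorial similarity of finite colourings of $\b A$, is where the bulk of the work will lie.
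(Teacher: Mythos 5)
Your proof of the direction ``finite Ramsey degrees $\Rightarrow$ CAP'' has a genuine gap: it hinges on realizing the excellent expansion class $\cal K^*$ as the age of an $\omega$-homogeneous expansion $\b K^*$ of the given (possibly uncountable) structure $\b K$, so that $H=\aut(\b K^*)$ is an extremely amenable, co-precompact subgroup to which \Cref{Prop:CoPrecompactEASubgroup} applies. In the countable case this is available via \frcom but for uncountable $\b K$ the existence of such an $\omega$-homogeneous expansion is precisely the open problem recorded in \Cref{Que:MNTConverseNonArch} --- it is unknown even for an uncountable $\omega$-homogeneous graph embedding all finite graphs, where the candidate expansion is by a linear order. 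The paper's proof deliberately avoids this: it never produces a subgroup at all. Instead it uses the level representation $\S(G)\cong\varprojlim\beta H_{\b A}$ (\Cref{Prop:LevelSG}) together with \Cref{Lem:FiniteRamseyIffFiniteProj}, which says that finite Ramsey degrees are equivalent to \emph{finiteness} of every projection $\tilde\pi_{\b A}[M]$ of a minimal subflow $M\subseteq\S(G)$. Once these projections are finite, the discrete pseudo-metrics $\partial_{\b A}$ induced by the clopen subgroups $U_{\b A}$ have clopen balls, hence are continuous on $M$, and since the $d_{\b A}$ form a base of pseudo-metrics, $G$ is UEB and therefore CAP by \Cref{Thm:EquivalentInG}. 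If you want to salvage your route, you would have to either resolve the open question or replace the subgroup argument with something like the paper's; as written, the implication does not go through.

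For the direction ``CAP $\Rightarrow$ finite Ramsey degrees,'' your instinct --- manufacture almost periodic points that are UEB-close but lie in distinct minimal subflows, contradicting the UEB characterization --- is the right general shape, but it remains a plan rather than a proof, and the combinatorial content you defer (``making this quantitative'') is exactly what \Cref{Lem:FiniteRamseyIffFiniteProj} supplies. The paper's execution is sharper and does not require building auxiliary flows $Y_n$ or diagonal orbit closures: if some $\b A$ has infinite Ramsey degree, then $\tilde\pi_{\b A}[M]$ is an infinite compact set, and were $G$ UEB the pseudo-metric $\partial_{\b A}$ would be a \emph{continuous discrete} metric on it, which is impossible. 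I would encourage you to reorganize both directions around the single dichotomy ``$\tilde\pi_{\b A}[M]$ finite for all $\b A$ versus infinite for some $\b A$,'' with the Ramsey combinatorics isolated in that one lemma.
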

\vspace{2 mm}

Using this and Barto\v{s}ov\'a's results, we compute $\M(\homeo(\omega_1))$, as well as the UMFs of the groups of homeomorphisms of a variety of scattered topological spaces, building on recent work of Gheysens \cite{Ghey}.

The paper is organized as follows. Section~\ref{Sec:Prelim} contains several preliminary results on topological groups and dynamics, uniform spaces, and the Samuel compactification $\S(G)$ that we will need going forward. Section~\ref{Sec:CAP} gives a brief introduction to the class of CAP groups. Section~\ref{Sec:TopoUnif} introduces the UEB uniformity on $\S(G)$ and $\M(G)$ and shows that this uniformity, while in general finer than the compact topology, interacts with it in nice ways. Section~\ref{Sec:UEB} introduces the class of \emph{UEB groups}, those groups where the compact and UEB uniformities agree on $\M(G)$. While we show in Section~\ref{Sec:EquivCAPUEB} that the classes of CAP groups and UEB groups coincide, Section~\ref{Sec:UEB} collects several results about this class which are easier to prove using the UEB characterization. Sections~\ref{Sec:ClosureProps} and \ref{Sec:LargeCAPSubs} give a variety of sufficient conditions under which a group is CAP, and show that the class of CAP group enjoys nice closure properties. Lastly, Section~\ref{Sec:Structures} investigates the automorphism groups of $\omega$-homogeneous structures and the homeomorphism groups of scattered spaces.

	\subsection*{Notation}

	Our notation is mostly standard. We let $\omega := \{0, 1, 2, ...\}$ denote the least infinite ordinal. Whenever $G$ is a topological group, we write $1_{G}$ for its identity and $\cal{N}_G$ for a base of symmetric open neighborhoods of $1_G$; if $d$ is a continuous, right-invariant pseudometric on $G$ and $c> 0$, we set $d(c) := \{g\in G: d(1_G, g)< c\}$. If $X$ is a topological space and $x\in X$, we often write $A \ni_{op} x$ or $A\subseteq_{op} X$ to introduce a non-empty open set $A$ in $X$.

	\subsection*{Acknowledgments}

	We would like to thank Colin Jahel for numerous detailed discussions during the early stages of the project. We would also like to thank Jan Pachl for helpful discussions about the UEB uniformity.

	\section{Preliminaries on topological groups}
	\label{Sec:Prelim}

	We collect some preliminaries on topological groups, with a particular focus on topological dynamics. We emphasize that all groups and spaces appearing in this paper are Hausdorff.

	\subsection{The Birkhoff-Kakutani theorem}
	\label{SubSec:BK}

	The famous theorem of Birkhoff and Kakutani states that a topological group $G$ is metrizable if and only if it is first countable. However, the proof of the theorem actually says something non-trivial about all topological groups, regardless of whether or not they are first countable.

	Recall that a \emph{pseudo-metric} on a set $X$ is a function $d\colon X\times X\to \bb{R}^{\geq 0}$ which satisfies each of the conditions for being a metric except possibly that of distinguishing distinct points.
	A pseudo-metric $d$ on a group $G$ is \emph{right-invariant} if $d(g, g') = d(gh, g'h)$ for all $g, g', h \in G$.
	\vspace{2mm}

	\begin{fact}[\cite{Ber}*{p.~28}]
		\label{Fact:BK}
		Suppose $G$ is a topological group, and let $\{U_n: n< \omega\}\subseteq \cal{N}_G$. Then there is a continuous, bounded, right-invariant pseudo-metric $d$ on $G$ so that for every $n< \omega$, there is $\epsilon_n> 0$ with $\{g: d(1_G, g)< \epsilon_n\}\subseteq U_n$.
	\end{fact}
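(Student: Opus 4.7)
The plan is to construct a continuous, symmetric, subadditive ``gauge'' function $f\colon G\to[0,1]$ with $f(1_G)=0$, and then set $d(g,h):=f(gh^{-1})$. Right-invariance of $d$ is then automatic, the triangle inequality follows from subadditivity of $f$, symmetry of $d$ from $f(g^{-1})=f(g)$, and continuity of $d$ from continuity of $f$ at $1_G$ (right-invariance extends this everywhere). The neighborhood condition $\{g:d(1_G,g)<\epsilon_n\}\subseteq U_n$ translates directly into one on $f$.

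To produce $f$, first refine $\{U_n:n<\omega\}$. Using joint continuity of multiplication and of inversion at $1_G$, inductively build a sequence $V_0=G\supseteq V_1\supseteq V_2\supseteq\cdots$ of symmetric open identity neighborhoods satisfying $V_n\subseteq U_n$ and $V_{n+1}^3\subseteq V_n$ for every $n$. Next define a weight $\rho\colon G\to[0,1]$ by $\rho(1_G):=0$ and $\rho(g):=2^{-n}$ where $n\geq 0$ is the largest integer with $g\in V_n$; since each $V_n$ is symmetric, $\rho(g^{-1})=\rho(g)$. Finally set
\[
f(g):=\inf\Bigl\{\sum_{i=1}^{k}\rho(h_i): g=h_1 h_2\cdots h_k,\ k\geq 1\Bigr\}.
\]
Subadditivity $f(gh)\leq f(g)+f(h)$ is immediate from concatenating factorizations, and reversing a factorization gives $f(g^{-1})=f(g)$; clearly $0\leq f\leq\rho\leq 1$ and $f(1_G)=0$.

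The technical heart of the argument, and the main obstacle, is the reverse comparison $\rho(g)\leq Cf(g)$ for some absolute constant $C$. The proof is by induction on chain length: given $g=h_1\cdots h_k$ of total weight $s>0$, choose the ``balanced cut'' $j$ with $\sum_{i\leq j}\rho(h_i)\leq s/2<\sum_{i\leq j+1}\rho(h_i)$, and split $g=abc$ via $a=h_1\cdots h_j$, $b=h_{j+1}$, $c=h_{j+2}\cdots h_k$. Each of $a,b,c$ has weight at most $s$, so by the inductive hypothesis each lies in a common $V_n$ with $2^{-n}$ of order $s$; the assumption $V_{n+1}^3\subseteq V_n$ then places $g=abc$ in $V_{n-1}$, yielding the desired bound.

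With the comparison $\rho\leq Cf$ in hand, choosing $\epsilon_n:=2^{-n}/C$ gives $d(1_G,g)=f(g)<\epsilon_n\implies \rho(g)<2^{-n}\implies g\in V_n\subseteq U_n$, verifying the last requirement. Continuity of $f$ at $1_G$ follows from the same estimate (since $V_n$ is an open neighborhood of $1_G$ on which $f\leq\rho\leq 2^{-n}$), and right-invariance of $d$ promotes this to continuity of $d$ on all of $G\times G$, completing the proof.
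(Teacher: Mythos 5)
This is the classical Birkhoff--Kakutani chain-length argument (with constant $C=2$ from the balanced-cut induction), which is exactly the proof in the source the paper cites; the paper itself states this as a Fact without proof, so there is nothing to diverge from. Your write-up is correct up to two cosmetic slips: with $V_0=G$ the requirement $V_n\subseteq U_n$ fails at $n=0$ unless $U_0=G$ (just ask $V_{n+1}\subseteq U_n$ and take $\epsilon_n=2^{-(n+1)}/C$), and $\rho$ should be declared $0$ on all of $\bigcap_n V_n$ rather than only at $1_G$, since ``the largest $n$ with $g\in V_n$'' need not exist there --- harmless, as only a pseudo-metric is required.
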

	\vspace{2 mm}

	Continuous, right-invariant pseudo-metrics on $G$ will feature prominently throughout the paper, so we fix some notation. If $d$ is a continuous, right-invariant pseudo-metric on $G$ and $c > 0$, we write $d(c) \coloneqq \{g\in G: d(1_G, g)< c\}$. Let $\lip(d)$ denote the collection of functions from $G$ to $[0,1]$ which are $1$-Lipschitz with respect to $d$, that is, such that $\lvert f(g) - f(h) \rvert \le d(g, h)$, for all $g, h \in G$.

	We say that a collection $\cal{D}$ of continuous, diameter $1$, right-invariant pseudo-metrics is a \emph{strong base of pseudo-metrics} if $\{d(1): d\in \cal{D}\}$ is a neighborhood basis at $1_G$. We say that $\cal{D}$ is a \emph{base of pseudo-metrics} if $\{d(\epsilon): d\in \cal{D}, \epsilon > 0\}$ is a neighborhood basis at $1_G$. By \Cref{Fact:BK}, every topological group admits a base of bounded pseudo-metrics. By taking a base of pseudo-metrics, multiplying each member by some constants, and capping at $1$, one can obtain a strong base of pseudo-metrics.

	\subsection{Uniform spaces}
	\label{SubSec:UnifSpaces}

	Let $X$ be a set.
	For $U, V \subseteq X \times X$, we write $U^{-1} \coloneqq \{(y, x):  (x, y)\in U \}$ and
	$$UV \coloneqq \{(x, y): \exists z\in X\, (x, z)\in U \text{ and } (z, y)\in V\},$$
	and likewise for the ``product" of any finitely many subsets of $X\times X$.

	A (Hausdorff) \emph{uniform space} is a set $X$ together with a filter $\mathcal{U}$ of supersets of the diagonal $\Delta \subseteq X \times X$ such that:
\begin{itemize}
	\item
		for each $U \in \mathcal{U}$ there is $V \in \mathcal{U}$ with $V^2  \subseteq U$,
	\item
		if $U \in \mathcal U$, then $U^{-1} \in \mathcal{U}$,
	\item
		$\bigcap_{U \in \mathcal{U}} U = \Delta$.
\end{itemize}
Members of $\cal{U}$ are called \emph{entourages}.

 For $U, V\in \cal{U}$, we write $V\ll U$ if for some $W\in \cal{U}$, we have $WVW\subseteq U$. For $A \subseteq X$ and $U \in \mathcal{U}$ we write:
\begin{align*}
A[U] &\coloneqq \setnew{y \in X}{\exists x \in A \, (x, y) \in U}\\[1 mm]
A(U) &\coloneqq \bigcup_{V\ll U} A[V]
\end{align*}

When $A = \set{x}$, we write $x[U], x(U)$ in place of $\set{x}[U], \set{x}(U)$.

The \emph{uniform topology} on $X$ is given by declaring a set $A$ to be open if for each $x \in A$ there is $U \in \cal{U}$ with $x[U] \subseteq A$.
We say that a topology $\tau$ on $X$ is compatible with the uniform structure if $\tau$ coincides with the uniform topology.
Each compact space admits a unique uniform structure; its entourages are all the neighborhoods of the diagonal.

A function $f\colon X \to Y$ between uniform spaces is \emph{uniformly continuous} if for each entourage $V$ of $Y$ there is an entourage $U$ of $X$ such that $(f(x), f(y)) \in V$ for all $(x, y) \in U$.

The \emph{right uniformity} on a topological group $G$ is the uniformity generated by all continuous right-invariant pseudo-metrics on $G$. By \Cref{Fact:BK}, this is equivalent to the uniformity whose typical basic entourage is of the form $\{(g, h)\in G\times G: gh^{-1}\in U\}$ for some $U\in \cal{N}_G$. If $H\subseteq G$ is a closed subgroup, then the left coset space $G/H$ also has a natural right uniformity whose typical basic entourage is of the form $\{(gH, kH): gHk^{-1}\cap U\neq\emptyset\}$ for some $U\in \cal{N}_G$.

Every uniform space admits $X$ admits a \emph{Samuel compactification} $\S(X)$; this is the largest compactification of $X$ with the property that continuous real-valued functions on $\S(X)$ restrict to uniformly continuous functions on $X$. In the case of a topological group $G$ equipped with its right uniform structure, we will see several constructions of $\S(G)$ in Subsection~\ref{SubSec:Samuel}. On occasion, we will also make use of $\S(G/H)$.

\subsection{Topological dynamics}
\label{SubSec:TopologicalDynamics}

Let $G$ denote a Hausdorff topological group. A \emph{$G$-flow} is a compact Hausdorff space $X$ equipped with a continuous (left) action $a\colon G\times X\to X$. Usually the action $a$ is understood, and we simply write $g\cdot x$ or $gx$ in place of $a(g, x)$. If $X$ and $Y$ are $G$-flows, a \emph{$G$-map} is a continuous map $\pi\colon X\to Y$ which respects the $G$-actions.

If $X$ is a $G$-flow and $x\in X$, we define $\rho_x\colon G\to X$ via $\rho_x(g) = gx$.
\vspace{2 mm}

	\begin{fact}
		\label{Fact:ActionUnifCont}
		Suppose $X$ is a $G$-flow. Then for any $x\in X$, the map $\rho_x$ is right uniformly continuous.
	\end{fact}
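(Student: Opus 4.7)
The plan is to unpack the definitions and use compactness of $X$ to pass from pointwise to uniform control. A basic entourage of the right uniformity on $G$ has the form $\set{(g,h) : gh^{-1}\in U}$ for $U\in\cal{N}_G$, while (as recalled just above) the unique uniformity on the compact Hausdorff space $X$ consists of all neighborhoods of the diagonal $\Delta_X \subseteq X \times X$. So, given such a neighborhood $V$, I must produce $U\in\cal{N}_G$ so that $gh^{-1}\in U$ forces $(gx, hx)\in V$.

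First I replace $V$ by a symmetric open neighborhood $V'$ of $\Delta_X$ with $V'V'\subseteq V$; this is a standard property of the uniformity on a compact Hausdorff space. Next, using joint continuity of the action at each point $(1_G, y)$, I choose $U_y\in \cal{N}_G$ and an open neighborhood $W_y\ni y$ with $U_y \cdot W_y\subseteq y[V']$. Compactness of $X$ now lets me extract a finite subcover $W_{y_1},\ldots,W_{y_n}$, and I set $U \coloneqq \bigcap_{i\le n} U_{y_i}\in \cal{N}_G$. To finish, suppose $k\coloneqq gh^{-1}\in U$, let $z\coloneqq hx$, and pick $i$ with $z\in W_{y_i}$; since both $1_G$ and $k$ lie in $U_{y_i}$, we get $hx = 1_G \cdot z \in y_i[V']$ and $gx = k \cdot z \in y_i[V']$, so $(y_i, hx), (y_i, gx)\in V'$. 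The symmetry of $V'$ then yields $(gx, hx)\in V'V'\subseteq V$, as required.

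The only substantive step is the passage from pointwise continuity at each $(1_G, y)$ to a single neighborhood $U$ of $1_G$ that works for every $y$ simultaneously, which is where compactness of $X$ is essential. Without it, there would be no reason a fixed $U \in \cal{N}_G$ should uniformly bound the displacement of $hx$ as $h$ ranges over $G$, even though it controls the displacement at any given point.
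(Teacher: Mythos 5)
Your proof is correct: the paper records this as a standard fact without supplying a proof, and your argument---using joint continuity of the action at each point $(1_G, y)$ and then compactness of $X$ to extract a single identity neighborhood $U$ that works uniformly---is exactly the canonical way to establish it. All the details check out, including the composition $(gx, y_i), (y_i, hx) \in V'$ giving $(gx,hx) \in V'V' \subseteq V$.
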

	\vspace{2 mm}

A corollary of the fact above is the following.
\vspace{2 mm}

	\begin{fact}
		\label{Fact:DenseSubgroup}
		Suppose $H\subseteq G$ is a dense subgroup and that $X$ is an $H$-flow. Then the action continuously extends to $G$.
	\end{fact}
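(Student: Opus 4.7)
The plan is to use \Cref{Fact:ActionUnifCont} to define the extended action pointwise via a uniformly continuous extension, and then bootstrap to associativity and joint continuity using the standard uniform equicontinuity available for compact flows.

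For each $x \in X$, \Cref{Fact:ActionUnifCont} gives that $\rho_x \colon H \to X$ is uniformly continuous with respect to the right uniformity on $H$. Because $H$ is dense in $G$, this right uniformity coincides with the restriction of the right uniformity on $G$, while the compact Hausdorff space $X$ carries a unique and in particular complete uniformity. So $\rho_x$ extends uniquely to a uniformly continuous map $\tilde{\rho}_x \colon G \to X$, and I set $g \cdot x \coloneqq \tilde{\rho}_x(g)$. By construction this extends the $H$-action and is continuous in $g$ for each fixed $x$; uniqueness of any continuous extension is automatic since two continuous maps $G \times X \to X$ agreeing on the dense subset $H \times X$ and taking values in a Hausdorff space must coincide.

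Next I would prove the standard uniform equicontinuity lemma for the original $H$-flow: for every entourage $V$ of $X$ there exists $U \in \cal{N}_H$ such that $(hx, x) \in V$ for all $h \in U$ and $x \in X$. This comes from a tube-lemma argument applied to joint continuity of the action at each $(1_H, x_0)$ together with a finite subcover of the compact space $X$. Because open symmetric neighborhoods of $1_G$ meet $H$ in open symmetric neighborhoods of $1_H$, this property transfers to the extension by a limiting argument (approximating $g \in G$ by a net in $H$ and using that $V$ can be taken closed): for every entourage $V$ of $X$ there is $U \in \cal{N}_G$ with $(g \cdot x, x) \in V$ for all $g \in U$ and $x \in X$.

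With this uniform equicontinuity in hand, both associativity and joint continuity follow by three-term entourage estimates. For associativity of $(g_1 g_2) \cdot x$ and $g_1 \cdot (g_2 \cdot x)$, pick nets $h_\alpha \to g_1$ and $k_\beta \to g_2$ in $H$; the uniform equicontinuity controls the pairs $(h_\alpha k_\beta \cdot x,\, (g_1 g_2) \cdot x)$ and $(h_\alpha \cdot (g_2 \cdot x),\, g_1 \cdot (g_2 \cdot x))$, while continuity of the honest $H$-map $L_{h_\alpha} \colon y \mapsto h_\alpha y$ (with $h_\alpha \in H$) lets me compare $h_\alpha k_\beta \cdot x$ with $h_\alpha \cdot (g_2 \cdot x)$ once $\beta$ is large enough, so Hausdorffness of $X$ forces the two sides to agree. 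For joint continuity at $(g_0, x_0)$ one passes through the intermediate point $g_0 \cdot x$, controlling $(g \cdot x, g_0 \cdot x)$ by uniform equicontinuity in the group variable and $(g_0 \cdot x, g_0 \cdot x_0)$ by continuity of $L_{g_0}$, the latter being itself obtained by a similar approximation argument. The main obstacle is precisely this step: uniform continuity of the individual maps $\rho_x$ only yields continuity in the group variable for each fixed $x$, so one really does need the compactness of $X$, via the equicontinuity lemma and the derived continuity of the $L_{g_0}$, to combine the group and space variables into a jointly continuous action.
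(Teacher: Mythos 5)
Your argument is correct and follows exactly the route the paper intends: the paper states this as an unproved corollary of \Cref{Fact:ActionUnifCont}, namely extending each uniformly continuous $\rho_x$ to the completion via the unique complete uniformity of the compact space $X$, and your proposal supplies the routine but genuinely necessary verifications (uniform equicontinuity of the $H$-action, partial associativity against elements of $H$, and the derived continuity of the maps $L_{g_0}$) that upgrade separate continuity in the group variable to a jointly continuous $G$-action. No gaps; only a minor quibble that the right uniformity of a subgroup is the restriction of that of the ambient group for any subgroup, with density needed only for the extension step.
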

\vspace{2 mm}

A \emph{subflow} of $X$ is a non-empty, closed, $G$-invariant subspace. The $G$-flow $X$ is \emph{minimal} if $X$ contains no proper subflows; equivalently, $X$ is minimal if every orbit is dense. If $\phi\colon X \to Y$ is a $G$-map and $Y$ is minimal then $\phi$ is surjective.
	\vspace{2 mm}

\begin{fact}
	\label{Fact:UMF}
	There exists a \emph{universal minimal flow}, a minimal flow which admits a $G$-map onto any other minimal flow. This flow is unique up to isomorphism and denoted by $\M(G)$.
\end{fact}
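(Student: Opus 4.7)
The plan is to construct $\M(G)$ as a minimal subflow of the greatest ambit $\S(G)$, whose construction will be recalled in Subsection~\ref{SubSec:Samuel}. The only inputs I will use are that $\S(G)$ is itself a $G$-flow under left translation, that it carries a distinguished point $1_G$ with dense orbit, and that it enjoys the universal property that for every pointed $G$-flow $(X, x_0)$ there is a $G$-map $\S(G) \to X$ sending $1_G$ to $x_0$. From this single universal object I will extract both existence and uniqueness of $\M(G)$.

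For existence, I would first apply Zorn's lemma to the family of subflows of $\S(G)$ ordered by reverse inclusion. Any descending chain has non-empty intersection by the finite intersection property in the compact space $\S(G)$, and this intersection is itself closed and $G$-invariant, hence a subflow; so there is a minimal subflow $M \subseteq \S(G)$. Given any minimal $G$-flow $Y$, I pick any $y_0 \in Y$ and invoke the universal property to obtain a $G$-map $\pi\colon \S(G) \to Y$ with $\pi(1_G) = y_0$. Then $\pi(M)$ is a non-empty closed $G$-invariant subset of $Y$, so by minimality $\pi(M) = Y$, and $\pi|_M\colon M \to Y$ is the desired surjective $G$-map.

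For uniqueness, let $M_1, M_2$ both be universal minimal flows. Applying the existence half to each gives $G$-maps $\phi\colon M_1 \to M_2$ and $\psi\colon M_2 \to M_1$, automatically surjective since the targets are minimal. The critical remaining task is to show that $\psi \circ \phi\colon M_1 \to M_1$ is a homeomorphism; granting this, $\phi$ is also injective, hence a continuous bijection between compact Hausdorff spaces, hence a $G$-isomorphism. This is the step I expect to be the main obstacle, because a priori a $G$-endomorphism of a minimal flow need not be injective. I would settle it via the classical argument of Ellis: the Ellis semigroup $E(M_1) \subseteq M_1^{M_1}$, defined as the pointwise closure of $\{x \mapsto gx : g \in G\}$, is a compact right-topological semigroup, and by the Ellis--Numakura lemma it contains a minimal left ideal $L$ with an idempotent $u \in L$. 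Since any $G$-endomorphism of $M_1$ commutes with right multiplication by elements of $E(M_1)$, a short calculation using $u$ and the minimality of $L$ produces a two-sided inverse for $\psi \circ \phi$. None of this machinery requires any countability or separability hypothesis on $G$, so the argument applies in the full generality of topological groups considered here.
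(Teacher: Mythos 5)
Your proposal is correct and follows essentially the same route as the paper: existence comes from Zorn's lemma applied to subflows of $\S(G)$ together with the universal property of $\S(G)$ (\Cref{Fact:SGUniversal}), and uniqueness comes from the idempotent/minimal-left-ideal machinery of compact right-topological semigroups (\Cref{Fact:SGSemigroup}). The only cosmetic difference is that you run the coalescence argument inside the enveloping semigroup $E(M_1)$ of an abstract universal minimal flow, whereas the paper works directly with the semigroup structure on $\S(G)$, where every $G$-map between minimal subflows is a right multiplication $\hat{\rho}_q$ and hence invertible.
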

\vspace{2 mm}

We will briefly sketch the proof of \Cref{Fact:UMF} in Subsection~\ref{SubSec:Samuel}. Note that \Cref{Fact:DenseSubgroup} implies that if $H\subseteq G$ is a dense subgroup, then $\M(H)\cong \M(G)$ as $H$-flows.

\subsection{The Samuel compactification}
\label{SubSec:Samuel}

Throughout this subsection, fix a topological group $G$. We define an important universal $G$-flow, the \emph{Samuel compactification} of $G$, and present several constructions which we exploit at different points in the paper.
\vspace{2 mm}

\begin{defin}
	\label{Def:SamuelComp}
	We let $\mathrm{RUC}_b(G)$ denote the $C^*$-algebra of bounded right-uniformly continuous functions from $G$ to $\bb{C}$. The \emph{Samuel compactification} of $G$, denoted $\S(G)$, is the Gelfand space of $\mathrm{RUC}_b(G)$, i.e.\ the space of $C^*$-homomorphisms from $\mathrm{RUC}_b(G)$ to $\bb{C}$ endowed with the topology of pointwise convergence.
\end{defin}
\vspace{2 mm}

	To each $g\in G$, we can associate a $C^*$-homomorphism  $\phi_g\in \S(G)$, where if $f\in \mathrm{RUC}_b(G)$, we set $\phi_g(f) = f(g)$. The map $g\to \phi_g$ is an embedding with dense image, and we typically identify $G$ with its image under this embedding.

	The group $G$ acts on $\mathrm{RUC}_b(G)$ on the right, where given $f\in \mathrm{RUC}_b(G)$ and $g, h\in G$, we set $(f\cdot g)(h) = f(gh)$. This gives rise to a continuous left-action on $\S(G)$, where given $g\in G$, $\phi\in \S(G)$, and $f\in \mathrm{RUC}_b(G)$, we set $(g\cdot \phi)(f) = \phi(f\cdot g)$. Since $\S(G)$ is compact, this gives it the structure of a $G$-flow.
	\vspace{2 mm}

	\begin{fact}
		\label{Fact:SGUniversal}
		If $X$ is a compact space and $f\colon G\to X$ is right uniformly continuous, then $f$ admits a continuous extension $\hatf$ to all of $\S(G)$. Conversely, if $f\colon \S(G)\to X$ is continuous, then $f|_G$ is right uniformly continuous.

		In particular, if $X$ is a $G$-flow and $x\in X$, then the map $\rho_x\colon G\to X$ continuously extends to a $G$-map $\hat{\rho}_x\colon \S(G)\to X$.

		It follows that any minimal subflow of $\S(G)$ is a universal minimal flow for $G$. This gives the existence part of \Cref{Fact:UMF}.
	\end{fact}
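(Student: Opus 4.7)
The plan is to prove the three assertions in sequence, with the first being the workhorse and the other two following essentially by soft arguments.

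For the first assertion, I would first reduce to the scalar case. Since $X$ is compact Hausdorff, its continuous $\bb{C}$-valued functions separate points and embed $X$ into a product $\bb{C}^I$ (or into $[0,1]^I$ via $\bb{R}$-valued continuous functions), so it suffices to extend each composition $\chi\circ f \colon G\to \bb{C}$ for $\chi\in C(X,\bb{C})$, and then combine coordinatewise. Thus the core case is $X \subseteq \bb{C}$ compact and $f\in \mathrm{RUC}_b(G)$. Here the extension is built directly from the definition of $\S(G)$: for $\phi\in \S(G)$, set $\hatf(\phi) \coloneqq \phi(f)$. This is continuous because $\S(G)$ carries the topology of pointwise convergence on $\mathrm{RUC}_b(G)$, and it agrees with $f$ on $G$ under the identification $g\mapsto \phi_g$ since $\phi_g(f) = f(g)$. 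Uniqueness of the extension follows from density of $G$ in $\S(G)$. For the converse, if $f\colon \S(G)\to X$ is continuous and $X$ is compact, then $X$ carries the unique compatible uniformity (neighborhoods of the diagonal), $f$ is automatically uniformly continuous for this structure, and its restriction to the uniform subspace $G\subseteq \S(G)$ is uniformly continuous for the induced uniformity, which is the right uniformity by construction of $\S(G)$.

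For the second assertion, apply the first to the map $\rho_x\colon G\to X$, which is right uniformly continuous by \Cref{Fact:ActionUnifCont}, obtaining a continuous extension $\hat{\rho}_x\colon \S(G)\to X$. To see this extension is $G$-equivariant, note that for any $g\in G$ and any $h\in G\subseteq \S(G)$ we have
\[
	\hat{\rho}_x(g\cdot h) = \rho_x(gh) = (gh)x = g\cdot(hx) = g\cdot \hat{\rho}_x(h).
\]
Both sides, as functions of the second variable, are continuous on $\S(G)$ (the left by continuity of $\hat\rho_x$ and of the action on $\S(G)$, the right by continuity of $\hat\rho_x$ and of the action on $X$), and they agree on the dense set $G$, hence they agree on all of $\S(G)$. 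Thus $\hat{\rho}_x$ is a $G$-map.

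For the third assertion, a standard Zorn's lemma argument on the poset of non-empty closed $G$-invariant subsets of $\S(G)$, ordered by reverse inclusion and using that intersections of decreasing chains of non-empty closed invariant subsets of a compact space are again non-empty closed and invariant, yields a minimal subflow $M\subseteq \S(G)$. Given any minimal $G$-flow $Y$ and any $y\in Y$, the $G$-map $\hat\rho_y\colon \S(G)\to Y$ from the second assertion restricts to a $G$-map $M\to Y$ whose image is a non-empty closed $G$-invariant subset of $Y$, hence all of $Y$ by minimality. Therefore $M$ admits a $G$-map onto every minimal $G$-flow, proving existence of the UMF.

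The only delicate step is the scalar extension in the first assertion, and even there the main subtlety is purely definitional: one must check that evaluation at $f$ is continuous on $\S(G)$, which is immediate from the definition of the Gelfand topology, and that $\mathrm{RUC}_b(G)$ is genuinely a $C^*$-algebra so that Gelfand duality applies. Everything else is formal manipulation with density and equivariance.
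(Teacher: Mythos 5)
Your argument is correct and follows the standard route that the paper leaves implicit (it states this as a Fact without proof, relying on exactly the Gelfand-space description of $\S(G)$, the evaluation maps $\phi\mapsto\phi(f)$, density of $G$, and a Zorn's lemma argument that you spell out). One small inaccuracy: the uniformity that $G$ inherits as a subspace of $\S(G)$ is \emph{not} the right uniformity but its totally bounded coreflection, which is in general strictly coarser; your conclusion survives because uniform continuity with respect to a coarser uniformity on the domain implies uniform continuity with respect to the finer right uniformity.
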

\vspace{2 mm}

When $X$ is a $G$-flow, $x\in X$, and $p\in \S(G)$, we often write $px$ instead of $\hat{\rho}_x(p)$. Note that $px = \lim_{g_i\to p} g_ix$. This shorthand ``multiplicative" notation becomes particularly suggestive when $X = \S(G)$.
\vspace{2 mm}

	\begin{fact}
		\label{Fact:SGSemigroup}
		On $\S(G)$, the binary operation given by $(p, q)\to pq\coloneqq \hat{\rho}_q(p)$ is associative. This turns $\S(G)$ into a \emph{compact right-topological semigroup}, which in this case means exactly that the right multiplication maps $\hat{\rho}_q$ are continuous for each $q\in \S(G)$. The following are basic facts about compact right-topological semigroups, most of which can be found in \cite{HS}.

		\begin{enumerate}
			\item
			For any $q\in \S(G)$, the right multiplication map $\hat{\rho}_q\colon \S(G)\to \S(G)$ is continuous.
			\item
			For any $g\in G$, the left multiplication map $\lambda_g\colon \S(G)\to \S(G)$, $p \mapsto gp$ is continuous.
			\item
			A \emph{left ideal} is any subset $L\subseteq \S(G)$ with $\S(G)\cdot L\subseteq L$. If $p\in \S(G)$, the left ideal $\S(G)\cdot p$ is closed. Minimal left ideals exist and are always closed. Minimal left ideals are exactly the minimal subflows of $\S(G)$.
			\item
			Every minimal left ideal $M$ contains an \emph{idempotent}, an element $u\in M$ with $uu = u$. Every other $p\in M$ satisfies $pu = p$. Hence the map $\hat{\rho}_u\colon \S(G)\to M$ is a retraction onto $M$.
			\item
			Every $G$-map $\phi\colon M\to N$ between minimal subflows of $\S(G)$ has the form $\hat{\rho}_q$ for some $q\in N$. Each such map is an isomorphism. Hence every minimal subflow of $\S(G)$ is isomorphic, showing the uniqueness part of \Cref{Fact:UMF}.
		\end{enumerate}
	\end{fact}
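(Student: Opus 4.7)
The statement packages a chain of standard facts about compact right-topological semigroups, so the plan is to prove them sequentially, each step relying on two organizing principles: density of $G$ in $\S(G)$ combined with continuity of each $\hat{\rho}_q$, which reduces semigroup identities to identities in the $G$-action; and Zorn's lemma combined with compactness, which produces minimal closed subobjects. Items~(1) and~(2) come essentially for free: (1) is the definitional content of $\hat{\rho}_q$ being continuous, and (2) is continuity of the $G$-flow structure on $\S(G)$ with the group element fixed.

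For associativity, I would show that the two continuous maps $\hat{\rho}_r \circ \hat{\rho}_q$ and $\hat{\rho}_{qr}$ from $\S(G)$ to $\S(G)$ coincide. By density of $G$ and continuity, it suffices to verify equality on $g \in G$, where both sides evaluate to $g(qr)$; the intermediate identity $\hat{\rho}_r(gq) = g\,\hat{\rho}_r(q)$ is itself a density argument, using continuity of $\hat{\rho}_r$ and $\lambda_g$ together with associativity of the $G$-action on $\S(G)$. Item~(3) is then immediate: $\S(G)\cdot p = \hat{\rho}_p(\S(G))$ is closed as a continuous image of a compact space and is a left ideal by associativity; minimal left ideals exist via Zorn on closed left ideals ordered by reverse inclusion, with compactness guaranteeing chains have non-empty intersection; density of $G$ identifies closed $G$-invariant subsets of $\S(G)$ with closed left ideals, so minimal left ideals coincide with minimal subflows.

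For item~(4), I would carry out Ellis's classical argument: Zorn applied to the family of non-empty closed subsemigroups of $M$ produces a minimal such $T$; for any $u \in T$, both $Tu = \hat{\rho}_u(T)$ and then $\{t \in T : tu = u\} = T \cap \hat{\rho}_u^{-1}(u)$ are closed subsemigroups of $T$, forced equal to $T$ by minimality, which yields $uu = u$. For the retraction claim, $\hat{\rho}_u|_M$ is a continuous, $G$-equivariant self-map of $M$ that is functionally idempotent (since $\hat{\rho}_u \circ \hat{\rho}_u = \hat{\rho}_{uu} = \hat{\rho}_u$), with closed $G$-invariant image $Mu = M$ by minimality of $M$, so it must be the identity on $M$.

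For item~(5), given a $G$-map $\phi : M \to N$ and an idempotent $u \in M$, setting $q = \phi(u)$ makes $\phi \circ \hat{\rho}_u$ and $\hat{\rho}_q$ into two continuous $G$-equivariant maps $\S(G) \to N$ that agree on $G$ and hence everywhere; restricting to $M$ and invoking $pu = p$ yields $\phi = \hat{\rho}_q|_M$. Surjectivity is automatic from minimality of $N$ applied to the closed $G$-invariant image $Mq$. Injectivity, which I expect to be the main obstacle, I would handle by the standard Hindman--Strauss argument: one notes first that any $G$-map $M \to M$ with a fixed point is the identity (its fixed set is closed and $G$-invariant, hence all of $M$), and then selects a companion idempotent $v \in N$ via the structure theory of minimal ideals in \cite{HS} so that the composition $\hat{\rho}_u|_N \circ \hat{\rho}_q|_M$ has a fixed point in $M$, thereby producing an inverse to $\hat{\rho}_q|_M$.
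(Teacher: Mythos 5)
Your reconstruction of items (1)--(4), the associativity argument, and the first half of item (5) --- that $\phi=\hat{\rho}_q|_M$ for $q=\phi(u)$ and that surjectivity is automatic --- is correct and is exactly the standard treatment; the paper itself gives no proof beyond the citation to Hindman--Strauss, so there is nothing else to compare against. In particular, your derivation of $pu=p$ for all $p\in M$ from surjectivity of $\hat{\rho}_u|_M$ together with $\hat{\rho}_u\circ\hat{\rho}_u=\hat{\rho}_{uu}=\hat{\rho}_u$ is clean, and your fixed-point lemma (a $G$-endomorphism of a minimal flow with a fixed point is the identity) is correct.

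The gap is in the injectivity step of item (5). The map $\hat{\rho}_u|_N\circ\hat{\rho}_q|_M=\hat{\rho}_{qu}|_M$ is a fixed $G$-endomorphism of $M$; choosing a companion idempotent $v\in N$ does not alter it, and it need not have a fixed point. Concretely, let $G$ be a nontrivial compact group, so $\S(G)=\M(G)=G$ is its own unique minimal left ideal with unique idempotent $u=1_G$; then a $G$-map $\phi\colon G\to G$ is right translation by $q=\phi(1_G)$, and $\hat{\rho}_u\circ\hat{\rho}_q=\hat{\rho}_q$ has a fixed point only when $q=1_G$, although it is of course always a bijection. What is actually needed is the invertibility of elements of $uM$, i.e.\ a second pass through the Zorn/Ellis machinery rather than the fixed-point principle alone. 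Namely, $uM$ is a group with identity $u$: it is a subsemigroup (using $pu=p$), $u$ is a two-sided identity for it, and since $M(up)=M\ni u$ every element has a left inverse. Since $qu\in Nu\subseteq M$, the element $u(qu)$ lies in $uM$ and has an inverse $z\in uM$ with $(uqu)z=u$. Then for every $p\in M$, associativity gives $p(qu)z=(pu)(qu)z=p\bigl((uqu)z\bigr)=pu=p$, so $\hat{\rho}_z\circ\hat{\rho}_{qu}|_M$ fixes $u$ and is therefore the identity by your lemma. This exhibits a left inverse for $\hat{\rho}_{qu}|_M$, hence for $\hat{\rho}_q|_M$, which gives injectivity. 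So your outline is repairable, but as written the named composition is the wrong map to which to apply the fixed-point lemma: one must first multiply on the right by an inverse taken in the group $uM$.
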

	\vspace{2 mm}

	We now discuss a more combinatorial construction of $\S(G)$, which is close in spirit to the original construction of the Samuel compactification of any uniform space \cite{Sam}. See also \cite{KoS} for the specific case of topological groups. We follow the presentation of \cite{ZucThesis}.
	\vspace{2 mm}

	\begin{defin}
		\label{Def:NearUlt}
		A collection $\cal{F}\subseteq \cal{P}(G)$ has the \emph{near finite intersection property}, or NFIP, if given any $k < \omega$, $A_0,..., A_{k-1}\in \cal{F}$ and any $U\in \cal{N}_{G}$, we have $\bigcap_{i< k} UA_i\neq \emptyset$.

		We say that $p\subseteq \cal{P}(G)$ is a \emph{near ultrafilter} if $p$ is maximal with respect to having the NFIP. Write $S_G$ for the collection of near ultrafilters on $G$.
	\end{defin}
\vspace{2 mm}

	The notation $S_G$ is temporary; see the fact below. We endow $S_G$ with the topology whose basic closed set has the form
	$$C_A \coloneqq \{p\in S_G: A\in p\}.$$
	The group $G$ acts on $S_G$ in the obvious fashion, where $A\in gp$ if and only if $g^{-1}A\in p$.
	\vspace{2 mm}

	\begin{fact}
		\label{Fact:NUltSG}
		$S_G\cong \S(G)$.
	\end{fact}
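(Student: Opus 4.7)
The plan is to exhibit a $G$-equivariant homeomorphism $\Phi : S_G \to \S(G)$ by sending each near ultrafilter $p$ to the character $\phi_p \in \S(G)$ which computes the ``limit along $p$'' of every $f \in \mathrm{RUC}_b(G)$. The proof falls naturally into three stages: establishing the basic topological structure of $S_G$, extending right-uniformly continuous functions along near ultrafilters, and then invoking the universal property of the Samuel compactification recorded in \Cref{Fact:SGUniversal}.

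First, I would verify that $S_G$ is a compact Hausdorff space into which $G$ embeds as a dense subspace via $g \mapsto p_g$, where $A \in p_g$ iff $g \in UA$ for every $U \in \cal{N}_G$. Maximality of NFIP families is supplied by a standard Zorn argument, since the NFIP is preserved under directed unions; compactness then follows by realising $S_G$ as the intersection of the closed sets $C_A \cup C_{G \setminus A}$ inside a suitable product space, or by checking that every collection of basic closed sets $\{C_{A_i}\}$ with the finite intersection property has nonempty intersection via the NFIP. Hausdorffness uses maximality: if $A \in p \setminus q$, then there exist $B_0, \dots, B_{k-1} \in q$ and $U \in \cal{N}_G$ with $UA \cap \bigcap_i UB_i = \emptyset$, which yields disjoint basic neighborhoods separating $p$ from $q$. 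Density of the principal near ultrafilters is the content of the computation that, if a family $A_0,\dots,A_{k-1}$ satisfies $\bigcup_i \overline{A_i} = G$, then every near ultrafilter contains some $A_i$.

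Second, given $f \in \mathrm{RUC}_b(G)$, I would show that there is a unique $z \in \bb{C}$ such that $f^{-1}(B_\epsilon(z)) \in p$ for every $\epsilon > 0$, and define $\hat{f}(p) := z$. Existence is the crux: since $\overline{f(G)}$ is compact and $f$ is right-uniformly continuous, for each $\epsilon > 0$ one covers $\overline{f(G)}$ by finitely many $(\epsilon/2)$-balls $B_1,\dots,B_n$ and chooses $V \in \cal{N}_G$ with $V \cdot f^{-1}(B_i) \subseteq f^{-1}(B_i')$ for the concentric $\epsilon$-ball $B_i'$; the maximality of $p$ applied to the finite cover $G = \bigcup_i f^{-1}(B_i)$ then forces at least one preimage to belong to $p$, and letting $\epsilon \to 0$ and using the NFIP singles out a unique limit point $z$. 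Uniqueness and the fact that $\hat{f}$ restricts to $f$ on $G$ are immediate from the NFIP. A routine check shows that $\phi_p(f) := \hat{f}(p)$ defines a $C^*$-homomorphism $\mathrm{RUC}_b(G) \to \bb{C}$, hence a point of $\S(G)$.

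Finally, I would verify that $\Phi : p \mapsto \phi_p$ is continuous and $G$-equivariant. Continuity reduces to translating the $C_A$-topology on $S_G$ into the topology of pointwise convergence on $\S(G)$, by relating membership $A \in p$ to evaluations of indicator-like $\mathrm{RUC}_b$ approximations of $\mathbf{1}_A$ built from pseudo-metrics furnished by \Cref{Fact:BK}. Since $S_G$ is compact, $\S(G)$ is Hausdorff, and $\Phi$ restricts to the canonical dense embedding of $G$ on the dense subgroup $G \subseteq S_G$, it will follow that $\Phi$ is a surjective closed map; injectivity is the uniqueness statement of the previous paragraph. The main obstacle I foresee is the existence of the limit $\hat{f}(p)$: extracting it cleanly requires a careful interplay between the right-uniform continuity of $f$ (which controls $V$-enlargements of level sets) and the NFIP condition on $p$, and is precisely the place where the passage from ``ultrafilter'' to ``near ultrafilter'' is forced by the uniform structure on $G$.
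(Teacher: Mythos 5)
The paper does not actually prove this fact; it quotes it from Ko\c{c}ak--Strauss \cite{KoS} and from \cite{ZucThesis}, and the proof given in those sources is essentially the one you outline: show that $S_G$ is a compact Hausdorff $G$-space containing $G$ densely, show that every $f\in\mathrm{RUC}_b(G)$ extends continuously to $S_G$, and show that these extensions separate the points of $S_G$, whence $S_G$ has the defining property of $\S(G)$. Your treatment of the crux --- the existence and uniqueness of $\hat{f}(p)$ via a finite cover of $\overline{f(G)}$ by small balls, the observation that a near ultrafilter must contain one member of any finite cover of $G$, and the use of right uniform continuity to control $V$-enlargements of level sets --- is correct, as are the compactness, Hausdorffness, and density arguments in your first stage.

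The one place where the write-up slips is injectivity of $\Phi$. The ``uniqueness statement of the previous paragraph'' only says that for a \emph{fixed} $p$ the limit $\hat{f}(p)$ is unique; it does not show that distinct near ultrafilters $p\neq q$ are separated by some $f\in\mathrm{RUC}_b(G)$. For that you must actually produce a separating function: take $A\in p\setminus q$, use maximality of $q$ to find $U\in\cal{N}_G$ with $UA\notin q$, take a continuous, right-invariant pseudo-metric $d$ with $d(1)\subseteq U$ as furnished by \Cref{Fact:BK}, and set $f(g)=\min(1,d(g,A))$; then $f\in\lip(d)\subseteq\mathrm{RUC}_b(G)$, $\hat{f}(p)=0$ because $A\subseteq f^{-1}([0,\epsilon))$ for every $\epsilon>0$, while $\hat{f}(q)>0$ because $f^{-1}([0,1))\subseteq UA\notin q$. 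This is exactly the ``indicator-like approximation built from pseudo-metrics'' that you invoke in passing for continuity, so the tool is already present in your proposal --- but it is doing the work of injectivity, not of well-definedness, and without it your argument only exhibits $\S(G)$ as a quotient of $S_G$ rather than as a homeomorphic copy.
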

	\vspace{2 mm}

	Therefore we will retire the notation $S_G$, and simply think of $\S(G)$ as the space of near ultrafilters on $G$. Below we record some basic facts about near ultrafilters on $G$. Proofs can be found in \cite{ZucThesis}.
	\vspace{2 mm}

	\begin{fact}\mbox{}
		\label{Fact:NUltBasics}
		\vspace{-2 mm}

		\begin{enumerate}
			\item
			If $p\in \S(G)$ and $A\subseteq G$ with $A\not\in p$, then for some $U\in \cal{N}_G$ we have $UA\not\in p$. In particular, $A\in p$ if and only if $\overline{A}\in p$.
			\item
			If $p\in \S(G)$, then a basis of (not necessarily open) neighborhoods of $p$ is given by $\{C_{UA}: A\in p, U\in \cal{N}_G\}$.
			\item
			Suppose $X$ is compact and $f\colon G\to X$ is right uniformly continuous. Let $\hatf\colon \S(G)\to X$ be the continuous extension. Then for $p\in \S(G)$, we have $\hatf(p) = x$  if and only if for every $A\ni_{op} x$, we have $f^{-1}(A)\in p$.
			\item
			We view $G$ as a subset of $\S(G)$ by identifying $g\in G$ with the near ultrafilter $\{A\subseteq G: g\in \overline{A}\}$. Then given $A\subseteq G$ and letting $cl_{\S(G)}(A)$ denote the closure of $A$ in $\S(G)$, we have $cl_{\S(G)}(A) = C_A$. In particular,  if $\{A_i: i\in I\}$ are subsets of $G$ and $A = \bigcup_{i\in I} A_i$, then $\overline{\bigcup_{i\in I} C_{A_i}} = C_A$.
		\end{enumerate}
	\end{fact}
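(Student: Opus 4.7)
The plan is to prove the four assertions in sequence, since (1) supplies the key regularity property underlying (2), and (2) then serves as a topological dictionary from which (3) and (4) follow with comparatively little work.

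\textbf{Part (1).} I unpack maximality directly: $A \notin p$ means $p \cup \{A\}$ fails NFIP, so there exist $B_0, \dots, B_{k-1} \in p$ and $V \in \cal{N}_G$ with $VA \cap VB_0 \cap \cdots \cap VB_{k-1} = \emptyset$. Pick a symmetric $U \in \cal{N}_G$ with $U^2 \subseteq V$. Then $UA \notin p$: were $UA \in p$, applying NFIP to $\{UA, B_0, \dots, B_{k-1}\}$ with the neighborhood $U$ would produce a point of $U(UA) \cap UB_0 \cap \cdots \cap UB_{k-1} \subseteq VA \cap VB_0 \cap \cdots \cap VB_{k-1}$, contradicting emptiness. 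For the closure claim, a short NFIP-preservation argument shows $p$ is closed under supersets in $\cal{P}(G)$, giving $A \in p \Rightarrow \overline{A} \in p$ since $A \subseteq \overline{A}$; conversely, $\overline{A} \subseteq UA$ for every symmetric $U$, so the first half forces $\overline{A} \notin p$ whenever $A \notin p$.

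\textbf{Part (2)} I expect to be the main obstacle, because unlike genuine ultrafilters, near ultrafilters need not be closed under finite intersections, so naive ``combine-into-one-set'' maneuvers break down. The overall structure: open neighborhoods of $p$ are generated by complements of the basic closed sets $C_B$ with $B \notin p$, and for each such $B$ I want to slot some $C_{UA}$ with $A \in p$, $U \in \cal{N}_G$ into the complement $\S(G) \setminus C_B$. The reduction is that $C_{UA} \cap C_B = \emptyset$ is equivalent, by maximality of near ultrafilters, to $\{UA, B\}$ failing NFIP, i.e.\ to the existence of $W \in \cal{N}_G$ with $WUA \cap WB = \emptyset$. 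I obtain such $A, U, W$ by applying (1) to $B$ to get some $U_0 B \notin p$, then iterating the NFIP witnesses and passing to smaller neighborhoods with $V^2$ inside the previous one to collapse them into a single uniform incompatibility bound. The dual direction --- that every $C_{UA}$ with $A \in p$, $U \in \cal{N}_G$ is already a neighborhood of $p$ --- is handled by constructing an explicit open neighborhood of $p$ inside it via the same toolkit.

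\textbf{Parts (3) and (4).} For (3), the forward direction runs: if $\hatf(p) = x$ and $A \ni_{op} x$, then $\hatf^{-1}(A)$ is an open neighborhood of $p$, so by (2) it contains some $C_{UB}$ with $B \in p$, $U \in \cal{N}_G$; intersecting with $G$ gives $UB \subseteq f^{-1}(A)$, and superset closure completes the argument. Conversely, if $\hatf(p) = y \neq x$, I separate $x, y$ by disjoint opens $A, A'$ and apply the forward direction to $A'$ to obtain $f^{-1}(A') \in p$; then NFIP of $p$ with $f^{-1}(A)$ and $f^{-1}(A')$ clashes with right uniform continuity of $f$. For (4), I first verify that $\{A : g \in \overline{A}\}$ is a near ultrafilter: NFIP is clear since $g \in \overline{A_i}$ implies $g \in UA_i$ for every symmetric $U$, and maximality follows by noting that if $g \notin \overline{B}$, then picking $V$ with $Vg \cap B = \emptyset$ and a symmetric $U$ with $U^2 \subseteq V$ yields $Ug \cap UB = \emptyset$, witnessing incompatibility with the member $\{g\}$. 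The equality $cl_{\S(G)}(A) = C_A$ then reduces to $C_A \subseteq cl_{\S(G)}(A)$: given $p \in C_A$ and a basic neighborhood $C_{UB}$ of $p$ provided by (2), NFIP applied to $\{A, B\}$ with any $W$ such that $W^2 \subseteq U$ produces an element of $A$ lying in $UB$, and such an element sits in $C_{UB}$ after the identification $G \subseteq \S(G)$. The final statement about unions follows from the monotonicity $A_i \subseteq A \Rightarrow C_{A_i} \subseteq C_A$ combined with $cl_{\S(G)}(\bigcup_i A_i) = C_A$.
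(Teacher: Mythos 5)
Your proposal is correct, and since the paper states \Cref{Fact:NUltBasics} without proof (deferring to \cite{ZucThesis}), there is no in-paper argument to compare against; your route is the standard one for near ultrafilters and all four parts go through as you describe. The one place where your sketch compresses essentially all of the content is part (2): the phrase ``iterating the NFIP witnesses \dots to collapse them into a single uniform incompatibility bound'' must cash out to the sublemma that if $B_0,\dots,B_{k-1}\in p$, $V\in \cal{N}_G$, and $C\supseteq \bigcap_{i<k} V_1 B_i$ for some symmetric $V_1$ with $V_1^2\subseteq V$, then $C\in p$ (check that $p\cup\{C\}$ has the NFIP, using $WC\supseteq \bigcap_i WB_i$ for symmetric $W\subseteq V_1$, and invoke maximality). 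With that lemma, given $B\notin p$ you take the witnesses $B_0,\dots,B_{k-1}\in p$ and $V$ to the failure of NFIP for $p\cup\{B\}$, set $A=\bigcap_i V_1 B_i\in p$, and verify $WUA\cap WB=\emptyset$ for suitable $U=W$; the same lemma also delivers the superset-closure of $p$ used throughout, and the inclusion $\S(G)\setminus C_{G\setminus VA}\subseteq C_{UA}$ (for $V^2\subseteq U$) that realizes your ``explicit open neighborhood of $p$ inside $C_{UA}$.'' I would also record explicitly that $C_{B_1}\cup C_{B_2}=C_{B_1\cup B_2}$, so that the complements of the sets $C_B$ form a base (not merely a subbase) of open sets; this is what justifies testing neighborhoods of $p$ against one $B\notin p$ at a time in part (2) and hence in parts (3) and (4). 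Parts (1), (3), and (4) are fine as written.
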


	\section{CAP groups}
	\label{Sec:CAP}
	In this section, we introduce the class of topological groups which we will investigate for the rest of the paper.
	These are defined in terms of the behavior of almost periodic points in $G$-flows.
	\vspace{2 mm}

	\begin{defin}
		\label{Def:AP}
		Let $G$ be a topological group and $X$ a $G$-flow. The set of \emph{almost periodic} points of $X$ is the set
		$$\ap(X) \coloneqq \{x\in X: \overline{Gx} \text{ is minimal}\}.$$
		If multiple groups act on $X$, we can write $\ap_G(X)$ to emphasize which group is being referred to. On $\ap(X)$, we let $E_G$ denote the equivalence relation of belonging to the same minimal subflow.
	\end{defin}
	\vspace{0 mm}

	\begin{defin}
		\label{Def:CAPGroup}
		A topological group $G$ has the \emph{closed AP} property, or is \emph{CAP}, if for any $G$-flow $X$, the set $\ap(X)\subseteq X$ is closed. In particular, $\ap(X)$ is a subflow of $X$. We say that $G$ is \emph{strongly CAP} if it is CAP and for any $G$-flow $X$, the equivalence relation $E_G\subseteq \ap(X)\times \ap(X)$ is closed.
	\end{defin}
	\vspace{2 mm}

In \Cref{Thm:EquivalentInG}, we will see that the notions of CAP and strongly CAP are equivalent. However, we do not have a ``direct" proof of this; instead, we will define the notion of a \emph{UEB group} and show that CAP groups are UEB and that UEB groups are strongly CAP.

  Recall that a topological group is \emph{pre-compact} if it is isomorphic to a dense subgroup of a compact group.
  \vspace{2 mm}

  \begin{exa}
		\label{Exa:PreCompactGroupsCAP}
		Every pre-compact group is CAP. To see why, suppose $G\subseteq K$ is dense, with $K$ a compact group. We note by \Cref{Fact:DenseSubgroup} that every $G$-flow is also a $K$-flow. As every $K$-orbit is closed, we see that $\ap_G(X) = \ap_K(X) = X$ for every $G$-flow $X$.
	\end{exa}
\vspace{-2 pt}

	\begin{exa}
		\label{Exa:LCGroupsNotCAP}
		Suppose that $G$ is locally compact and non-compact. Then $G$ is not CAP. To see this, consider the flow $2^G$ of closed subsets of $G$ with the Fell topology.
		Recall that a subbasis for the Fell topology is given by the sets $\{F \in 2^G : F \cap U \ne \emptyset, F \cap K = \emptyset\}$, for $U \subseteq G$ open and $K \subseteq G$ compact.
		 The group $G$ acts on $2^G$ by left multiplication. If $D\subseteq G$ is a pre-compact, symmetric open subset of $G$ containing the identity, we say that $S\subseteq G$ is $D$-spaced if for any $g\neq h\in S$, we have $gD\cap hD = \emptyset$. Let $Y_D\subseteq 2^G$ denote the subflow of $D$-spaced subsets of $G$. We note that $\ap(Y_D)\setminus \{\emptyset\} \neq \emptyset$; one can for instance fix $S\subseteq G$  a maximal $D$-spaced subset and show that $\emptyset\not\in \overline {G\cdot S}$.

		For each $D\subseteq G$ as above, find $S_D\in \ap(Y_D)\subseteq \ap(2^G)$ with $1_G\in S_D$. Viewing the collection of such $D$ as a directed set, let $S$ be a limit point of the $S_D$. Then we must have $S = \{1_G\}\not\in \ap(2^G)$; the last non-inclusion holds as $G$ is not compact.
	\end{exa}
\vspace{-2 pt}

	\begin{exa}
		\label{Exa:CAPiffMGMetr}
		When $G$ is metrizable, the notion coincides with having metrizable universal minimal flow. The reverse direction is due to \cite{JZ}, and the forward direction is due to Barto\v{s}ov\'a and Zucker and appears in \cite{ZucThesis}. However, the proof given there has a minor error, which we take the opportunity to fix in the proof of \Cref{Thm:EquivalentInG}.
	\end{exa}
\vspace{2 mm}

	The proof of many of our results on CAP groups rests on another characterization of CAP of a very different nature, which we introduce in \Cref{Sec:UEB}. The definition we give there, that of a \emph{UEB group}, is inspired by the result of \cite{BYMT} characterizing when $\M(G)$ is metrizable for a Polish group in terms of a canonical, but possibly not compatible, metric on $\M(G)$.

	\section{Topo-uniform spaces and the UEB uniformity}
	\label{Sec:TopoUnif}

	In this section, we discuss a uniformity one can put on $\S(G)$ called the \emph{UEB uniformity}. Though in general this uniformity is finer than the compact topology on $\S(G)$, it interacts with it in a strong way: together they form a \emph{topo-uniform space}. We will give several equivalent descriptions of this uniformity, and prove they are equivalent. This also gives us a uniformity on any minimal subflow of $\S(G)$;  we will see that this is in fact independent of the choice of minimal subflow, giving us a canonical uniformity on $\M(G)$. In the sections which follow, we will be particularly interested in those $G$ for which this uniformity on $\M(G)$ coincides with the compact topology; these are the groups which we will call \emph{UEB groups}, which we prove are exactly the CAP groups.

	\subsection{Topo-uniform spaces}
	\label{SubSec:TopoUnif}

	Topo-uniform spaces are a generalization of topo-metric spaces, which were introduced in \cite{BY} and have recently been employed to study the dynamics of Polish groups. Throughout this section, we will make frequent use of the notation defined in Subsection~\ref{SubSec:UnifSpaces}
	\vspace{2 mm}

	\begin{defin}
		\label{Defin:TopoUnif}
		A \emph{topo-uniform} space is a triple $(X, \tau, \mathcal{U})$, where $X$ is a set, $\tau$ is a topology on $X$ and $\mathcal{U}$ is a Hausdorff uniformity on $X$ such that:
		\begin{enumerate}
			\item
			\label{itm:lscUnif}
			$\mathcal{U}$ has a basis of $(\tau \times \tau)$-closed entourages;
			\item
			\label{itm:finerUnif}
			$O \in \mathcal{U}$ for any $(\tau\times\tau)$-open neighborhood $O$ of the diagonal in $X^2$.
		\end{enumerate}
		We refer to the symmetric $(\tau \times \tau)$-closed entourages of $\mathcal{U}$ as \emph{basic}.
	\end{defin}
	\vspace{0 mm}

	\begin{lemma}
		\label{Lem:IfHausdorffFiner}
		If $(X, \tau)$ is compact and $\mathcal{U}$ is a Hausdorff uniformity on $X$ satisfying item~\ref{itm:lscUnif} of \Cref{Defin:TopoUnif}, then $(X, \tau, \cal{U})$ is a topo-uniform space. Furthermore, $\cal{U}$ is complete. 
	\end{lemma}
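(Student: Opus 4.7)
The plan is to verify item~\ref{itm:finerUnif} of \Cref{Defin:TopoUnif} by a compactness argument, and then to prove completeness using $\tau$-compactness together with closed entourages. For item~\ref{itm:finerUnif}, I would let $O$ be a $(\tau\times\tau)$-open neighborhood of the diagonal and look at its complement $F = (X\times X)\setminus O$. Since $X\times X$ is $\tau\times\tau$-compact (by $\tau$-compactness of $X$), $F$ is compact, and it is disjoint from $\Delta$.

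For each $(x,y)\in F$, the Hausdorffness hypothesis $\bigcap_{U\in\mathcal{U}} U = \Delta$ gives some $U\in\mathcal{U}$ with $(x,y)\notin U$; by hypothesis~\ref{itm:lscUnif} I may assume $U$ is basic, and then $(X\times X)\setminus U$ is a $(\tau\times\tau)$-open neighborhood of $(x,y)$. Extracting a finite subcover of $F$ from the resulting cover yields basic entourages $U_0,\dots,U_{n-1}\in\mathcal{U}$ whose intersection misses $F$, hence is contained in $O$. Since $\mathcal{U}$ is a filter, $\bigcap_i U_i\in\mathcal{U}$, and so $O\in\mathcal{U}$, as desired.

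For completeness, let $(x_\alpha)$ be a Cauchy net in $\mathcal{U}$. By $\tau$-compactness, it has a subnet $(x_{\alpha_\gamma})$ converging in $\tau$ to some $x\in X$. I claim $x_\alpha\to x$ in the uniform topology. Fix a basic symmetric $U\in \mathcal{U}$ and choose $\alpha_0$ so that $(x_\alpha,x_\beta)\in U$ whenever $\alpha,\beta\geq\alpha_0$. For any $\alpha\geq\alpha_0$, the net $\gamma\mapsto(x_\alpha,x_{\alpha_\gamma})$ eventually lies in $U$ and converges in $\tau\times\tau$ to $(x_\alpha,x)$; since $U$ is $(\tau\times\tau)$-closed, $(x_\alpha,x)\in U$. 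As basic entourages form a base, this gives uniform convergence $x_\alpha\to x$, proving completeness.

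The proof is entirely routine once one has the two key observations: that Hausdorffness of $\mathcal{U}$ plus a closed basis lets one separate any compact set from $\Delta$ by a single entourage, and that closedness of entourages is exactly what transfers convergence of a subnet in the coarser $\tau$-topology to convergence of the whole Cauchy net in the finer uniform topology. The only place I would be mildly careful is in the completeness argument, making sure the subnet-to-net transfer is stated with the right quantifier structure (fixing $\alpha\geq\alpha_0$ first, then letting $\gamma$ vary), so that $(\tau\times\tau)$-closedness of $U$ can be applied cleanly.
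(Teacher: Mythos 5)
Your proof is correct and takes essentially the same route as the paper's: for the second topo-uniform axiom you separate the compact complement of $O$ from the diagonal by finitely many closed entourages via an open cover, which is just the dual phrasing of the paper's argument that $\Delta = \bigcap_{U\in\mathcal{U}}\overline{U}$ admits a finite subfamily whose intersection lies in $O$. For completeness the paper constructs the limit directly as a point of $\bigcap_U x_{i_U}[U]$ using the finite intersection property, while you extract a $\tau$-convergent subnet and upgrade its limit to a uniform limit of the whole Cauchy net; both versions rest on exactly the same two ingredients ($\tau$-compactness and $(\tau\times\tau)$-closedness of basic entourages) and are interchangeable.
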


	\begin{proof}
		Let $\Delta$ denote the diagonal of $X$ and fix a $(\tau\times\tau)$-open neighborhood $O\supseteq \Delta$. Since $\mathcal{U}$ is Hausdorff and satisfies item~\ref{itm:lscUnif}, we have $\Delta = \bigcap_{U \in \mathcal{U}}  \overline{U}$. By compactness of $\tau \times \tau$, there are $U_{1}, \dots, U_{n} \in \mathcal U$ such that $\overline{U_{1}} \cap \dots \cap \overline{U_{n}} \subseteq O$, so $O \in \mathcal{U}$.
		
		For the second claim, suppose that $(x_i)_{i\in I}$ is a $\cal U$-Cauchy net indexed by some directed set $I$. 
		For each basic $U \in \cal U$, fix $i_U\in I$ such that $(x_j, x_{j'}) \in U$ for all $j,j' \ge i_U$. 
		Then $x_{i_U}[U]$ is closed, thus compact, and the collection $\{x_{i_U}[U]: U\in \cal{U} \text{ basic}\}$ has the finite intersection property. 
		Therefore $\bigcap x_{i_U}[U] \ne \emptyset$. 
		Let $x$ belong to the intersection and fix $V \in \cal U$. 
		Let $U$ be a basic entourage such that $U^2 \subseteq V$.
		For all $j \ge i_U$, it holds that $(x_j, x_{i_U}) \in U$. 
		Since $x \in x_{i_U}[U]$, that is $(x_{i_{U}}, x) \in U$, it follows that $(x_j, x) \in U^2 \subseteq V$, from which we conclude that $x_i\xrightarrow{\cal{U}} x$. 
	\end{proof}
	\vspace{2 mm}

	\noindent
	From now on, we assume that $(X, \tau)$ is compact.

	\begin{rem}
		\label{Rem:xUisClosed}
		For each $x \in X$ and each basic $U \in \mathcal{U}$, it holds that $x[U]$ is $\tau$-closed, as it is the projection of the closed set $(\set{x}\times X)\cap U$. Moreover, for each $A \ni_{op} x$, there is a basic $U \in \mathcal{U}$ such that $x[U] \subseteq A$ by item \ref{itm:finerUnif} of \Cref{Defin:TopoUnif}.
	\end{rem}
	\vspace{2 mm}

	\begin{defin}
		\label{Defin:TopoUnifAdequate}
		A compact topo-uniform $(X, \tau, \mathcal{U})$ space is \emph{adequate} if for a base of $U \in \mathcal{U}$ and each $\tau$-open $A \subseteq X$, it holds that  $A(U)$ is $\tau$-open.
	\end{defin}
	\vspace{0 mm}

	\begin{lemma}
		\label{Lem:ANWDBall}
		If $(X, \tau, \mathcal{U})$ is a an adequate topo-uniform space such that the topology induced by $\mathcal{U}$ is strictly finer that $\tau$, then there exists $x \in X$ and a basic $U \in \mathcal{U}$ such that $x[U]$ is $\tau$-nowhere dense.
	\end{lemma}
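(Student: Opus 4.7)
The plan is to proceed by contradiction: suppose that for every $x \in X$ and every basic $U \in \mathcal{U}$ we have $\interior_\tau(x[U]) \ne \emptyset$, and derive a contradiction.

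First I would show that $\mathcal{U}$ is not totally bounded. The $\mathcal{U}$-topology is Hausdorff (because $\mathcal{U}$ is a Hausdorff uniformity) and strictly finer than the compact Hausdorff $\tau$; were it compact, the uniqueness of the compatible uniformity on a compact Hausdorff space would force the $\mathcal{U}$-topology to equal $\tau$ and $\mathcal{U}$ to equal the $\tau$-uniformity, contradicting the hypothesis. Since $\mathcal{U}$ is complete by \Cref{Lem:IfHausdorffFiner}, the non-compactness of the $\mathcal{U}$-topology forces $\mathcal{U}$ to fail total boundedness, so we can fix $U_0$ in the adequate base $\mathcal{B}$ such that $X \ne F[U_0]$ for every finite $F \subseteq X$.

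Next I would pick $V \in \mathcal{B}$ and a basic $W \in \mathcal{U}$ with $V^3 \subseteq U_0$ and $W^3 \subseteq V$; then $V \ll U_0$ and $W \ll V$, witnessed by $V$ and $W$ themselves. For each $x \in X$, set $G_x := \interior_\tau(x[W])(V)$. The contradiction hypothesis, applied to the basic entourage $W$, says that $\interior_\tau(x[W])$ is a nonempty $\tau$-open set; adequacy applied to $V \in \mathcal{B}$ then says that $G_x$ is $\tau$-open, and since $\Delta \subseteq W$ and $W \ll V$, we have $\interior_\tau(x[W]) \subseteq \interior_\tau(x[W])[W] \subseteq G_x$, so $G_x$ is nonempty. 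Unpacking definitions, any $y \in G_x$ comes from some $a \in \interior_\tau(x[W])$ and some $V' \ll V$ with $(a,y) \in V' \subseteq V$; since $(x,a) \in W$, we get $(x, y) \in W \cdot V \subseteq WVW \subseteq U_0$, so $G_x \subseteq x[U_0]$.

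If $\{G_x : x \in X\}$ covered $X$, then $\tau$-compactness would yield a finite $F \subseteq X$ with $X \subseteq F[U_0]$, contradicting our choice of $U_0$. Hence there is $y^* \in X \setminus \bigcup_x G_x$; in particular $y^* \notin G_{y^*}$. Since $W \ll V$, the set $\interior_\tau(y^*[W])[W]$ is one of the pieces $\interior_\tau(y^*[W])[V']$ appearing in the union defining $G_{y^*}$, so $y^* \notin \interior_\tau(y^*[W])[W]$; i.e., no $a \in \interior_\tau(y^*[W])$ satisfies $(a, y^*) \in W$. By symmetry of $W$ this is the same as $a \notin y^*[W]$, so $\interior_\tau(y^*[W]) \cap y^*[W] = \emptyset$. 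But trivially $\interior_\tau(y^*[W]) \subseteq y^*[W]$, forcing $\interior_\tau(y^*[W]) = \emptyset$, which contradicts the standing assumption for the basic entourage $W$ at the point $y^*$.

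The main obstacle is calibrating the two layers of $\ll$: one needs $V \ll U_0$ so that the adequate thickening $\interior_\tau(x[W])(V)$ stays inside $x[U_0]$ (letting $\tau$-compactness see the non-total-boundedness of $U_0$), and simultaneously $W \ll V$ so that the "trivial" piece $\interior_\tau(y^*[W])[W]$ sits inside the same thickening and, when $y^* \notin G_{y^*}$, collapses $\interior_\tau(y^*[W])$ to $\emptyset$.
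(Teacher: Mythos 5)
Your proof is correct, but it reaches the contradiction by a different route than the paper. Both arguments negate the conclusion (using \Cref{Rem:xUisClosed}, ``nowhere dense'' reduces to ``empty interior'' since basic balls are $\tau$-closed) and both hinge on the same technical move: thickening the non-empty open set $\interior(x[W])$ by an adequate entourage to get a $\tau$-open set trapped inside $x[U_0]$ via the $\ll$ calculus. The paper then concludes \emph{locally}: it shows $x \in A(V^3) \subseteq x[U]$, so that $\{x[U] : U \in \mathcal{U}\}$ is a $\tau$-neighborhood basis at every $x$, directly contradicting strict fineness; this needs nothing beyond adequacy. You instead conclude \emph{globally}: you invoke completeness from \Cref{Lem:IfHausdorffFiner} to convert strict fineness into failure of total boundedness, and then run a compactness/covering argument on the sets $G_x$ to produce a point $y^*$ with $y^* \notin G_{y^*}$, whence $\interior(y^*[W]) \cap y^*[W] = \emptyset$ by symmetry of $W$ and so $\interior(y^*[W]) = \emptyset$. (Your step 5 is essentially the contrapositive of the paper's ``$x \in A(V^3)$'' computation.) Your version is valid but costs an extra ingredient (completeness, hence the second half of \Cref{Lem:IfHausdorffFiner}) and proves slightly less along the way, whereas the paper's local argument also yields that the balls form a neighborhood basis. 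One phrasing quibble: in your first step, the identification of the two topologies follows from the fact that a continuous bijection from a compact space to a Hausdorff space is a homeomorphism (the $\mathcal{U}$-topology being finer makes the identity continuous toward $\tau$), not literally from uniqueness of the compatible uniformity; the conclusion you draw is nonetheless correct.
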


	\begin{proof}
		Since each such $x[U]$ is closed, it is enough to show that there is one with empty interior. Suppose towards a contradiction that for each $x\in X$ and  $U\in \mathcal{U}$, we had that $\interior(x[U]) \neq \emptyset$. We show that for each $x\in X$, $\setnew{x[U]}{ U \in \mathcal{U}}$ is a $\tau$-neighborhood basis at $x$, thus contradicting the assumption that the uniform topology is finer than $\tau$. So fix $x\in X$ and $B \ni_{op} x$. Let $U \in \mathcal{U}$ be such that $x[U] \subseteq B$. Let $V\in \mathcal{U}$ be basic and symmetric and such that $V^4\subseteq U$. Call $A = \interior(x[V])$ and consider $A(V^3)$, which is open by adequacy. For each $y \in A$, we have $(y, x) \in V \ll V^3$, so $x \in A(V^3)$. On the other hand, fix $z \in A(V^3)$. Then $z\in A[W]$ for some $W\ll V^3$. But $A \subseteq x[V]$, so $(x, z) \in VW \subseteq V^4 \subseteq U$, so $z \in x[U]$. Therefore $x \in A(V^3) \subseteq x[U]$, which shows that $x[U]$ is a neighborhood of $x$.
	\end{proof}

	\subsection{The UEB uniformity}
	\label{Subsec:UEB}
	We now proceed to give three descriptions of the UEB uniformity and prove that these descriptions are equivalent.

	\subsubsection{Entourages via UEB sets}
	The first definition of the UEB uniformity we give is the classical one; see \cite{Pachl} or \cite{ST} for a more detailed exposition.
	\vspace{2 mm}

	\begin{defin}\mbox{}
		\label{Def:UEBTopology}
		\vspace{-3 mm}

		\begin{enumerate}
			\item
			A subset $H\subseteq \mathrm{RUC}_b(G)$ is a \emph{uniformly equicontinuous and bounded} set, or \emph{UEB} set, if both $\sup\{\|f\|: f\in H\}< \infty$ (uniformly bounded) and for every $\epsilon > 0$ there is $U\ni_{op} G$ so that for any $g, h\in G$ with $gh^{-1}\in U$, we have $|f(g)-f(h)|<\epsilon$ (uniformly equicontinuous).
			\item
			The \emph{UEB uniformity} on $\S(G)$ is generated by entourages of the form
			$$[H, \epsilon] \coloneqq \{(p, q)\in \S(G)\times \S(G): |p(f)-q(f)|<\epsilon \text{ for all } f\in H\}$$
			where $H \subseteq \mathrm{RUC}_{b}(G)$ is a UEB set and $\epsilon>0$.
		\end{enumerate}
	\end{defin}
	We call the topology induced by the UEB uniformity the \emph{UEB topology}.

	\subsubsection{Entourages via identity neighborhoods}

	The second definition is inspired by \cite{ZucMHP} and can be defined on a wide class of $G$-flows.
	\vspace{2 mm}

	\begin{defin}
		\label{Def:MHP}
		A $G$-flow $X$ is called \emph{maximally highly proximal}, or MHP, if whenever $A \subseteq_{op}X$ and $x\in \overline{A}$, then $x \in \interior(\overline{UA})$ for each $U\in \cal{N}_G$.
	\end{defin}
	\vspace{2 mm}

	In this work, we will be mostly interested in two particular MHP flows, namely $\S(G)$ and $\M(G)$.

	\begin{fact}[\cite{ZucMHP}]
		\label{Fact:MGandSGareMPH}
	For any topological group $G$, the Samuel compactification $\S(G)$ and the universal minimal flow $\M(G)$ are MHP $G$-flows. 
	\end{fact}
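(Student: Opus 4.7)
The plan is to prove the $\S(G)$ case directly via the near ultrafilter description from \Cref{Fact:NUltBasics}, and then transport the property to $\M(G)$ by pulling back through the retraction $\pi \coloneqq \hat{\rho}_u\colon \S(G) \to \M(G)$ associated to any minimal idempotent $u \in \M(G)$ (\Cref{Fact:SGSemigroup}(4)).

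For $\S(G)$, I would fix $A \subseteq_{op} \S(G)$, $p \in \overline{A}$, and $U \in \cal{N}_G$, and set $B \coloneqq A \cap G$. Density of $G$ in $\S(G)$ together with openness of $A$ yields $\overline{A} = \overline{B} = C_B$ by \Cref{Fact:NUltBasics}(4). Since each translation is a homeomorphism of $\S(G)$, the set $UA$ is open, and the key identity to verify is $UA \cap G = UB$: the inclusion $\supseteq$ is clear, and for $\subseteq$, if $uz \in UA \cap G$ with $u \in U$ and $z \in A$, then $z = u^{-1}(uz) \in G$, so $z \in A \cap G = B$. Density gives $\overline{UA} = \overline{UB} = C_{UB}$. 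Since $p \in C_B$ means $B \in p$, \Cref{Fact:NUltBasics}(2) identifies $C_{UB}$ as a (not necessarily open) neighborhood of $p$, so $p \in \interior(C_{UB}) = \interior(\overline{UA})$.

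For $\M(G)$, I would fix $A \subseteq_{op} \M(G)$, $x \in \overline{A}^{\M(G)}$, and $U \in \cal{N}_G$, and set $A' \coloneqq \pi^{-1}(A) \subseteq_{op} \S(G)$. Since $\pi$ restricts to the identity on $\M(G)$, we have $A' \cap \M(G) = A$, and $G$-invariance of $\M(G)$ gives $UA' \cap \M(G) = UA$ (if $uz \in \M(G)$ with $u \in U$, then $z = u^{-1}(uz) \in \M(G)$, hence $z \in A' \cap \M(G) = A$). From $A \subseteq A'$ we get $x \in \overline{A'}^{\S(G)}$, so the first part places $x$ inside $\interior^{\S(G)}(\overline{UA'}^{\S(G)})$. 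It remains to verify $\overline{UA'}^{\S(G)} \cap \M(G) = \overline{UA}^{\M(G)}$: the $\supseteq$ direction is immediate, while for $\subseteq$, any net $u_\alpha z_\alpha \to y \in \M(G)$ with $u_\alpha \in U$ and $z_\alpha \in A'$ satisfies $u_\alpha \pi(z_\alpha) = \pi(u_\alpha z_\alpha) \to \pi(y) = y$ with $\pi(z_\alpha) \in A$, placing $y$ in $\overline{UA}^{\M(G)}$. Intersecting $\interior^{\S(G)}(\overline{UA'}^{\S(G)})$ with $\M(G)$ then provides the required open $\M(G)$-neighborhood of $x$ inside $\overline{UA}^{\M(G)}$.

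The main subtlety will be the bookkeeping in the $\M(G)$ case, where interiors and closures must be carefully shuttled between $\S(G)$ and $\M(G)$ through $\pi$; the $\S(G)$ portion, once the identification $\overline{UA} = C_{UB}$ is in hand, reduces to a direct application of the combinatorial neighborhood basis from \Cref{Fact:NUltBasics}(2).
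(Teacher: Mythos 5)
The paper states this as a quoted fact from \cite{ZucMHP} and gives no proof of its own, so there is nothing internal to compare against; judged on its own merits, your argument is correct and is essentially the standard one from the cited source. The $\S(G)$ half correctly reduces to the near-ultrafilter identity $\overline{UA}=C_{U(A\cap G)}$ together with item (2) of \Cref{Fact:NUltBasics}, and the transfer to $\M(G)$ via the continuous $G$-equivariant retraction $\hat{\rho}_u$ (including the identities $\pi^{-1}(A)\cap\M(G)=A$ and $\overline{UA'}\cap\M(G)=\overline{UA}$) is carried out without gaps.
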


	Given $U, V\in \cal{N}_G$, we write $V\ll U$  if and only if there is $W\in \cal{N}_G$ with $WVW\subseteq U$. This is reminiscent of the definition given earlier for entourages of a uniform space.
	\vspace{2 mm}

	\begin{defin}
		\label{Defin:BasicEntourageOnMHP}
		Let $X$ be an MHP $G$-flow. For each $U \in\cal{N}_G$, we define $\onx{U} \subseteq X^{2}$ by letting $(x, y) \in \onx{U}$ if and only if for all $A \ni_{op} x$ and all $U_0\in \cal{N}_G$ with $U\ll U_0$, we have $y \in \overline{U_0A}$. Let $\mathcal{U}_{X}$ denote the the filter of subsets of $X^{2}$ generated by $\setnew{\onx{U}}{U\in \cal{N}_G}$.
	\end{defin}
	\vspace{2 mm}

\noindent
	In Definition~\ref{Defin:BasicEntourageOnMHP}, notice that $y\in \overline{U_0A}$ iff for every $B\ni_{op} y$, we have $U_0A\cap B\neq \emptyset$. Since $U_0$ is symmetric, we see that $\onx{U}$ is symmetric. Before proving that $\mathcal{U}_{X}$ is indeed a uniformity, we need the following property of the entourages $\onx{U}$.
	\vspace{2 mm}

	\begin{lemma}
		\label{Lem:EquivDefBasicEntourageOnMHP}
		Let $X$ be an MHP $G$-flow, and fix $U \in \cal{N}_G$. Then $(x, y) \in \onx{U}$ if and only if for all $A \subseteq_{op} X$ with $x \in \overline{A}$ and all $U_0\in \cal{N}_G$ with $U\ll U_0$, we have $y \in \interior(\overline{U_0A})$.
	\end{lemma}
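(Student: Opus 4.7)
My plan is to prove both implications, with the backward direction being essentially immediate and the forward direction being the substantive one, achieved by applying the MHP property twice (once near $x$, once near $y$) together with a careful choice of neighborhoods witnessing $U \ll U_0$.

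For the easy direction, note that if the new condition holds and we take $A \ni_{op} x$ (so $x \in A \subseteq \overline{A}$), then for any $U_0 \in \cal{N}_G$ with $U \ll U_0$ we get $y \in \interior(\overline{U_0 A}) \subseteq \overline{U_0 A}$, which is exactly the defining condition of $\onx{U}$.

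For the forward direction, assume $(x,y) \in \onx{U}$ in the sense of \Cref{Defin:BasicEntourageOnMHP}, and fix $A \subseteq_{op} X$ with $x \in \overline{A}$ and $U_0 \in \cal{N}_G$ with $U \ll U_0$. Pick $W_0 \in \cal{N}_G$ such that $W_0 U W_0 \subseteq U_0$, and then pick a symmetric $W_1 \in \cal{N}_G$ with $W_1^2 \subseteq W_0$. Set $A' \coloneqq \interior(\overline{W_1 A})$; by the MHP property applied to $x \in \overline{A}$, we have $x \in A'$, so $A' \ni_{op} x$. Set $U_1 \coloneqq W_1 U W_1$; then $W_1 U W_1 \subseteq U_1$ trivially, so $U \ll U_1$. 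Applying the hypothesis $(x,y) \in \onx{U}$ to $A' \ni_{op} x$ and $U_1$ yields $y \in \overline{U_1 A'}$.

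Now I use MHP a second time at $y$: the set $U_1 A'$ is open (as a union of open translates of $A'$), and $y$ lies in its closure, so MHP gives $y \in \interior(\overline{W_1 \cdot U_1 A'})$. It remains to show $\overline{W_1 U_1 A'} \subseteq \overline{U_0 A}$; monotonicity of $\interior$ then delivers $y \in \interior(\overline{U_0 A})$. Using continuity of the left action (so that $g\overline{B} \subseteq \overline{gB}$) together with $A' \subseteq \overline{W_1 A}$, we compute
\[
W_1 U_1 A' \subseteq W_1 U_1 \overline{W_1 A} \subseteq \overline{W_1 U_1 W_1 A} = \overline{W_1^2 U W_1^2 A} \subseteq \overline{W_0 U W_0 A} \subseteq \overline{U_0 A},
\]
and taking closures finishes the job.

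The main (minor) obstacle is orchestrating the three parameters $W_0, W_1, U_1$ so that both the original definition of $\onx{U}$ is applicable with parameter $U_1$ (requiring $U \ll U_1$) \emph{and} the product $W_1 U_1 W_1$ lands inside $U_0$; picking $U_1 = W_1 U W_1$ achieves both simultaneously once $W_1^2 \subseteq W_0$. Beyond that, the argument is just a bookkeeping of closures versus interiors, trading the extra strength of the conclusion against the weaker hypothesis on $A$ by means of two applications of MHP.
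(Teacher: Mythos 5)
Your proof is correct and follows essentially the same strategy as the paper's: apply MHP at $x$ to upgrade $x\in\overline{A}$ to membership in an open set, feed that open set into the definition of $\onx{U}$ with an intermediate neighborhood, then apply MHP at $y$ to upgrade $y\in\overline{\,\cdot\,}$ to $y\in\interior(\overline{\,\cdot\,})$. The only difference is cosmetic: you build the intermediate neighborhood explicitly as $U_1=W_1UW_1$ with $W_1^2\subseteq W_0$, whereas the paper interpolates $U\ll U_1\ll U_0$; both bookkeeping schemes land the product inside $U_0$.
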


	\begin{proof}
		One direction is clear. For the other, suppose that $x \in \overline{A}$ for some $A \subseteq_{op} X$, and fix $U_0\in \cal{N}_G$ with $U\ll U_0$. Find $U_1\in \cal{N}_G$ with $U\ll U_1\ll U_0$, and let $W\in \cal{N}_G$ be such that $WU_1W \subseteq U_0$. Since $X$ is MHP, it follows that $x \in \interior(\overline{WA})$. By the definition of $\onx{U}$, it holds that $y \in \overline{U_1\interior(\overline{WA})} \subseteq \overline{U_1\overline{WA}} \subseteq \overline{U_1WA}$. By MHP again, $y \in \interior(\overline{WU_1WA}) \subseteq \interior(\overline{U_0A})$.
	\end{proof}
	\vspace{2 mm}

\noindent
	\Cref{Lem:EquivDefBasicEntourageOnMHP} implies that if $U\in \mathcal{N}_G$, then for any $V\in \cal{N}_G$, we have $\onx{U}\onx{U}\subseteq \onx{UVU}$. This together with our earlier observation that each $\onx{U}$ is symmetric yields the following.
	\vspace{2 mm}
	
	\begin{prop}
		\label{Prop:UniformityOnMHP}
		$\cal{U}_X$ is a uniformity on $X$.
	\end{prop}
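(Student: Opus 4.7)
The plan is to verify each of the axioms listed in the definition of a (Hausdorff) uniform space from \Cref{SubSec:UnifSpaces}. Since $\mathcal{U}_X$ is a filter by construction, the remaining tasks are to check that each basic entourage contains the diagonal, that $\mathcal{U}_X$ is closed under symmetrization and composition, and that $\bigcap \mathcal{U}_X = \Delta$.

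The diagonal containment is immediate: for any $x \in X$, any $A \ni_{op} x$, and any $U_0 \in \mathcal{N}_G$, the fact that $1_G \in U_0$ gives $x \in A \subseteq U_0 A \subseteq \overline{U_0 A}$, so $(x,x) \in \onx{U}$ for every $U \in \mathcal{N}_G$. Symmetry of each $\onx{U}$ is already recorded in the discussion preceding the proposition. For the composition axiom, given a basic $\onx{U}$ I pick $W \in \mathcal{N}_G$ with $W^3 \subseteq U$; the remark immediately preceding the proposition (applied with $V = W$) gives $\onx{W}\onx{W} \subseteq \onx{WWW} = \onx{W^3}$, and monotonicity of $U \mapsto \onx{U}$ gives $\onx{W^3} \subseteq \onx{U}$. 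This last inclusion holds because any $U_0 \in \mathcal{N}_G$ with $U \ll U_0$ via $V U V \subseteq U_0$ also satisfies $V W^3 V \subseteq V U V \subseteq U_0$, so the defining condition for $\onx{W^3}$ ranges over a larger collection of $U_0$ and is a priori stronger than that for $\onx{U}$.

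The bulk of the proof lies in showing $\bigcap_{U \in \mathcal{N}_G} \onx{U} = \Delta$. Fix $x \neq y$ in $X$. Using regularity of the compact Hausdorff space $X$, I first find open sets $A \ni x$ and $B \ni y$ with $\overline{A} \cap \overline{B} = \emptyset$. The key claim is that there exists $U_0 \in \mathcal{N}_G$ with $B \cap U_0 A = \emptyset$, which gives $y \notin \overline{U_0 A}$. Suppose otherwise: for each $U_0 \in \mathcal{N}_G$ pick $g_{U_0} \in U_0$ and $z_{U_0} \in A$ with $g_{U_0} z_{U_0} \in B$. Viewing this as a net indexed by $\mathcal{N}_G$ under reverse inclusion, extract a subnet with $z_{U_0} \to z \in \overline{A}$ by compactness of $X$; since $g_{U_0} \to 1_G$ and the $G$-action is jointly continuous at $(1_G, z)$, we obtain $g_{U_0} z_{U_0} \to z$, so $z \in \overline{B}$, contradicting $\overline{A} \cap \overline{B} = \emptyset$. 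Having found $U_0$, I choose $U \in \mathcal{N}_G$ with $U^3 \subseteq U_0$, so that $U \ll U_0$; then the pair $(A, U_0)$ witnesses $(x, y) \notin \onx{U}$, completing the separation.

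The main obstacle is this last step: the Hausdorffness of $\mathcal{U}_X$ is the only axiom that genuinely uses structural features of the $G$-flow beyond the definitions, combining topological separation in $X$, joint continuity of the action, and compactness. The remaining axioms reduce to bookkeeping once \Cref{Lem:EquivDefBasicEntourageOnMHP} and its corollary are in hand.
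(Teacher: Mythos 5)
Your proof is correct and follows essentially the same route as the paper: symmetry and the composition axiom are exactly the observations recorded immediately before the proposition (via \Cref{Lem:EquivDefBasicEntourageOnMHP}), and your monotonicity argument for $\onx{W^3}\subseteq\onx{U}$ is sound. The one substantive addition is your verification that $\bigcap_{U}\onx{U}=\Delta$; the paper defers this separation axiom to the proof of \Cref{Thm:XUisTopoUnif}, where it is obtained from the unelaborated claim that $y\notin\overline{A}$ forces $y\notin\overline{U^{3}A}$ for some $U\in\cal{N}_G$ -- precisely the compactness-plus-joint-continuity argument you spell out.
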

	\vspace{2 mm}
	
	\noindent
	Now \Cref{Lem:EquivDefBasicEntourageOnMHP} implies that whenever $U, V\in \cal{N}_G$ with $V\ll U$, we also have $\onx{V}\ll\onx{U}$.
	\vspace{2 mm}

	\begin{theorem}
		\label{Thm:XUisTopoUnif}
		Let $(X, \tau)$ be an MHP $G$-flow. Then  $(X, \tau, \mathcal{U}_{X})$ is an adequate topo-uniform space.
	\end{theorem}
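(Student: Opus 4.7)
The proof splits into verifying the topo-uniform axioms and verifying adequacy. By \Cref{Lem:IfHausdorffFiner}, since $X$ is compact, the first part reduces to showing that $\mathcal{U}_X$ is Hausdorff and admits a base of $(\tau\times\tau)$-closed entourages. I would argue that each basic entourage $\widehat{U}$ is itself $(\tau\times\tau)$-closed: if $(x,y)\notin\widehat{U}$, unfolding the definition produces $A\ni_{op} x$ and $U_0\in\mathcal{N}_G$ with $U\ll U_0$ and $y\notin\overline{U_0 A}$; any open $B\ni y$ disjoint from $\overline{U_0 A}$ then gives an open rectangle $A\times B$ disjoint from $\widehat{U}$, since the same witnesses $A$ and $U_0$ show $(x',y')\notin\widehat{U}$ for every $(x',y')\in A\times B$. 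For Hausdorffness, given $x\neq y$, regularity of compact Hausdorff $X$ yields an open $B\ni x$ with $y\notin\overline{B}$, and continuity of the action at $(1_G,x)$ yields $U_0\in\mathcal{N}_G$ and $A\ni_{op} x$ with $U_0 A\subseteq B$; for any $U\in\mathcal{N}_G$ with $U\ll U_0$, we have $\overline{U_0 A}\subseteq\overline{B}\not\ni y$, hence $(x,y)\notin\widehat{U}$.

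For adequacy, I take the base $\{\widehat{U}:U\in\mathcal{N}_G\}$ and aim to prove $A(\widehat{U})=UA$ for every $\tau$-open $A\subseteq X$ and $U\in\mathcal{N}_G$, where $UA:=\{ux:u\in U,\,x\in A\}=\bigcup_{u\in U}uA$ is visibly $\tau$-open as a union of translates of the $\tau$-open set $A$. The inclusion $UA\subseteq A(\widehat{U})$ is the easy direction: for $y=ux$ with $u\in U$ and $x\in A$, I would find $V\in\mathcal{N}_G$ with $\{1_G,u\}\subseteq V$ and $V\ll U$ (for example, $V=W_1\cup W_1 u\cup W_1 u^{-1}$ for a sufficiently small symmetric $W_1\in\mathcal{N}_G$, using that $u\in U$ and $U$ is open), and then directly verify $(x,y)\in\widehat{V}$ from \Cref{Lem:EquivDefBasicEntourageOnMHP} by noting that $y=ux\in uA'\subseteq VA'\subseteq V_0 A'\subseteq\overline{V_0 A'}$ for every $A'\ni_{op} x$ and $V_0\gg V$. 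Since $\widehat{V}\ll\widehat{U}$, this places $y\in A[\widehat{V}]\subseteq A(\widehat{U})$.

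The reverse inclusion $A(\widehat{U})\subseteq UA$ is the main obstacle. Given $y\in A(\widehat{U})$, any witnessing $E\ll\widehat{U}$ has $E\subseteq\widehat{U}$ (since any entourage $V^*$ witnessing $E\ll\widehat{U}$ satisfies $E\subseteq V^* E V^*\subseteq\widehat{U}$), so for some $x\in A$ we have $(x,y)\in\widehat{U}$. Applying \Cref{Lem:EquivDefBasicEntourageOnMHP} with $A'$ ranging over open neighborhoods of $x$ contained in $A$ and $U_0$ ranging over $U_0\gg U$ gives $y\in\interior(\overline{U_0 A'})$ for all such choices. A careful net-and-compactness argument using continuity of the action and the MHP property of $X$ then produces a representation $y=u^* x^*$ with $u^*\in\overline{U}$ and $x^*\in A$; the MHP hypothesis is essential here to upgrade $x^*$ from $\overline{A}$ to $A$, by feeding the shrinking neighborhoods $A'$ through the interior characterization rather than the mere closure characterization of $\widehat{U}$. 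Finally, a perturbation argument using openness of $A$ alone shows $\overline{U}A\subseteq UA$: choose a net $u_n\in U$ with $u_n\to u^*$; then by continuity $(u_n^{-1}u^*)x^*\to x^*\in A$, so $x''_n:=(u_n^{-1}u^*)x^*\in A$ eventually, and $y=u_n x''_n\in UA$.
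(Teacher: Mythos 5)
The first half of your argument is sound and essentially identical to the paper's: you reduce to \Cref{Lem:IfHausdorffFiner}, show each $\onx{U}$ is $(\tau\times\tau)$-closed (you exhibit an open rectangle in the complement where the paper uses nets, which is the same computation), and verify Hausdorffness correctly. The inclusion $UA\subseteq A(\onx{U})$ in the second half is also correct, modulo making your candidate $V$ symmetric.

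The adequacy argument, however, hinges on the identity $A(\onx{U})=UA$, and the inclusion $A(\onx{U})\subseteq UA$ is false in general. The ``careful net-and-compactness argument'' cannot be carried out: from $y\in\overline{U_0A'}$ you get a net $u_ia_i\to y$ with $u_i\in U_0$, $a_i\in A'$, and while $(a_i)$ has a convergent subnet in the compact space $X$, the net $(u_i)$ lives in $G$, which is not compact --- indeed $\overline{U}$ is not compact unless $G$ is locally compact --- so there is no $u^*$ to extract. This is not a repairable technicality: the set $x[\onx{U}]$ is a ball for the pseudo-uniformity and typically contains points far outside the orbit $Gx$, which is exactly why $\partial_d(p,q)=0$ need not imply $q\in Gp$ in \Cref{Prop:MHPPseudo}. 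Concretely, take $G=\sym(\omega)$, $X=\M(G)=\mathrm{LO}(\omega)$, $U$ the pointwise stabilizer of $\{0,1\}$ (a clopen subgroup, so $\onx{U}\ll\onx{U}$), and $A=\{\prec\,:\,0\prec 2\prec 1\}$. Every order in $UA$ has a point strictly between $0$ and $1$, so the usual order of $\omega$ is not in $UA$; but one checks that $x[\onx{U}]=\{y: y\restriction_{\{0,1\}}=x\restriction_{\{0,1\}}\}$, so that $A(\onx{U})\supseteq A[\onx{U}]=\{\prec\,:\,0\prec 1\}$, which does contain the usual order. The paper avoids this by reducing to regular open $A$ and proving $A(\onx{U})=\bigcup_{V\ll U,\,W}\interior\bigl(\overline{V\cdot(X\setminus\overline{WK})}\bigr)$ with $K=X\setminus A$, a set that is open by inspection; some such detour through closures and interiors (i.e., through the MHP property) is unavoidable, because the naive translate $UA$ is genuinely too small.
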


	\begin{proof}
		The theorem statement splits into two claims.
		\vspace{2 mm}

		\begin{claim}
			\label{Claim:UisTopoUnif}
			$(X, \tau, \mathcal{U}_{X})$ is a topo-uniform space.
		\end{claim}

		\begin{proof}[Proof of Claim]
			By \Cref{Lem:IfHausdorffFiner}, it is enough to show that $\cal{U}_X$ is Hausdorff and admits a base of $(\tau\times\tau)$-closed entourages. First we show that for each $U\in \cal{N}_G$, $\onx{U}$ is $(\tau \times \tau)$-closed. Let $x_{i} \to x$, $y_{i} \to y$ be nets in $X$ such that for each $i$, $(x_{i}, y_{i}) \in \onx{U}$. Let $A \ni_{op} x$ and $U_0\in \cal{N}_G$ with $U\ll U_0$; then eventually $x_{i} \in A$, so $y_{i} \in \overline{U_0A}$. But this is a $\tau$-closed set, so $y \in \overline{U_0A}$, and thus $(x, y) \in \onx{U}$.

			Now we show that $\mathcal{U}_{X}$ is Hausdorff. Let $x, y \in X$ and let $A \ni_{op} x$ be such that  $y \not \in \overline{A}$. Find $U \in \cal{N}_G$ be such that $y \not \in \overline{U^{3}A}$. Then $(x, y) \not \in \onx{U}$.
		\end{proof}
	\vspace{0 mm}

		\begin{claim}
			\label{Claim:UisAdequate}
			$(X, \tau, \mathcal{U}_{X})$ is adequate.
		\end{claim}

		\begin{proof}[Proof of Claim]
			We will show for every $A\subseteq_{op} X$ and $U\in \cal{N}_G$ that $A(\onx{U})$ is open. We do this by mimicking the proof of Theorem 4.8 from \cite{ZucMHP}.

			We first note that for any sets $A, A_i\subseteq X$ with $A = \bigcup_i A_i$ that $A(\onx{U}) = \bigcup_i A_i(\onx{U})$. Now fix $A\subseteq_{op} X$. As $(X, \tau)$ is compact Hausdorff, we can write $A = \bigcup_i A_i$ with each $A_i$ a \emph{regular open} set, i.e.\ with $A_i = \mathrm{int}(\overline{A_i})$. So it suffices to prove the claim when $A\subseteq_{op} X$ is a regular open set. Write $K = X\setminus A$. Given $U\in \cal{N}_G$, we show that:
			\[
			A(\onx{U}) = \bigcup_{\begin{subarray}{c} V \ll U\\ W\in \cal{N}_G \end{subarray}} \interior\left(\overline{V\cdot(X\setminus \overline{WK})}\right).
			\]
			Suppose that $x \in A(\onx{U})$, and find $V_0 \ll U$, and $y \in A$, such that $(x, y) \in \onx{V_0}$. Find $V_1\in \cal{N}_G$ with $V_0\ll V_1\ll U$. Since $y\in A$, we can find $W\in \cal{N}_G$ with $y\in X\setminus \overline{WK}$. Then since $(x, y)\in \onx{V_0}$ and $V_0\ll V_1$, we have by \Cref{Lem:EquivDefBasicEntourageOnMHP} that $x\in \mathrm{int}\left(\overline{V_1\cdot (X\setminus \overline{WK})}\right)$.

			Conversely, suppose for some $V\ll U$ and $W\in \cal{N}_G$ that $x\in \mathrm{int}\left(\overline{V\cdot (X\setminus \overline{WK})}\right)$. It follows that for any $B\ni_{op} x$ that $VB\cap \left(X\setminus \overline{WK}\right) \neq \emptyset$. Therefore we have:
			$$\left(\overline{X\setminus \overline{WK}}\right)\cap \bigcap \{VB: B\ni_{op} x\} \neq \emptyset.$$
			Pick $y$ from this set. Then $(x, y)\in \onx{V}$. Towards a contradiction, suppose $y\not\in A$, i.e.\ that $y\in K$. As $A$ is regular open, we have that $y\in \mathrm{int}(\overline{WK})$. But since $y\in \overline{X\setminus \overline{WK}}$, this is a contradiction.
		\end{proof}
		These two claims conclude the proof of \Cref{Thm:XUisTopoUnif}.
	\end{proof}
	\vspace{2 mm}

	Notice that we have two ways of defining this uniformity on $\M(G)$: either directly via \Cref{Defin:BasicEntourageOnMHP}, since $\M(G)$ is an MHP flow, or by restricting $\mathcal{U}_{\S(G)}$ to a minimal subflow.
	These are in fact the same:

	\begin{prop}
		\label{Prop:UnifMG}
		Suppose $M\subseteq \S(G)$ is a minimal subflow, and let $p, q\in M$ and $U\in \cal{N}_G$.
		The following are equivalent.
		\begin{enumerate}
			\item
			$(p, q) \in \onx{U}$ computed in $\S(G)$
			\item
			$(p, q) \in \onx{U}$ computed in $M$.
		\end{enumerate}
	\end{prop}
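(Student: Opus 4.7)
The plan is to use the MHP structure on both $\S(G)$ and $M$ (guaranteed by \Cref{Fact:MGandSGareMPH}) so that \Cref{Defin:BasicEntourageOnMHP} makes sense in both, and to exploit the retraction onto $M$ coming from an idempotent in $M$ to pass between open sets in the two spaces.

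The direction (2) $\Rightarrow$ (1) is essentially formal. Suppose $(p, q) \in \widehat{U}$ computed in $M$, and let $A \ni_{op} p$ in $\S(G)$ and $U_0 \in \cal{N}_G$ with $U \ll U_0$. Then $A \cap M$ is open in $M$ and contains $p$, so the hypothesis yields $q \in \overline{U_0(A \cap M)}^M \subseteq \overline{U_0 A}^{\S(G)}$ (using $M$ closed in $\S(G)$). Hence $(p, q) \in \widehat{U}$ computed in $\S(G)$.

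For (1) $\Rightarrow$ (2), the key is to lift $M$-open neighborhoods of $p$ to $\S(G)$-open neighborhoods via a $G$-equivariant retraction onto $M$. By \Cref{Fact:SGSemigroup}(4), fix an idempotent $u \in M$; then $\hat\rho_u \colon \S(G) \to M$ is a continuous $G$-map with $\hat\rho_u(r) = ru = r$ for every $r \in M$, and in particular $\hat\rho_u(\S(G)) = M$. Given $A \ni_{op} p$ in $M$ and $U_0 \in \cal{N}_G$ with $U \ll U_0$, set $B := \hat\rho_u^{-1}(A)$. Then $B$ is open in $\S(G)$, contains $p$ (since $\hat\rho_u(p) = p \in A$), and satisfies $\hat\rho_u(B) = A$ (because $A \subseteq M = \hat\rho_u(\S(G))$). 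Applying the hypothesis in $\S(G)$ gives $q \in \overline{U_0 B}^{\S(G)}$, and applying the continuous $G$-map $\hat\rho_u$ yields
\[
q = \hat\rho_u(q) \in \hat\rho_u\!\left(\overline{U_0 B}^{\S(G)}\right) \subseteq \overline{\hat\rho_u(U_0 B)}^{M} = \overline{U_0 \hat\rho_u(B)}^{M} = \overline{U_0 A}^{M},
\]
where we used that $\hat\rho_u$ commutes with the $G$-action. This shows $(p, q) \in \widehat{U}$ computed in $M$.

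The only substantive point is setting up the retraction and verifying that the $G$-equivariance together with continuity is enough to push the closure through; once one has $\hat\rho_u$ in hand, everything reduces to the basic semigroup facts already listed in \Cref{Fact:SGSemigroup}, so I do not expect any real obstacle beyond being careful about which closure is being taken in which space.
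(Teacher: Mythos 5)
Your proof is correct and follows essentially the same route as the paper: the easy direction by restricting open sets to $M$, and the converse via the idempotent retraction $\hat\rho_u$. The only cosmetic difference is that you push the closure forward through $\hat\rho_u$, whereas the paper uses the pullback inclusion $\overline{U_0\,\hat\rho_u^{-1}(B)} \subseteq \hat\rho_u^{-1}\left(\overline{U_0 B}\right)$ and then applies the retraction property; these are the same idea.
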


	\begin{proof}
		First suppose item $2$ holds.
		If $B\ni p$ is an open subset of $\S(G)$, then $B\cap M$ is relatively open in $M$, so for any $U_0 \in \cal{N}_{G}$ with $U\ll U_0$, we have $q\in \overline{U_0(B\cap M)}\subseteq \overline{U_0 B}$, so item $1$ holds.

		For the other direction, suppose item $1$ holds. Fixing an idempotent $u\in M$, we obtain a continuous, $G$-equivariant retraction $\hat{\rho}_u\colon \S(G)\to M$. If $B\subseteq M$ is a neighborhood of $p$ in $M$, then $\hat{\rho}_u^{-1}(B)$ is a neighborhood of $p$ in $\S(G)$.
		So we have for any $U_0\in \cal{N}_G$ with $U\ll U_0$ that $q\in \overline{U_0 \hat{\rho}_u^{-1}(B)} \subseteq \hat{\rho}_u^{-1}\left(\overline{U_0B}\right)$.
		Because $\hat{\rho}_u$ is a retraction, this implies that $q\in \overline{U_0 B}$ as desired.
	\end{proof}
	\vspace{0 mm}

	\begin{cor}
		\label{Cor:UnifMGSame}
		Whenever $M, N\subseteq \S(G)$ are minimal $G$-flows and $\phi\colon M\to N$ is a $G$-map, then in fact $\phi$ is a  $\mathcal{U}_{\M(G)}$-uniform isomorphism.
	\end{cor}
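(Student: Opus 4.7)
The plan is to extract an abstract lemma and then apply it twice. The abstract statement I would prove first is: whenever $\psi \colon X \to Y$ is a continuous $G$-map between MHP $G$-flows, $\psi$ is uniformly continuous with respect to $\cal{U}_X$ and $\cal{U}_Y$. Once this is proved, \Cref{Cor:UnifMGSame} follows immediately: by \Cref{Fact:MGandSGareMPH} both $M$ and $N$ are MHP, and by item (5) of \Cref{Fact:SGSemigroup}, the $G$-map $\phi$ is automatically a $G$-flow isomorphism, so both $\phi$ and $\phi^{-1}$ are continuous $G$-maps between MHP flows, hence uniformly continuous by the lemma.

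The proof of the abstract lemma is a direct unpacking of \Cref{Defin:BasicEntourageOnMHP}. Fix a basic entourage $\onx{U}$ of $\cal{U}_Y$ with $U \in \cal{N}_G$; I claim $(\psi \times \psi)(\onx{U}_X) \subseteq \onx{U}_Y$, where $\onx{U}_X$ and $\onx{U}_Y$ denote the entourages computed in $X$ and $Y$ respectively. Given $(x_1, x_2) \in \onx{U}_X$, pick any $A \ni_{op} \psi(x_1)$ in $Y$ and any $U_0 \in \cal{N}_G$ with $U \ll U_0$. By continuity, $\psi^{-1}(A) \ni_{op} x_1$, so the hypothesis $(x_1, x_2) \in \onx{U}_X$ gives $x_2 \in \overline{U_0 \psi^{-1}(A)}$ in $X$. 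Now $\psi$ is continuous and $G$-equivariant, so
\[
\psi(x_2) \in \psi\bigl(\overline{U_0 \psi^{-1}(A)}\bigr) \subseteq \overline{\psi(U_0 \psi^{-1}(A))} = \overline{U_0\,\psi(\psi^{-1}(A))} \subseteq \overline{U_0 A}
\]
in $Y$. Since $A$ and $U_0$ were arbitrary, $(\psi(x_1), \psi(x_2)) \in \onx{U}_Y$, which is exactly uniform continuity.

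Two minor points to check in the write-up. First, I want to be explicit that the entourages $\onx{U}_M$ and $\onx{U}_N$ referenced in the corollary's ``$\cal{U}_{\M(G)}$'' notation can be computed either intrinsically via \Cref{Defin:BasicEntourageOnMHP} or by restricting $\cal{U}_{\S(G)}$ to $M$ or $N$; these agree by \Cref{Prop:UnifMG}. This justifies using the intrinsic MHP description in the proof of the lemma. Second, I want to note explicitly that since $\phi^{-1}$ exists as a homeomorphism (from \Cref{Fact:SGSemigroup}(5)) and is itself a $G$-map, the lemma applied to $\phi^{-1}$ completes the argument.

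I do not expect any real obstacle here: the whole proof is a one-paragraph verification once one sees that continuity plus $G$-equivariance automatically transport the sets of the form $\overline{U_0 A}$ in the right direction. The only place that could confuse the reader is the interplay between closure in $X$ and closure in $Y$; I would emphasize that we use continuity of $\psi$ to push the $X$-closure into the $Y$-closure, and equivariance to rewrite $\psi(U_0 \psi^{-1}(A))$ as $U_0 \psi(\psi^{-1}(A)) \subseteq U_0 A$.
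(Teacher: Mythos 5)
Your proof is correct and follows essentially the same route as the paper: the paper's one-line proof simply observes that $\phi$ is a $G$-flow isomorphism and that the entourages $\onx{U}$ are defined intrinsically from the topology and the action, which is exactly the computation you carry out. Your version is slightly more general (any continuous $G$-map between MHP flows is $\mathcal{U}$-uniformly continuous, with MHP not even needed for the verification), but the content is the same.
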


	\begin{proof}
		This follows from the fact that $\phi$ is a $G$-isomorphism (\Cref{Fact:SGSemigroup}).
	\end{proof}

	\subsubsection{Entourages via pseudo-metrics}

	The third definition is inspired by \cite{BYMT}. There, the authors start with a Polish group $G$ equipped with a compatible, bounded, right-invariant metric and endow $\S(G)$ with a topo-metric structure by considering the Lipschitz functions on $G$. Rather than directly creating a topo-uniform structure on $\S(G)$ as in the previous definition, we can instead create a family of ``topo-pseudo-metrics."
	\vspace{2 mm}

	\begin{defin}
		\label{Def:LipschitzPseudo}
		Suppose $d$ is a continuous, diameter $1$, right-invariant pseudo-metric on $G$. We define the pseudo-metric $\partial_d$ on $\S(G)$ by setting
		$$\partial_d(p, q) \coloneqq \sup\{|f(p)-f(q)|: f\in \lip(d)\}.$$
	\end{defin}
	\vspace{2 mm}

\noindent
	We can nicely characterize $\partial_d$ by viewing $\S(G)$ as the space of near ultrafilters on $G$. The next proposition is adapted from \cite{ZucMHP}.
	\vspace{2 mm}

	\begin{prop}
		\label{Prop:MHPPseudo}
		Suppose $p, q\in \S(G)$ and $c\geq 0$. Then the following are equivalent:
		\begin{enumerate}
			\item
			$\partial_d(p, q) \leq c$
			\item
			For any open $B\subseteq \S(G)$ with $p\in B$ and any $\epsilon > 0$, we have $q \in \overline{d(c+\epsilon)\cdot B}$.
			\item
			For any $A\subseteq G$ with $A\in p$ and any $\epsilon > 0$, we have $d(c+\epsilon)\cdot A\in q$.
		\end{enumerate}
	\end{prop}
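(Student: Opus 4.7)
The plan is to prove the equivalences in the pattern $(1)\Leftrightarrow(3)\Leftrightarrow(2)$, since (3) bridges naturally between the functional formulation of (1) and the geometric one of (2). Throughout I will freely use the characterizations from \Cref{Fact:NUltBasics}: $A\in p$ iff $p\in C_A$, a neighborhood basis at $p$ is given by the sets $C_{UA}$ with $A\in p$ and $U\in\cal{N}_G$, $\hatf(p)=x$ iff $f^{-1}(B)\in p$ for every open $B\ni x$, and near ultrafilters are closed upward (a standard maximality argument). The only ingredient beyond these is the submultiplicative inclusion $d(s)\cdot d(t)\subseteq d(s+t)$, immediate from right-invariance of $d$.

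For $(1)\Rightarrow(3)$, given $A\in p$ and $\epsilon>0$, consider the $d$-distance function $f(g)\coloneqq d(g,A)$. Since $d$ has diameter $1$ and is right-invariant, $f$ lies in $\lip(d)\subseteq\mathrm{RUC}_b(G)$; as $f\equiv 0$ on $A\in p$, we get $\hatf(p)=0$, so (1) yields $\hatf(q)\leq c$, whence $f^{-1}([0,c+\epsilon))\in q$. The key identity $f^{-1}([0,c+\epsilon))=d(c+\epsilon)A$ (again by right-invariance) then gives (3). Conversely, for $(3)\Rightarrow(1)$, fix $f\in\lip(d)$ and set $\alpha\coloneqq\hatf(p)$, $\beta\coloneqq\hatf(q)$. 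For each $\epsilon>0$ the set $A\coloneqq f^{-1}((\alpha-\epsilon,\alpha+\epsilon))$ belongs to $p$, so (3) gives $d(c+\epsilon)A\in q$, i.e., $q\in C_{d(c+\epsilon)A}$. A short calculation using the Lipschitz condition shows $f(d(c+\epsilon)A)\subseteq (\alpha-c-2\epsilon,\alpha+c+2\epsilon)$, and continuity of $\hatf$ then forces $\beta\in[\alpha-c-2\epsilon,\alpha+c+2\epsilon]$; letting $\epsilon\to 0$ yields $|\alpha-\beta|\leq c$.

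For $(3)\Rightarrow(2)$: given open $B\ni p$ and $\epsilon>0$, pick a basic neighborhood $C_{UA}\subseteq B$ (so $UA\in p$); then (3) applied to $UA$ delivers $d(c+\epsilon)\cdot UA\in q$, and since $UA\subseteq B$ inside $\S(G)$ we obtain $q\in\overline{d(c+\epsilon)B}$. For $(2)\Rightarrow(3)$: given $A\in p$ and $\epsilon>0$, choose $U\in\cal{N}_G$ with $U\subseteq d(\epsilon/2)$ and apply (2) to $B\coloneqq\interior(C_{UA})\ni p$ with parameter $\epsilon/2$. Using continuity of each left-multiplication $\lambda_g$ on $\S(G)$, one pushes multiplication inside closures to get $g\cdot C_{UA}\subseteq C_{gUA}$ for each $g$; combining with \Cref{Fact:NUltBasics}(4) and $d(c+\epsilon/2)\cdot U\subseteq d(c+\epsilon)$, this gives $\overline{d(c+\epsilon/2)B}\subseteq C_{d(c+\epsilon)A}$, hence $d(c+\epsilon)A\in q$. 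I expect this last implication to be the trickiest step, since it requires juggling closures computed in $\S(G)$ of subsets of $G$ versus of subsets of $\S(G)$, and the correct insertion of $\epsilon/2$-factors to translate between the basic-neighborhood language in $\S(G)$ and the purely near-ultrafilter statement of (3).
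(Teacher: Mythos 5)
Your proposal is correct; every implication checks out. The decomposition differs from the paper's: the paper closes a single contrapositive cycle $(\neg 1\Rightarrow\neg 2)$, $(\neg 2\Rightarrow\neg 3)$, $(\neg 3\Rightarrow\neg 1)$ in three steps, whereas you prove the four implications $1\Leftrightarrow 3$ and $3\Leftrightarrow 2$. Your $(1)\Rightarrow(3)$ is essentially the paper's $(\neg 3\Rightarrow\neg 1)$ run forwards, with the same witness $f(g)=d(g,A)$ and the same identity $f^{-1}([0,c+\epsilon))=d(c+\epsilon)A$. Where you genuinely diverge is in how you connect $(1)$ and $(2)$: the paper goes directly from $\neg(1)$ to $\neg(2)$ by exploiting the orbit-Lipschitz property of the extension $\hatf$ (namely $|\hatf(gp_0)-\hatf(p_0)|\leq d(g,1_G)$), while you route everything through $(3)$, using the plain Lipschitz estimate on $G$ for $(3)\Rightarrow(1)$ and the submultiplicativity $d(s)\cdot d(t)\subseteq d(s+t)$ together with $\lambda_g$-continuity for $(2)\Rightarrow(3)$. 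The paper's $(\neg 2\Rightarrow\neg 3)$ is also slicker than your $(3)\Leftrightarrow(2)$ pair: it just sets $A=B\cap G$ and invokes \Cref{Fact:NUltBasics}(4) to get $\overline{d(c+\epsilon)\cdot B}=C_{d(c+\epsilon)\cdot A}$ in one line, avoiding your $\epsilon/2$-bookkeeping. The trade-off is that your version makes $(3)$ the hub and keeps all the analysis at the level of near ultrafilters and subsets of $G$, at the cost of one extra implication and slightly more careful closure manipulations; both arguments use the same toolbox (\Cref{Fact:NUltBasics}, upward closure of near ultrafilters, Lipschitz extensions) and are comparable in length.
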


	\begin{proof}
		As a preliminary remark, note that if $f\in \lip(d)$ is continuously extended to $\hatf\colon \S(G)\to [0,1]$, then the continuous extension is \emph{orbit $d$-Lipschitz}, i.e.\ for any $g\in G$ and $p\in \S(G)$, we have $|\hatf(gp)-\hatf(p)|\leq d(g, 1_G)$.

		$(\neg 1\Rightarrow \neg 2)$ Suppose that item $1$ fails, witnessed by some $f\in \lip(d)$. Letting $\hatf$ denote the continuous extension, suppose for some $\epsilon > 0$ that $|\hatf(p)-\hatf(q)|> c+2\epsilon$. Find $B\ni_{op} p$ so that $|\hatf(p)-\hatf(p_0)|< \epsilon$ whenever $p_0\in B$. Then for $p_0\in B$ and $g\in d(c+\epsilon)$, we have $|\hatf(p) - \hatf(gp_0)|< c+2\epsilon$. So if $p_1\in \overline{d(c+\epsilon)\cdot B}$, we have $|\hatf(p) - \hatf(p_1)|\leq c+2\epsilon$. It follows that $q\not\in \overline{d(c+\epsilon)\cdot B}$.

		$(\neg 2\Rightarrow \neg 3)$ Suppose that item $2$ fails, witnessed by some neighborhood $B$ of $p$ and some $\epsilon > 0$. Set $A = B\cap G$. Then $\overline{B} = C_A$, and by \Cref{Fact:NUltBasics}, we have $\overline{d(c+\epsilon)\cdot B} = \overline{d(c+\epsilon)\cdot C_A} = C_{d(c+\epsilon)\cdot A}$. So $d(c+\epsilon)\cdot A\not\in q$.

		$(\neg 3\Rightarrow \neg 1)$ Suppose that item $3$ fails, witnessed by some $A\in p$ and $\epsilon > 0$. Define $f\colon G\to [0,1]$ via $f(g) = d(g, A)$. Then $f\in \lip(d)$, and upon continuously extending to $\S(G)$, we have $\hatf(p) = 0$. Towards a contradiction, suppose that $\hatf(q)\leq c$. Then by \Cref{Fact:NUltBasics} we would have $f^{-1}([0, c+\epsilon])\in q$. However, $f^{-1}([0, c+\epsilon]) = d(c+\epsilon)\cdot A\not\in q$.
	\end{proof}
	\vspace{0 mm}

	\begin{cor}
		\label{Cor:OrbitLipschitz}
		For any $p\in \S(G)$ and $g, h\in G$, we have $\partial_d(gp, hp)\leq d(g,h)$.
	\end{cor}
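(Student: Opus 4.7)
The plan is to exploit the ``orbit $d$-Lipschitz'' property of continuous extensions already alluded to in the preamble of the proof of \Cref{Prop:MHPPseudo}. That is, for any $f \in \lip(d)$, letting $\hatf \colon \S(G) \to [0,1]$ denote its continuous extension, I would first establish
\[
|\hatf(kp) - \hatf(p)| \leq d(k, 1_G) \quad \text{for all } k \in G, \, p \in \S(G).
\]
This follows by density: pick a net $(g_i) \subseteq G$ with $g_i \to p$, noting that $kg_i \to kp$ since left multiplication by $k$ is continuous on $\S(G)$ (\Cref{Fact:SGSemigroup}). Since $f$ is $d$-Lipschitz and $d$ is right-invariant, $|f(kg_i) - f(g_i)| \leq d(kg_i, g_i) = d(k, 1_G)$. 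Passing to the limit using continuity of $\hatf$ gives the claimed inequality.

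Next, given $g, h \in G$ and $p \in \S(G)$, apply this to $k = gh^{-1}$ and the point $hp$:
\[
|\hatf(gp) - \hatf(hp)| = |\hatf((gh^{-1}) \cdot hp) - \hatf(hp)| \leq d(gh^{-1}, 1_G) = d(g, h),
\]
where the last equality is right-invariance of $d$. Taking the supremum over $f \in \lip(d)$ gives $\partial_d(gp, hp) \leq d(g,h)$, as desired.

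As an alternative, one could argue via characterization (3) of \Cref{Prop:MHPPseudo}: fix $A \in gp$ and $\epsilon > 0$, and show $d(d(g,h)+\epsilon) \cdot A \in hp$, i.e., $h^{-1} d(d(g,h)+\epsilon) \cdot A \in p$. Setting $B = g^{-1} A \in p$ and observing that $hg^{-1} \in d(d(g,h)+\epsilon)$ (since $d(hg^{-1}, 1_G) = d(g,h)$ by right-invariance), one verifies $B \subseteq h^{-1} d(d(g,h)+\epsilon) \cdot A$, so upward-closure of near ultrafilters concludes.

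There is really no main obstacle here: the corollary is a one-step consequence of right-invariance of $d$, packaged through either the Lipschitz functional description or the near-ultrafilter description of $\partial_d$. The only subtlety worth checking is the continuity of left multiplication by $k$ on $\S(G)$ used in the limit argument, which is recorded in \Cref{Fact:SGSemigroup}(2).
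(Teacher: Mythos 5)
Your proof is correct, and your ``alternative'' argument via characterization (3) of \Cref{Prop:MHPPseudo} is essentially the paper's own proof, which simply notes that $hp=(hg^{-1})\cdot gp\in d(d(g,h)+\epsilon)\cdot gp$ for every $\epsilon>0$ and applies \Cref{Prop:MHPPseudo}. Your primary route, verifying the orbit $d$-Lipschitz property of the extensions $\hatf$ by a net argument and then taking the supremum over $f\in\lip(d)$, is an equally valid direct computation from \Cref{Def:LipschitzPseudo} (it is in fact the preliminary remark recorded at the start of the proof of \Cref{Prop:MHPPseudo}), so no gap either way.
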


	\begin{proof}
		We note that $hp = (hg^{-1})gp \in d( d(g, h) + \epsilon) gp$ for each $\epsilon >0$ and apply \Cref{Prop:MHPPseudo}.
	\end{proof}
	\vspace{2 mm}

	Item $2$ of \Cref{Prop:MHPPseudo} in fact defines a pseudo-metric on any MHP flow (one needs to allow $\partial_d$ to take the value $+\infty$ for MHP flows which are not topologically transitive). In particular, $\partial_d$ can be defined on $\M(G)$ in this way. Much as in \Cref{Prop:UnifMG} and \Cref{Cor:UnifMGSame}, we have that whenever $M\subseteq \S(G)$ is a minimal subflow and $p, q\in M$, the value of $\partial_d(p, q)$ is the same computed in $\S(G)$ or computed in $M$, and any $G$-map between two minimal subflows of $\S(G)$ is a $\partial_d$-isometry.

	Recall that if $\tau$ is a compact topology on $X$ and $\partial$ is a pseudo-metric on $X$, we say that $\partial$ is \emph{$\tau$-lower-semi-continuous}, or $\tau$-lsc, if for every $c\geq 0$, we have $\{(x, y): \partial(x, y)\leq c\}\subseteq X\times X$ is $(\tau\times\tau)$-closed. This is the analog for pseudo-metrics of item $1$ of \Cref{Defin:TopoUnif}.

	For now, we let $\tau$ denote the compact topology on $\S(G)$.
	As a rule of thumb, topological vocabulary will refer to $\tau$ unless specifically indicated otherwise.
	\vspace{2 mm}

	\begin{prop}\mbox{}
		\label{Prop:TauLSC}
		\vspace{-2 mm}

		\begin{enumerate}
			\item
			Each of the pseudo-metrics $\partial_d$ is $\tau$-lsc.
			\item
			If $p\neq q\in \S(G)$, then there is some continuous, diameter 1, right invariant pseudo-metric $d$ on $G$ with $\partial_d(p, q)> 0$.
		\end{enumerate}
	\end{prop}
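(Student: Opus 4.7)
The plan is to handle the two items separately, both via the correspondence between continuous extensions of $\mathrm{RUC}_b(G)$-functions and the Gelfand picture of $\S(G)$.

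For item $(1)$, I would rewrite the sublevel set explicitly. For each $f\in\lip(d)$, the function $f$ is bounded and uniformly continuous with respect to the right uniformity (because $d$ is continuous and right-invariant and $f$ is $1$-Lipschitz for $d$), so $f\in\mathrm{RUC}_b(G)$ and continuously extends to $\hatf\colon \S(G)\to[0,1]$ by \Cref{Fact:SGUniversal}. Under the Gelfand identification, $p(f)=\hatf(p)$, so $\partial_d(p,q)\le c$ is equivalent to $|\hatf(p)-\hatf(q)|\le c$ holding for every $f\in\lip(d)$. Each map $(p,q)\mapsto|\hatf(p)-\hatf(q)|$ is $\tau$-continuous, so each of these sets is $(\tau\times\tau)$-closed, and the intersection over $f\in\lip(d)$ is the desired closed sublevel set.

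For item $(2)$, the strategy is to build a pseudo-metric tailored to a single set separating $p$ from $q$. Start by picking, via the near ultrafilter description and possibly swapping $p$ and $q$, a set $A\subseteq G$ with $A\in p$ and $A\notin q$. By \Cref{Fact:NUltBasics}(1) there is $U\in\cal{N}_G$ with $UA\notin q$. Applying Birkhoff-Kakutani (\Cref{Fact:BK}) to the family $\{U\}$ produces a continuous, bounded, right-invariant pseudo-metric whose open $\epsilon_0$-ball at the identity is contained in $U$; replacing this pseudo-metric by its pointwise minimum with $1$ (which preserves continuity, right-invariance, and the triangle inequality) yields a continuous, right-invariant, diameter-$1$ pseudo-metric $d$ still satisfying $d(\epsilon_0)\subseteq U$ for some $\epsilon_0\in (0,1)$.

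I would then derive a contradiction from the assumption $\partial_d(p,q)=0$. By \Cref{Prop:MHPPseudo}(3) applied with $c=0$ and $\epsilon=\epsilon_0$, the set $d(\epsilon_0)\cdot A$ belongs to $q$. Since $d(\epsilon_0)\cdot A\subseteq UA$, and near ultrafilters are upward closed — if $B\in q$ and $B\subseteq C$, then $q\cup\{C\}$ has NFIP by monotonicity of $B\mapsto UB$, so by maximality $C\in q$ — we conclude $UA\in q$, contradicting $UA\notin q$. The only step that needs a touch of care is this passage from $d(\epsilon_0)A\in q$ to $UA\in q$; everything else is standard once Birkhoff-Kakutani has been tuned to the separating neighborhood $U$.
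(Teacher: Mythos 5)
Your proposal is correct and follows essentially the same route as the paper: item (1) is the observation that the sublevel set is an intersection of $(\tau\times\tau)$-closed sets determined by the continuous extensions $\hatf$ for $f\in\lip(d)$ (the paper phrases this with nets, which is the same argument), and item (2) picks $A\in p\setminus q$, finds $U$ with $UA\notin q$, tunes a Birkhoff--Kakutani pseudo-metric to $U$, and invokes \Cref{Prop:MHPPseudo} exactly as the paper does, with the upward-closure step you flag being implicit there as well. The only cosmetic difference is that the paper normalizes so that $d(1)\subseteq U$ and concludes the stronger bound $\partial_d(p,q)\geq 1$, whereas you keep an arbitrary $\epsilon_0$ and conclude $\partial_d(p,q)>0$, which suffices for the statement.
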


	\begin{proof}
		For $1$, suppose $(p_i)_{i\in I}$ and $(q_i)_{i\in I}$ are nets from $\S(G)$ with $p_i\to p$ and $q_i\to q$, and suppose $\partial_d(p_i, q_i)\leq c$.
		If $f\in \lip(d)$, then $\hatf(p_i)\to \hatf(p)$ and $\hatf(q_i)\to \hatf(q)$, so $|\hatf(p)-\hatf(q)|\leq c$, and hence $\partial_d(p, q)\leq c$.

		For $2$, suppose $A\subseteq G$ with $A\in p$ and $A\not\in q$.
		So for some $U\in \cal{N}_G$ we have $UA\not\in q$.
		Find a pseudo-metric $d$ on $G$ with $d(1)\subseteq U$.
		By \Cref{Prop:MHPPseudo}, it follows that $\partial_d(p, q)\geq 1$.
	\end{proof}
	\vspace{2 mm}

	Recall that if $X$ is a compact space and $\partial$ is a pseudo-metric on $X$, we say that $\partial$ is \emph{adequate} if for any $\epsilon > 0$ and any open $A\subseteq X$, we have that $B_{\partial}(A, \epsilon) \coloneqq \{x\in X: \partial(x, A)< \epsilon\}$ is open.
	\vspace{2 mm}

	\begin{prop}
		\label{Prop:Adequacy}
		On $\S(G)$, each of the pseudo-metrics $\partial_d$ is adequate.
	\end{prop}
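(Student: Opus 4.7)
The plan is to take $p \in \S(G)$ with $\partial_d(p, A) < \epsilon$ and explicitly construct an open neighborhood $N$ of $p$ contained in $B_{\partial_d}(A, \epsilon)$. I plan to combine three ingredients: \Cref{Prop:MHPPseudo} (which converts the pseudo-metric bound $\partial_d(p, q) \le c$ into a ``fuzzy closure'' statement), the MHP property of $\S(G)$ from \Cref{Fact:MGandSGareMPH} (to promote closure membership to interior membership), and $\tau$-lower-semi-continuity of $\partial_d$ from \Cref{Prop:TauLSC} (to propagate a distance bound along a limit).

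First I would fix $q \in A$ with $\partial_d(p, q) = c < \epsilon$, choose $\delta > 0$ with $c + 2\delta < \epsilon$, and by regularity of $\S(G)$ select an open $B$ with $q \in B \subseteq \overline{B} \subseteq A$. Since $\partial_d$ is symmetric, \Cref{Prop:MHPPseudo} applied with the roles of $p$ and $q$ swapped yields $p \in \overline{d(c+\delta) \cdot B}$. The set $d(c+\delta) \cdot B = \bigcup_{g \in d(c+\delta)} gB$ is open in $\S(G)$, so the MHP property of $\S(G)$ (\Cref{Fact:MGandSGareMPH}) applied with $U = d(\delta)$ gives
$$p \in N := \interior\bigl(\overline{d(\delta) \cdot d(c+\delta) \cdot B}\bigr) \subseteq \interior\bigl(\overline{d(c+2\delta) \cdot B}\bigr),$$
where the inclusion uses right-invariance of $d$ to see that $d(\delta) \cdot d(c+\delta) \subseteq d(c+2\delta)$.

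The remaining task is to show $N \subseteq B_{\partial_d}(A, \epsilon)$. Given $p' \in N$, I would fix a net $g_\alpha q_\alpha \to p'$ with $g_\alpha \in d(c+2\delta)$ and $q_\alpha \in B$, and by compactness pass to a subnet along which $q_\alpha \to q'' \in \overline{B} \subseteq A$. \Cref{Cor:OrbitLipschitz} gives $\partial_d(g_\alpha q_\alpha, q_\alpha) \le d(g_\alpha, 1_G) < c + 2\delta$ for every $\alpha$, and since $\{(x, y) : \partial_d(x, y) \le c + 2\delta\}$ is $(\tau \times \tau)$-closed by \Cref{Prop:TauLSC}(1), passing to the limit along $(g_\alpha q_\alpha, q_\alpha) \to (p', q'')$ yields $\partial_d(p', q'') \le c + 2\delta < \epsilon$. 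Hence $\partial_d(p', A) < \epsilon$, as required.

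The subtle step is the last one: since multiplication on $\S(G)$ is only right-topological, one cannot simply identify the limit of $g_\alpha q_\alpha$ with some $r \cdot q''$ by extracting a further subnet along which $g_\alpha$ converges in $\S(G)$. The workaround is to avoid taking a limit on the ``$g$'' side entirely, instead bounding $\partial_d$ pointwise along the orbit via \Cref{Cor:OrbitLipschitz} and transferring the inequality to the limit pair $(p', q'')$ by lower semi-continuity; this is exactly the property of $\partial_d$ that makes the pass-to-the-limit argument go through.
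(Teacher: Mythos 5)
Your argument is correct. The paper itself gives no proof of \Cref{Prop:Adequacy}, deferring entirely to Theorem 4.8 of the external reference \cite{ZucMHP}, so your write-up supplies a self-contained argument where the paper outsources one. Your route is essentially the pseudo-metric analogue of the adequacy argument the paper does spell out for the entourage version (the second claim in the proof of \Cref{Thm:XUisTopoUnif}): there the authors also pass from an open set to the interior of the closure of its $U$-thickening via MHP, and then must argue that a point of that interior is genuinely close to the original open set. Your version is cleaner in that you work with a single regular-open shrinking $B\subseteq\overline{B}\subseteq A$ rather than a regular-open decomposition of $A$, and the final step --- bounding $\partial_d(g_\alpha q_\alpha, q_\alpha)$ pointwise by \Cref{Cor:OrbitLipschitz} and transporting the bound to the limit pair by $\tau$-lower-semi-continuity --- is exactly the right substitute for the joint-continuity that the right-topological semigroup structure fails to provide; the paper's entourage-version proof handles the same difficulty by a slightly different compactness argument (intersecting $\overline{X\setminus\overline{WK}}$ with all the sets $VB$ for $B\ni_{op}x$). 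The only cosmetic point worth noting is that $d(\delta)$ need not literally belong to the fixed base $\cal{N}_G$, so the MHP property should be applied to some $U\in\cal{N}_G$ with $U\subseteq d(\delta)$ and the resulting inclusion of interiors of closures then gives what you want; this costs nothing.
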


	\begin{proof}
		See Theorem 4.8 of \cite{ZucMHP}, recalling that $\S(G)$ is an MHP $G$-flow. 
	\end{proof}

	\subsubsection{Equivalent uniformities}

	%%%%

	The following proposition links the entourages defined in \Cref{Defin:BasicEntourageOnMHP} and those given by the pseudo-metrics on $\S(G)$.
	\vspace{2 mm}

	\begin{prop}
		\label{Prop:PseudoAndUnif}
		Let $d$ be a continuous, diameter $1$, right-invariant pseudo-metric on $G$ and $c>0$. Then $ \setnew{(p, q)}{\partial_{d}(p, q) < c} \subseteq \onx{d(c)} \subseteq \setnew{(p, q)}{\partial_{d}(p, q) \le c}$. In particular, $\partial_{d}$ is $\mathcal{U}_{\S(G)}$-uniformly continuous.
	\end{prop}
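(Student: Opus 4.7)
The plan is to derive both inclusions from \Cref{Prop:MHPPseudo} and then obtain uniform continuity essentially for free from the triangle inequality. Two simple observations about the $d$-balls drive the argument: first, $d(c) \ll U_0$ forces $d(c) \subseteq U_0$; and second, $d(c) \ll d(c + \epsilon/2)$ for every $\epsilon > 0$, which rests on the right-invariant product bound $d(a) \cdot d(b) \subseteq d(a+b)$.

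For the first inclusion, I would argue directly. Suppose $\partial_d(p, q) < c$, and fix $A \ni_{op} p$ together with $U_0 \in \mathcal{N}_G$ with $d(c) \ll U_0$. Since $1_G$ lies in any $W \in \mathcal{N}_G$ witnessing $d(c) \ll U_0$, we have $d(c) \subseteq U_0$. Choose $\epsilon > 0$ so that $\partial_d(p, q) + \epsilon < c$; then \Cref{Prop:MHPPseudo} applied to $\partial_d(p, q)$ yields
\[
q \in \overline{d(\partial_d(p, q) + \epsilon) \cdot A} \subseteq \overline{d(c) \cdot A} \subseteq \overline{U_0 \cdot A},
\]
so $(p, q) \in \widehat{d(c)}$.

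For the second inclusion, suppose $(p, q) \in \widehat{d(c)}$. By \Cref{Prop:MHPPseudo}, it suffices to show that $q \in \overline{d(c + \epsilon) \cdot B}$ for every open $B \ni p$ and every $\epsilon > 0$. The product bound $d(a) \cdot d(b) \subseteq d(a + b)$ follows from right-invariance: for $g \in d(a)$ and $h \in d(b)$,
\[
d(1_G, gh) \leq d(1_G, h) + d(h, gh) = d(1_G, h) + d(1_G, g) < a + b.
\]
Taking $W = d(\epsilon/4)$ this yields $W \cdot d(c) \cdot W \subseteq d(c + \epsilon/2)$, so $d(c) \ll d(c + \epsilon/2)$. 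Applying the hypothesis with $A = B$ and $U_0 = d(c + \epsilon/2)$ gives $q \in \overline{d(c + \epsilon/2) \cdot B} \subseteq \overline{d(c + \epsilon) \cdot B}$, which is exactly what is needed.

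For the uniform continuity claim, I would fix $\epsilon > 0$ and use the triangle inequality $|\partial_d(p_1, q_1) - \partial_d(p_2, q_2)| \leq \partial_d(p_1, p_2) + \partial_d(q_1, q_2)$. By the second inclusion, if both $(p_1, p_2)$ and $(q_1, q_2)$ lie in $\widehat{d(\epsilon/3)}$, the right-hand side is at most $2\epsilon/3 < \epsilon$, so $\partial_d$ is uniformly continuous with respect to the product of $\mathcal{U}_{\S(G)}$ with itself. The main technical step throughout is the product bound $d(a) \cdot d(b) \subseteq d(a + b)$, which uses right-invariance of $d$ in an essential way and is what makes the relation $\ll$ on $\mathcal{N}_G$ align cleanly with the scale structure of the $d$-balls.
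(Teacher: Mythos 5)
Your proof is correct and follows essentially the same route as the paper's: both inclusions are read off from \Cref{Prop:MHPPseudo}, using that $d(c)\ll U_0$ forces $d(c)\subseteq U_0$ for the first and that $d(c)\ll d(c+\epsilon)$ (via right-invariance) for the second. You merely make explicit the details the paper leaves implicit, including the product bound $d(a)\cdot d(b)\subseteq d(a+b)$ and the quadrilateral inequality for the uniform continuity claim.
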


	\begin{proof}
		Suppose $\partial_{d}(p, q) < c$, and let $A \ni_{op} p$. Then $q \in \overline{d(c)A}$, and it follows that $(p, q) \in \onx{d(c)}$.

		For the other inclusion, suppose $(p, q) \in \onx{d(c)}$ and let $A \ni_{op} p $ and $\epsilon>0$. Noting that $d(c)\ll d(c+\epsilon)$, we have $q \in \overline{d(c +\epsilon)A}$, so $\partial_{d}(p, q) \le c$.
	\end{proof}
	\vspace{2 mm}

	%%%%

	As $d$ ranges over all continuous, diameter 1, right-invariant pseudo-metrics on $G$, we obtain a uniformity $\cal{U}_{met}$ on $\S(G)$ which is generated by the induced pseudo-metrics $\partial_d$.
	\vspace{2 mm}

	\begin{prop}
		\label{Prop:PseudoUEB}
		The uniformities $\cal{U}_{met}$, $\mathcal{U}_{\S(G)}$, and the UEB uniformity coincide.
	\end{prop}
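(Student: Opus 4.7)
I will prove $\cal U_{met} = \mathcal U_{\S(G)}$ and $\cal U_{met} = \text{UEB}$ separately. For the first, the sandwich $\{\partial_d < c\} \subseteq \onx{d(c)} \subseteq \{\partial_d \le c\}$ from \Cref{Prop:PseudoAndUnif}, combined with \Cref{Fact:BK} ensuring that the $d(c)$ form a neighborhood base at $1_G$, makes the two bases cofinally interleaved: $\onx{d(c)} \supseteq \{\partial_d < c\}$ shows $\mathcal U_{\S(G)} \subseteq \cal U_{met}$, while for any $c' < c$, $\{\partial_d < c\} \supseteq \onx{d(c')}$ shows the reverse. For the easy inclusion $\cal U_{met} \subseteq \text{UEB}$, observe that $\lip(d)$ is itself a UEB set: it is bounded by $1$, and $gh^{-1} \in d(\epsilon)$ forces $|f(g) - f(h)| \le d(g,h) < \epsilon$ uniformly over $f \in \lip(d)$ by right-invariance of $d$, so $[\lip(d), \epsilon/2] \subseteq \{\partial_d \le \epsilon/2\} \subseteq \{\partial_d < \epsilon\}$ exhibits a UEB entourage inside each basic $\cal U_{met}$-entourage.

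The main obstacle is the reverse inclusion $\text{UEB} \subseteq \cal U_{met}$: given a UEB set $H$ with $M \coloneqq \sup\{\|f\| : f \in H\}$, I must construct a single continuous right-invariant pseudo-metric controlling every $f \in H$ simultaneously. The key trick is to close $H$ under the \emph{left} regular representation: for $g \in G$ and $f \in H$ set $L_g f(h) \coloneqq f(hg)$, and let $\tilde H \coloneqq \{L_g f : g \in G,\, f \in H\}$. Since $(h_1 g)(h_2 g)^{-1} = h_1 h_2^{-1}$, the family $\tilde H$ is still UEB with the same modulus as $H$; crucially, the analogous closure under the right action $(f\cdot g)(h) = f(gh)$ of \Cref{SubSec:Samuel} would not preserve UEB, because there a conjugation by $g$ intervenes. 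Now define
\[
d^*(h_1, h_2) \coloneqq \sup_{\tilde f \in \tilde H} |\tilde f(h_1) - \tilde f(h_2)|,
\]
a pseudo-metric bounded by $2M$, right-invariant by the $L_g$-closure of $\tilde H$, and continuous because the UEB property of $\tilde H$ forces $d^*(1_G, g) \to 0$ as $g \to 1_G$. Every $f \in H \subseteq \tilde H$ is $1$-Lipschitz with respect to $d^*$ by inspection, so setting $d \coloneqq d^*/(2M)$ yields a diameter-$1$ continuous right-invariant pseudo-metric such that $(f + M)/(2M) \in \lip(d)$ for each real-valued $f \in H$. Since additive constants cancel in UEB entourages ($p(c) = q(c) = c$ for every $p, q \in \S(G)$), this gives $\{(p,q) : \partial_d(p, q) < \epsilon/(2M)\} \subseteq [H, \epsilon]$, completing the proof. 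The complex-valued case follows by decomposing functions in $H$ into real and imaginary parts and applying the above to each.
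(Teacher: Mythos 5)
Your proof is correct, but it routes one of the key inclusions differently from the paper. The paper establishes the cyclic chain $\cal{U}_{met}\subseteq \mathrm{UEB}\subseteq \cal{U}_{\S(G)}\subseteq \cal{U}_{met}$: the first inclusion is your observation that $\lip(d)$ is a UEB set; the middle one is proved by taking $U\ll \phi_H(\epsilon/4)$ and running a net argument inside $\overline{\phi_H(\epsilon/4)B_f}$ to show $\onx{U}\subseteq [H,\epsilon]$; the last is \Cref{Prop:PseudoAndUnif}. You instead prove $\cal{U}_{met}=\cal{U}_{\S(G)}$ directly from the sandwich of \Cref{Prop:PseudoAndUnif} (both directions, using that the sets $d(c)$ are cofinal in $\cal{N}_G$ and that $\onx{V}\subseteq\onx{U}$ when $V\subseteq U$), and then close the loop by attacking $\mathrm{UEB}\subseteq\cal{U}_{met}$ head-on: given a UEB set $H$, you saturate it under the left regular representation $L_g$, check that this preserves the modulus of equicontinuity (correctly noting that the right action $f\mapsto f\cdot g$ would introduce a conjugation and destroy it), and extract the right-invariant continuous pseudo-metric $d^*(h_1,h_2)=\sup_{\tilde f}|\tilde f(h_1)-\tilde f(h_2)|$ dominating every $f\in H$; after rescaling and translating (using that characters are unital, so additive constants pass through $p$ and $q$ and cancel), this puts a $\partial_d$-ball inside $[H,\epsilon]$. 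All the verifications you sketch (right-invariance via the $L_g$-saturation, continuity from equicontinuity at $1_G$ plus the triangle inequality, $1$-Lipschitzness of each $f\in H$, and the reduction of the complex case to real and imaginary parts) go through. What your approach buys is a self-contained comparison of the UEB and pseudo-metric uniformities that never touches the MHP entourages $\onx{U}$; what the paper's chain buys is economy --- three inclusions instead of four --- and a direct link between the UEB entourages and the identity-neighborhood entourages, which is the form used elsewhere in the paper. The explicit pseudo-metric you build is essentially the standard one associated to an equicontinuous family (cf.\ Pachl), so this leg of your argument is a known device rather than a gap.
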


	\begin{proof}
		For each continuous, diameter 1, right-invariant pseudo-metric $d$ on $G$, the space $\lip(d)\subseteq \mathrm{RUC}_b(G)$ is a UEB set, so $\setnew{(p, q)}{\partial_{d}(p, q) < c} = [\lip(d), c]$. Therefore $\cal{U}_{met}$ entourages are UEB entourages.

		Now, suppose $H\subseteq \mathrm{RUC}_b(G)$ is a UEB set and let $\epsilon>0$, with the objective of finding $U \in \cal{N}_G$ such that $\onx{U} \subseteq [H, \epsilon]$. Let $\phi_H\colon (0,1)\to \cal{N}_G$ describe the modulus of uniform equicontinuity. We remark that if $f\in H$ and $\hatf$ denotes the continuous extension to $\S(G)$, then if $g\in \phi_H(\delta)$ and $p\in \S(G)$, we have $|\hatf(p) - \hatf(gp)|\leq \delta$.

		Fix $U \in \cal{N}_G$ with $U \ll \phi_H(\epsilon/4)$. Suppose $p, q \in \S(G)$ are such that $(p,q) \in \onx{U}$. For each $f \in H$ let $B_{f} \ni_{op} p$ be such that $\lvert \hatf(p) -\hatf(p') \rvert < \epsilon/4$ for all $p' \in B_{f}$. For all $f \in H$ and each $U_0\in \cal{N}_G$ with $U\ll U_0$ it holds that $q \in \overline{U_0B_{f}}$, so in particular $q \in \overline{\phi_{H}(\epsilon/4)B_{f}}$. If $h_{i}q_{i} \in \phi_{H}(\epsilon/4)B_{f}$ is a net converging to $q$, we have $\lvert \hatf(p) -\hatf(h_{i}q_{i}) \rvert \le \lvert \hatf(p) - \hatf(q_{i}) \rvert + \lvert \hatf(q_{i}) - \hatf(h_{i}q_i) \rvert < \epsilon/2$. Therefore $\lvert \hatf(p) -\hatf(q) \rvert \le \epsilon/2$ and $(p, q) \in [H, \epsilon]$.
		We thus have that UEB entourages are $\cal{U}_{\S(G)}$ entourages. 

		The last direction, $\cal{U}_{\S(G)} \subseteq \cal{U}_{met}$, is taken care by \Cref{Prop:PseudoAndUnif}.
	\end{proof}

	\section{UEB groups}
	\label{Sec:UEB}
	In this section we define the class of UEB groups, based on the behavior of the UEB uniformity on $\M(G)$.
	In Section~\ref{Sec:EquivCAPUEB}, we will prove that a group is UEB  if and only if it is CAP. Before doing so, we collect and discuss results which make direct use of the UEB definition. We use the term ``UEB group" rather than ``CAP group" throughout this section to emphasize the methods involved in the proofs.
	\vspace{2 mm}

	\begin{defin}
		\label{Def:UEBGroup}
		A topological group $G$ is \emph{UEB} if the compact and UEB uniformities coincide on $\M(G)$.
	\end{defin}
	\vspace{-2 pt}

	\begin{exa}
		\label{Ex:PolishUEBGroups}
		If $G$ is a metrizable group, then the UEB uniformity on $\S(G)$ can be given by a single metric, namely $\partial_d$ for $d$ a compatible, right-invariant metric on $G$. In \cite{BYMT}, it is shown that if $\M(G)$ is metrizable, then this metric is a compatible metric for $\M(G)$. In other words, $G$ is UEB exactly when $\M(G)$ is metrizable.
	\end{exa}
	\vspace{2 mm}

	The work we did in Subsection~\ref{Subsec:UEB} will give us several ways to understand UEB groups. For instance, when working with the induced pseudo-metrics, we have the following simple proposition.
	\vspace{2 mm}

	\begin{prop}
		\label{Prop:CAPSimpleEquiv}
		For a topological group $G$, the following are equivalent.
		\begin{enumerate}
			\item
			$G$ is UEB.
			\item
			For any continuous, diameter $1$, right-invariant pseudo-metric $d$ on $G$, the pseudo-metric $\partial_d$ on $\M(G)$ is continuous.
			\item
			For some base $\cal{D}$ of continuous, diameter $1$, right-invariant pseudo-metrics on $G$ and any $d\in \cal{D}$, the pseudo-metric $\partial_d$ on $\M(G)$ is continuous.
		\end{enumerate}
	\end{prop}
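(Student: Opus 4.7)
The plan is to exploit the two parallel descriptions of the UEB uniformity on $\M(G)$ developed in Section~\ref{Sec:TopoUnif}: by \Cref{Prop:UnifMG} and \Cref{Cor:UnifMGSame}, the UEB uniformity on $\M(G)$ equals $\cal{U}_{\M(G)}$ with basic entourages $\onx{U}$ for $U \in \cal{N}_G$; and by \Cref{Prop:PseudoUEB}, it is also generated by the pseudo-metrics $\partial_d$ as $d$ ranges over continuous, diameter $1$, right-invariant pseudo-metrics on $G$. Since $(\M(G), \tau, \cal{U}_{\M(G)})$ is a topo-uniform space by \Cref{Thm:XUisTopoUnif}, the compact uniformity is always contained in the UEB uniformity on $\M(G)$, so $G$ being UEB reduces to showing that every basic UEB entourage $\onx{U}$ also contains an open neighborhood of the diagonal.

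With this setup, (1) $\Rightarrow$ (2) is immediate: each $\partial_d$ is, by construction, uniformly continuous with respect to the UEB uniformity (\Cref{Prop:PseudoUEB}), so if that uniformity agrees with the compact uniformity, then $\partial_d$ is continuous on $\M(G) \times \M(G)$. The implication (2) $\Rightarrow$ (3) is also trivial, since by \Cref{Fact:BK} together with normalization to diameter $1$, the collection of all continuous, diameter $1$, right-invariant pseudo-metrics on $G$ is itself a base.

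The only substantive direction is (3) $\Rightarrow$ (1). Fix a basic UEB entourage $\onx{U}$ on $\M(G)$ with $U \in \cal{N}_G$. Using that $\cal{D}$ is a base of pseudo-metrics, pick $d \in \cal{D}$ and $\epsilon > 0$ with $d(\epsilon) \subseteq U$. A short unwinding shows $\onx{d(\epsilon)} \subseteq \onx{U}$: if $U \ll U_0$ via some $W \in \cal{N}_G$, that is, $WUW \subseteq U_0$, then the same $W$ witnesses $d(\epsilon) \ll U_0$, so the defining condition of $\onx{d(\epsilon)}$ subsumes that of $\onx{U}$. Combined with $\{(p, q) : \partial_d(p, q) < \epsilon\} \subseteq \onx{d(\epsilon)}$ from \Cref{Prop:PseudoAndUnif}, this yields $\{(p,q) : \partial_d(p,q) < \epsilon\} \subseteq \onx{U}$. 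Under the hypothesis of (3), $\partial_d$ is continuous on $\M(G)$, so $\{(p,q) : \partial_d(p,q) < \epsilon\}$ is an open neighborhood of the diagonal, hence a compact entourage, and therefore $\onx{U}$ is as well.

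The only obstacle worth flagging is the direction-of-inclusion bookkeeping between the pseudo-metric entourages $\{\partial_d < c\}$ and the $\onx{V}$-entourages; once the implication $V \subseteq U \Rightarrow \onx{V} \subseteq \onx{U}$ is in hand, the argument is essentially formal.
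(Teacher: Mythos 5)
Your proposal is correct and follows essentially the same route as the paper: the easy implications are handled identically, and for $(3)\Rightarrow(1)$ both arguments rest on choosing $d\in\cal{D}$ and $\epsilon>0$ with $d(\epsilon)\subseteq U$ and chaining $\{(p,q):\partial_d(p,q)<\epsilon\}\subseteq\onx{d(\epsilon)}\subseteq\onx{U}$ via \Cref{Prop:PseudoAndUnif}. The only cosmetic difference is that the paper verifies the topologies agree pointwise and then invokes uniqueness of the compatible uniformity on a compact space, whereas you show directly that each $\onx{U}$ contains an open neighborhood of the diagonal; these amount to the same thing.
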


	\begin{proof}
		Clearly $1\Rightarrow 2\Rightarrow 3$. For $3\Rightarrow 1$, suppose we are given $\cal{D}$ as in item $3$. We first remark that it is sufficient to show that the UEB and compact \emph{topologies} coincide, as there is a unique compatible uniform structure on a compact Hausdorff space. So let $x\in \M(G)$, and suppose $\Theta\in \cal{U}_{\M(G)}$. In particular, we can find some $U\in \cal{N}_G$ with $x[\onx{U}]\subseteq x[\Theta]$. Since $\cal{D}$ is a base of pseudo-metrics on $G$, we can find $d\in \cal{D}$ and $c> 0$ with $d(c)\subseteq U$. Then by \Cref{Prop:PseudoAndUnif} we have $\{y: \partial_d(x, y)< c\} \subseteq x[\onx{d(c)}]\subseteq x[\onx{U}]\subseteq x[\Theta]$ as desired.
	\end{proof}
	\vspace{0 mm}

	\begin{cor}
		\label{Cor:AutGroupCAP}
		Suppose $G$ is UEB. Then $\aut(\M(G))$ is a compact group.
	\end{cor}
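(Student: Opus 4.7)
The plan is to exhibit $\aut(\M(G))$ as a closed, equicontinuous family of self-maps of $\M(G)$, from which compactness follows by an Arzelà--Ascoli/Tychonoff argument that the UEB hypothesis makes available.

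First, I would record the key rigidity of $\aut(\M(G))$: by \Cref{Fact:SGSemigroup}, every $\phi\in \aut(\M(G))$ arises as a right-multiplication map $\hat{\rho}_q$ for some $q\in \M(G)$, and the remarks following \Cref{Cor:OrbitLipschitz} and \Cref{Prop:TauLSC} show that every $G$-map between minimal subflows is a $\partial_d$-isometry for every continuous, diameter $1$, right-invariant pseudo-metric $d$ on $G$. Hence the full group $\aut(\M(G))$ acts by simultaneous $\partial_d$-isometries. Assuming $G$ is UEB, \Cref{Prop:CAPSimpleEquiv} together with \Cref{Prop:PseudoUEB} guarantees that this family of pseudo-metrics generates the (unique) uniformity compatible with the compact topology on $\M(G)$. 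Consequently $\aut(\M(G))$ is equicontinuous with respect to that uniformity.

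Next, I would embed $\aut(\M(G))\hookrightarrow \M(G)^{\M(G)}$ with the product topology and show that the image is closed. Suppose $\phi_i\to \phi$ pointwise along some net, with each $\phi_i\in \aut(\M(G))$. Then $\phi$ is $G$-equivariant as a pointwise limit of $G$-equivariant maps, and for every $d$ and every $p, q\in \M(G)$ we have $\partial_d(\phi(p),\phi(q)) = \lim_i \partial_d(\phi_i(p),\phi_i(q)) = \partial_d(p,q)$, using continuity of $\partial_d$ under UEB (\Cref{Prop:CAPSimpleEquiv}). The $\partial_d$'s separate points of $\M(G)$ (by restricting \Cref{Prop:TauLSC}(2) to $\M(G)$), so $\phi$ is injective; its image is a subflow of the minimal flow $\M(G)$, hence all of $\M(G)$, so $\phi$ is surjective; and the simultaneous isometry property yields uniform continuity with respect to the UEB uniformity, which coincides with the compact uniformity, so $\phi$ is continuous. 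A continuous bijection between compact Hausdorff spaces is a homeomorphism, whence $\phi\in \aut(\M(G))$.

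Tychonoff then gives that $\aut(\M(G))$ is compact in the pointwise topology. Because the family is equicontinuous, this topology agrees with the topology of uniform convergence, with respect to which $\aut(\M(G))$ is a topological group. The only real point of friction is verifying that a pointwise limit is still an automorphism: the UEB hypothesis is used exactly at the step where the $\partial_d$-isometry property must be promoted to continuity with respect to the compact topology, and it is this promotion that makes the argument go through.
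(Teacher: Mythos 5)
Your proof is correct and follows essentially the same route as the paper's: both hinge on the observation that every automorphism of $\M(G)$ is a simultaneous $\partial_d$-isometry and that the UEB hypothesis makes each $\partial_d$ continuous, then realize $\aut(\M(G))$ as a closed subset of a compact product. The only cosmetic difference is that the paper embeds $\aut(\M(G))$ into $\prod_d \mathrm{Iso}(\M(G)/\partial_d)$ and quotes compactness of the isometry group of a compact metric space, whereas you embed into $\M(G)^{\M(G)}$ and run the Arzel\`a--Ascoli argument by hand; your verification that a pointwise limit is again an automorphism is in fact more detailed than the paper's.
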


	\begin{proof}
		By the remark after \Cref{Cor:OrbitLipschitz}, every automorphism of $\M(G)$ is a $\partial_d$-isometry for each of the pseudometrics $\partial_d$. Since each $\partial_d$ is continuous, the space $\M(G)/\partial_d$ is a compact metric space, and any element of $\aut(\M(G))$ induces an isometry of $\M(G)/\partial_d$. Therefore, we can view $\aut(\M(G))$ as a subgroup of the compact group $\prod_d \mathrm{Iso}(\M(G)/\partial_d)$.

		To see that $\aut(\M(G))$ forms a closed subgroup, suppose $\alpha_i\in \aut(\M(G))$, and suppose $\alpha_i\to \alpha\in \prod_d \mathrm{Iso}(\M(G)/\partial)$. Considering the homeomorhpism $\M(G)\to \prod_d \M(G)/\partial_d$, we see that $\alpha$ is the homeomorphism of $\M(G)$ given by $\alpha(x) = \lim_i \alpha_i(x)$. It follows that $\alpha_i$ commutes with the $G$-action since each $\alpha_i$ does.
	\end{proof}
	\vspace{2 mm}

	By considering the entourages $\onx{U}$ for $U\in \cal{N}_G$, we obtain a bound on the complexity of $\M(G)$ as a topological space when $G$ is UEB. Recall that a poset $\langle \bb{P}, \leq\rangle$ is \emph{directed} if for any $p, q\in \bb{P}$, there is some $r\in \bb{P}$ with $p, q\leq r$. A subset $S\subseteq \bb{P}$ is \emph{cofinal} if for every $p\in \bb{P}$, there is $q\in S$ with $p\leq q$. If $\bb{P}$ and $\bb{Q}$ are directed posets, then a map $f\colon \bb{P}\to \bb{Q}$ is \emph{cofinal} if the image of every cofinal subset of $\bb{P}$ is cofinal in $\bb{Q}$. The map $f$ is \emph{monotone} if it respects the poset orders. The directed posets we will consider are $\cal{N}_G$, ordered by reverse inclusion, and $\mathrm{Nbd}(\Delta_{\M(G)})$, the poset of neighborhoods of the diagonal $\Delta_{\M(G)}\subseteq \M(G)\times \M(G)$. By considering the map $U\to \onx{U}$ for $U\in \cal{N}_G$, we obtain:
	\vspace{2 mm}

	\begin{prop}
		\label{Prop:CAPTukey}
		Suppose $G$ is UEB. Then there is a monotone cofinal map from $\cal{N}_G$ to $\mathrm{Nbd}(\Delta_{\M(G)})$.
	\end{prop}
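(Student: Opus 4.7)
The natural candidate is the map $F\colon \cal{N}_G \to \mathrm{Nbd}(\Delta_{\M(G)})$ defined by $F(U) = \onx{U}$, where both posets are ordered by reverse inclusion. The plan is to verify three things: that $F$ actually lands in $\mathrm{Nbd}(\Delta_{\M(G)})$, that $F$ is monotone, and that $F$ is cofinal. The first is immediate from the UEB hypothesis: each $\onx{U}$ trivially contains the diagonal, and under UEB the uniformity $\cal{U}_{\M(G)}$ coincides with the compact uniformity on $\M(G)$, whose entourages are precisely the neighborhoods of $\Delta_{\M(G)}$.

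Monotonicity is a routine chase of the definition. Suppose $V \subseteq U$ in $\cal{N}_G$. For any $W \in \cal{N}_G$, $WVW \subseteq WUW$, so $U \ll U_0$ implies $V \ll U_0$. Hence in the definition of $\onx{V}$ the quantifier $\forall U_0$ with $V \ll U_0$ ranges over at least as many $U_0$'s as the corresponding quantifier for $\onx{U}$, making the condition defining $\onx{V}$ logically stronger. Therefore $\onx{V} \subseteq \onx{U}$, which is the required inequality $F(U) \le F(V)$ in $\mathrm{Nbd}(\Delta_{\M(G)})$ with respect to reverse inclusion.

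Since $F$ is monotone, cofinality of $F$ reduces to cofinality of its image: every $N \in \mathrm{Nbd}(\Delta_{\M(G)})$ must contain some $\onx{U}$. By \Cref{Prop:PseudoUEB} together with \Cref{Cor:UnifMGSame}, the family $\{\onx{U}: U \in \cal{N}_G\}$ is a base for the UEB uniformity on $\M(G)$. The UEB hypothesis identifies this with the compact uniformity, whose entourages on a compact Hausdorff space are exactly the neighborhoods of the diagonal, and this yields precisely the cofinality statement. I do not expect a serious obstacle: the substantive content has already been absorbed into the identification of the UEB uniformity with $\cal{U}_{\M(G)}$ in Section~\ref{Sec:TopoUnif} and into the UEB hypothesis itself, so what remains is mostly bookkeeping with the definition of $\onx{U}$ and the standard correspondence between entourages of the compact uniformity and neighborhoods of the diagonal.
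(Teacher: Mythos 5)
Your proposal is correct and follows exactly the paper's intended argument: the paper proves this proposition simply by pointing to the map $U\mapsto\onx{U}$, and your verification of monotonicity (via the implication $V\subseteq U\Rightarrow\onx{V}\subseteq\onx{U}$) and of cofinality of the image (via the identification of the UEB and compact uniformities on $\M(G)$) is precisely the bookkeeping the authors leave to the reader.
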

	\vspace{2 mm}

	Monotone, cofinal maps are a particularly nice form of \emph{Tukey reduction} (where a Tukey reduction would not require that the map be monotone). We remark that when $G$ is metrizable, \Cref{Prop:CAPTukey} says exactly that $\M(G)$ is metrizable, by \cite{Snei}.
	More generally, $\M(G)$ is metrizable for any UEB group $G$ such that $\cal N_G$ has a cofinal subset of type $\omega^{\omega}$, by \cite{CO}.

	On the other hand, when $G$ is not UEB, one can show that $\M(G)$ is a somewhat complicated topological space. The key ingredient is the following characterization of UEB groups.
	\vspace{2 mm}

	\begin{prop}
		\label{Prop:BadOpensNotCAP}
		Suppose $G$ is a topological group. Then the following are equivalent.
		\begin{enumerate}
			\item
			$G$ is UEB.
			\item
			For any sequence $\{A_n: n< \omega\}$ of non-empty open subsets of $\M(G)$ and any $U\in \cal{N}_G$, we have that $\{UA_n: n< \omega\}$ is not pairwise disjoint.
		\end{enumerate}
	\end{prop}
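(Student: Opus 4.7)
The plan is to prove both implications by contraposition, using the MHP-uniformity entourages $\onx{V}$ from \Cref{Defin:BasicEntourageOnMHP} as the main gadget.

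For $(1)\Rightarrow (2)$, I will assume a bad family exists, namely nonempty open sets $\{A_n: n<\omega\}$ in $\M(G)$ and $U \in \cal{N}_G$ with $\{UA_n\}$ pairwise disjoint, and derive that $G$ is not UEB. The idea is to pick $V, W \in \cal{N}_G$ with $WVW \subseteq U$, so that $V \ll V_0$ holds for $V_0 := WVW \subseteq U$, and then choose $x_n \in A_n$ for each $n$. By countable compactness of $\M(G)$, the sequence $(x_n)$ has a cluster point $x$. If $G$ were UEB, the entourage $\onx{V}$ would be a neighborhood of the diagonal in $\M(G)\times \M(G)$, hence would contain $B\times B$ for some open $B \ni x$. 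Since $x$ is a cluster point, there exist $n\neq m$ with $x_n, x_m \in B$, so $(x_n, x_m) \in \onx{V}$. Unpacking the definition with $A = A_n$ and $V_0$ as above gives $x_m \in \overline{V_0 A_n} \subseteq \overline{UA_n}$, contradicting that the open set $UA_m$ is disjoint from $UA_n$ (and hence, since $UA_m$ is open, also from $\overline{UA_n}$).

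For $(2)\Rightarrow (1)$, the contrapositive assumes $G$ is not UEB and constructs a bad sequence. Since $(\M(G), \tau, \cal{U}_{\M(G)})$ is an adequate topo-uniform space by \Cref{Thm:XUisTopoUnif} and \Cref{Fact:MGandSGareMPH}, and the UEB uniformity is always at least as fine as the compact uniformity (item~\ref{itm:finerUnif} of \Cref{Defin:TopoUnif}), failure of UEB means the uniform topology strictly refines $\tau$. Then \Cref{Lem:ANWDBall} yields $x \in \M(G)$ and $V \in \cal{N}_G$ with $x[\onx{V}]$ $\tau$-nowhere dense. Fix $W \in \cal{N}_G$ with $W^2 \subseteq V$, and build nested open sets by recursion with $B_0 := \M(G)$: at step $n \geq 1$, nowhere-denseness of $x[\onx V]$ makes $B_{n-1}\setminus x[\onx V]$ nonempty, so pick $y_n$ there. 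From $(x, y_n) \notin \onx V$ there exist open $C_n \ni x$ and $V_0$ with $V \ll V_0$ such that $y_n \notin \overline{V_0 C_n}$; since $V \ll V_0$ forces $V \subseteq V_0$ and hence $W^2 \subseteq V_0$, also $y_n \notin \overline{W^2 C_n}$, so one can find open $D_n \ni y_n$ with $D_n \cap W^2 C_n = \emptyset$, equivalently $WC_n \cap WD_n = \emptyset$. Set $A_n := D_n \cap B_{n-1}$ and $B_n := C_n \cap B_{n-1}$; both are open, $y_n \in A_n$ and $x \in B_n$, and $WA_n \cap WB_n \subseteq WD_n\cap WC_n = \emptyset$.

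The nesting $A_m \subseteq B_{m-1} \subseteq \cdots \subseteq B_n$ for $n < m$ gives $WA_m \subseteq WB_n$, whence $WA_n \cap WA_m \subseteq WA_n \cap WB_n = \emptyset$, so $\{WA_n\}$ is pairwise disjoint, producing the required bad family. The part I expect to require the most care is the backward direction: the failure of UEB is only guaranteed to be witnessed at a single point $x$ by \Cref{Lem:ANWDBall}, so the induction must be arranged so that at every stage the new open set $B_n$ remains a neighborhood of that same $x$, allowing the nowhere-density of $x[\onx V]$ to be invoked again. Always shrinking the ``$x$-side'' $B_n$ inside the previous stage is precisely what converts a property which is only binary at each step into pairwise disjointness of the full family.
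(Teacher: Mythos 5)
Your argument is correct, but it takes a genuinely different route from the paper's. You work entirely with the entourages $\onx{V}$ of \Cref{Defin:BasicEntourageOnMHP} and the topo-uniform machinery: for $\neg(2)\Rightarrow\neg(1)$ a cluster-point argument showing a pairwise disjoint family $\{UA_n\}$ forces some $\onx{V}$ to fail to be a neighborhood of the diagonal, and for $\neg(1)\Rightarrow\neg(2)$ an appeal to \Cref{Lem:ANWDBall} to get a nowhere dense ball $x[\onx{V}]$ followed by a nested recursion that shrinks the ``$x$-side'' at each stage to upgrade binary separation to pairwise disjointness. The paper instead passes through the pseudo-metrics $\partial_d$: for one direction it picks $d$ with $d(1)\subseteq U$ and invokes \Cref{Prop:MHPPseudo} to get a $1$-separated infinite set in a compact space; for the other it uses non-compactness of the quotient $(\M(G)/\partial,\partial)$ to extract a $2c$-separated set and then \emph{defers the construction of the open sets $A_n$ to an external reference} (Theorem~5.3 of \cite{ZucMHP}). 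Your version has the virtue of being self-contained within the paper's own lemmas and of making the recursion explicit; the paper's version is shorter on the page and reuses the pseudo-metric formalism that drives \Cref{Thm:EquivalentInG}. One cosmetic point: in the first direction you set $V_0:=WVW$ and apply the definition of $\onx{V}$ with $U_0=V_0$, but \Cref{Defin:BasicEntourageOnMHP} quantifies over $U_0\in\cal{N}_G$ and $WVW$ need not lie in the chosen base; simply take $U_0=U$ itself (choosing $V\in\cal{N}_G$ with $V^3\subseteq U$ gives $V\ll U$), which yields $x_m\in\overline{UA_n}$ directly. Likewise your observation that $V\ll V_0$ implies $V\subseteq V_0$ (since $1_G$ lies in the witnessing $W'$) is exactly what is needed and is correct.
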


	\begin{proof}
		We proof the contrapositives. First assume that $G$ is not UEB. Find a continuous, diameter $1$, right-invariant pseudo-metric $d$ on $G$ so that $\partial_d\coloneqq \partial$ is not continuous. In particular, $(\M(G)/\partial, \partial)$ is strictly finer than the compact topology on $\M(G)/\partial$, so is not compact. Moving back up to $\M(G)$, we can find an infinite set $Y\subseteq \M(G)$ so that for some $c> 0$, we have $\partial(x, y)> 2c$ for every $x\neq y\in Y$. From here, we can find a sequence of $x_n\in Y$ and of $A_n\ni_{op} x_n$ so that $d(c/2)\cdot A_m\cap d(c/2)\cdot A_n\neq \emptyset$ whenever $m\neq n< \omega$. For more details, see Theorem 5.3 in \cite{ZucMHP}.

		For the other direction, suppose that there were non-empty open sets $A_n\subseteq \M(G)$ and $U\in \cal{N}_G$ with $\{UA_n: n< \omega\}$ pairwise disjoint. Find a continuous, diameter $1$, right-invariant pseudo-metric $d$ on $G$ with $d(1)\subseteq U$. Then if we pick a point $x_n\in A_n$ for each $n$, \Cref{Prop:MHPPseudo} tells us that $\partial(x_m, x_n)\geq 1$ whenever $m\neq n< \omega$. In particular, $\partial$ cannot be a continuous pseudo-metric on the compact space $\M(G)$.
	\end{proof}
	\vspace{0 mm}

	\begin{cor}
		\label{Cor:NotCAPEmbedBeta}
		If $G$ is not UEB, then $\M(G)$ embeds a copy of $\beta \omega$.
		In particular, a group $G$ with metrizable $\M(G)$ is CAP, regardless of metrizability of $G$.
	\end{cor}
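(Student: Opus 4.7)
The plan is to prove the contrapositive of the first assertion: assuming $G$ is not UEB, I will produce a topological embedding $\beta\omega \hookrightarrow \M(G)$. The starting point is \Cref{Prop:BadOpensNotCAP}, which supplies non-empty open sets $A_n \subseteq \M(G)$ and $U \in \cal{N}_G$ such that $\{UA_n : n < \omega\}$ is pairwise disjoint. I pick $x_n \in A_n$ for each $n$, and extend $n \mapsto x_n$ to a continuous $\tilde\phi \colon \beta\omega \to \M(G)$ by the universal property of the Stone-\v{C}ech compactification. Since $\beta\omega$ is compact and $\M(G)$ Hausdorff, it suffices to show $\tilde\phi$ is injective, in which case it is automatically a topological embedding onto its image.

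To verify injectivity, fix $p \neq q$ in $\beta\omega$ and pick disjoint $A \in p$ and $B \in q$. Set $Y_S \coloneqq \bigcup_{n \in S} A_n$ for $S \subseteq \omega$, so that both $Y_A$ and $Y_B$ are open. Since $A \in p$, the limit $\tilde\phi(p) = \lim_{n \to p} x_n$ is taken along $n \in A$, which places $\tilde\phi(p) \in \overline{Y_A}$; analogously $\tilde\phi(q) \in \overline{Y_B}$. The MHP property of $\M(G)$ (\Cref{Fact:MGandSGareMPH}) then upgrades these to $\tilde\phi(p) \in \interior(\overline{UY_A})$ and $\tilde\phi(q) \in \interior(\overline{UY_B})$. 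Crucially, the hypothesis on $\{UA_n\}$ gives $UY_A \cap UY_B = \emptyset$, as $UY_A = \bigcup_{n \in A} UA_n$ and $UY_B = \bigcup_{m \in B} UA_m$ are unions over disjoint index sets of pairwise disjoint sets.

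The remaining step is an elementary topological observation: whenever $O_1, O_2$ are disjoint open subsets of any topological space, $\interior(\overline{O_1}) \cap \interior(\overline{O_2}) = \emptyset$. Briefly, $O_1 \cap \overline{O_2} = \emptyset$ (else $O_1$ would be an open neighborhood of some point of $\overline{O_2}$ disjoint from $O_2$), so $\interior(\overline{O_2}) \subseteq X \setminus O_1$, placing the intersection $\interior(\overline{O_1}) \cap \interior(\overline{O_2})$ inside the interior-free boundary $\partial O_1$; as the intersection is itself open, it must be empty. Applying this with $O_1 = UY_A$ and $O_2 = UY_B$ yields $\tilde\phi(p) \neq \tilde\phi(q)$, completing the proof of injectivity.

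For the ``in particular'' clause, suppose $\M(G)$ is metrizable. A metrizable compact space cannot contain $\beta\omega$ (for instance on cardinality grounds, since $\lvert \beta\omega\rvert = 2^{\mathfrak{c}}$), so the first part forces $G$ to be UEB, and hence CAP via the equivalence established in \Cref{Sec:EquivCAPUEB}. I expect the main obstacle to be choosing the right argument for injectivity: the naive attempt to separate $\tilde\phi(p)$ and $\tilde\phi(q)$ via the UEB pseudo-metric $\partial_d$ fails because $\partial_d$ is only $\tau$-lower-semi-continuous (and fails to be continuous precisely because $G$ is not UEB), so the lower bound $\partial_d(x_m, x_n) \geq 1$ does not propagate to ultrafilter limits. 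Substituting MHP together with the ``disjoint open sets have disjoint interiors of closures'' trick is the key move that circumvents this difficulty.
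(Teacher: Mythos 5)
Your proof is correct and follows essentially the same route as the paper: both extract the sequence $\{A_n\}$ and $U$ from \Cref{Prop:BadOpensNotCAP}, pick $x_n\in A_n$, and separate limit points along disjoint index sets by combining the MHP property of $\M(G)$ with the fact that disjoint open sets have disjoint interiors of closures. Your phrasing via the universal property of $\beta\omega$ and injectivity of the extension is just an equivalent packaging of the paper's criterion that $\overline{\{x_n:n\in S\}}\cap\overline{\{x_n:n\in T\}}=\emptyset$ for disjoint $S,T$.
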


	\begin{proof}
		Fix a sequence $\{A_n: n<\omega\}$ of non-empty open subsets of $\M(G)$ and $U\in \cal{N}_G$ so that the sequence $\{UA_n: n< \omega\}$ is pairwise disjoint. Pick $x_n\in A_n$; we show that $\overline{\{x_n: n< \omega\}}\cong \beta \omega$. It is enough to show that if $S, T\subseteq \omega$ are disjoint, then $\overline{\{x_n: n\in S\}}$ and $\overline{\{x_n: n\in T\}}$ are disjoint. Write $A_S = \bigcup_{n\in S} A_n$, and similarly for $A_T$. Notice that $UA_S\cap UA_T = \emptyset$; this in turn implies that $\mathrm{int}(\overline{UA_S})\cap \mathrm{int}(\overline{UA_T}) = \emptyset$. Towards a contradiction, suppose $x\in \overline{A_S}\cap \overline{A_T}$. Since $\M(G)$ is MHP and $x\in \overline{A_S}$, we see that $x\in \mathrm{int}(\overline{UA_S})$, and likewise for $A_T$. This is a contradiction.
	\end{proof}
	\vspace{2 mm}

	Comparing \Cref{Prop:CAPTukey} and \Cref{Cor:NotCAPEmbedBeta}, it is natural to ask if these conclusions are mutually exclusive, as they are in the case that $G$ is metrizable. We will see later that in general, this need not be the case.

	\Cref{Prop:BadOpensNotCAP} has other applications towards showing that the topological dynamics of groups which are not UEB are badly behaved. For instance, the following result shows that for groups which are not UEB, the operation $G\to \M(G)$ does not respect product.
	\vspace{2 mm}

	\begin{theorem}
		\label{Thm:NotCAPNotProductive}
		Suppose $G$ and $H$ are topological groups, neither of which is UEB. Then $\M(G\times H)\not\cong \M(G)\times \M(H)$.
	\end{theorem}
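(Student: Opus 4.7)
The plan is to argue by contradiction, leveraging the embeddings of $\beta\omega$ coming from \Cref{Cor:NotCAPEmbedBeta} on each side of the putative isomorphism. Suppose $\phi\colon \M(G\times H) \to \M(G)\times \M(H)$ is a $G\times H$-isomorphism. Since $G$ is not UEB, \Cref{Prop:BadOpensNotCAP} yields non-empty open sets $A_n\subseteq \M(G)$ and $U\in \cal{N}_G$ with $\{UA_n\}_{n<\omega}$ pairwise disjoint; similarly, non-empty opens $B_m\subseteq \M(H)$ and $V\in \cal{N}_H$ with $\{VB_m\}_{m<\omega}$ pairwise disjoint. Pick $x_n\in A_n$ and $y_m\in B_m$. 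By \Cref{Cor:NotCAPEmbedBeta} applied inside $\M(G)$ and $\M(H)$, $\overline{\{x_n : n<\omega\}} \cong \beta\omega$ and $\overline{\{y_m : m<\omega\}} \cong \beta\omega$; hence inside $\M(G)\times \M(H)$ one has $\overline{\{(x_n, y_m) : n, m<\omega\}} = \overline{\{x_n\}} \times \overline{\{y_m\}} \cong \beta\omega \times \beta\omega$.

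Set $z_{n,m} \coloneqq \phi^{-1}(x_n, y_m)$ and $\widetilde A_{n,m} \coloneqq \phi^{-1}(A_n\times B_m)$. By $G\times H$-equivariance, $(U\times V)\widetilde A_{n,m} = \phi^{-1}(UA_n\times VB_m)$. In $\M(G)\times \M(H)$, the family $\{UA_n\times VB_m\}_{(n,m)\in\omega^2}$ is pairwise disjoint, and therefore so is $\{(U\times V)\widetilde A_{n,m}\}_{(n,m)\in\omega^2}$ in $\M(G\times H)$. Repeating the argument of \Cref{Cor:NotCAPEmbedBeta} verbatim, now with index set $\omega^2$ in place of $\omega$ and using the MHP property of $\M(G\times H)$ from \Cref{Fact:MGandSGareMPH}, one finds that for any disjoint $S, T\subseteq \omega^2$, $\overline{\{z_{n,m} : (n,m)\in S\}} \cap \overline{\{z_{n,m} : (n,m)\in T\}} = \emptyset$. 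Thus $\overline{\{z_{n,m} : (n,m)\in\omega^2\}}$ is the Stone--\v{C}ech compactification of the countable discrete set $\omega^2$, hence homeomorphic to $\beta\omega$.

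Since $\phi$ is a homeomorphism sending $z_{n,m}$ to $(x_n, y_m)$, it restricts to a homeomorphism $\beta\omega \cong \overline{\{z_{n,m}\}} \to \overline{\{(x_n, y_m)\}} \cong \beta\omega\times\beta\omega$. This is the sought contradiction: $\beta\omega$ is extremally disconnected, whereas $\beta\omega\times\beta\omega$ is not, as the disjoint open sets $\{(n,m)\in\omega^2 : n<m\}$ and $\{(n,m)\in \omega^2 : n>m\}$ inside $\beta\omega\times\beta\omega$ both contain every non-principal diagonal point $(p,p)$ in their closure. The only real step to verify is the $\omega^2$-indexed version of \Cref{Cor:NotCAPEmbedBeta}; this is a direct transliteration of the original proof, since its inputs---the MHP property and a pairwise disjoint family of $(U\times V)$-translates of open neighborhoods---are available equally for either index set.
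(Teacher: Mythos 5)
Your proof is correct, but it takes a genuinely different route from the paper's. The paper argues intrinsically: it shows that $\M(G)\times \M(H)$ fails to be an MHP $(G\times H)$-flow (via a careful computation of $\mathrm{int}\bigl(\overline{(U\times V)\cdot W}\bigr)$ for the diagonal union $W = \bigcup_n A_n\times B_n$), whereas $\M(G\times H)$ is always MHP by \Cref{Fact:MGandSGareMPH}; no isomorphism is ever assumed. You instead assume a $(G\times H)$-flow isomorphism $\phi$ and transport the grid of witnesses: the pairwise disjointness of $\{UA_n\times VB_m\}_{(n,m)\in\omega^2}$ pulls back through $\phi$ by equivariance, so the $\omega^2$-indexed rerun of \Cref{Cor:NotCAPEmbedBeta} (which indeed only needs MHP of $\M(G\times H)$ plus a pairwise disjoint family of translates, and works verbatim for any countable index set) forces $\overline{\{z_{n,m}\}}\cong\beta\omega$, while the image is $\overline{\{x_n\}}\times\overline{\{y_m\}}\cong\beta\omega\times\beta\omega$; the extremal disconnectedness argument correctly rules out $\beta\omega\cong\beta\omega\times\beta\omega$. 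Each approach has its virtues: the paper's identifies the precise structural obstruction (non-MHP-ness of the product flow) and stays entirely within the machinery of Section 4, while yours is shorter modulo \Cref{Cor:NotCAPEmbedBeta} and reduces everything to the classical fact that $\beta$ does not commute with products; note only that your argument, unlike the paper's, genuinely uses the equivariance of the hypothesized isomorphism (to identify $(U\times V)\phi^{-1}(A_n\times B_m)$ with $\phi^{-1}(UA_n\times VB_m)$), so it rules out isomorphism of flows rather than exhibiting an intrinsic difference between the two flows.
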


	\begin{proof}
		We will show that $\M(G)\times \M(H)$ is not an MHP $(G\times H)$-flow, unlike $\M(G \times H)$.
		We need to find an open $W\subseteq \M(G)\times \M(H)$, an identity neighborhood $U\times V\in \cal{N}_{G\times H}$, and $p\in \overline{W}$ so that $p\not\in \mathrm{Int}\left(\overline{(U\times V)\cdot W}\right)$. Using \Cref{Prop:BadOpensNotCAP}, fix a sequence $\{A_n: n< \omega\}$ of non-empty open subsets of $\M(G)$ and $U\in \cal{N}_G$ with $\{UA_n: n<\omega\}$ pairwise disjoint; similarly, fix a sequence $\{B_n: n< \omega\}$ of non-empty open subsets of $\M(H)$ and $V\in \cal{N}_H$ with $\{VB_n: n< \omega\}$ pairwise disjoint. We set $W = \bigcup_n A_n\times B_n$.
		\vspace{2 mm}

		\begin{claim}
			$\mathrm{Int}\left(\overline{(U\times V)\cdot W}\right) = \bigcup_n \mathrm{Int}\left(\overline{UA_n\times VB_n}\right)$
		\end{claim}

		\begin{proof}
			Clearly the right hand side is included in the left. Now let $q\in \mathrm{Int}\left(\overline{(U\times V)\cdot W}\right)$. Let $A\times B\subseteq \overline{(U\times V)\cdot W}$ be a basic open neighborhood of $q$.
			There is some $n< \omega$ such that $UA_n\times VB_n$ meets $A\times B$.
			We claim that $A\times B\subseteq \overline{UA_n\times VB_n}$.
			If not, then without loss of generality we can find non-empty open $C\subseteq A$ with $C\cap UA_n = \emptyset$.
			But now setting $D = B\cap VB_n$, we have for every $m\neq n$ that $(UA_m\times VB_m)\cap (C\times D) = \emptyset$, simply because $VB_m\cap D = \emptyset$.
			Since also $(UA_n\times VB_n)\cap (C\times D) = \emptyset$, it cannot be that  $A\times B\subseteq \overline{(U\times V)\cdot W} = \overline{\bigcup_{n} UA_{n} \times VB_{n}}$.
			This contradiction concludes the proof of the claim.
		\end{proof}

		Now let $p$ be any point in $\bigcap_{N} \overline{\bigcup_{n>N} A_n\times B_n} \subseteq \overline{W}$. Then $p\not\in \mathrm{Int}\left(\overline{UA_n\times VB_n}\right)$ for any $n< \omega$. By the claim, it follows that $\M(G)\times \M(H)$ is not MHP.
	\end{proof}
	\vspace{2 mm}

	It will turn out that \Cref{Thm:NotCAPNotProductive} is an if and only if: we show in \Cref{Prop:ProductFormula} that if either of $G$ or $H$ is UEB, then $\M(G\times H)\cong \M(G)\times \M(H)$.

	\section{The equivalence of UEB and CAP }
	\label{Sec:EquivCAPUEB}
	We are now ready for one of the main theorems of the paper.
	Recall that $\lip(d)$, for a continuous, right-invariant pseudo-metric $d$ on $G$, is a $G$-flow with the action given by $g \cdot f(h) = f(hg)$.
	\vspace{2 mm}

	\begin{theorem}
		\label{Thm:EquivalentInG}
		Fix a topological group $G$. Then the following are equivalent.
		\begin{enumerate}
			\item
			$G$ is CAP.
			\item
			$G$ is strongly CAP.
			\item
			For any continuous, diameter $1$, right-invariant pseudo-metric $d$ on $G$, the set $\ap(\lip(d))\subseteq \lip(d)$ is closed.
			\item
			$G$ is UEB.
		\end{enumerate}
	\end{theorem}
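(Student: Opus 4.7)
The implications $(2)\Rightarrow(1)$ and $(1)\Rightarrow(3)$ are immediate from the definitions, once one checks that $\lip(d)$ with the topology of pointwise convergence is indeed a $G$-flow — compactness is Tychonoff, and the shift action is continuous by right-invariance of $d$. The real work is $(3)\Rightarrow(4)$ and $(4)\Rightarrow(2)$; I plan to prove each directly rather than cycling through a third intermediate.

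For $(4)\Rightarrow(2)$, the plan is to upgrade UEB to \emph{joint} continuity of the extended action $M\times X\to X$ for any minimal $M\subseteq\S(G)$ and any $G$-flow $X$. Once this is in hand, $\{(p,x)\in M\times X:px=x\}$ is closed, so $\ap(X)$ is closed as its projection to $X$, and the graph of $E_G$ on $\ap(X)$ is closed as the projection of $\{(p,x,y)\in M\times X\times X : px=y,\,\exists q\in M\,qx=x\}$, giving strongly CAP in one stroke. To establish joint continuity, fix $f\in C(X,[0,1])$ and consider $H_f=\{f_x:x\in X\}\subseteq \mathrm{RUC}_b(G)$, where $f_x(g)=f(gx)$. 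This family is UEB: uniform boundedness is clear, and uniform equicontinuity follows from continuity of $G\times X\to X$ together with compactness of $X$ (a standard extraction of a uniform modulus). Under UEB, $[H_f,\epsilon]\cap M^2$ is an open neighborhood of the diagonal in $M$, which says exactly that $p\mapsto f(px)=p(f_x)$ is equi-continuous in $p$, uniformly over $x\in X$. Combined with separate continuity of $x\mapsto p_0(f_x)$ (which holds because $x\mapsto f_x$ is continuous into $H_f$ with its pointwise topology, and any $p_0\in \S(G)$ acts continuously on a UEB set in that topology), this yields joint continuity of $(p,x)\mapsto f(px)$ at every $(p_0,x_0)$, and therefore of $(p,x)\mapsto px$.

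For $(3)\Rightarrow(4)$ I would argue by contrapositive, combining \Cref{Prop:BadOpensNotCAP} with an explicit construction in $\lip(d)$. Suppose $G$ is not UEB, and fix non-empty open $A_n\subseteq \M(G)$ and $U\in\cal N_G$ with $\{UA_n\}$ pairwise disjoint. Choose a continuous, diameter-$1$, right-invariant pseudo-metric $d$ with $d(1)\subseteq U$; by \Cref{Prop:MHPPseudo}, points from distinct $A_n$ are $\partial_d$-separated by at least $1$. Fix an idempotent $u$ in a minimal $M\subseteq\S(G)$, use density of $Gu$ in $M$ to pick $g_n\in G$ with $g_nu\in A_n$, and construct $F_n\in\lip(d)$ of the form $F_n(h)=\hat\phi_n(hg_nu)$ for continuous $\phi_n\colon\M(G)\to[0,1]$ designed so that $F_n$ factors through the $G$-map $\hat\rho_{g_nu}\colon M\to \lip(d)$ and therefore lies in a minimal subflow. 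The coordinated choice of $\phi_n$ — each concentrated near $g_nu\in A_n$ — together with the pairwise disjointness of $\{UA_n\}$ will force any pointwise limit $F^*$ of a subnet of $(F_n)$ to sit in the orbit closure of functions arising from distinct minimal subflows, preventing $F^*$ from being almost periodic.

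I expect the main obstacle to be $(3)\Rightarrow(4)$. The naive construction — translates $g_n\cdot F$ of a single $F\in\lip(d)$ — fails because all such translates share one orbit closure and hence one minimal subflow, so their limits remain almost periodic. One must instead vary $F_n$ in tandem with $g_n$, using the combinatorial data of the $\{A_n\}$ to arrange that the orbit closures of the $F_n$ are pairwise distinct minimal subflows whose union is no longer minimal upon taking closure. Verifying that the intended $F^*$ indeed fails almost periodicity — that no $p\in M$ satisfies $\hat F^*(hp)=F^*(h)$ for all $h\in G$ — will require careful use of the $\partial_d$-separation inherited from \Cref{Prop:BadOpensNotCAP}.
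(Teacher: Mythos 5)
Your reduction of strong CAP to joint continuity of the action map $M\times X\to X$ is where the argument breaks: that joint continuity claim is false, even for UEB groups. The equicontinuity half of your argument is fine --- $H_f=\{f_x:x\in X\}$ is indeed a UEB set, and the UEB hypothesis does give that $p\mapsto f(px)$ is equicontinuous on $M$ uniformly in $x$. But the other half, separate continuity of $x\mapsto p_0(f_x)$ for fixed $p_0\in M$, does not hold: an element of $\S(G)$ is in general \emph{not} continuous on a UEB set in the topology of pointwise convergence (functionals with that property are Pachl's uniform measures, and multiplicative functionals of that kind are essentially just the points of the completion of $G$; the elements of a minimal left ideal of a non-precompact group are not among them). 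Concretely, take $G=\sym(\omega)$, which is CAP, $X=\M(G)=\mathrm{LO}(\omega)$, and $p\in M$. Whether $0<_{px}1$ is decided by whether $\{(b_0,b_1): b_0<_x b_1\}$ belongs to the ultrafilter $\tilde\pi_{(0,1)}(p)$ on injective pairs, and for $p\in M$ this ultrafilter is nonprincipal (a principal one would violate the near finite intersection property of the translates of $p$). Hence $px$ restricted to $\{0,1\}$ is not determined by any finite restriction of $x$: orders $x_N$ agreeing with the standard order on $\{0,\dots,N\}$ but reversed beyond $N$ converge to the standard order while $px_N$ does not converge to $px$. So $x\mapsto px$ is discontinuous for every $p\in M$, the set $\{(p,x):px=x\}$ cannot be handled by a projection argument, and $(4)\Rightarrow(2)$ needs a different mechanism. (The paper's proof avoids this exactly here: it never fixes $p$ and moves $x$; instead it runs a net $x_i\to x$ together with $G$-maps $\phi_i\colon\M(G)\to\overline{Gx_i}$ and uses continuity of the pseudo-metrics $\partial_d$ on $\M(G)$ to control $\phi_i(p)$ via the sets $\overline{d(\epsilon)\cdot B}$.)

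The direction $(3)\Rightarrow(4)$ is, as you anticipate, not yet a proof, and the two points you defer are precisely the hard ones. First, a function of the form $F_n(h)=\hat\phi_n(hg_nu)$ lies in $\lip(d)$ only if $\phi_n$ is $1$-Lipschitz for $\partial_d$ on $\M(G)$, not merely continuous; since $\partial_d$ is only lower semi-continuous, producing such $\phi_n$ ``concentrated near $g_nu$'' requires a regularization step (the paper replaces $\psi$ by $\psi_\partial(q)=\inf_r(\psi(r)+\partial(q,r))$ and must then prove $\psi_\partial$ is still continuous, using adequacy of $\partial_d$). Second, and more seriously, you give no criterion for certifying that the limit $F^*$ fails to be almost periodic; ``sitting in the orbit closure of functions from distinct minimal subflows'' does not by itself preclude $F^*$ from lying in yet another minimal subflow. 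The paper's route is to extract from the failure of UEB a single point $p$ with a $\tau$-nowhere dense ball $B_\partial(p,1)$ (\Cref{Lem:ANWDBall}), take the limit function $f(g)=\partial(gp,p)$, and rule out almost periodicity by the syndeticity of $\xi^{-1}(U)$ for almost periodic $\xi$ (\Cref{Lem:SyndeticPreimage}): if $f$ were almost periodic then $f^{-1}([0,1/2))\cdot p$ would be somewhere dense yet contained in a nowhere dense ball. Some argument of comparable strength is needed to close your construction; the pairwise disjointness of the $UA_n$ alone does not supply it.
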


	\begin{rem}
		Recall that an \emph{ambit} is a $G$-flow $X$ along with a distinguished point $x\in X$ with dense orbit. A topological group is said to be \emph{ambitable} if for any continuous, diameter $1$, right-invariant pseudo-metric $d$ on $G$, the flow $\lip(d)$ embeds into some ambit.
		For the class of \emph{ambitable} topological groups, the above are also equivalent to $\ap(\S(G))$ being closed. The same is true for pre-compact groups (which are never ambitable), simply because these are all CAP. It is an open question whether every topological group is either pre-compact or ambitable; see \cite{Pachl}.
	\end{rem}

	\begin{proof}
		We have $(2)\Rightarrow (1)\Rightarrow (3)$. We will show that $(4)\Rightarrow (2)$ and $\neg(4)\Rightarrow \neg(3)$. We will freely use the equivalent characterizations of $(4)$ given in \Cref{Prop:CAPSimpleEquiv}.

		For $(4)\Rightarrow (2)$ we closely follow Section 3 of \cite{JZ}. Fix $\cal{D}$ a base of continuous, diameter $1$, right-invariant pseudo-metrics on $G$ for which item $(4)$ holds, and fix a $G$-flow $X$. Suppose $x_i, y_i\in \ap(X)$ with $E_G(x_i, y_i)$ with $x_i\to x$ and $y_i\to y$. We wish to show that $x, y\in \ap(X)$ and $E_G(x, y)$. Suppose $Y_i\subseteq X_i$ is the minimal subflow with $x_i, y_i\in Y_i$. Let $\phi_i\colon \M(G)\to Y_i$ be a $G$-map, and choose $p_i, q_i\in \M(G)$ with $\phi_i(p_i) = x_i$ and $\phi_i(q_i) = y_i$.

		Suppose in $\M(G)$ that $p_i\to p$. We claim that $\phi_i(p)\to x$. Fix some $A\ni_{op} x$. We must show that eventually $\phi_i(p)\in A$. Find $B\ni_{op} x$ and $U\in \cal{N}_G$ with $\overline{UB}\subseteq A$. Fix a pseudo-metric $d\in \cal{D}$ and $\epsilon > 0$ with $d(\epsilon)\subseteq U$. Since $\partial_d$ is continuous, we have $p_i\xrightarrow{\partial_d} p$. So eventually, we have $\phi_i(p_i)\in B$ and $\partial_d(p_i, p) < \epsilon$, which together imply that $p\in \overline{d(\epsilon)\cdot \phi_i^{-1}(B)}\subseteq \phi_i^{-1}\left(\overline{d(\epsilon)\cdot B}\right)$. So we have $\phi_i(p)\in \overline{d(\epsilon)\cdot B}\subseteq A$ as desired. If we also suppose that $q_i\to q$, an identical argument shows that $\phi_i(q)\to y$.

		Now suppose we are given $r\in \S(G)$ with $rp = q$. We claim that $rx = y$. Let $g_i\in G$ with $g_i\to r$, and let $A\ni_{op} y$. We wish to show that eventually $g_ix \in A$. Find $B\ni_{op} y$ and $U\in \cal{N}_G$ with $\overline{UA}\subseteq B$. Fix a pseudo-metric $d\in \cal{D}$ and $\epsilon > 0$ with $d(\epsilon)\subseteq U$. Since $\partial_d$ is continuous, we have $g_ip\xrightarrow{\partial_d} q$. Fix any index $j$ with $\partial_d(g_jp, q) < \epsilon$; for this $j$ and any index $i$ with  $\phi_i(q)\in B$, we have $g_jp\in \overline{d(\epsilon)\cdot \phi_i^{-1}(B)}\subseteq \phi_i^{-1}\left(\overline{d(\epsilon)\cdot B}\right)$. So $\phi_i(g_jp) = g_j\phi_i(p)\in \overline{d(\epsilon)\cdot B}$. So $g_jx\in \overline{d(\epsilon)\cdot B}\subseteq A$ as desired. A symmetric argument shows that if we are given $s\in \S(G)$ with $p = sq$, then also $x = sy$.

		To conclude the argument, we note that for any minimal subflow $M\subseteq \S(G)$ we have $Mp = Mq = \M(G)$ by \Cref{Fact:SGSemigroup}, so we can find $r, s\in M$ with $rp = q$ and $p = sq$. Hence $y\in \hat{\rho}_x[M]$ and $x\in \hat{\rho}_y[M]$. This implies that $x,y\in \ap(X)$ and that both $x\in \overline{Gy}$ and $y\in \overline{Gx}$, i.e.\ that $E_G(x, y)$ as desired.
		\vspace{5 mm}

		For $\neg(4)\Rightarrow \neg(3)$, we follow section 2.7 of \cite{ZucThesis}. Suppose that the UEB uniformity on $\M(G)$ is strictly finer than the compact topology. By Lemma~\ref{Lem:ANWDBall}, we can find a continuous, bounded, right-invariant pseudo-metric $d$ on $G$ and a point $p\in \M(G)$ so that, setting $\partial := \partial_d$, the metric ball $B_\partial(p, 1) := \{q\in \M(G): \partial(p, q)< 1\}$ is $\tau$-nowhere dense. \textbf{Fix this} $p\in \M(G)$.

		\iffalse
		$\partial_d \coloneqq \partial$ is not continuous.
		We will show (after some minor modifications to $d$) that $\ap(\lip(d))\subseteq \lip(d)$ is not closed. Since $\partial$ is $\tau$-lsc and adequate by Propositions~\ref{Prop:TauLSC} and \ref{Prop:Adequacy}, there is $p\in \M(G)$ and some $\epsilon > 0$ with $B_\partial(p, \epsilon) \coloneqq \{q\in \M(G): \partial(p, q)< \epsilon\}$ $\tau$-nowhere dense.
		To see why, it is easier to work with the closed $\partial$-ball $C_\partial(p, \epsilon) \coloneqq \{q\in \M(G): \partial(p, q)\leq \epsilon\}$.
		Since $\partial$ is $\tau$-lsc, $C_\partial(p, \epsilon)$ is $\tau$-closed. So suppose towards a contradiction that for every $p\in \M(G)$ and $\epsilon > 0$ that $\mathrm{Int}\left(C_\partial(p, \epsilon)\right)\neq \emptyset$. Using adequacy, we see that $p\in \mathrm{Int}\left(B_\partial(p, 2\epsilon)\right)$. But this implies that $\partial$ is continuous, contradicting our assumption.
		So we may find $p\in \M(G)$ and $\epsilon > 0$ with $\mathrm{Int}\left(C_\partial(p, \epsilon)\right) = \emptyset$. By multiplying the pseudo-metric $d$ by $1/\epsilon$ and then capping at $1$, we may assume that $B_\partial(p, 1)$ is $\tau$-nowhere dense. \textbf{Fix this} $p\in \M(G)$.
		\fi

		For any continuous $\psi\colon \M(G)\to [0, 1]$, we define $\psi_\partial\colon \M(G)\to [0,1]$ via
		$$\psi_\partial(q) = \inf\{\psi(r) + \partial(q, r): r\in \M(G)\}.$$

		\begin{lemma}
			\label{Lem:PsiPartialCont}
			The function $\psi_\partial$ is continuous and $\partial$-Lipschitz.
		\end{lemma}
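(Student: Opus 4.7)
My plan has three ingredients: Lipschitzness with respect to $\partial$, $\tau$-lower semi-continuity, and $\tau$-upper semi-continuity. The first is purely formal from the triangle inequality, while the other two split cleanly according to which property of $\partial$ I use.

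For the $\partial$-Lipschitz claim, I would just observe that for any $q, q' \in \M(G)$ and any $r \in \M(G)$, the triangle inequality gives $\psi(r) + \partial(q, r) \le \partial(q, q') + \psi(r) + \partial(q', r)$. Taking the infimum over $r$ on both sides yields $\psi_\partial(q) \le \partial(q, q') + \psi_\partial(q')$, and by symmetry $\lvert \psi_\partial(q) - \psi_\partial(q') \rvert \le \partial(q, q')$.

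For $\tau$-upper semi-continuity I plan to use adequacy of $\partial$ (\Cref{Prop:Adequacy}, which applies to $\M(G)$ since it is an MHP flow by \Cref{Fact:MGandSGareMPH}). Suppose $\psi_\partial(q) < t$ and pick $r$ and $\eta > 0$ with $\psi(r) + \partial(q, r) < t - \eta$. Using $\tau$-continuity of $\psi$ fix $A \ni_{op} r$ so that $\psi(r') < \psi(r) + \eta/2$ for all $r' \in A$. Let $V \coloneqq B_\partial(A, \partial(q, r) + \eta/2)$; this is $\tau$-open by adequacy, and clearly contains $q$ since $r \in A$. For $q' \in V$, choose $r' \in A$ with $\partial(q', r') < \partial(q, r) + \eta/2$; then $\psi_\partial(q') \le \psi(r') + \partial(q', r') < t$, showing that $\{q' : \psi_\partial(q') < t\}$ is $\tau$-open.

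For $\tau$-lower semi-continuity I use $\tau$-lsc of $\partial$ (\Cref{Prop:TauLSC}) together with compactness of $\M(G)$. Suppose $q_i \to q$ in $\tau$ with $\psi_\partial(q_i) \le t$ for all $i$; I want $\psi_\partial(q) \le t$. Pick $r_i$ with $\psi(r_i) + \partial(q_i, r_i) \le t + 1/i$. By compactness, refine to a subnet with $r_i \to r$ in $\tau$ and $\partial(q_i, r_i) \to s$ for some $s \in [0,1]$. Then $\psi(r_i) \to \psi(r)$ by continuity of $\psi$, so $\psi(r) + s \le t$. The $\tau$-lsc property of $\partial$ applied to the convergent net $(q_i, r_i) \to (q, r)$ gives $\partial(q, r) \le \liminf \partial(q_i, r_i) = s$. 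Hence $\psi_\partial(q) \le \psi(r) + \partial(q, r) \le \psi(r) + s \le t$, as required. Combining the two one-sided continuities gives $\tau$-continuity of $\psi_\partial$.

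The only nontrivial input is recognizing which property of $\partial$ powers which direction: adequacy gives the $\tau$-open sub-level sets (usc), and $\tau$-lsc of $\partial$ together with compactness gives the $\tau$-closed sub-level sets (lsc). Everything else is a triangle inequality or a routine net argument.
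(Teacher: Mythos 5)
Your proof is correct and follows essentially the same route as the paper: the Lipschitz bound is the triangle inequality, adequacy of $\partial$ powers upper semi-continuity, and $\tau$-lower semi-continuity of $\partial$ together with compactness powers lower semi-continuity --- the paper merely packages the two semi-continuities as a single net computation with two inequalities. One cosmetic point: nets are indexed by an arbitrary directed set, so ``$\psi(r_i)+\partial(q_i,r_i)\le t+1/i$'' is not well formed; instead fix $\epsilon>0$, choose $r_i$ with $\psi(r_i)+\partial(q_i,r_i)\le \psi_\partial(q_i)+\epsilon\le t+\epsilon$, run the same argument to get $\psi_\partial(q)\le t+\epsilon$, and let $\epsilon\to 0$.
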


		\begin{proof}
			Suppose $q_i\to q$. Let $\epsilon > 0$, and choose $r_i\in \M(G)$ such that $\psi(r_i) + \partial(q_i, r_i) \leq \psi_\partial(q_i)+\epsilon$. Letting $r_i\to r$, we have $\psi(r_i)\to \psi(r)$, and eventually $\partial(q, r) \leq \partial(q_i, r_i)+ \epsilon$ by $\tau$-lsc. It follows that eventually
			\begin{align*}
			\psi_\partial(q) &\leq \psi(r)+\partial(q, r)\\[1 mm]
			&\leq \psi(r_i) + \partial(q_i, r_i) + 2\epsilon\\[1 mm]
			&\leq \psi_\partial(q_i) + 3\epsilon.
			\end{align*}
			For the other inequality, choose $s\in \M(G)$ with $\psi(s)+\partial(q, s)\leq \psi_\partial(q)+\epsilon$. Suppose $c> 0$ is such that $c - \epsilon < \partial(q, s) < c$.
			By continuity of $\psi$, $A\ni_{op} s$ can be chosen so that $\psi[A]\subseteq [\psi(s)-\epsilon, \psi(s)+\epsilon]$. Then $B_\partial(A, c)$ is open by adequacy and contains $q$, as $\partial(q, s) < c$ and $s \in A$.
			So eventually $q_i\in B_\partial(A, c)$, and we can choose witnesses $s_i\in A$ with $\partial(q_i, s_i) < c$. It follows that eventually
			\begin{align*}
			\psi_\partial(q) &\geq \psi(s)+\partial(q, s) - \epsilon\\[1 mm]
			&\geq \psi(s_i) + \partial(q_i, s_i) - 3\epsilon\\[1 mm]
			&\geq \psi_\partial(q_i)-3\epsilon.
			\end{align*}
			To see that $\psi_{\partial}$ is $\partial$-Lipschitz, let $q, r\in \M(G)$. Fix $\epsilon > 0$, and suppose $s\in \M(G)$ satisfies $\psi_\partial(q)\geq \psi(s)+\partial(q, s) -\epsilon$. Then we have
			\begin{align*}
			\psi_{\partial}(r) &\leq \psi(s)+ \partial(r, s)\\[1 mm]
			&\leq \psi(s)+ \partial(r, q) + \partial(q, s)\\[1 mm]
			&\leq \psi_{\partial}(q) + \partial(r, q) + \epsilon.
			\end{align*}
			Since $\epsilon> 0$ is arbitrary and applying an identical argument with $p$ and $q$ reversed, we are done.
		\end{proof}
		\vspace{2 mm}

		Now given $q\in \M(G)$, define $q_\psi\colon G\to [0,1]$ via $q_\psi(g) = \psi_\partial(g\cdot q)$. We note that $q_\psi\in \lip(d)$ since $\partial(gq, hq) \leq d(g, h)$. Even better, the map from $\M(G)$ to $\lip(d)$ given by $q\to q_\psi$ is a $G$-map, for $h \cdot q_{\psi}(g) = q_{\psi}(g \cdot h) = \psi_{\partial}(g \cdot hq)= (hq)_\psi(g)$, so we have $q_\psi\in \ap\left(\lip(d)\right)$.

		Recall the $p\in \M(G)$ we fixed earlier. \textbf{Define} $f\colon G \to [0, 1]$ via
		$$f(g) = \partial(gp, p).$$
		Since $\lvert \partial(gp, p) - \partial(hp, p) \rvert \leq  \partial(gp, hp) \le d(g, h)$, we have that $f \in \lip(d)$.

		For each $A\ni_{op} p$, fix some $\psi_A\colon \M(G)\to [0,1]$ with $\psi_A(p) = 0$ and with constant value $1$ outside $A$. Set $f_A = p_{\psi_A}$. So we saw above that $f_A\in \ap(\lip(d))$.
		\vspace{2 mm}

		\begin{lemma}
			\label{Lem:FunctionInClosureAP}
			$f\in \overline{\{f_A: A\ni_{op} p\}}.$
		\end{lemma}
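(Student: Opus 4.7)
The plan is to verify pointwise convergence in $\lip(d) \subseteq [0,1]^G$: given $g_1, \dots, g_n \in G$ and $\epsilon > 0$, I will exhibit an open $A \ni p$ with $|f(g_i) - f_A(g_i)| < \epsilon$ for each $i$. Unwinding the definitions gives $f_A(g) = p_{\psi_A}(g) = \psi_{A,\partial}(gp) = \inf_r\{\psi_A(r) + \partial(gp, r)\}$, and the plan is to bound this expression from above and from below.

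The upper bound is immediate: taking $r = p$ in the infimum, and using $\psi_A(p) = 0$, yields $f_A(g) \leq \partial(gp, p) = f(g)$. For the lower bound I would split the infimum along the partition $\M(G) = A \sqcup (\M(G) \setminus A)$. Every element of $\lip(d)$ takes values in $[0,1]$, so $\partial \leq 1$, and in particular $f(g) \leq 1$. For $r \notin A$ one has $\psi_A(r) = 1$, whence $\psi_A(r) + \partial(gp, r) \geq 1 \geq f(g)$. For $r \in A$ the expression is at least $\partial(gp, r)$, which gives
\[
f_A(g) \geq \min\!\Bigl\{ f(g),\; \inf_{r \in A} \partial(gp, r) \Bigr\}.
\]

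The key step is to arrange $\inf_{r \in A} \partial(g_i p, r) \geq f(g_i) - \epsilon$ for each $i$ simultaneously. By \Cref{Prop:TauLSC} the pseudo-metric $\partial$ is $\tau$-lsc, so the sets $U_i := \{ r \in \M(G) : \partial(g_i p, r) > f(g_i) - \epsilon \}$ are $\tau$-open, and each contains $p$ since $\partial(g_i p, p) = f(g_i)$. Taking $A := \bigcap_{i=1}^n U_i$ produces a $\tau$-open neighborhood of $p$ on which every $\partial(g_i p, \cdot)$ exceeds $f(g_i) - \epsilon$; the lower bound then reads $f_A(g_i) \geq f(g_i) - \epsilon$, and combined with the upper bound we get $|f(g_i) - f_A(g_i)| \leq \epsilon$ for all $i$, as required.

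I do not anticipate a serious obstacle. The whole argument is a direct computation, and the only genuine input is the $\tau$-lower-semi-continuity of $\partial$, which plays a clean role: it expresses exactly that shrinking $\tau$-neighborhoods of $p$ cannot allow $\partial(g_i p, \cdot)$ to fall more than $\epsilon$ below its value $f(g_i)$ at $p$. The failure of $\partial$ to be $\tau$-continuous, which is the whole reason we singled out this $p$, is irrelevant for the current lemma --- it will play its part only later, when showing that the limit $f$ itself fails to be almost periodic.
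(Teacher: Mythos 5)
Your proof is correct and follows essentially the same route as the paper's: the upper bound $f_A \le f$ comes from testing $r = p$, and the lower bound comes from splitting the infimum over $r \in A$ versus $r \notin A$ and choosing $A$ as a finite intersection of open neighborhoods of $p$ on which each $\partial(g_i p, \cdot)$ stays above $f(g_i) - \epsilon$. The only (cosmetic) difference is that you justify the openness of these neighborhoods directly from the $\tau$-lower-semi-continuity of $\partial$ (\Cref{Prop:TauLSC}), whereas the paper extracts them from the near-ultrafilter characterization in \Cref{Prop:MHPPseudo}; both are valid and yield the same sets.
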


		\begin{proof}
			First note for any $A\ni_{op} p$ and any $g\in G$ that $f(g)\geq f_A(g)$, simply because $f_A(g) = \inf\{\psi_A(r) + \partial(gp, r): r\in \M(G)\}$, and one can consider $r = p$.
			Now fix some finite $F\subseteq G$ and $\epsilon >0$;
			we want to find $A\ni_{op} p$ so that $f(g)\leq f_A(g)+\epsilon$ for every $g\in F$, that is, such that $f(g) \leq \psi_A(r)+\partial(gp, r)+\epsilon$ for every $g\in F$ and $r\in \M(G)$.
			We can suppose that $\epsilon < \partial(g, gp)$ for all $g \in F$, and write $c_g = \partial(g, gp) - \epsilon$.
			For each $g \in F$, there is $A_g \ni_{op} p $ such that $pg \not \in \overline{d(c_g)A}$.
			Let $A = \bigcap_{g \in F} A_g$.
			Fix $g \in F$; since $pg \not \in \overline{d(c_g)A}$, if $r \in A$, we have $\partial(pg, r) \ge c_g = \partial(g, gp) - \varepsilon$, that is,
			$f(g) = \partial(gp, p)\leq \partial(gp, r)+\epsilon$.
			If $r \not \in A$, then $f(g) \le \psi_A(r) = 1$, so $A$ is as desired.
		\end{proof}
		\vspace{2 mm}

		A set $S\subseteq G$ is \emph{syndetic} if there is a finite $F\subseteq G$ with $G = FS$. We need the following lemma.
		\vspace{2 mm}

		\begin{lemma}
			\label{Lem:SyndeticPreimage}
			Suppose $\xi\in \ap(\lip(d))$. Then for any open $U\subseteq [0,1]$ the set $\xi^{-1}(U)\subseteq G$ is either empty or syndetic.
		\end{lemma}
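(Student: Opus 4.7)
The plan is to reduce this to the classical fact that in any minimal flow, the return time to a non-empty open set is syndetic, and translate that into the stated claim about $\xi$ via the evaluation-at-$1_G$ map.

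First, I would observe that evaluation at $1_G$ is continuous on $\lip(d)$ (recall that $\lip(d)$ carries the pointwise topology, under which it is a compact $G$-flow), so the set $V \coloneqq \{\eta\in\lip(d): \eta(1_G)\in U\}$ is open. Since the $G$-action is $(g\cdot\eta)(h)=\eta(hg)$, we have $(g\xi)(1_G)=\xi(g)$, hence
\[
\xi^{-1}(U) = \{g\in G: g\xi\in V\}.
\]
In particular, if this set is non-empty then $V$ meets the orbit $G\xi$, and a fortiori $V\cap \overline{G\xi}\neq\emptyset$.

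Next, since $\xi\in \ap(\lip(d))$, the orbit closure $Y\coloneqq\overline{G\xi}$ is a minimal $G$-flow. Set $V'\coloneqq V\cap Y$, which is non-empty and relatively open in $Y$. By minimality, for every $y\in Y$ the orbit $Gy$ is dense in $Y$, so there is some $g\in G$ with $gy\in V'$, i.e. $y\in g^{-1}V'$. Thus $\{g^{-1}V': g\in G\}$ is an open cover of the compact space $Y$, and we may extract a finite subcover $Y=g_1^{-1}V'\cup\cdots\cup g_n^{-1}V'$.

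Finally, set $F\coloneqq\{g_1^{-1},\dots,g_n^{-1}\}$. For any $h\in G$ we have $h\xi\in Y$, so $h\xi\in g_i^{-1}V'$ for some $i$, whence $g_ih\xi\in V$, i.e.\ $g_ih\in \xi^{-1}(U)$, and therefore $h\in g_i^{-1}\xi^{-1}(U)\subseteq F\cdot\xi^{-1}(U)$. Hence $G=F\cdot\xi^{-1}(U)$, showing that $\xi^{-1}(U)$ is syndetic. There is no real obstacle here beyond correctly unwinding the action convention $(g\cdot\eta)(h)=\eta(hg)$ so that the set whose return times we care about is the preimage of an open set of the form $V$ above.
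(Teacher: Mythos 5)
Your proof is correct. You reduce the lemma to the classical fact that in a minimal flow the set of return times to a non-empty open set is syndetic, and you prove that fact directly: the translation to the flow $\lip(d)$ via $(g\xi)(1_G)=\xi(g)$ is the right one for the action convention $(g\cdot\eta)(h)=\eta(hg)$, the sets $g^{-1}V'$ are open because each $g$ acts by a homeomorphism, minimality gives the open cover, and compactness gives the finite set $F$ with $G=F\cdot\xi^{-1}(U)$, which is exactly the paper's definition of syndetic. The paper proves the same statement by contraposition: assuming $\xi^{-1}(U)$ is non-empty but not syndetic, it builds a net $g_F\xi$ (indexed by finite subsets of $G$) avoiding $U$ on ever larger finite sets, passes to a limit $\phi\in\overline{G\xi}$ with $\ran(\phi)\cap U=\emptyset$, and observes that $\xi\notin\overline{G\phi}$, so $\overline{G\xi}$ is not minimal. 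The two arguments are dual forms of the same compactness principle; yours is the direct finite-subcover version and is arguably cleaner, while the paper's has the minor virtue of explicitly exhibiting the point $\phi$ witnessing the failure of minimality. Either is acceptable here.
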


		\begin{proof}
			Suppose $\xi^{-1}(U)$ is non-empty, but not syndetic. We can find for every finite $F\subseteq G$ some $g_F\in G$ with $(g_F\cdot \xi)(h) = \xi(hg_F)\not\in U$ for every $h\in F$.
			Consider the net $g_F \xi$, indexed by the finite subsets of $G$.
			Up to a subnet it converges to some $\phi$.
			Let $h \in G$ and suppose towards a contradiction that $\phi(h) \in U$.
			Eventually $g_F \xi(h)= \xi(h g_F) \in U$, but for each such $F$, $g_{F \cup \set{h}} \xi(h) \not \in U$, a contradiction.
			So $\ran(\phi)\cap U = \emptyset$.
			On the other hand, if $g_i \phi \to \xi$ and $\xi(h) \in U$, eventually $g_i \phi(h) = \phi(h g_i) \in U$, which cannot be, so $\xi \not \in \overline{G\cdot \phi}$.
			Therefore  $\xi$ does not belong to a minimal subflow, i.e.\ $\xi\not\in \ap(\lip(d))$.
		\end{proof}
		\vspace{2 mm}

		Considering the $f$ defined before, we show that $f\not\in \ap(\lip(d))$. Towards a contradiction, suppose it were. By \Cref{Lem:SyndeticPreimage}, $S \coloneqq f^{-1}([0, 1/2))\subseteq G$ is syndetic, so for some finite $F\subseteq G$, we have $G = FS$. Since $G\cdot p\subseteq \M(G)$ is dense, we have that $S\cdot p\subseteq \M(G)$ is somewhere dense. However, $S\cdot p\subseteq B_\partial(p, 1)$, and must be nowhere dense. This contradiction shows that $f\not\in \ap(\lip(d))$.

		This concludes the proof of $\neg(4)\Rightarrow \neg(3)$ and the proof of \Cref{Thm:EquivalentInG}.
	\end{proof}
	\vspace{2 mm}

	As we have now seen that UEB groups and CAP groups are the same, we can use the names interchangeably; we will typically use CAP unless we wish to emphasize \Cref{Def:UEBGroup}.

	\section{Closure properties of CAP groups}
	\label{Sec:ClosureProps}

	The class of CAP groups enjoys robust closure properties, which we collect in this section. Some are simple observations, while one generalizes a non-trivial results of \cite{JZ}. We will also be able to complete our analysis of when $G\to \M(G)$ respects product, obtaining yet another characterization of CAP groups.
	\vspace{2 mm}

	\begin{prop}
		\label{Prop:CAPDenseHom}
		Let $G$ and $K$ be topological groups. Suppose $G$ is CAP and that $\pi\colon G\to K$ is a continuous homomorphism with dense image. Then $K$ is CAP.
	\end{prop}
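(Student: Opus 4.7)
The plan is to work directly from the original definition of CAP, \Cref{Def:CAPGroup}, rather than going through the UEB characterization. Given a $K$-flow $X$, we pull the action back along $\pi$ to obtain a $G$-action on $X$ via $g \cdot x := \pi(g) \cdot x$, which is continuous as the composition of $\pi \times \mathrm{id}_X$ with the continuous $K$-action. So $X$ becomes a $G$-flow, and since $G$ is CAP, $\ap_G(X)$ is closed in $X$. The whole task is then to show that this coincides with $\ap_K(X)$.

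The key step is to verify that the $G$-subflows of $X$ coincide with the $K$-subflows of $X$. For any $x \in X$, we have $\overline{G \cdot x} = \overline{K \cdot x}$: the inclusion $\subseteq$ is immediate, while for $\supseteq$ we use that $\pi(G)$ is dense in $K$, so every $k \in K$ is a limit of some net $\pi(g_i)$, and continuity of the $K$-action gives $k \cdot x = \lim \pi(g_i)\cdot x \in \overline{G \cdot x}$. By the same density-plus-continuity argument, any closed $G$-invariant (equivalently $\pi(G)$-invariant) subset of $X$ is automatically $K$-invariant, so $G$-subflows and $K$-subflows of $X$ are literally the same subsets.

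Consequently $\ap_G(X) = \ap_K(X)$ as sets, and CAP of $G$ forces this common set to be closed. Since $X$ was an arbitrary $K$-flow, $K$ is CAP. The proof has no genuine obstacle; it is a clean application of \Cref{Fact:DenseSubgroup}-style reasoning transplanted from the dense-subgroup setting to the setting of a continuous homomorphism with dense image. The only subtlety to be careful about is that $\pi$ is not assumed to be injective or open, but neither property is needed—density of the image together with continuity of the $K$-action on $X$ is enough to guarantee that the orbit closures and invariant closed sets are insensitive to whether one uses $G$ or $K$.
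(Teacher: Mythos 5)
Your proof is correct and follows exactly the paper's argument: pull the $K$-flow back along $\pi$, use density of $\im(\pi)$ to see that $G$-orbit closures and $K$-orbit closures (and hence almost periodic sets) coincide, and invoke CAP of $G$. The extra details you supply about invariant closed sets are fine but not needed beyond what the paper's one-line proof already asserts.
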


	\begin{proof}
		This follows since every $K$-flow $X$ becomes a $G$-flow via the action $g\cdot x \coloneqq \pi(g)\cdot x$. Since $\im(\pi)\subseteq K$ is dense, $G$ orbit closures and $K$ orbit closures coincide, so we have that $\ap_K(X) = \ap_G(X)$. Since $G$ is CAP, $K$ is as well.
	\end{proof}
	\vspace{2 mm}

	The next proposition shows that CAP groups are closed under surjective inverse limits; first we give a few general remarks about inverse limits of topological groups. An \emph{inverse system} of topological groups, denoted $(G_i, \pi^j_i)$, consists of a directed set $I$, topological groups $G_i$ for each $i\in I$, and a continuous group homomorphism $\pi^j_i\colon G_j\to G_i$ for each $i\leq j\in I$. The \emph{inverse limit} of the inverse system $(G_i, \pi^j_i)$ is the topological group
	$$\varprojlim_i G_i \coloneqq \setnew{(g_i)_i\in \prod_i G_i}{ \forall i\leq j\in I\, \pi^j_i(g_j) = g_i}.$$
	The notation avoids mentioning the maps $\pi^j_i$ which are usually understood from context. Notice that $\varprojlim_i G_i$ could be empty. Let $\pi_i\colon \varprojlim_i G_i\to G_i$ be the projection to coordinate $i$. We say that the inverse system is \emph{surjective} if each $\pi_i$ is surjective. In particular, the inverse limit of a surjective inverse system is non-empty. Note that this is stronger than demanding that each $\pi^j_i$ be surjective.

	Notice that each bonding homomorphism $\pi^j_i$ continuously extends to a surjective map $\hat{\pi}^j_i\colon \S(G_j)\to \S(G_i)$.
	\vspace{2 mm}

	\begin{lemma}
		\label{Lem:InverseSG}
		Suppose $(G_i, \pi^j_i)$ is a surjective inverse system of topological groups, and let $G = \varprojlim_i G_i$. Then $\S(G) \cong \varprojlim_i \S(G_i)$.
	\end{lemma}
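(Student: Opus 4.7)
The plan is to construct the natural continuous $G$-equivariant map $\Phi\colon \S(G)\to \varprojlim_i \S(G_i)$ and show it is a bijection. Using that the bonding maps $\hat{\pi}^j_i\colon \S(G_j)\to \S(G_i)$ are continuous extensions (so in particular unique), the extensions $\hat{\pi}_i\colon \S(G)\to \S(G_i)$ of the projections $\pi_i$ commute with the bonding maps and therefore assemble to $\Phi$. Since $\S(G)$ is compact and $\varprojlim_i \S(G_i)$ is Hausdorff, a continuous bijection will automatically be a homeomorphism of $G$-flows.

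For surjectivity, the image $\Phi(\S(G))$ is closed by compactness, so it suffices to show $\Phi(G)$ is dense in $\varprojlim_i \S(G_i)$. Because $I$ is directed, any basic open set can be written in the form $\hat{\pi}_j^{-1}(W)$ for some $j\in I$ and $W\subseteq_{op} \S(G_j)$. Density of $G_j$ in $\S(G_j)$ gives some $x\in W\cap G_j$, and surjectivity of $\pi_j\colon G\to G_j$ produces $g\in G$ with $\pi_j(g)=x$, as needed.

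Injectivity is the main technical point, and I would reduce it to the claim that the subalgebra $\mathcal{A} \coloneqq \bigcup_j \pi_j^{\ast}\,\mathrm{RUC}_b(G_j)\subseteq \mathrm{RUC}_b(G)$ is uniformly dense. Granted this, if $p\neq q$ in $\S(G)$ there is $f\in \mathrm{RUC}_b(G)$ with $\hatf(p)\neq \hatf(q)$, and approximating $f$ within $\tfrac{1}{3}|\hatf(p)-\hatf(q)|$ by some $\tilde{f}\circ \pi_j\in \mathcal{A}$ forces $\hat{\pi}_j(p)\neq \hat{\pi}_j(q)$ (using that continuous extensions to $\S(G)$ are unique, so $\widehat{\tilde{f}\circ \pi_j}=\hat{\tilde{f}}\circ \hat{\pi}_j$). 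To prove density, given $f\in \mathrm{RUC}_b(G)$ and $\epsilon>0$, pick $U\in \cal{N}_G$ so that $gh^{-1}\in U$ implies $|f(g)-f(h)|<\epsilon$. Because $G = \varprojlim_i G_i$ carries the subspace topology from $\prod_i G_i$ and $I$ is directed, we may shrink $U$ to one of the form $U=\pi_j^{-1}(V)$ with $V\in \cal{N}_{G_j}$. Using surjectivity of $\pi_j$, fix a set-theoretic section $s\colon G_j\to G$ and set $\tilde{f} \coloneqq f\circ s$. Then for $xy^{-1}\in V$ we have $s(x)s(y)^{-1}\in \pi_j^{-1}(V)=U$, so $|\tilde{f}(x)-\tilde{f}(y)|<\epsilon$, showing $\tilde{f}\in \mathrm{RUC}_b(G_j)$. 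For any $g\in G$, $g\cdot s(\pi_j(g))^{-1}\in \ker(\pi_j)\subseteq U$, hence $|f(g)-\tilde{f}(\pi_j(g))|<\epsilon$, giving $\|f-\tilde{f}\circ \pi_j\|_{\infty}\le \epsilon$.

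The main obstacle is precisely this density argument, where both uses of the surjectivity of the inverse system are essential: surjectivity of the individual $\pi_j$ lets us choose a section (and is also what makes density work in the first step), while the directedness of $I$ lets us concentrate a given modulus of uniform continuity on a single $G_j$. Once density of $\mathcal{A}$ is established, the rest of the proof is formal and yields the claimed $G$-flow isomorphism $\S(G)\cong \varprojlim_i \S(G_i)$.
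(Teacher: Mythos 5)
Your overall route is genuinely different from the paper's: the paper proves injectivity in the near-ultrafilter presentation of $\S(G)$, separating $p\neq q$ by finding $A\in p$, $B\in q$ and $U=\pi_i^{-1}(U_i)$ with $UA\cap UB=\emptyset$ and pushing forward along $\pi_i$, whereas you work in the Gelfand picture and reduce injectivity to density of the subalgebra $\mathcal{A}=\bigcup_j\pi_j^{\ast}\mathrm{RUC}_b(G_j)$. Your surjectivity argument is fine (and more detailed than the paper, which merely asserts it), the reduction of injectivity to density of $\mathcal{A}$ is correct, and the move of shrinking $U$ to $\pi_j^{-1}(V)$ via directedness is exactly the move the paper makes. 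The problem is the density proof itself.

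The gap is the claim that $\tilde{f}:=f\circ s\in\mathrm{RUC}_b(G_j)$. What you verify is a single modulus: $xy^{-1}\in V$ implies $\lvert\tilde{f}(x)-\tilde{f}(y)\rvert<\epsilon$. Right uniform continuity requires a modulus for \emph{every} $\epsilon'>0$, and for $\epsilon'<\epsilon$ your argument gives nothing: $s$ is only a set-theoretic section, so $s(x)s(y)^{-1}$ is controlled only in its $j$-th coordinate, while $f$ may depend, at scales finer than $\epsilon$, on coordinates beyond $j$. Concretely, take $G=\bb{T}^{\omega}=\varprojlim_n\bb{T}^n$, let $f(x)=\tfrac12\,\rho(x_2,1)$ for $\rho$ an invariant metric on $\bb{T}$ of diameter $1$, take $\epsilon=1$ and $j=1$ (so $V=\bb{T}$ is a valid modulus), and let $s(x)=(x,\sigma(x),1,1,\dots)$ with $\sigma$ discontinuous: then $\tilde{f}(x)=\tfrac12\rho(\sigma(x),1)$ is not even continuous on $G_1=\bb{T}$, so $\tilde{f}\circ\pi_1\notin\mathcal{A}$, even though $\lVert f-\tilde f\circ\pi_1\rVert\le\epsilon$. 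Since you need $\tilde f$ to be genuinely RUC for it to extend to $\S(G_j)$ (otherwise $\widehat{\tilde f\circ\pi_j}=\hat{\tilde f}\circ\hat{\pi}_j$ is meaningless), this breaks the step you yourself identify as the crux. Nor is the repair routine: replacing $f\circ s$ by, say, $\tilde F(x)=\sup\{f(g):\pi_j(g)=x\}$, which is $\ker\pi_j$-invariant and within $\epsilon$ of $f$, still leaves you having to prove $\tilde F\in\mathrm{RUC}_b(G_j)$, and there you hit the fact that neither $\pi_j$ nor the bonding maps are assumed open, so small elements of $G_j$ need not lift to small elements of $G$. The paper's near-ultrafilter argument sidesteps all of this because it only has to separate the two given points at a single scale; it never needs to manufacture a uniformly continuous function on $G_j$.
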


	\begin{proof}
		Viewing $\S(G)$ and $\S(G_i)$ as spaces of near ultrafilters, let $\phi\colon \S(G)\to \varprojlim_i \S(G_i)$ denote the map
		$p \mapsto \left ( \setnew{A \subseteq G_i}{\pi_i^{-1}(A) \in p}\right)_{i \in I}$.
		This is a continuous and surjective $G$-map, and we argue that it is injective. Suppose $p\neq q\in \S(G)$. Find $A\in p$, $B\in q$, and $U\in \cal{N}_G$ with $UA\cap UB = \emptyset$. We may assume that $U = \{g\in G: \pi_i(g) \in U_i\}$ for some $i \in I$ and  $U_i\in \cal{N}_{G_i}$. In particular, the sets $UA$ and $UB$ are $\pi_i$-invariant, that is $\pi_i^{-1}(\pi_i[UA]) = UA$, and similarly for $UB$.
		Therefore we have $\pi_i[A]\in p_i$, $\pi_i[B]\in q_i$, and $U_i\pi_i[A]\cap U_i\pi_i[B] = \pi_i[UA]\cap \pi_i[UB] = \emptyset$, i.e.\ that $p_i\neq q_i$.
	\end{proof}
	\vspace{0 mm}

	\begin{prop}
		\label{Prop:CAPInverseLim}
		Suppose $(G_i, \pi^j_i)$ is a surjective inverse system of topological groups, and let $G = \varprojlim_i G_i$. If every $G_i$ is CAP, then $G$ is CAP.
	\end{prop}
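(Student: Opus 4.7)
My plan is to verify item $(3)$ of \Cref{Prop:CAPSimpleEquiv}: I will produce a base $\cal{D}$ of continuous, diameter $1$, right-invariant pseudo-metrics on $G$ such that $\partial_d$ is continuous on $\M(G)$ for every $d \in \cal{D}$. The natural candidate is the collection of pullbacks: for each $i \in I$ and each continuous, diameter $1$, right-invariant pseudo-metric $d_i$ on $G_i$, set $(\pi_i^* d_i)(g, h) \coloneqq d_i(\pi_i(g), \pi_i(h))$, and let $\cal{D}$ be the family of all such pullbacks. That $\cal{D}$ is indeed a base on $G$ is immediate from the inverse-limit topology: basic neighborhoods of $1_G$ have the form $\pi_i^{-1}(V)$ for $i \in I$ and $V \in \cal{N}_{G_i}$, and applying \Cref{Fact:BK} inside $G_i$ one finds $d_i$ and $\epsilon > 0$ with $d_i(\epsilon) \subseteq V$, so that $(\pi_i^* d_i)(\epsilon) = \pi_i^{-1}(d_i(\epsilon)) \subseteq \pi_i^{-1}(V)$.

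Next, I will invoke \Cref{Lem:InverseSG} to view $\hat{\pi}_i \colon \S(G) \to \S(G_i)$ as the coordinate projection, which is a continuous $G$-equivariant surjection. Fixing a minimal subflow $M \subseteq \S(G)$, its image $\hat{\pi}_i(M)$ is a $G$-minimal subflow of $\S(G_i)$; since $\pi_i$ is surjective, hence has dense image, $G$-minimality and $G_i$-minimality coincide on $\S(G_i)$, so $\hat{\pi}_i(M) = \M(G_i)$. Thus $\phi_i \coloneqq \hat{\pi}_i|_M \colon \M(G) \to \M(G_i)$ is a continuous $G$-equivariant surjection.

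The heart of the argument is the identity $\partial_d = \partial_{d_i} \circ (\phi_i \times \phi_i)$ on $\M(G)$ for $d = \pi_i^* d_i$. To see this, note that any $f \in \lip(d)$ must satisfy $f(g) = f(h)$ whenever $\pi_i(g) = \pi_i(h)$, since then $d(g, h) = 0$; by surjectivity of $\pi_i$, $f$ factors uniquely as $f = f_i \circ \pi_i$ for some $f_i \in \lip(d_i)$, and this sets up a bijection $\lip(d_i) \leftrightarrow \lip(d)$. By Gelfand duality the continuous extensions satisfy $\hatf(p) = \hatf_i(\hat{\pi}_i(p)) = \hatf_i(\phi_i(p))$, so taking suprema yields the claimed identity. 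Since $G_i$ is CAP, $\partial_{d_i}$ is continuous on $\M(G_i)$ by \Cref{Prop:CAPSimpleEquiv}, whence $\partial_d$ is continuous on $\M(G)$, completing the verification. No step here is particularly delicate; the main bookkeeping is setting up the pullback base correctly and confirming that the $\lip$-bijection transports continuous extensions as claimed.
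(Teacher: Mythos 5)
Your proposal is correct and follows essentially the same route as the paper: both use the base of pullback pseudo-metrics $d_i\circ(\pi_i\times\pi_i)$, factor $\lip(\pi_i^*d_i)$ through $G_i$ via surjectivity of $\pi_i$, deduce the identity $\partial_{\pi_i^*d_i} = \partial_{d_i}\circ(\hat\pi_i\times\hat\pi_i)$ on a minimal subflow, and conclude continuity from the CAP hypothesis on $G_i$ via \Cref{Prop:CAPSimpleEquiv}. The only cosmetic difference is that you spell out the verification that the pullbacks form a base and that $\hat\pi_i[M]\cong\M(G_i)$, steps the paper leaves implicit.
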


	\begin{proof}
		By \Cref{Lem:InverseSG}, we have $\S(G) \cong \varprojlim_i \S(G_i)$. If $M\subseteq \S(G)$ is a minimal subflow and $i\in I$, then $\pi_i[M] =: M_i\subseteq \S(G_i)$ is a minimal $G_i$-subflow. A base of continuous, diameter $1$, right-invariant pseudo-metrics on $G$ is given by those of the form $d\circ (\pi_i\times \pi_i)$ for some $i\in I$ and some continuous, diameter $1$, right-invariant pseudo-metric $d$ on $G_i$. If $f\in \lip(d\circ (\pi_i\times \pi_i))$, and $g, h \in G$ are such that $\pi_i(g) = \pi_i(h)$, then $\lvert f(g)-f(h) \rvert \le d(\pi_i(g), \pi_i(h)) = 0$, so $f(g)=f(h)$.
		Therefore $f$ factors through $G_i$: there is $f' \in \lip(d)$ with $f= f'\circ \pi_i$.
		It follows that $\partial_{d\circ (\pi_i\times \pi_i)} = \partial_d\circ (\pi_i\times \pi_i)$. By assumption, $\partial_d$ is continuous on $M_i$, so $\partial_{d\circ (\pi_i\times \pi_i)}$ is continuous on $M$ as desired.
	\end{proof}
	\vspace{2 mm}

	We discuss one last general result about inverse systems which will be useful later. Suppose $(G_i, \pi^j_i)$ is a surjective inverse system of topological groups. If $X_i$ is a $G_i$-flow, then we can also view $X_i$ as a $G_j$-flow for any $j\geq i$. An \emph{inverse system of $G_i$-flows}, denoted $(X_i, \phi^j_i)$, consists of a $G_i$-flow $X_i$ for each $i\in I$ along with a $G_j$-map $\phi^j_i\colon G_j\to G_i$ for each $i\leq j\in I$. Then $\varprojlim_i X_i$ becomes a $\varprojlim_i G_i$-flow in the natural way.

	We saw in the proof of \Cref{Prop:CAPInverseLim} that $\M(G) = \varprojlim_i \M(G_i)$ for some inverse limit. We show that any inverse limit will work.
	\vspace{2 mm}

	\begin{lemma}
		\label{Lem:InverseMG}
		Suppose $(G_i, \pi^j_i)$ is a surjective inverse system of topological groups. Suppose $(X_i, \phi^j_i)$ is an inverse system of $G_i$-flows so that $X_i\cong \M(G_i)$ for every $i\in I$. Then letting $X = \varprojlim_i X_i$, we have $X\cong \M(G)$.
	\end{lemma}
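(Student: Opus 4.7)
The plan is to realize both $X$ and $\M(G)$ inside $\varprojlim_i \S(G_i)$ via \Cref{Lem:InverseSG} and then transport one to the other using a single point of $X$. First I would fix a minimal subflow $M \subseteq \S(G)$ together with an idempotent $u \in M$, and set $M_i \coloneqq \hat\pi_i[M]$ and $u_i \coloneqq \hat\pi_i(u)$. Each $M_i$ is a minimal $G_i$-subflow of $\S(G_i)$, hence isomorphic to $\M(G_i)$. Since each continuous extension $\hat\pi^j_i$ is a compact right-topological semigroup homomorphism (this follows by continuity from the fact that $\pi^j_i$ is a group homomorphism), the $u_i$ form a compatible family of idempotents satisfying $\hat\pi^j_i(u_j) = u_i$.

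Next I would verify that the isomorphism of \Cref{Lem:InverseSG} identifies $M$ with $\varprojlim_i M_i$. The forward inclusion $M \hookrightarrow \varprojlim_i M_i$ is immediate from the description $p \mapsto (\hat\pi_i(p))_i$. The reverse relies on showing that $\varprojlim_i M_i$ is already minimal as a $G$-flow: a basic open set in the inverse limit has the form $\phi_j^{-1}(V_j)$ with $V_j \subseteq M_j$ open, and for $(p_i) \in \varprojlim_i M_i$ the $G_j$-minimality of $M_j$ produces $h \in G_j$ with $h p_j \in V_j$, which lifts to $g \in G$ by surjectivity of $\pi_j \colon G \to G_j$. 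Since $M$ sits inside $\varprojlim_i M_i$ and $M$ is itself minimal, the inclusion must be equality.

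Now I transport the isomorphism $M \cong \varprojlim_i M_i$ to $X$. Each $\phi^j_i$ is a $G_j$-map onto the minimal $G_j$-flow $X_i$ and hence surjective, so $X \ne \emptyset$. Pick $y = (y_i)_i \in X$ and define $\beta_i \colon M_i \to X_i$ by $\beta_i(p) \coloneqq p \cdot y_i$, where $p \in M_i \subseteq \S(G_i)$ acts on $X_i$ via $\hat\rho_{y_i}$. Composing with any $G_i$-isomorphism $X_i \cong M_i$ (which exists as both are isomorphic to $\M(G_i)$), $\beta_i$ becomes a $G_i$-map between minimal subflows of $\S(G_i)$ and is therefore an isomorphism by \Cref{Fact:SGSemigroup}(5). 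Compatibility reads
\[ \phi^j_i(\beta_j(p)) = \phi^j_i(p \cdot y_j) = p \cdot \phi^j_i(y_j) = p \cdot y_i = \hat\pi^j_i(p) \cdot y_i = \beta_i(\hat\pi^j_i(p)), \]
using $G_j$-equivariance of $\phi^j_i$, the defining identity $\phi^j_i(y_j) = y_i$, and the fact that the $\S(G_j)$-action on the $G_j$-flow $X_i$ factors through $\hat\pi^j_i$. Hence $(\beta_i)_i$ induces a $G$-isomorphism $\varprojlim_i M_i \to X$, yielding $X \cong M \cong \M(G)$.

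The step I expect to be the main obstacle is establishing the identification $M \cong \varprojlim_i M_i$ inside $\S(G) \cong \varprojlim_i \S(G_i)$, which hinges on the minimality argument for $\varprojlim_i M_i$; once that is in hand the remainder is largely bookkeeping, driven by the observation that any single element of the abstract inverse limit $X$ automatically provides the coherent family of $G_i$-equivariant identifications $M_i \to X_i$ needed to promote the identification to a $G$-isomorphism.
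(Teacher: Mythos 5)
Your proof is correct and follows essentially the same route as the paper: fix $y=(y_i)\in X$ and show that $p\mapsto py$ identifies a minimal subflow $M\subseteq \S(G)\cong\varprojlim_i\S(G_i)$ with $X$ coordinatewise, the paper simply leaving implicit the details you spell out (minimality of $\varprojlim_i M_i$ and the compatibility of the maps $\beta_i$). The idempotents $u_i$ and the semigroup-homomorphism claim for $\hat\pi^j_i$ are never used and could be deleted.
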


	\begin{proof}
		Fix a minimal subflow $M\subseteq \S(G)$, and set $\pi_i[M] = M_i\subseteq \S(G_i)$. Fix $x\in X$. This gives us a $G$-map $\hat{\rho}_x\colon M\to X$ given by $\hat{\rho}_x(p) = px$. Writing $p = (p_i)_{i\in I}$ and $x = (x_i)_{i\in I}$, we have $px = (p_ix_i)_{i\in I}$. If $p\neq q\in \S(G)$, then $p_i\neq q_i$ for some $i\in I$. Since $X_i\cong \M(G_i)$, we have that $\hat{\rho}_{x_i}\colon M_i\to X_i$ is an isomorphism, implying that $p_ix_i\neq q_ix_i$. Hence $\hat{\rho}_x$ is injective and therefore an isomorphism.
	\end{proof}
	\vspace{2 mm}

	We now generalize a theorem from \cite{JZ}, where it is shown that given a short exact sequence of Polish groups $1\to H\to G\to K\to 1$ so that $\M(H)$ and $\M(K)$ are metrizable, then $\M(G)$ is also metrizable. An important step in that proof was to show that Polish groups $G$ with $\M(G)$ metrizable are CAP. Here, we use the strong CAP property directly to show that CAP groups are closed under extensions.
	\vspace{2 mm}

	\begin{theorem}
		\label{Thm:CAPShortExactSeq}
		Suppose $1\to H\to G\xrightarrow{\pi} K\to 1$ is a short exact sequence of topological groups. Then if $H$ and $K$ are CAP, then so is $G$.
	\end{theorem}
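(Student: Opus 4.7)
The plan is to reduce the problem to the $K$-action by quotienting out by the $H$-orbit equivalence relation. The essential ingredient is that $H$, being CAP, is already \emph{strongly} CAP by \Cref{Thm:EquivalentInG}, so for any $G$-flow $X$ the relation $E_H$ on $\ap_H(X)$ is closed.

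Fix a $G$-flow $X$. The first step is to check that $\ap_G(X) \subseteq \ap_H(X)$. Given $x \in \ap_G(X)$, the set $\ap_H(\overline{Gx})$ is closed by CAP of $H$, non-empty, and $G$-invariant since $H$ is normal in $G$: for $g \in G$ and $y \in \ap_H(\overline{Gx})$ we have $g\overline{Hy} = \overline{gHy} = \overline{Hgy}$, which is again $H$-minimal. By $G$-minimality of $\overline{Gx}$, this forces $\ap_H(\overline{Gx}) = \overline{Gx}$, so in particular $x \in \ap_H(X)$.

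Next, I would form $Y := \ap_H(X)/E_H$ with the continuous quotient map $q$; this is compact Hausdorff because $\ap_H(X)$ is and $E_H$ is closed. The $G$-action on $\ap_H(X)$ descends to $Y$ via normality of $H$, and $H$ acts trivially, so $Y$ is naturally a $K$-flow; continuity of the $K$-action is a routine subnet argument combining compactness with the identity $g\overline{Hy} = \overline{Hgy}$. Given now a net $x_i \in \ap_G(X)$ with $x_i \to x$, the previous step and closedness of $\ap_H(X)$ give $x \in \ap_H(X)$. Since $q$ is continuous and $G$-equivariant, it sends the $G$-minimal flow $\overline{Gx_i}$ onto the $K$-minimal subflow $\overline{Kq(x_i)}$, so $q(x_i) \in \ap_K(Y)$; and CAP of $K$ yields $q(x) \in \ap_K(Y)$.

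The main obstacle is the final step: deducing $x \in \ap_G(X)$ from $x \in \ap_H(X)$ and $q(x) \in \ap_K(Y)$. Given any $y \in \overline{Gx}$, I need $x \in \overline{Gy}$. Since $q(y) \in \overline{Kq(x)}$ and the latter is $K$-minimal, there is a net $g_\alpha \in G$ with $q(g_\alpha y) \to q(x)$; by compactness, pass to a subnet so that $g_\alpha y \to y' \in \overline{Gy}$. Then $q(y') = q(x)$, i.e.\ $y' \in \overline{Hx}$. Because $x \in \ap_H(X)$, the flow $\overline{Hx}$ is $H$-minimal, and hence $x \in \overline{Hy'} \subseteq \overline{Gy}$, as required.
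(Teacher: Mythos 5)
Your proof is correct and follows essentially the same route as the paper: show $\ap_G(X)\subseteq\ap_H(X)$ via normality of $H$, pass to the $K$-flow $\ap_H(X)/E_H$ (which requires $H$ to be \emph{strongly} CAP, exactly as you note), and then invoke CAP of $K$. The only divergence is cosmetic: where the paper proves the quotient map is open and uses that to show $\pi_H^{-1}(Z)$ is $G$-minimal for each minimal $K$-subflow $Z$, you verify minimality of $\overline{Gx}$ pointwise by lifting a net from the quotient via compactness and then correcting inside an $H$-minimal fiber --- the same idea executed with subnets instead of openness.
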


	\begin{proof}
		We break the proof of this theorem into several claims.
		\vspace{2 mm}

		\begin{claim}
			Suppose $Y$ is a $G$-flow. Then $\ap_H(Y)$ is a $G$-subflow. If $Y$ is a minimal $G$-flow, then $\ap_H(Y) = Y$.
		\end{claim}

		\begin{proof}
			Suppose $Z\subseteq Y$ is a minimal $H$-subflow. Then so is $gZ$, since $H$ is a normal subgroup. Hence $\ap_H(Y)$ is $G$-invariant, and it is closed since $H$ is CAP. If $Y$ is a minimal $G$-flow, then $\ap_H(Y)\subseteq Y$ is dense, and it is closed since $H$ is CAP.
		\end{proof}
		\vspace{1 mm}

		\begin{claim}
			Suppose $Y$ is a $G$-flow with $\ap_H(Y) = Y$. Then $Y/E_H$ is a $K$-flow, and the quotient map $\pi_H\colon Y\to Y/E_H$ is open.
		\end{claim}

		\begin{proof}
			Since $H$ is CAP, $Y/E_H$ is a compact Hausdorff space. Since the $G$-action on $Y$ is continuous and preserves the closed equivalence relation $E_H$, we obtain a continuous $G$-action on $Y/E_H$. Since $H$ acts trivially on $Y/E_H$, we can view $Y/E_H$ as a $K$-flow. To see that $\pi_H$ is open, suppose $A\subseteq Y$ is open. Then $\pi_H[A] = \pi_H[HA]$, and $HA$ is open and $E_H$-invariant.
		\end{proof}
		\vspace{2 mm}

		Now suppose $X$ is a $G$-flow, towards showing that $\ap_G(X)$ is closed. By the first claim, we have $\ap_G(X)\subseteq \ap_H(X)$. By the second claim, $\ap_H(X)/E_H \coloneqq W$ is a $K$-flow. We will argue that
		$$\ap_G(X) = \pi_H^{-1}(\ap_K(W)).$$
		Since $K$ is CAP, this will show that $\ap_G(X)$ is closed. In one direction, suppose $Y\subseteq X$ is a minimal $G$-subflow. Then $\pi_H[Y]\subseteq W$ is a minimal $G$-subflow, so also a minimal $K$-subflow. In the other direction, suppose $Z\subseteq \ap_K(W)$ is a minimal $K$-subflow. Then $\pi_H^{-1}(Z)\subseteq \ap_G(X)$ is a $G$-subflow, and we argue that it is minimal. Fix $p\in \pi_H^{-1}(Z)$ and a non-empty open $A\subseteq \pi_H^{-1}(Z)$. By the second claim, we have that $\pi_H[A]\subseteq Z$ is open, so find $g\in G$ with $g\cdot \pi_H(p)\in \pi_H[A]$. This means that $gp\in Q$ for some minimal $H$-subflow $Q\subseteq \pi_H^{-1}(Z)$ with $Q\cap A\neq \emptyset$. We then find $h\in H$ with $hgp\in A$.
	\end{proof}
	\vspace{0 mm}

	\begin{cor}
		\label{Cor:CAPProducts}
		If $\{G_i: i\in I\}$ is a set of CAP groups, then $\prod_i G_i$ is CAP.
	\end{cor}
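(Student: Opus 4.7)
The plan is to deduce the corollary from two of the closure properties established immediately before it: the extension theorem (\Cref{Thm:CAPShortExactSeq}) handles the finite case, and the inverse limit theorem (\Cref{Prop:CAPInverseLim}) handles the passage from finite products to arbitrary ones.

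First I would handle finite products. Given CAP groups $G$ and $H$, consider the short exact sequence
\[
1 \to G \to G \times H \xrightarrow{\pi_H} H \to 1,
\]
where $G$ is identified with the normal subgroup $G \times \{1_H\}$ and $\pi_H$ is projection onto the second coordinate. Since both the kernel and quotient are CAP, \Cref{Thm:CAPShortExactSeq} gives that $G \times H$ is CAP. A straightforward induction then yields that any finite product of CAP groups is CAP.

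Next, for an arbitrary index set $I$, let $\mathcal{F}$ denote the collection of finite subsets of $I$ directed by inclusion. For each $F \in \mathcal{F}$ set $G_F \coloneqq \prod_{i \in F} G_i$, and for $F \subseteq F'$ let $\pi^{F'}_F \colon G_{F'} \to G_F$ be the obvious coordinate projection. This is an inverse system of topological groups, each of whose members is CAP by the first step. It is standard that $\prod_{i \in I} G_i \cong \varprojlim_{F \in \mathcal{F}} G_F$ as topological groups, with projection maps $\pi_F\colon \prod_{i\in I} G_i \to G_F$ that are obviously surjective (given any element of $G_F$, extend it by $1_{G_i}$ on coordinates outside $F$). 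Hence the inverse system is surjective in the sense of \Cref{Prop:CAPInverseLim}.

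Applying \Cref{Prop:CAPInverseLim} to this inverse system concludes the proof. No step appears to present a genuine obstacle, since all the technical work is already carried out in \Cref{Thm:CAPShortExactSeq} and \Cref{Prop:CAPInverseLim}; the only thing to verify carefully is that the system of finite subproducts really is surjective (not just that each bonding map $\pi^{F'}_F$ is surjective), which is immediate from the product structure.
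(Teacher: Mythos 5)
Your proof is correct and matches the paper's argument exactly: the paper's proof of \Cref{Cor:CAPProducts} is simply ``Combine \Cref{Prop:CAPInverseLim} and \Cref{Thm:CAPShortExactSeq},'' and you have filled in precisely the intended details (finite products via the short exact sequence, arbitrary products as the surjective inverse limit of finite subproducts). Your care in checking surjectivity of the projections $\pi_F$, as opposed to just the bonding maps, is exactly the right point to verify.
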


	\begin{proof}
		Combine \Cref{Prop:CAPInverseLim} and \Cref{Thm:CAPShortExactSeq}.
	\end{proof}
	\vspace{2 mm}

	In fact, when $G = \prod_i G_i$ for a collection $\{G_i: i\in I\}$ of CAP groups, we can describe $\M(G)$ explicitly.
	\vspace{2 mm}

	\begin{prop}
		\label{Prop:ProductFormula}
		Suppose $\{G_i: i\in I\}$ is a set of CAP groups. Then $M\left(\prod_i G_i\right) \cong \prod_i \M(G_i)$. More generally, if $H$ and $K$ are topological groups with $H$ CAP, then \newline $\M(H\times K) = \M(H)\times \M(K)$.
	\end{prop}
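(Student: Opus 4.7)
The plan is to prove the two-factor statement first and then deduce the arbitrary product case. For the latter, write $\prod_i G_i = \varprojlim_F \prod_{i \in F} G_i$ as a surjective inverse limit over the directed set of finite subsets $F \subseteq I$. By \Cref{Cor:CAPProducts}, each finite product $\prod_{i \in F} G_i$ is again CAP, so induction on $|F|$ using the two-factor case (with $H = \prod_{i \in F \setminus \{j\}} G_i$ CAP and $K = G_j$) gives $\M(\prod_{i \in F} G_i) \cong \prod_{i \in F} \M(G_i)$. \Cref{Lem:InverseMG} then identifies $\M(\prod_i G_i)$ with the inverse limit $\varprojlim_F \prod_{i \in F} \M(G_i) = \prod_i \M(G_i)$.

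For the two-factor case, fix a minimal subflow $M_K \subseteq \S(K)$, so $M_K \cong \M(K)$. I would construct an $(H \times K)$-equivariant map $\mu \colon \M(H) \times M_K \to \S(H \times K)$ by setting
\[
\mu(p, q)(f) \coloneqq q\bigl(k \mapsto p(f(\cdot, k))\bigr), \qquad f \in \mathrm{RUC}_b(H \times K).
\]
Well-definedness rests on the observation that the right-uniform continuity of $f$ on $H \times K$ makes $\{f(\cdot, k) : k \in K\}$ a UEB subset of $\mathrm{RUC}_b(H)$; it follows that $k \mapsto p(f(\cdot, k))$ itself lies in $\mathrm{RUC}_b(K)$, and $\mu(p, q)$ is a $C^*$-homomorphism. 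Equivariance is a direct calculation unwinding the $(H \times K)$-actions on $\S(H)$, $\S(K)$, and $\S(H \times K)$. For injectivity, applying the equality $\mu(p_1, q_1) = \mu(p_2, q_2)$ to functions of the form $(h, k) \mapsto g(h)$ recovers $p_1 = p_2$, and to $(h, k) \mapsto g(k)$ recovers $q_1 = q_2$.

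The crux of the argument is joint continuity of $\mu$ on $\M(H) \times M_K$, which is exactly where the CAP hypothesis on $H$ enters. Writing $\phi_p(k) \coloneqq p(f(\cdot, k))$, the UEB property of $\{f(\cdot, k)\}_{k \in K}$ says precisely that $p \mapsto \phi_p$ is uniformly continuous from $\S(H)$ with the UEB uniformity into $\mathrm{RUC}_b(K)$ equipped with the sup norm. Since $H$ is CAP, \Cref{Thm:EquivalentInG} gives that the UEB and compact uniformities coincide on $\M(H)$, so $p_\alpha \to p$ in $\M(H)$ implies $\phi_{p_\alpha} \to \phi_p$ uniformly on $K$. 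The triangle estimate
\[
|\mu(p_\alpha, q_\alpha)(f) - \mu(p, q)(f)| \le \|\phi_{p_\alpha} - \phi_p\|_\infty + |q_\alpha(\phi_p) - q(\phi_p)|
\]
then delivers continuity: the first summand vanishes in the limit by the uniform convergence just discussed, and the second by the definition of the topology on $\S(K)$.

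To conclude, $\M(H) \times M_K$ is a minimal $(H \times K)$-flow because the orbit of any $(p, q)$ equals $Hp \times Kq$, whose closure is $\M(H) \times M_K$. Its image $\mu(\M(H) \times M_K)$ is thus a minimal $(H \times K)$-subflow of $\S(H \times K)$, hence isomorphic to $\M(H \times K)$. A continuous injection between compact Hausdorff spaces is a homeomorphism onto its image, so $\mu$ provides the desired $(H \times K)$-isomorphism $\M(H) \times \M(K) \cong \M(H \times K)$. The main obstacle is the continuity of $\mu$: without CAP on at least one factor, uniform equicontinuity of $\{f(\cdot, k)\}_k$ does not upgrade pointwise convergence of $\phi_{p_\alpha}(k) \to \phi_p(k)$ to uniform convergence, and \Cref{Thm:NotCAPNotProductive} confirms that the product identity genuinely fails when neither factor is CAP.
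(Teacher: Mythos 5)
Your reduction of the arbitrary product to the two-factor case via \Cref{Lem:InverseMG} is exactly the paper's, but your two-factor argument goes in the opposite direction from theirs and is, as far as I can check, correct. The paper works ``top-down'': starting from $\M(H\times K)$, it invokes the claims from the proof of \Cref{Thm:CAPShortExactSeq} (using normality of $H$ in $H\times K$ and the CAP property of $H$ to know that $E_H$ is a closed equivalence relation with open quotient map) to produce a $K$-map $\M(H\times K)/E_H\cong\M(K)$ and an $H$-map $\M(H\times K)\to\M(H)$, and then checks that the combined $G$-map into $\M(H)\times\M(K)$ is injective, hence an isomorphism. You instead build the isomorphism ``bottom-up'': an iterated-limit map $\mu(p,q)(f)=q\bigl(k\mapsto p(f(\cdot,k))\bigr)$ embedding $\M(H)\times\M(K)$ equivariantly into $\S(H\times K)$ as a minimal subflow, which by \Cref{Fact:SGSemigroup} must be $\M(H\times K)$. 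The key observations — that $\{f(\cdot,k):k\in K\}$ is a UEB subset of $\mathrm{RUC}_b(H)$, so that $p\mapsto\phi_p$ is UEB-to-sup-norm uniformly continuous, and that the CAP hypothesis enters precisely through the equivalence of the UEB and compact uniformities on $\M(H)$ (\Cref{Thm:EquivalentInG}) to upgrade pointwise to uniform convergence — all hold, and the remaining verifications (multiplicativity of $\mu(p,q)$, equivariance, injectivity via functions depending on one coordinate, minimality of the product flow) go through. Your route has the advantage of making transparent exactly where and why CAP is needed, tying the proof directly to the failure described in \Cref{Thm:NotCAPNotProductive}; the paper's route is softer, avoiding the Gelfand-space computation entirely and reusing the machinery already built for group extensions.
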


	\begin{proof}
		By \Cref{Lem:InverseMG}, we have $\M\left(\prod_i G_i\right)\cong \varprojlim \M(G_F)$, where $F\subseteq I$ is finite and $G_F = \prod_{i\in F} G_i$, so it is enough to show the second claim. Set $G = H\times K$. By the claims in the proof of \Cref{Thm:CAPShortExactSeq}, we have that $\ap_H(\M(G)) = \M(G)$ and that $\M(G)/E_H$ is a $K$-flow. Because $\M(G)$ is a minimal $G$-flow, $\M(G)/E_H$ is a minimal $K$-flow. If $X$ is any minimal $K$-flow, then $X$ is also a $G$-flow, so let $\phi\colon \M(G)\to X$ be a $G$-map. Since $H$ acts trivially on $X$, $\phi$ must be $E_H$-invariant, giving us a $K$-map $\tilde{\phi}\colon \M(G)/E_H\to X$. Hence we have $\M(K)\cong \M(G)/E_H$. Let $\pi\colon \M(G)\to \M(K)$ denote the quotient map.

		The homomorphism from $G$ onto $H$ also induces an $H$-map $\psi\colon \M(G)\to \M(H)$. Notice that $\psi$ maps each $H$-minimal subflow of $\M(G)$ onto $\M(H)$, hence each $H$-minimal subflow of $\M(G)$ is isomorphic to $\M(H)$ via $\psi$. Form the $G$-map $\theta\colon \M(G)\to \M(H)\times \M(K)$. We show that $\theta$ is injective, and hence an isomorphism. Fix $p\neq q\in \M(G)$; if $\pi(p)\neq \pi(q)$, we are done. If $\pi(p) = \pi(q)$, then $p$ and $q$ belong to the same $H$-minimal subflow of $\M(G)$. Since $p\neq q$, we must have $\psi(p)\neq \psi(q)$.
	\end{proof}
	\vspace{0 mm}

	\begin{cor}
		\label{Cor:CAPviaProducts}
		Let $G$ be a topological group. Then the following are equivalent:
		\begin{enumerate}
			\item
			$G$ is CAP.
			\item
			$\M(G\times G)\cong \M(G)\times \M(G)$.
		\end{enumerate}
	\end{cor}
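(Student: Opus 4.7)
The plan is to observe that this corollary follows almost immediately by combining two results already established in the paper: \Cref{Prop:ProductFormula} for the forward direction and \Cref{Thm:NotCAPNotProductive} for the contrapositive of the reverse direction.

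For the implication $(1) \Rightarrow (2)$, I would simply apply \Cref{Prop:ProductFormula} with $H = K = G$: since $G$ is CAP, the hypothesis that $H$ is CAP is satisfied, and the proposition yields $\M(G \times G) \cong \M(G) \times \M(G)$ directly.

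For the reverse implication $(2) \Rightarrow (1)$, I would argue contrapositively. Suppose $G$ is not CAP. By \Cref{Thm:EquivalentInG}, $G$ is not UEB. Now apply \Cref{Thm:NotCAPNotProductive} with $H = G$: since neither factor is UEB, we obtain $\M(G \times G) \not\cong \M(G) \times \M(G)$, which is exactly the failure of $(2)$.

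There is no real obstacle here, since the substantive work has been done in the two preceding results. The only thing worth highlighting in the write-up is the transition through \Cref{Thm:EquivalentInG} to identify the CAP and UEB classes, so that \Cref{Thm:NotCAPNotProductive} can be invoked in the contrapositive form. One might also briefly remark that this corollary is a particularly clean self-referential characterization of the CAP property: the UMF operation respects a single self-product precisely when the group has nice dynamics, even though, a priori, respecting products is a statement about arbitrary pairs of groups.
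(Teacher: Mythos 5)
Your proposal is correct and matches the paper's proof exactly: the forward direction is \Cref{Prop:ProductFormula} with $H=K=G$, and the reverse direction is the contrapositive via \Cref{Thm:NotCAPNotProductive}, with the CAP/UEB identification from \Cref{Thm:EquivalentInG} already in place. Nothing to add.
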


	\begin{proof}
		Combine \Cref{Prop:ProductFormula} and \Cref{Thm:NotCAPNotProductive}.
	\end{proof}
	\vspace{2 mm}

	We conclude this section with two examples illustrating the possible overlap between the conclusions of \Cref{Prop:CAPTukey} and \Cref{Cor:NotCAPEmbedBeta}.
	\vspace{2 mm}

	\begin{exa}
		Let $G = 2^\mathfrak{c}$. Then $G$ is CAP since it is a compact group. In this case, $\M(G) = G = 2^\mathfrak{c}$, which embeds a copy of $\beta \omega$.
	\end{exa}
	\vspace{0 mm}

	\begin{exa}
		Let $G = \bb{Z}\times 2^\mathfrak{c}$. By \Cref{Prop:CAPDenseHom}, $G$ cannot be CAP since $\bb{Z}$ is not CAP. However, since $2^\mathfrak{c}$ is CAP, we have by  \Cref{Prop:ProductFormula} that
		$$\M(G) \cong \M(\bb{Z})\times \M(2^\mathfrak{c}) = \M(\bb{Z})\times 2^\mathfrak{c}.$$
		Therefore $\M(G)$ has a clopen basis of size continuum. On the other hand, the poset $\cal{N}_G$ has a cofinal subset isomorphic to  the poset $[\mathfrak{c}]^{<\omega}$. The poset $\mathrm{Nbd}(\Delta_{\M(G)})$ has a cofinal subset of size continuum, the clopen neighborhoods, which is a \emph{join-semilattice}, i.e.\ finite sets have least upper bounds. We conclude by noting that $[\mathfrak{c}]^{<\omega}$ admits a monotone, cofinal map to any join-semilattice $\bb{P}$ of size continuum; simply put $\mathfrak{c}$ and $\bb{P}$ in bijection, and map a finite subset of $\mathfrak{c}$ to the corresponding least upper bound.
	\end{exa}
	\vspace{0 mm}

	\begin{que}
		\label{Que:TopologyMGCAP}
		Is there a topological condition on $\M(G)$ which characterizes when $G$ is CAP?
	\end{que}

	\section{Groups with large CAP subgroups}
	\label{Sec:LargeCAPSubs}

	Recall that if $G$ is a topological group and $H\subseteq G$ is a closed subgroup, then $G/H$ is also a Hausdorff uniform space, where the typical basic open entourage is of the form $\{(gH, kH): gHk^{-1}\cap U\neq \emptyset\}$ for some $U\in \cal{N}_G$. Therefore we can form the completion $\widehat{G/H}$ and the Samuel compactification $\S(G/H)$ of this uniform space. As usual, we have $\widehat{G/H}\subseteq \S(G/H)$. The action of $G$ on $G/H$ extends, turning $\S(G/H)$ into a $G$-flow. We say that $H\subseteq G$ is \emph{co-precompact} if $\widehat{G/H} = \S(G/H)$, i.e.\ if $\widehat{G/H}$ is compact. Equivalently, for every $U\in \cal{N}_G$, there is a finite $F\subseteq G$ with $UFH = G$.
	\vspace{2 mm}

	\begin{defin}
		\label{Def:PreSyndetic}
		A subset $S \subseteq G$ is \emph{pre-syndetic} if,
		for every $U \in \cal{N}_{G}$, there is a finite $F\subseteq G$ with $FUS = G$.
	\end{defin}
\vspace{2 mm}

	By \cite{ZucMHP}*{Proposition 6.6} a closed subgroup $H$ of $G$ is pre-syndetic if and only if $\S(G/H)$ is a minimal $G$-flow.

	In this section we consider some generalizations of a result from \cite{MNT}. There, it is shown that if $G$ is a Polish group and $H\subseteq G$ is an extremely amenable, co-precompact, pre-syndetic subgroup, then $\M(G) \cong \widehat{G/H}$. So in particular, $\M(G)$ is metrizable. In our setting, we are asking about conditions on $H$ which ensure that $G$ is CAP. We present two such sufficient conditions, both of which use the following lemma. Given a topological group $G$, a $G$-flow $X$, and a closed subgroup $H\subseteq G$, we let $\mathrm{Fix}_H(X)$ denote the collection of $H$-fixed points in $X$. If $y\in \mathrm{Fix}_H(X)$, there is a unique $G$-map $\phi_y\colon \S(G/H)\to X$ with $\phi_y(H) = y$.
	By restricting to $\widehat{G/H} \subseteq \S(G/H)$ we also obtain joint continuity of $\phi_y$, for $y$ varying in $\mathrm{Fix}_H(X)$.
	\vspace{2 mm}

	\begin{lemma}
		\label{Lem:JointCont}
		The map $\widehat{G/H}\times \mathrm{Fix}_H(X)\to X$ given by $(\eta, y)\to \phi_y(\eta)$ is continuous.
	\end{lemma}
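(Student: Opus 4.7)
My plan is to establish uniform equicontinuity of the family $\{\phi_y : y \in \mathrm{Fix}_H(X)\}$ on $\widehat{G/H}$ and combine it with the ordinary continuity of the map $z \mapsto gz$ at $y$ for a suitable approximant $gH \in G/H$ of $\eta$. The argument then reduces to a three-term triangle through entourages of $X$.

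The equicontinuity step rests on the following standard fact: for any continuous action of $G$ on a compact space $X$ and any entourage $V$ of $X$, there exists $U \in \cal{N}_G$ such that $(z, uz) \in V$ for all $z \in X$ and $u \in U$. This follows from joint continuity of the action at each point $(1_G, z_0)$, squaring neighborhoods $V_{z_0} \times V_{z_0} \subseteq V$, and a finite subcover of $X$. Translating through a basic entourage $\{(gH, kH) : gHk^{-1}\cap U \ne \emptyset\}$ of $G/H$ and using $hy = y$ for $y \in \mathrm{Fix}_H(X)$, this yields that the family of maps $\rho_y \colon G/H \to X$, $gH \mapsto gy$, is uniformly equicontinuous with modulus independent of $y$. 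Since $X$ is compact and each $\rho_y$ extends to the continuous $\phi_y$ on $\widehat{G/H}$, the equicontinuity transfers to $\widehat{G/H}$ by taking closures of closed entourages in $\widehat{G/H}\times\widehat{G/H}$.

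Given $(\eta, y)$ and an open $W \ni \phi_y(\eta)$, I will choose a closed entourage $V$ of $X$ with $V[\phi_y(\eta)] \subseteq W$ and a symmetric entourage $V'$ with $(V')^3 \subseteq V$. Picking a symmetric entourage $\tilde U$ of $\widehat{G/H}$ realizing the equicontinuity modulus for $V'$, and halving via $\tilde U_0 \tilde U_0 \subseteq \tilde U$, I then pick $gH$ with $(gH, \eta) \in \tilde U_0$ (using density of $G/H$ in $\widehat{G/H}$) and an open neighborhood $V_0 \ni y$ in $X$ with $gV_0 \subseteq V'[gy]$ (using continuity of $z \mapsto gz$). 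For $(\eta', y') \in \tilde U_0[\eta] \times V_0$, the three pairs $(\phi_{y'}(\eta'), gy')$, $(gy', gy)$, and $(gy, \phi_y(\eta))$ each lie in $V'$ (the first and third by equicontinuity, the middle by continuity of $g\cdot$), so $(\phi_{y'}(\eta'), \phi_y(\eta)) \in (V')^3 \subseteq V$ and hence $\phi_{y'}(\eta') \in W$.

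The only nontrivial ingredient is the uniform-in-$z$ version of group-variable continuity of the action on the compact space $X$; once that is in hand, transferring to $G/H$ via $H$-fixedness and then to $\widehat{G/H}$ via density and the completeness of the compact target is routine, as is the final entourage triangle.
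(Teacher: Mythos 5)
Your argument is correct, and it reaches the conclusion by a somewhat different decomposition than the paper. The paper's proof uses the same two essential ingredients---approximating $\eta$ by a coset $gH$ and using $H$-fixedness to write $g_jy_j=(g_jhg_i^{-1})(g_iy_j)$ so that a small identity neighborhood absorbs the discrepancy between nearby cosets---but organizes them as a direct two-net computation: given $g_iH\to\eta$, $y_i\to y$ and an open $A\ni \phi_y(\eta)$, it picks $B\ni_{op}\phi_y(\eta)$ and $U\in\cal{N}_G$ with $UB\subseteq A$ (this is just joint continuity of the action at $(1_G,\phi_y(\eta))$, so no compactness sweep over $X$ is needed) and concludes in a few lines. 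You instead first upgrade the group-variable continuity to a statement uniform over all of $X$, deduce uniform equicontinuity of the entire family $\{\phi_y\}$ on $\widehat{G/H}$ with modulus independent of $y$, and finish with a three-entourage triangle. This is more machinery than strictly necessary, but it buys something real: your proof directly handles approximating nets $\eta_i\to\eta$ consisting of arbitrary points of $\widehat{G/H}$, whereas the paper's proof as written only treats nets drawn from the dense subset $G/H$ and leaves the reduction to that case implicit (a reduction most cleanly justified by exactly the equicontinuity you establish; note that the lemma is later applied to nets $z_i\to z$ inside a minimal subflow of $\widehat{G/H}$, not to cosets). One cosmetic point: in the paper's notation the $V$-ball around $\phi_y(\eta)$ would be written $\phi_y(\eta)[V]$ rather than $V[\phi_y(\eta)]$; since you may take $V$ symmetric this is harmless.
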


	\begin{proof}
		Suppose $g_iH\to \eta\in \widehat{G/H}$ and $y_i\to y\in \mathrm{Fix}_H(X)$. Write $\phi_i$ for $\phi_{y_i}$; we need to show that $\phi_i(g_iH)\to \phi_y(\eta)$. Fix an open $A\ni \phi_y(\eta)$. Find open $B\ni \phi_y(\eta)$ and $U \in \cal{N}_{G}$ with $UB\subseteq A$.
		For all large enough $i$ and $j$ we have $g_jH\in Ug_iH$ (viewing the latter as a set of cosets). Also, $\phi_y(g_iH)\in B$ for any large enough $i$. Fix a suitably large $i$. Then $\phi_j(g_iH) = g_iy_j\in B$ for any suitably large $j$. Noting that $g_jHg_i^{-1}\cap U\neq \emptyset$, fix some $h\in H$ with $g_jhg_i^{-1}\in U$. Then $(g_jhg_i^{-1})(g_iy_j) \in A$ and is equal to $g_jy_j = \phi_j(g_jH)$.
	\end{proof}
	\vspace{0 mm}

	\begin{prop}
		\label{Prop:IfMGisCompletionGHthenHisEA}
		Let $G$ be a topological group and $H$ be a closed subgroup such that $\M(G) \cong \widehat{G/H}$.
		Then $H$ is extremely amenable.
	\end{prop}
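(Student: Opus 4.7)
The strategy is to deduce the extreme amenability of $H$ from the universal property $\M(G) \cong \widehat{G/H}$ by showing that every $H$-flow has an $H$-fixed point. The starting observation is that $eH \in \widehat{G/H}$ is $H$-fixed, with stabilizer in $G$ precisely equal to $H$. Consequently every minimal $G$-flow $Y_{0}$, being a continuous image of $\widehat{G/H}$, contains $H$-fixed points---namely the image of $eH$ under any $G$-map $\widehat{G/H} \to Y_{0}$. The plan is then to bootstrap this from $G$-flows to arbitrary $H$-flows by means of a coinduction construction.

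Given an $H$-flow $X$, I would consider
\[
Y \,:=\, \{f\colon G \to X \,:\, f(gh) = h^{-1}f(g)\ \forall g\in G,\, h\in H\},
\]
equipped with the $G$-action $(g_{0}\cdot f)(g) = f(g_{0}^{-1}g)$, which visibly preserves the $H$-equivariance condition. The evaluation map $\mathrm{ev}_{1_{G}}\colon Y \to X$, $f\mapsto f(1_{G})$, is $H$-equivariant: for $h \in H$ one has $(h\cdot f)(1_{G}) = f(h^{-1}) = h\,f(1_{G})$, where the second equality uses the equivariance condition with $g=1_{G}$. Hence any $H$-fixed $f_{0}\in Y$ yields $f_{0}(1_{G}) \in \mathrm{Fix}_{H}(X)$. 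To produce such an $f_{0}$, pass to a minimal $G$-subflow $Y_{0}\subseteq Y$; by the universal property, there is a $G$-map $\pi\colon \widehat{G/H}\to Y_{0}$, and $\pi(eH)\in\mathrm{Fix}_{H}(Y_{0})\subseteq\mathrm{Fix}_{H}(Y)$ is the desired fixed point.

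The main obstacle is to equip $Y$ with a topology that makes it a genuine compact $G$-flow with \emph{jointly} continuous action: the reindexing action of $G$ on $X^{G}$ in the product topology is only separately continuous, and one needs joint continuity. I would address this by restricting to a uniformly equicontinuous $G$-invariant subspace. Concretely, first exhibit a right-uniformly continuous $H$-equivariant map $f_{0}\colon G \to X$---this is the delicate point, but can be arranged, for instance, by starting from the idempotent $u \in M \subseteq \S(G)$ corresponding to $eH$ under the isomorphism $\varphi(p) = p\cdot eH$ (so that $\mathrm{Stab}_{G}(u)=H$ and $Hu=\{u\}$), and transferring to $X$ via an appropriate $\hat{\rho}_{x}$. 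Then set $Y := \overline{G\cdot f_{0}} \subseteq X^{G}$: the orbit $G\cdot f_{0}$ is uniformly equicontinuous, so its closure is compact, consists of uniformly continuous $H$-equivariant maps, and the $G$-action on it is jointly continuous. With this $Y$, the argument above runs and produces an $H$-fixed point of $X$.
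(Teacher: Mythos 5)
Your overall strategy (show that every $H$-flow has an $H$-fixed point, by coinducing it to a $G$-flow and pushing the fixed point $eH$ through) is genuinely different from the paper's, but it has an essential gap exactly at the step you flag as delicate: the existence of the coinduced flow, i.e.\ of a uniformly continuous $H$-equivariant map $f_{0}\colon G\to X$. For a general topological group there is no reason such a map exists --- set-theoretically $f_{0}$ is determined by a section of $G\to G/H$ together with the equivariance rule $f_0(gh)=h^{-1}f_0(g)$, and making this uniformly continuous across cosets is a nontrivial local-triviality-type condition on the pair $(G,H)$. Your suggested construction does not typecheck: since $X$ is only an $H$-flow, $\hat{\rho}_{x}$ is defined on $\S(H)$, not on $G$ or $\S(G)$, so there is no evident way to ``transfer'' the idempotent $u\in M\subseteq \S(G)$ into $X$. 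There is also a left/right mismatch: with the action $(g_{0}\cdot f)(g)=f(g_{0}^{-1}g)$, the orbit of $f_{0}$ is uniformly equicontinuous for the \emph{left} uniformity of $G$ when $f_{0}$ is left-uniformly continuous; right uniform continuity of $f_{0}$ gives each translate a $g_{0}$-dependent modulus and does not yield equicontinuity of the orbit. Note also that the weaker fact you do establish --- that some minimal $G$-flow has an $H$-fixed point --- does not by itself imply extreme amenability of $H$ (a minimal $H$-subflow of $M$ need not be $\M(H)$); the fixed point must be located inside a flow that is $H$-universal, such as $\S(H)$.

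The good news is that the object you construct is already enough to finish, and this is essentially the paper's proof. Your $u\in M$ with $u\cdot eH=eH$ is the unique point of $M$ over $eH$ under $\hat{\rho}_{eH}\colon \S(G)\to \widehat{G/H}=\S(G/H)$, and this map is the canonical projection of \cite{ZucMHP}*{Proposition 6.4}, whose fiber over $eH$ is exactly $\overline{H}^{\,\S(G)}\cong \S(H)$ (every bounded right-uniformly continuous function on the closed subgroup $H$ extends to $G$). Hence $u\in \S(H)$, and $\{u\}=Hu=\overline{Hu}$ is a nonempty closed $H$-invariant subset of $\S(H)$, i.e.\ a minimal $H$-subflow of $\S(H)$, which is therefore isomorphic to $\M(H)$. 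So $\M(H)$ is a singleton and $H$ is extremely amenable --- no coinduction is needed, and the detour through $Y$ is where your argument cannot currently be completed.
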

	\begin{proof}
		Notice that $ \widehat{G/H}$ is compact so $ \widehat{G/H} = \S(G/H)$.
		Let $M \subseteq \S(G)$ be a minimal $G$-flow.
		By \cite{ZucMHP}*{Proposition 6.4} there is a canonical $G$-map $\pi\colon \S(G) \to \S(G/H)$ such that $\pi^{-1}(H) = \S(H) \subseteq \S(G)$.
		Since $\S(G/H) \cong M(G)$, it follows that $\pi \vert M$ is an isomorphism onto $\S(G/H)$, so $\S(H) \cap M = \pi^{-1}(H) \cap M = \{p\}$ is a singleton and is an $H$-flow since $\pi(h \cdot p) = h \cdot \pi(p) = H$, for any $h \in H$, and $M$ is $H$-invariant.
		Therefore it a minimal $H$-subflow of $S(H)$, so it is isomorphic to $\M(H)$.
	\end{proof}
	\vspace{0 mm}

	\begin{prop}
		\label{Prop:CoPrecompactEASubgroup}
		Suppose $G$ is a topological group containing a closed, co-precompact, extremely amenable subgroup $H$. Then $G$ is CAP. If $H$ is also pre-syndetic, then $\widehat{G/H} \cong \M(G)$.
	\end{prop}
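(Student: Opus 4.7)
The plan is to first embed $\M(G)$ as a minimal subflow of $\widehat{G/H}$, which will immediately settle the second claim and provide the tool for establishing CAP. Since $H$ is co-precompact, $\widehat{G/H} = \S(G/H)$ is compact, so it has a minimal $G$-subflow $N$. Given any minimal $G$-flow $X$, extreme amenability of $H$ supplies a point $y \in \mathrm{Fix}_H(X)$, and hence a $G$-map $\phi_y \colon \widehat{G/H} \to X$ as discussed before \Cref{Lem:JointCont}. The image $\phi_y(N)$ is a non-empty closed $G$-invariant subset of the minimal $X$, hence equals $X$. Therefore $N$ maps onto every minimal $G$-flow, so $N \cong \M(G)$. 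When $H$ is also pre-syndetic, $\S(G/H)$ itself is minimal by the characterization recalled after \Cref{Def:PreSyndetic}, so $\widehat{G/H} = N \cong \M(G)$, which settles the second claim.

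For the CAP conclusion, I fix a $G$-flow $X$ and a net $(x_i) \subseteq \ap_G(X)$ with $x_i \to x$; the goal is to show that $x \in \ap_G(X)$. Each orbit closure $\overline{Gx_i}$ is minimal and, by extreme amenability of $H$, contains an $H$-fixed point $y_i \in \mathrm{Fix}_H(X)$. The corresponding $G$-map $\phi_{y_i} \colon \widehat{G/H} \to X$ satisfies $\phi_{y_i}(N) = \overline{Gx_i}$, since $\phi_{y_i}(N)$ is a non-empty closed $G$-invariant subset of the minimal flow $\overline{Gx_i}$. I therefore choose $\mu_i \in N$ with $\phi_{y_i}(\mu_i) = x_i$. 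By compactness of $N$ and closedness of $\mathrm{Fix}_H(X)$ in $X$, I pass to a subnet so that $\mu_i \to \mu \in N$ and $y_i \to y \in \mathrm{Fix}_H(X)$.

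Applying the joint continuity of \Cref{Lem:JointCont} then yields $x = \lim_i \phi_{y_i}(\mu_i) = \phi_y(\mu)$. Since $N$ is minimal, the $G$-subflow $\phi_y(N) \subseteq X$ is also minimal (as the image of a minimal flow under a $G$-map), and it contains $x = \phi_y(\mu)$; hence $\overline{Gx} = \phi_y(N)$ is minimal, i.e.\ $x \in \ap_G(X)$. The main subtlety is ensuring that the lifts $\mu_i$ lie inside the fixed minimal subflow $N$ of $\widehat{G/H}$ rather than arbitrarily in $\widehat{G/H}$: only then is the limit $\phi_y(\mu)$ guaranteed to lie in a minimal subflow of $X$, since $\phi_y(\widehat{G/H}) = \overline{Gy}$ need not be minimal in general. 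The first claim therefore rests on combining the universality of $N$ (itself a consequence of extreme amenability together with co-precompactness) with the joint continuity lemma, which allows the fixed points $y_i$ and the lifts $\mu_i$ to be taken to their limits simultaneously.
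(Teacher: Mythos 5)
Your proposal is correct and follows essentially the same route as the paper: fix a minimal subflow of $\widehat{G/H}=\S(G/H)$, take $H$-fixed points $y_i$ in the minimal orbit closures $\overline{Gx_i}$, lift the $x_i$ into that minimal subflow via the maps $\phi_{y_i}$, and pass to limits using \Cref{Lem:JointCont}. The only cosmetic difference is that you identify the minimal subflow $N$ with $\M(G)$ up front, whereas the paper does this only when handling the pre-syndetic case.
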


	\begin{proof}
		Let $X$ be a $G$-flow, towards showing that $\ap_G(X)$ is closed. By co-precompactness of $H$, we have $\S(G/H) = \widehat{G/H}$. Let $x_i\to x$ with each $x_i\in \ap_G(X)$. Set $Y_i = \overline{G\cdot x_i}$. Since $H$ is extremely amenable, let $y_i\in Y_i$ be an $H$-fixed point, and write $\phi_i$ for $\phi_{y_i}$. Assume that $y_i\to y\in \mathrm{Fix}_H(X)$. Let $W\subseteq \widehat{G/H}$ be a minimal $G$-subflow. Since we must have $\phi_i[W] = Y_i$, we can find $z_i\in W$ with $\phi_i(z_i) = x_i$. We may assume that $z_i\to z\in W$. By \Cref{Lem:JointCont}, we have $\phi_y(z) = x$. In particular, $x\in \phi_y[W]$, so since $W$ is minimal, we have $x\in \ap_G(X)$.

		Since $H$ is extremely amenable, fix some $p\in \mathrm{Fix}_H(\M(G))$. Forming $\phi_p\colon \widehat{G/H}\to \M(G)$, we see that if $H$ is pre-syndetic, then the domain is minimal, so $\phi_p$ must be an isomorphism.
	\end{proof}
	\vspace{0 mm}

		By combining the results of \cite{MNT} and \cite{BYMT}, one obtains the converse of the above proposition in the case that $G$ is Polish; namely, $\M(G)$ is metrizable  if and only if there is some extremely amenable closed subgroup $H\subseteq G$ with $\M(G) = \widehat{G/H}$. 
		In a first version of this paper, we asked if the same could hold true for CAP groups. 
		Gheysens, in a private communication, provided a counterexample: the group of finitely supported permutations of a countable set, with the topology it inherits as a subgroup of $\mathrm{Sym}(\omega)$, does not contain non-trivial extremely amenable subgroups but is CAP, since its two-sided completion $\mathrm{Sym}(\omega)$ is CAP. 
		The question still remains open for Raikov complete groups, that is, groups which are complete with respect to the two-sided uniformity. Equivalently, a topological group $G$ is Raikov complete if whenever $G'\supseteq G$ is a topological group containing $G$ as a dense subgroup, then $G = G'$. 
		\vspace{2 mm}

	\begin{que}
		\label{Que:MNTConverse}
		If $G$ is a Raikov complete CAP group, is $\M(G) = \widehat{G/H}$ for some extremely amenable closed subgroup $H \subseteq G$? 
		
		On the opposite end, can a non-compact, Raikov complete CAP group act freely on a compact space?
	\end{que}

	\vspace{2 mm}
	We will add to the discussion of \Cref{Que:MNTConverse} after examining the case of automorphism groups of structures. But given that we do not know if this converse holds, perhaps the assumptions of \Cref{Prop:CoPrecompactEASubgroup} can be weakened to only demand that $H$ is CAP instead of extremely amenable. While we are unable to prove this, we can prove the following; the assumption on $H$ is weakened to CAP, but then we must assume that $H$ is also pre-syndetic.
	\vspace{2 mm}

	\begin{prop}
		\label{Prop:CAPCoPrePreSyndSubgroup}
		Suppose $G$ is a topological group containing a closed, co-precompact, pre-syndetic CAP subgroup $H$. Then $G$ is CAP.
	\end{prop}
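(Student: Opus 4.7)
The plan is to follow closely the strategy of the proof of \Cref{Prop:CoPrecompactEASubgroup}, but with the $H$-fixed points $y_i\in Y_i$ (provided there by extreme amenability) replaced by $H$-minimal subflows $W_i\subseteq Y_i$ (whose existence and closure properties are ensured by CAP of $H$). Given a $G$-flow $X$ and a net $x_i\to x$ with $x_i\in \ap_G(X)$, I set $Y_i=\overline{G x_i}$; since $H$ is CAP, $\ap_H(Y_i)$ is a nonempty closed $H$-subflow of $Y_i$. I pick $z_i\in \ap_H(Y_i)$ and let $W_i:=\overline{H z_i}$, an $H$-minimal subflow of $Y_i$. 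By $G$-minimality of $Y_i$ we have $\overline{G z_i}=Y_i$; combining this with the pre-syndetic factorization $G=F_U U H$ and the containment $H z_i\subseteq W_i$ yields $Y_i=\overline{F_U U W_i}$ for every $U\in \cal N_G$. Since $F_U$ is finite, we can extract $f_{i,U}\in F_U$ such that $v_{i,U}:=f_{i,U}^{-1}x_i\in \overline{U W_i}$.

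The next step is to take a coherent joint limit over the directed set of pairs $(i,U)$ refining $i\to \infty$ and $U\to \{1_G\}$. On the one hand, $v_{i,U}\in \overline{U W_i}\subseteq \overline{U \ap_H(X)}$, and using closedness of $\ap_H(X)$ together with continuity of the action one checks that $\bigcap_U \overline{U \ap_H(X)}=\ap_H(X)$; passing to a subnet, $v_{i,U}\to y\in \ap_H(X)$. On the other hand the translators $f_{i,U}$ range over the possibly unbounded set $\bigcup_U F_U\subseteq G$, but co-precompactness of $H$ ensures that the cosets $f_{i,U}H$ sit inside the compact space $\widehat{G/H}$, so a further subnet has $f_{i,U}H\to \eta\in \widehat{G/H}$. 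Decomposing $f_{i,U}=u_{i,U}\,g\,h_{i,U}$ when $\eta=gH\in G/H$ (with analogous care when $\eta\in \widehat{G/H}\setminus G/H$, using the Samuel-compactification description of $\widehat{G/H}$), and exploiting $x_i\to x$ together with $u_{i,U}\to 1_G$, I conclude that $x\in \overline{G W^*}$, where $W^*:=\overline{H y}$ is an $H$-minimal subflow of $\ap_H(X)$.

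It then suffices to show that $\overline{G W^*}$ is a $G$-minimal subflow of $X$, for then $x\in \overline{G W^*}\subseteq \ap_G(X)$. Given any $z\in \overline{G W^*}$, I apply exactly the same subnet-extraction argument to the pre-syndetic identity $\overline{G W^*}=\overline{F_U U W^*}$, producing some $w^*\in W^*$ with $w^*\in \overline{H(g^{-1}z)}$ for a suitable $g\in G$. By $H$-minimality of $W^*$, this forces $W^*=\overline{H w^*}\subseteq \overline{H(g^{-1}z)}\subseteq \overline{G z}$, so $\overline{G W^*}\subseteq \overline{G z}$, and combined with $z\in \overline{G W^*}$ this gives $\overline{G z}=\overline{G W^*}$, as desired.

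The main obstacle is the subnet-extraction step: the translators $f_{i,U}$ have no reason to converge in $G$, since $\bigcup_U F_U$ need not be relatively compact. Co-precompactness of $H$ is essential precisely because it moves the convergence problem into the compact space $\widehat{G/H}$, allowing one to pass to a convergent net of cosets $f_{i,U}H\to \eta$ and thereby recover a coherent decomposition of the limit $x$ as a $G$-translate of a point in $\ap_H(X)$. Pre-syndeticity is used twice — once to realize $Y_i=\overline{F_U U W_i}$, and again to verify $G$-minimality of $\overline{G W^*}$ — while CAP of $H$ is what makes $\ap_H(X)$ closed and thus guarantees that the accumulated limit $y$ actually lies in $\ap_H(X)$.
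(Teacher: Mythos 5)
Your skeleton is the right one (replace the $H$-fixed points of \Cref{Prop:CoPrecompactEASubgroup} by $H$-minimal subflows, use co-precompactness to push the non-convergent translators into the compact space $\widehat{G/H}$, and use pre-syndeticity twice), and your final step is fine: extracting $w^*\in W^*\cap\overline{Gz}$ from $\overline{GW^*}=\overline{F_UUW^*}$ and invoking $H$-minimality of $W^*$ is a correct and in fact rather clean way to get minimality of $\overline{GW^*}$. But the central limit-interchange step is a genuine gap. From $x_i=f_{i,U}v_{i,U}$ with $v_{i,U}\to y$ and $f_{i,U}H\to\eta$ you cannot conclude $x\in\overline{Gy}=\overline{GW^*}$: the action of $\S(G)$ on $X$ is not jointly continuous, and $\lim_j g_jy_j$ with $y_j\to y$ need not lie in $\overline{Gy}$ (this is exactly the discrepancy between $p\circ B$ and $pB$ recalled before \Cref{Lem:IfCAPthenMinClosed}). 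Your proposed fix via the decomposition $f_{i,U}=u_{i,U}\,g\,h_{i,U}$ does not repair this. In the proof of \Cref{Lem:JointCont} the analogous factor $h\in H$ is harmless only because it acts trivially on the second coordinate ($hy_j=y_j$); here $v_{i,U}$ is not $H$-fixed, and $h_{i,U}v_{i,U}$ lies in $\overline{(h_{i,U}Uh_{i,U}^{-1})W_i}$, where the conjugate $h_{i,U}Uh_{i,U}^{-1}$ is not controlled by $U$ unless $G$ is SIN. Moreover the case $\eta\in\widehat{G/H}\setminus G/H$ admits no such factorization at all, and it cannot be dismissed as ``analogous.''

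This is precisely why the paper's proof passes to the Vietoris hyperspace $K(X)$: there the $H$-minimal sets $Z_i$ become genuine $H$-\emph{fixed} points, so \Cref{Lem:JointCont} applies verbatim to the maps $\phi_{Z_i}\colon\widehat{G/H}\to K(X)$, and the joint limit $x\in\phi_Z(\eta)$ is legitimate. The price is that one must know the limit $Z$ of the $Z_i$ is still an $H$-minimal subflow, which is \Cref{Lem:IfCAPthenMinClosed} and is where CAP of $H$ (in its strong form, via the closedness of $E_H$) enters. In your version CAP is only used to land $y$ in $\ap_H(X)$, and the resulting $W^*=\overline{Hy}$ has no established relation to $x$ or to the $W_i$; the hyperspace detour is not an optional convenience but the mechanism that restores the joint continuity your argument silently assumes.
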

\vspace{2 mm}

	The proof of \Cref{Prop:CAPCoPrePreSyndSubgroup} follows a similar structure to that of \Cref{Prop:CoPrecompactEASubgroup}.
	Since $H$ is not extremely amenable, we might not find $H$-fixed points in each $G$-flow $X$, so we pass to a related flow which is guaranteed to have $H$-fixed points.
	Given a $G$-flow $X$, let $K(X)$ denote the space of compact subsets of $X$ with the Vietoris topology.
	This is the topology generated by open sets of the form $\mathrm{Sub}(A) \coloneqq \{Z\in K(X): Z\subseteq A\}$ and $\mathrm{Meets}(A) \coloneqq \{Z\in K(X): Z\cap A\neq \emptyset\}$ for some open $A\subseteq X$.
	The space $K(X)$ is compact and forms a $G$-flow with the obvious action.
	We briefly recall the ``circle" operation on the Vietoris hyperflow; if $B\in K(X)$ and $p\in \S(G)$, we write $p\circ B$ for $\lim_i g_iB$, where $g_i\to p$. If we write $pB \coloneqq \{pb: b\in B\}$, then $pB\subseteq p\circ B$, but in general the inclusion is strict.
	Indeed, we have $w\in p\circ B$  if and only if there are nets $g_i\in G$ and $b_i\in B$ with $g_i\to p$ and $g_ib_i\to w$.
	We can restrict the net $b_i$ to come from any desired dense subset of $B$.

	If $H$ is a subgroup of $G$, any $H$-subflow $Y \subseteq X$ is an $H$-fixed point of $K(X)$.
	\vspace{2 mm}

	\begin{lemma}
		\label{Lem:IfCAPthenMinClosed}
		Let $H$ be a closed CAP subgroup of $G$ and $X$ be a $G$-flow.
		Then the set
		$$\mathrm{Min}_H(X)\coloneqq \{Z\in K(X): Z\text{ is a minimal $H$-subflow}\}$$
		is closed in $K(X)$.
	\end{lemma}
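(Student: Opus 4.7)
The plan is to take a net $(Z_i)$ in $\mathrm{Min}_H(X)$ with $Z_i \to Z$ in $K(X)$ and show directly that $Z$ is itself a minimal $H$-subflow. The key engine is that, by \Cref{Thm:EquivalentInG}, CAP is equivalent to strongly CAP, so the equivalence relation $E_H$ of ``belonging to the same minimal $H$-subflow'' is closed on the closed set $\mathrm{AP}_H(X) \subseteq X$. Viewing $X$ as an $H$-flow by restricting the $G$-action gives us access to both of these facts.

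First I would verify that $Z$ is a non-empty, closed, $H$-invariant subset of $X$. Closedness is built into membership in $K(X)$, and non-emptiness follows from the Vietoris convergence since the $Z_i$ are non-empty. For $H$-invariance, fix $h \in H$ and $z \in Z$; by Vietoris convergence (every open neighborhood of $z$ eventually meets $Z_i$), choose a subnet with $z_i \in Z_i$ and $z_i \to z$. Then $hz_i \in Z_i$ by $H$-invariance of $Z_i$, and $hz_i \to hz$. If $hz$ were not in $Z$, then by compact Hausdorffness pick disjoint open $U \ni hz$ and $V \supseteq Z$; eventually $Z_i \subseteq V$, so $hz_i \in V$, contradicting $hz_i \to hz \in U$. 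So $hz \in Z$.

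Now pick any points $z_i \in Z_i$ and, passing to a subnet, assume $z_i \to z$. By the argument above, $z \in Z$. Each $z_i$ lies in $\mathrm{AP}_H(X)$, which is closed by CAP of $H$, so $z \in \mathrm{AP}_H(X)$, and $\overline{Hz}$ is a minimal $H$-subflow contained in $Z$. It remains to prove the reverse inclusion. Fix $w \in Z$; by Vietoris convergence, choose (after refining) $w_i \in Z_i$ with $w_i \to w$. Each $w_i \in \mathrm{AP}_H(X)$ and $E_H(z_i, w_i)$ for every $i$, since $Z_i$ is $H$-minimal. Strong CAP (via \Cref{Thm:EquivalentInG}) now gives $E_H(z, w)$, i.e., $w \in \overline{Hz}$. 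Hence $Z = \overline{Hz}$ is a minimal $H$-subflow, so $Z \in \mathrm{Min}_H(X)$.

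The only subtle points are the repeated passages to subnets dictated by the Vietoris topology and the verification of $H$-invariance of $Z$; beyond these, the statement reduces cleanly to the closedness of $E_H$ given by the strong CAP property. I do not anticipate any other real obstacle.
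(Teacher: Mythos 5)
Your proof is correct. The two inputs you use --- that $\ap_H(X)$ is closed (CAP) and that $E_H$ is closed on $\ap_H(X)\times\ap_H(X)$ (strong CAP, via \Cref{Thm:EquivalentInG}) --- are both legitimately available at this point in the paper, and the net-chasing is sound: the limit $Z$ is a non-empty closed $H$-invariant set, any limit $z$ of points $z_i\in Z_i$ lies in $Z\cap\ap_H(X)$ so $\overline{Hz}\subseteq Z$ is minimal, and closedness of $E_H$ forces every $w\in Z$ into the class of $z$, so $Z=\overline{Hz}$. The repeated subnet extractions are harmless since the conclusions ($hz\in Z$, $E_H(z,w)$) do not depend on the choice of subnet. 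The paper takes a different route: it exhibits the natural bijection between $\mathrm{Min}_H(X)$ and the compact Hausdorff quotient $\ap_H(X)/E_H$ and shows it is a homeomorphism by checking that the subbasic Vietoris sets $\mathrm{Sub}(A)$ and $\mathrm{Meets}(A)$ are quotient-open (the latter via the observation that a minimal $Z$ meets $A$ iff $Z\subseteq HA$); compactness of the quotient then gives closedness of $\mathrm{Min}_H(X)$ in $K(X)$. Your argument is more elementary and proves exactly the stated lemma, which is all that is used later; the paper's argument yields the stronger and independently useful fact that the Vietoris topology on $\mathrm{Min}_H(X)$ agrees with the quotient topology on $\ap_H(X)/E_H$.
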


	\begin{proof}
		We have a natural bijection between $\mathrm{Min}_H(X)$ and $\ap_H(X)/E_H$, and we will show that this is a homeomorphism.
		Since $H$ is CAP, $E_H$ is a closed equivalence relation on $\ap_H(X)$.  In general, when considering a closed equivalence relation on a compact space, the Vietoris topology may be finer than the quotient topology. So we need to show that every Vietoris open set is open in the quotient topology.

		First we consider $\mathrm{Sub}(A) \cap \mathrm{Min}_H(X)$. Let $\pi\colon \ap_H(X)\to \ap_H(X)/E_H$ be the quotient map. Then $\pi[\ap_H(X)\setminus A]\subseteq \ap_H(X)/E_H$ is quotient-topology closed, so $\pi^{-1}\left(\pi[\ap_H(X)\setminus A]\right)$ is closed, $E_{H}$-invariant, and coincides with $\mathrm{Min}_H(X)\setminus \mathrm{Sub}(A)$, showing that $\mathrm{Sub}(A) \cap \mathrm{Min}_H(X)$ is quotient-open.
		This argument works for any closed equivalence relation on a compact space.

		Now we consider $\mathrm{Meets}(A) \cap \mathrm{Min}_H(X)$; here we will need to use more specific knowledge of the situation at hand. Given a minimal $H$-flow $Z$, we have that $Z\cap A\neq \emptyset$  if and only if $Z\subseteq HA$. So $HA\cap \ap_H(X)$ is an $E_{H}$-invariant open subset of $\ap_{H}(X)$, showing that $\mathrm{Meets}(A)\cap \mathrm{Min}_H(X)$ is quotient-open.
	\end{proof}
	\vspace{0 mm}

	\begin{proof}[Proof of \Cref{Prop:CAPCoPrePreSyndSubgroup}]
		Let $X$ be a $G$-flow, towards showing that $\ap_G(X)$ is closed. Let $x_i\to x$ with each $x_i\in \ap_G(X)$. Set $Y_i = \overline{G\cdot x_{i}}$. Let $Z_i\subseteq Y_i$ be a minimal $H$-subflow; we emphasize that since $H\subseteq G$ is not assumed to be normal, we do not have $\ap_H(Y_i) = Y_i$ like we did in \Cref{Thm:CAPShortExactSeq}.

		Write $\phi_i = \phi_{Z_i}\colon \widehat{G/H}\to K(X)$.
		Fix $i$ and $y \in Y_{i} = \overline{GZ_{i}}$.
		There is a net $g_{j} \in G$ with $y \in \lim_{j} g_{j}Z_{i}$, so $\phi_{i}(\lim_{j} g_{j} H) = \lim_{j} g_{j} \phi_{i}(H) \ni y$.
		Therefore we have $Y_i = \bigcup \phi_i\left[\widehat{G/H}\right]$, so we can find $\eta_i\in \widehat{G/H}$ with $x_i\in \phi_i(\eta_i)$.
		We may assume $\eta_i\to \eta\in \widehat{G/H}$
		and, by \Cref{Lem:IfCAPthenMinClosed}, that $Z_i\to Z\in \mathrm{Min}_H(X)$. So by \Cref{Lem:JointCont}, we have $x\in \phi_Z(\eta)$.
		As above, $\bigcup \phi_Z\left[\widehat{G/H}\right] = \overline{GZ}$.
		\vspace{2 mm}

		\begin{claim}
			$\overline{GZ}$ is a minimal $G$-flow.
		\end{claim}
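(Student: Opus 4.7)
The plan is to show that for any $x_0 \in \overline{GZ}$, one has $Z \subseteq \overline{Gx_0}$; this gives $\overline{GZ} \subseteq \overline{Gx_0}$, and the reverse inclusion is trivial, so $\overline{GZ}$ is minimal. We will exploit three ingredients: since $H$ is pre-syndetic, $\widehat{G/H}$ is a minimal $G$-flow; $\phi_Z$ is a continuous $G$-map $\widehat{G/H} \to K(X)$ with $\phi_Z(H) = Z$ (where $H$ denotes the identity coset in $\widehat{G/H}$); and since $H$ is CAP and $Z_i \to Z$ in $K(X)$ with each $Z_i \in \mathrm{Min}_H(X)$, \Cref{Lem:IfCAPthenMinClosed} guarantees that $Z$ is itself a minimal $H$-subflow of $X$.

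Given $x_0 \in \overline{GZ} = \bigcup \phi_Z[\widehat{G/H}]$, I would first pick $\eta_0 \in \widehat{G/H}$ with $x_0 \in \phi_Z(\eta_0)$. Using minimality of the $G$-flow $\widehat{G/H}$, produce a net $g_i \in G$ with $g_i \eta_0 \to H$. By $G$-equivariance and continuity of $\phi_Z$, we then have $g_i \phi_Z(\eta_0) = \phi_Z(g_i \eta_0) \to Z$ in the Vietoris topology on $K(X)$. Passing to a subnet, we may also assume $g_i x_0 \to w$ for some $w \in X$. A standard property of Vietoris convergence---if $F_i \to F$ and $x_i \in F_i$ with $x_i \to x$, then $x \in F$, verified by separating $w$ from $F$ with a closed neighborhood---forces $w \in Z$. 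Since $Z$ is minimal as an $H$-flow, $\overline{Hw} = Z$; combining this with $w \in \overline{Gx_0}$ gives $Z \subseteq \overline{Gx_0}$, as wanted.

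The main point worth highlighting is that we do not require the fiber $\phi_Z(\eta_0)$ to be $H$-invariant, let alone a minimal $H$-subflow---such a statement is unclear in general, since $H$ is not assumed normal in $G$, so $g Z$ is only $gHg^{-1}$-invariant. We sidestep this potential obstacle by using the $G$-minimality of $\widehat{G/H}$ to translate the arbitrary fiber back toward the distinguished coset $H$, where the fiber is exactly $Z$, and then invoking minimality of $Z$ itself as an $H$-flow to absorb all of $Z$ into $\overline{Gx_0}$.
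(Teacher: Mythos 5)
Your proof is correct and follows essentially the same route as the paper's: both use pre-syndeticity (i.e.\ minimality of the $G$-flow $\widehat{G/H}$) to translate an arbitrary point of $\overline{GZ}$ into the minimal $H$-flow $Z$, and then invoke $H$-minimality of $Z$. The only difference is the conclusion: the paper proves directly that $v\in\overline{Gu}$ for all $u,v\in\overline{GZ}$ via a double-net computation with the circle operation, whereas you observe that $Z\subseteq\overline{Gx_0}$ already forces $\overline{GZ}\subseteq\overline{Gx_0}$ by closedness and $G$-invariance, which is a slightly cleaner finish.
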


		\begin{proof}
			Let $u, v\in \overline{GZ}$.
			Let $\xi \in\widehat{G/H}$ with $u  \in \phi_{Z}(\xi)$.
			By pre-syndeticity of $H$, $\widehat{G/H}$ is a minimal $G$-flow, so there is a net $g_{i} \in G$ with $g_{i} \xi \to H$.
			Then $Z = \phi_{Z}(H) = \lim g_{i} \phi_{Z}(\xi)$; by letting $p \coloneqq \lim g_{i} \in \S(G)$ we have $pu\in Z$.
			Find $q\in \S(G)$ with $v\in q\circ Z$.
			Since $Z$ is $H$-minimal, $Hpu\subseteq Z$ is dense, so we can find nets $g'_i\in G$ and $h_i\in H$ with $g'_i\to q$ and $g'_i\cdot h_ig_{i}u \to v$. So in particular $v\in \overline{Gu}$.
		\end{proof}
		\vspace{2 mm}

		As $x\in \bigcup \phi_Z\left[\widehat{G/H}\right]$, this concludes the proof of \Cref{Prop:CAPCoPrePreSyndSubgroup}.
	\end{proof}

	\section{Automorphism groups of \texorpdfstring{$\omega$}{w}-homogeneous structures}
	\label{Sec:Structures}

	In this section we extend results from \cite{Bar} and \cite{Bar2} as well as the results for Polish non-archimedean groups which have the been the focus of a long line of research, initiated with \cite{KPT}.
	Results in this field link Ramsey theoretic properties of classes of finite structures to the dynamical properties of groups of automorphisms. Given a relational structure $\b K$ in some language $L$, denote by $\age(\b K)$ the class of finite structures which embed into $\b K$.
	Then $\b K$ is $\omega$-\emph{homogeneous} if any isomorphism between finite substructures of $\b K$ extends to an automorphism of $\b K$.
	A countable $\omega$-homogeneous structure is a \emph{\fr structure}.
	For any $L$-structure, the group of automorphisms of $\b K$, denoted $\aut(\b K)$ is a topological group with the topology of pointwise convergence. By expanding $L$ to add symbols for each orbit of every finite tuple in $\b{K}$, we can assume that $\b K$ is $\omega$-homogeneous.

	The age of an $\omega$-homogeneous structure contains arbitrarily large finite structures and enjoys several combinatorial properties, including the \emph{hereditary property}, the \emph{joint embedding property}, and the \emph{amalgamation property}. For the precise definitions of these properties, see \cite{KPT}.
	Families of finite structures with such properties are called \emph{\fr families}. A classical theorem of \fr is that any \fr family $\cal K$ in a countable language admits a \emph{\fr limit}, that is, a \fr structure whose age is exactly $\cal K$.

	Given two structures $\b A, \b B$, denote by $\mathrm{Emb}(\b A, \b B)$ the set of embeddings from $\b A$ to $\b B$. We write $\b A\leq \b B$ if $\emb(\b A, \b B)\neq \emptyset$. Fix an infinite structure $\b K$; given $\b A \in \age(\b K)$ and $k< \omega$, we say that $\b A$ has \emph{Ramsey degree $k$} if $k$ is least such that for all $r \ge k$ and all $\b B \in \age(\b K)$ with $\b A\leq \b B$, there is $\b C \in \age(\b K)$ with $\b B\leq \b C$ such that for each coloring $\gamma\colon\mathrm{Emb}(\b A, \b C)\to r$, there is $i \in \mathrm{Emb}(\b B, \b C)$ such that
	\[
	\left \lvert \setnew*{ \gamma(i \circ j)}{ j \in \mathrm{Emb}(\b A, \b B)} \right \rvert \le k.
	\]
	The class $\age(\b K)$ has the \emph{Ramsey property} if each $\b A \in \age(\b K)$ has Ramsey degree $1$ and has \emph{finite Ramsey degrees} if each $\b A \in \age(\b K)$ has Ramsey degree some $k < \omega$.

	For automorphism groups of countable structures, we have the following:
	\vspace{2 mm}

	\begin{fact}[\cite{ZucAut}]
		\label{Fact:ZucNonArch}
		Let $\b K$ be a \fr structure.
		Then $\M(\aut(\b K))$ is metrizable if and only if each $\b A \in \age(\b K)$ has finite Ramsey degrees.
	\end{fact}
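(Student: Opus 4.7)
The strategy is to use the UEB characterization of CAP from \Cref{Thm:EquivalentInG} and \Cref{Prop:CAPSimpleEquiv}, combined with the criterion \Cref{Prop:BadOpensNotCAP}. Since $G = \aut(\b K)$ is non-archimedean with identity neighborhoods generated by the pointwise stabilizers $G_{\b A}$ of finite $\b A \in \age(\b K)$, a strong base of continuous, right-invariant, diameter-$1$ pseudo-metrics is given by the discrete pseudo-metrics $d_{\b A}$, with $d_{\b A}(g, h) = 0$ iff $g\restriction \b A = h \restriction \b A$. Hence $G$ is CAP iff for every finite $\b A \in \age(\b K)$, there is no sequence of non-empty open sets $\{A_n\} \subseteq \M(G)$ with $\{G_{\b A} \cdot A_n\}$ pairwise disjoint. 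Using $\omega$-homogeneity to identify $G/G_{\b A} \cong \emb(\b A, \b K)$, this translates into a combinatorial condition about colorings of $\emb(\b A, \b K)$ that can be directly compared with the Ramsey degree definition.

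For the ``$\Leftarrow$'' direction (finite Ramsey degrees imply CAP), I plan to follow the Kechris-Pestov-Todor\v{c}evi\'c--Nguyen Van Th\'e framework, extended to uncountable $\b K$. Finite Ramsey degrees furnish a ``precompact expansion class'' $\cal K^*$ of expansions of $\age(\b K)$ with the Ramsey property and such that each finite $\b A \in \age(\b K)$ admits only finitely many admissible expansions in $\cal K^*$. A back-and-forth construction inside $\b K$, exploiting $\omega$-homogeneity, produces an $\omega$-homogeneous expansion $\b K^*$ of $\b K$ with $\age(\b K^*) = \cal K^*$. Then $H \coloneqq \aut(\b K^*) \subseteq G$ is closed, extremely amenable (by KPT applied to $\cal K^*$), and co-precompact in $G$ (by the finiteness of admissible expansions on each finite substructure). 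The sufficient condition \Cref{Prop:CoPrecompactEASubgroup} then yields that $G$ is CAP.

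For the ``$\Rightarrow$'' direction (CAP implies finite Ramsey degrees), assume some $\b A \in \age(\b K)$ has infinite Ramsey degree, witnessed for each $n$ by a coloring $\gamma_n : \emb(\b A, \b K) \to n$ and a structure $\b B_n \supseteq \b A$ such that no copy of $\b B_n$ in $\b K$ admits a reduction of $\gamma_n$ below $n$ colors. Using the near-ultrafilter description of $\S(G)$ from \Cref{Fact:NUltSG}, accumulate an infinite sequence of near ultrafilters whose projections to $\emb(\b A, \b K) \cong G/G_{\b A}$ concentrate on distinct color classes, and which hence have pairwise disjoint $G_{\b A}$-saturations; pushing these into a minimal subflow of $\S(G)$ yields non-empty open sets in $\M(G)$ violating \Cref{Prop:BadOpensNotCAP}. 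The main obstacle is the construction of the $\omega$-homogeneous expansion $\b K^*$ in the backward direction: for uncountable $\b K$ this requires a careful back-and-forth argument inside the given domain of $\b K$ rather than a classical Fra\"iss\'e construction from scratch, after which the extreme amenability and co-precompactness of $\aut(\b K^*)$ in $\aut(\b K)$ can be verified by standard KPT-style reasoning.
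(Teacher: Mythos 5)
First, note that the paper does not prove \Cref{Fact:ZucNonArch} at all: it is imported from \cite{ZucAut}. What the paper proves is the generalization \Cref{Thm:FiniteRamseyIffCAP} (for arbitrary $\omega$-homogeneous $\b K$), from which the Fact follows in the countable case via \Cref{Exa:CAPiffMGMetr}. So the relevant comparison is with the proof of \Cref{Thm:FiniteRamseyIffCAP}.

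Your backward set-up and your ``$\Rightarrow$'' direction are essentially the paper's route: the discrete pseudo-metrics $d_{\b A}$, the specialization of \Cref{Prop:BadOpensNotCAP} to the basic clopen subgroups, and the derivation of a contradiction from an infinite ``discrete'' configuration in $\M(G)$. Where you diverge is the ``$\Leftarrow$'' direction: you construct an excellent expansion class $\cal K^*$, realize it as an $\omega$-homogeneous expansion $\b K^*$ of $\b K$, and invoke \Cref{Prop:CoPrecompactEASubgroup} for the extremely amenable co-precompact subgroup $\aut(\b K^*)$. For a countable \fr structure this is legitimate and is the classical KPT--Nguyen Van Th\'e--Zucker argument. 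The paper deliberately avoids it: it works directly with the level representation $\S(G)\cong\varprojlim\beta H_{\b A}$ and the purely combinatorial \Cref{Lem:FiniteRamseyIffFiniteProj} (finite Ramsey degrees iff every projection $\tilde\pi_{\b A}[M]$ of a minimal subflow is finite), so that no expansion of $\b K$ ever needs to be realized as a structure. The payoff of the paper's route is exactly that it survives the passage to uncountable $\b K$; your route does not, because the existence of an $\omega$-homogeneous expansion $\b K^*$ with $\age(\b K^*)=\cal K^*$ is precisely the paper's open \Cref{Que:MNTConverseNonArch} (open already for an uncountable $\omega$-homogeneous graph embedding all finite graphs). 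Since you explicitly advertise your argument as ``extended to uncountable $\b K$,'' be aware that in that generality your ``main obstacle'' is not a technical back-and-forth but an open problem.

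One more substantive caution on your ``$\Rightarrow$'' direction: the step ``accumulate near ultrafilters whose projections concentrate on distinct color classes \ldots\ pushing these into a minimal subflow'' hides the entire combinatorial content. Producing points of $\S(G)$ with pairwise disjoint $U_{\b A}$-saturations is trivial (principal ultrafilters already do it); the work is to show that a \emph{single minimal subflow} $M$ must have $\lvert\tilde\pi_{\b A}[M]\rvert\geq k$ whenever the Ramsey degree of $\b A$ is at least $k$, and the retraction $\hat{\rho}_u\colon\S(G)\to M$ does not obviously preserve disjointness of saturations. This is the hard half of \Cref{Lem:FiniteRamseyIffFiniteProj} (proved via the syndetic-set characterization of minimal left ideals in \cite{ZucThesis}), and your outline should identify it as the step where the Ramsey-degree hypothesis is actually used.
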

	\vspace{2 mm}

	Fix $\b K$ be an $\omega$-homogeneous relational $L$-structure with $\cal{K} = \age(\b{K})$.
	If $L^* \supseteq L$ is a larger language, let $X_{L^*}$ denote the set of $L^*$-expansions of $\b{K}$. The group $\aut(\b{K})$ acts on $X_{L^*}$ in the obvious fashion, where if $\b{K}^*\in X_{L^*}$, $R\in L^*\setminus L$, $g\in \aut(\b{K})$, and $R^{\b{K}^*}(a_0,...,a_{n-1})$ holds, then $R^{g\cdot \b{K}^*}(ga_0,...,ga_{n-1})$ holds.
	Now suppose $\cal{K}^*$ is a hereditary family of $L^*$-structures such that $\cal{K}^* 	\vert_L = \cal{K}$. We say that $\cal{K}^*$ is a \emph{reasonable} expansion of $\cal{K}$ if whenever $\b A\subseteq \b B\in \cal{K}$ and $\b A^*\in \cal{K}^*$ is an expansion of $\b A$, then there is some expansion $\b B^*$ of $\b B$ whose restriction to $\b{A}$ is $\b{A}^*$. Given a reasonable expansion class $\cal{K}^*$, one can form the following $\aut(\b{K})$-invariant subset of $X_{L^*}$:
	$$X_{\cal{K}^*} \coloneqq \{\b{K}^*\in X_{L^*}: \age(\b{K}^*)\subseteq \cal{K}^*\}.$$
	If $\cal{K}^*$ contains finitely many expansions of each $\b A \in \age(\b K)$, we say that $\cal{K}^*$ is a pre-compact expansion of $\cal{K}$. When $\cal{K}^*$ is a reasonable, pre-compact expansion of $\cal{K}$, one can endow $X_{\cal{K}^*}$ with a natural compact topology, turning $X_{\cal{K}^*}$ into an $\aut(\b{K})$-flow. There is a combinatorial property of $\cal{K}^*$, called either \emph{expansion, order}, or \emph{minimal} property, which holds  if and only if $X_{\cal{K}^*}$ is a minimal flow. If in addition $\cal{K}^*$ is a \fr family with the Ramsey property, then $\cal{K}^*$ is called an \emph{excellent} expansion of $ \age(\b K)$. We have the following fact, which was first proved in the countable case in \cite{KPT}, and subsequently generalized in \cite{Bar}.
	\vspace{2 mm}

	\begin{fact}[\cites{Bar, KPT}]
		\label{Fact:BartUncountable}
		Let $\b K$ be a $\omega$-homogeneous structure.
		If $\age(\b K)$ admits an excellent expansion $\cal{K}^*$, then $\M(\aut(\b K)) = X_{\cal K^*}$ is the space of expansions of $\b K$.
	\end{fact}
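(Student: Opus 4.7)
My plan is to reduce the statement to an application of \Cref{Prop:CoPrecompactEASubgroup}. Fix $\b K^* \in X_{\cal K^*}$, set $G = \aut(\b K)$, and let $H = \aut(\b K^*) = \mathrm{Stab}_G(\b K^*)$, a closed subgroup of $G$. The first step is to identify $\widehat{G/H}$ with $X_{\cal K^*}$ as $G$-flows: the orbit map $gH \mapsto g\cdot \b K^*$ is a $G$-equivariant uniform embedding of $G/H$ with dense image in $X_{\cal K^*}$, and pre-compactness of $\cal K^*$ (only finitely many $L^*$-expansions of any $\b A \in \cal K$) ensures that $\widehat{G/H}$ is compact. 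Hence $H$ is co-precompact in $G$, and $\widehat{G/H} \cong X_{\cal K^*}$.

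Next I would separately verify that $H$ is pre-syndetic and that $H$ is extremely amenable. Pre-syndeticity is immediate from the expansion (minimal) property of $\cal K^*$: by definition this property asserts that $X_{\cal K^*}$ is a minimal $G$-flow; transporting along the isomorphism above, $\S(G/H) = \widehat{G/H}$ is minimal, so by \cite{ZucMHP}*{Proposition~6.6} $H$ is pre-syndetic in $G$. Extreme amenability of $H = \aut(\b K^*)$ is the main content: it is the KPT correspondence between the Ramsey property and extreme amenability, extended to uncountable $\omega$-homogeneous structures by Barto\v{s}ov\'a. Given an $H$-flow $Y$ with distinguished $y_0 \in Y$ and a finite open cover $\cal U$ of $Y$, one picks a finite $\b A^* \in \cal K^*$ and colors $\mathrm{Emb}(\b A^*, \b K^*)$ according to which element of $\cal U$ contains the corresponding translate of $y_0$; the Ramsey property of $\cal K^*$ yields arbitrarily large monochromatic copies inside some $\b C^* \in \cal K^*$, and an appeal to $\omega$-homogeneity of $\b K^*$ plus a compactness argument extracts a fixed point of $H$ on $Y$.

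With $H$ closed, co-precompact, pre-syndetic, and extremely amenable in $G$, \Cref{Prop:CoPrecompactEASubgroup} now yields $\M(G) \cong \widehat{G/H} \cong X_{\cal K^*}$, as desired.

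The main obstacle is the implication ``Ramsey property of $\cal K^*$ $\Rightarrow$ extreme amenability of $\aut(\b K^*)$''. In the countable KPT setting one exploits separability of $\b K$ to replace continuous functions on $H$ by finitary data on $\age(\b K^*)$; for a general $\omega$-homogeneous $\b K$ the structure need not be separable, and one must instead work with the right-uniformly continuous functions on $H$ which factor through finite substructures of $\b K^*$ and verify that this family is rich enough to detect fixed points in arbitrary $H$-flows. The key enabler is that $\b K^*$ remains $\omega$-homogeneous in its expanded language, so sufficiently many automorphisms of $\b K^*$ are available to realize the finite configurations produced by the Ramsey property.
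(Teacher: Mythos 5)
There is a genuine gap, and it sits exactly where you flag the ``main obstacle,'' but the problem is more serious than you suggest: your whole strategy hinges on choosing $\b K^* \in X_{\cal K^*}$ so that $\b K^*$ is $\omega$-homogeneous as an $L^*$-structure with $\age(\b K^*) = \cal K^*$, and in the uncountable setting the existence of such a point is an \emph{open problem} --- this is precisely \Cref{Que:MNTConverseNonArch} in this paper, open even for an uncountable $\omega$-homogeneous graph embedding all finite graphs with $\cal K^*$ the class of finite ordered graphs. For an arbitrary $\b K^* \in X_{\cal K^*}$, essentially every step of your reduction breaks: the identification $\widehat{G/H} \cong X_{\cal K^*}$ requires $\omega$-homogeneity of $\b K^*$ to see that agreement of $g\cdot\b K^*$ and $k\cdot\b K^*$ on a finite substructure forces $gH$ and $kH$ to be close in the right uniformity; extreme amenability of $H = \aut(\b K^*)$ via the Ramsey property requires $\omega$-homogeneity to realize the monochromatic configurations by actual automorphisms of $\b K^*$; and in the worst case $\b K^*$ could be rigid, making $H$ trivial and hence certainly not co-precompact or pre-syndetic in a non-precompact $G$. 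The paper itself notes, right after \Cref{Que:MNTConverseNonArch}, that the automorphism group of such a homogeneous $\b K^*$ \emph{would} be a closed, co-precompact, extremely amenable subgroup witnessing $\M(\aut(\b K)) = \widehat{\aut(\b K)/\aut(\b K^*)}$ --- i.e., your argument is exactly the implication whose hypothesis is not known to be satisfiable.

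In the countable case your route is the classical one of \cite{KPT} (where the \fr limit of $\cal K^*$ exists and restricts to $\b K$ by uniqueness of \fr limits), but \Cref{Fact:BartUncountable} is stated for arbitrary $\omega$-homogeneous $\b K$, and the cited proof of Barto\v{s}ov\'a \cite{Bar} deliberately avoids passing through an extremely amenable subgroup: she works directly with near ultrafilters on $\aut(\b K)$ (equivalently, with the level representation $\S(G) \cong \varprojlim \beta H_{\b A}$ of \Cref{Prop:LevelSG}), using the Ramsey property of $\cal K^*$ to show that the canonical $G$-map from a minimal left ideal of $\S(G)$ onto $X_{\cal K^*}$ is injective, and the expansion property to show $X_{\cal K^*}$ is minimal. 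If you want to salvage a proof in the spirit of this paper, that ultrafilter-level argument --- not \Cref{Prop:CoPrecompactEASubgroup} --- is the one to reconstruct.
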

	\vspace{2 mm}

	In \cite{Bar2} and \cite{BarMore}, Barto\v{s}ov\'{a} computes the universal minimal flows of several groups of automorphisms of uncountable $\omega$-homogeneous structures.
	One such group is the group of permutations $\sym(\kappa)$ of an arbitrary cardinal $\kappa$, whose universal minimal flow is the space $\mathrm{LO}(\kappa)$ of linear orders on $\kappa$, with the topology whose basic open sets consist of all linear orders which extend a given linear order on a finite subset.
	Other examples of UMFs computed in \cite{Bar2} and \cite{BarMore} include: the group of automorphisms of an uncountable homogeneous boolean algebra; the automorphism group of an infinite dimensional vector space over a finite field, and the group of automorphisms of any $\omega$-homogeneous graph which embeds every finite graph.

	A necessary and sufficient condition for this existence of an excellent expansion was given in \cite{ZucAut}.
	\vspace{2 mm}

	\begin{fact}[\cite{ZucAut}]
		Let $\cal K$ be a countable \fr family of finite structures.
		$\cal K$ has finite Ramsey degrees if and only if it admits an excellent expansion $\cal K^*$.
	\end{fact}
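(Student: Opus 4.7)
The plan is as follows. For the forward direction (excellent expansion implies finite Ramsey degrees), I would use a standard iterated Ramsey argument. Fix $\b A \in \cal K$ and enumerate its finitely many $\cal K^*$-expansions $\b A^*_1, \ldots, \b A^*_k$; I claim that $\b A$ has Ramsey degree at most $k$. Given $r \geq k$ and $\b B \in \cal K$ with $\b A \leq \b B$, use reasonableness to fix some $\b B^* \in \cal K^*$ expanding $\b B$. Then iteratively apply the Ramsey property of $\cal K^*$, once per expansion type $\b A^*_i$, to produce a single $\b C^* \in \cal K^*$ with $\b B^* \leq \b C^*$ such that any $r$-coloring of $\emb(\b A^*_i, \b C^*)$ admits a $\b B^*$-copy inside $\b C^*$ monochromatic on its $\b A^*_i$-copies, simultaneously for all $i$. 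Setting $\b C$ to be the $L$-reduct of $\b C^*$, any $r$-coloring $\gamma$ of $\emb(\b A, \b C)$ lifts via reducts to a coloring of $\emb(\b A^*_i, \b C^*)$ for each $i$; the simultaneously monochromatic copy of $\b B^*$ descends to a $\b B$-embedding into $\b C$ on which $\gamma$ takes at most $k$ values, one per expansion type.

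For the backward direction, the plan is to extract an excellent expansion from the Ramsey degree data. For each $\b A \in \cal K$, the Ramsey degree $d(\b A)$ corresponds, via a compactness argument over finite colorings, to the existence of a canonical partition $P_{\b A}$ of the $\aut(\b K)$-orbits on enumerated copies of $\b A$ in the \fr limit $\b K$ into at most $d(\b A)$ pieces, coherent with embeddings across $\cal K$. Using these partitions as predicates, define an expansion language $L^* \supseteq L$ by adding, for each $\b A$, one new relation symbol per class of $P_{\b A}$. Let $\cal K^*$ consist of all $L^*$-expansions of structures in $\cal K$ whose interpretations are consistent with the $P_{\b A}$'s on every $\b A$-substructure. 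By construction, $\cal K^*$ is reasonable, hereditary, and pre-compact, with each $\b A$ having at most $d(\b A)$ expansions in $\cal K^*$.

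The main obstacle is verifying the Ramsey and expansion properties of $\cal K^*$. For the Ramsey property, given $\b A^* \leq \b B^*$ in $\cal K^*$ and $r < \omega$, the plan is to apply the finite Ramsey degree of the $L$-reduct $\b A$ of $\b A^*$ with a suitably inflated color count, and use the canonicity of $P_{\b A}$ to show that a nearly monochromatic $\b B$-copy in the reduct corresponds to a monochromatic $\b B^*$-copy at the expansion level: once we fix the expansion type $\b A^*$, only one of the $d(\b A)$ classes of $P_{\b A}$ is relevant, collapsing the color count back to $r$. The expansion property, equivalently the minimality of $X_{\cal K^*}$, is then achieved by passing to a minimal Ramsey subclass via Zorn's lemma applied to the poset of reasonable pre-compact Ramsey subclasses. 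I expect the verification of the Ramsey property of $\cal K^*$ to be the most delicate point, since it requires carefully tracking how the canonical partitions interact simultaneously with the $\aut(\b K)$-action and with arbitrary colorings.
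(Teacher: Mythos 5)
This Fact is quoted in the paper as a black box from \cite{ZucAut}; the paper gives no proof, so your attempt can only be measured against the argument in that reference, which runs through the Samuel compactification machinery that the present paper recalls in Section~\ref{Sec:Structures} (the level representation $\S(G)\cong\varprojlim\beta H_{\b A}$ and the finiteness of the projections $\tilde\pi_{\b A}[M]$ of a minimal left ideal, cf.\ \Cref{Lem:FiniteRamseyIffFiniteProj}). Your forward direction is correct and standard: iterating the Ramsey property of $\cal K^*$ once per expansion of $\b A$ and passing to the reduct shows that the Ramsey degree of $\b A$ is bounded by the number of its $\cal K^*$-expansions, and only this upper bound is needed.

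The backward direction, however, has a genuine gap. A first symptom is the phrase ``canonical partition of the $\aut(\b K)$-orbits on enumerated copies of $\b A$'': since $\b K$ is $\omega$-homogeneous, $\aut(\b K)$ acts transitively on $\emb(\b A,\b K)$, so there are no orbits to partition; the canonical partition must be a partition of $\emb(\b A,\b K)$ itself, and its pieces are necessarily \emph{not} invariant under the action. More seriously, the claim that finite Ramsey degrees yield, ``via a compactness argument over finite colorings,'' a family of partitions $P_{\b A}$ that is simultaneously of the right size, persistent and indivisible (which is what makes your ``collapse back to $r$ colors'' step work), \emph{and} coherent under restriction across all of $\cal K$ (which is what makes $\cal K^*$ hereditary and reasonable) is essentially the entire content of the theorem, not a routine compactness step. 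For a single $\b A$ one can extract a canonical partition by diagonalization, but partitions chosen independently for each $\b A$ need not cohere; the known proofs obtain all the $P_{\b A}$ simultaneously from a single object --- a point of a minimal left ideal $M\subseteq\S(G)$, whose projections $\tilde\pi_{\b A}[M]$ are finite precisely under the finite Ramsey degree hypothesis, or equivalently a distinguished expansion of $\b K$ realizing a point of $\M(G)$. Without that global choice, the verification of the Ramsey property of $\cal K^*$, which you yourself flag as the delicate point, does not go through. The final reduction to the expansion property by passing to a minimal pre-compact Ramsey subclass is a correct and known step (due to Nguyen Van Th\'e), but it only applies after a reasonable pre-compact Ramsey expansion has actually been constructed.
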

	\vspace{2 mm}

	The main theorem of this section extends \Cref{Fact:ZucNonArch} to the case of automorphism groups of uncountable $\omega$-homogeneous structures.
	\vspace{2 mm}

	\begin{theorem}
		\label{Thm:FiniteRamseyIffCAP}
		Let $\b K$ be a $\omega$-homogeneous relational structure.
		Then $\aut(\b K)$ is CAP if and only if $\age(\b K)$ has finite Ramsey degrees.
	\end{theorem}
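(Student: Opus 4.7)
The plan is to prove the two directions separately, using different tools from the paper.

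For $(\Leftarrow)$, assume $\cal K \coloneqq \age(\b K)$ has finite Ramsey degrees. By the theorem of Zucker cited just above the theorem statement (extended from the countable case, which is routine since the Ramsey and expansion properties are local), $\cal K$ admits an excellent expansion $\cal K^{*}$. By \Cref{Fact:BartUncountable}, $\M(\aut(\b K)) \cong X_{\cal K^{*}}$. Pick any $\b K^{*} \in X_{\cal K^{*}}$ and set $H \coloneqq \aut(\b K^{*})$, a closed subgroup of $\aut(\b K)$. I would then verify: (a) $\b K^{*}$ is itself $\omega$-homogeneous (inherited from $\b K$ together with the expansion property), so by the uncountable KPT correspondence the Ramsey property of $\cal K^{*}$ makes $H$ extremely amenable; (b) the pre-compactness of the expansion makes $H$ co-precompact in $\aut(\b K)$, since a basic identity neighborhood is the pointwise stabilizer of a finite tuple $F$ and there are only finitely many $\cal K^{*}$-expansions of $F$; (c) minimality of $X_{\cal K^{*}} \cong \widehat{\aut(\b K)/H}$ gives pre-syndeticity via \cite{ZucMHP}*{Proposition 6.6}. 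Then \Cref{Prop:CoPrecompactEASubgroup} delivers CAP for $\aut(\b K)$.

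For $(\Rightarrow)$, I would proceed by contrapositive: assume some $\b A \in \cal K$ has infinite Ramsey degree and exhibit an $\aut(\b K)$-flow whose almost periodic set is not closed. The natural candidate is $X = r^{\emb(\b A, \b K)}$ (for some suitable $r$, possibly obtained as an inverse limit over finite $r$), equipped with the $\aut(\b K)$-action $(g \cdot c)(e) = c(g^{-1} \circ e)$. For each $k < \omega$, the failure of finite Ramsey degree supplies $\b B_{k} \in \cal K$ and $r_{k}$ such that every large enough substructure carries an $r_{k}$-coloring of $\emb(\b A, -)$ in which no embedded copy of $\b B_{k}$ sees at most $k$ colors; using $\omega$-homogeneity of $\b K$ and a diagonal/compactness argument I would build an almost periodic coloring $c_{k} \in X$ that is locally tame on copies of $\b B_{k-1}$ but globally bad at level $k$. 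A limit point $c^{*}$ of the $c_{k}$ lies in $\overline{\ap(X)}$ by construction but fails to be almost periodic, because any minimal subflow through $c^{*}$ would have to cap the color multiplicity on every $\b B_{k}$ uniformly, contradicting the witnesses. As a possibly cleaner alternative, I would pass to a countable $\omega$-homogeneous substructure $\b F \subseteq \b K$ whose age still witnesses infinite Ramsey degree at $\b A$. By \Cref{Fact:ZucNonArch} and \Cref{Exa:CAPiffMGMetr}, $\aut(\b F)$ is Polish and not CAP, and I would transfer the non-CAP witness to $\aut(\b K)$ using the continuous surjective homomorphism $\aut(\b K)_{\set{\b F}} \twoheadrightarrow \aut(\b F)$ (granted by $\omega$-homogeneity of $\b K$) combined with an induced-flow construction $\aut(\b K) \times_{\aut(\b K)_{\set{\b F}}} Y$.

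The main obstacle lies in the reverse direction. The $(\Leftarrow)$ implication is essentially a chain of two already-proved theorems, but $(\Rightarrow)$ must convert the combinatorial failure of finite Ramsey degrees into a dynamical failure of CAP over a group that is typically non-metrizable, so \Cref{Cor:NotCAPEmbedBeta} and the classical Polish-group toolbox do not apply off the shelf. Each of the approaches above has a sticky point. The direct coloring construction requires carefully building the $c_{k}$ so that they are jointly almost periodic yet their limit $c^{*}$ is not, which forces one to control both ``how minimal'' and ``how complex'' the orbit closures are at each level $k$. The reduction approach faces a different difficulty: closed subgroups of CAP groups need not be CAP, so transferring non-CAP from $\aut(\b F)$ back up to $\aut(\b K)$ is not immediate and most plausibly must be argued directly---for instance by taking pairwise-disjoint open translates in an $\aut(\b F)$-flow supplied by \Cref{Prop:BadOpensNotCAP} and lifting them, via the induced-flow construction, to pairwise-disjoint open translates in an $\aut(\b K)$-flow.
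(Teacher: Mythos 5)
Both directions of your proposal have genuine gaps, and the paper's proof takes a completely different route that avoids them.

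For $(\Leftarrow)$, the fatal step is (a): you need $\b K^*$ to be $\omega$-homogeneous in order to run the KPT correspondence and conclude that $H = \aut(\b K^*)$ is extremely amenable, but the existence of an $\omega$-homogeneous $\b K^* \in X_{\cal K^*}$ is precisely \Cref{Que:MNTConverseNonArch}, which the paper states is \emph{open} even in the concrete case of an uncountable $\omega$-homogeneous graph embedding all finite graphs (it is not known whether such a graph carries a linear order making the expansion $\omega$-homogeneous). A generic point of $X_{\cal K^*}$ need not be $\omega$-homogeneous when $\b K$ is uncountable, so $\omega$-homogeneity is not ``inherited.'' A secondary issue: the cited equivalence between finite Ramsey degrees and the existence of an excellent expansion is stated for \emph{countable} \fr families, and after adding orbit predicates the language (hence the age) of $\b K$ may be uncountable, so even the first link in your chain needs an argument. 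This is exactly why the paper does not route the easy direction through \Cref{Prop:CoPrecompactEASubgroup}.

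For $(\Rightarrow)$, neither of your two sketches closes. The direct coloring construction never specifies how to produce colorings $c_k$ that are provably almost periodic while their limit provably is not; this is the entire content of the direction, not a detail. The reduction to a countable $\b F$ founders on the transfer you yourself flag: surjectivity (or even density of the image) of the restriction $\aut(\b K)_{\{\b F\}} \to \aut(\b F)$ does not follow from $\omega$-homogeneity of $\b K$ (back-and-forth extends finite partial maps, not a full automorphism of $\b F$, to an automorphism of $\b K$), and even granting it you would only get that the \emph{closed subgroup} $\aut(\b K)_{\{\b F\}}$ is not CAP via \Cref{Prop:CAPDenseHom}; the induced-flow construction $\aut(\b K)\times_{H} Y$ for a closed, non-open, non-cocompact $H$ does not in general yield a compact Hausdorff $\aut(\b K)$-flow, so the final lift is unjustified. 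The paper instead proves the level representation $\S(G)\cong\varprojlim \beta H_{\b A}$ (\Cref{Prop:LevelSG}), shows via \Cref{Lem:FiniteRamseyIffFiniteProj} that finite Ramsey degrees are equivalent to every projection $\tilde\pi_{\b A}[M]$ of a minimal subflow being finite, and then converts this into CAP/non-CAP through the UEB characterization of \Cref{Thm:EquivalentInG}: the discrete pseudo-metrics $d_{\b A}$ induce pseudo-metrics $\partial_{\b A}$ on $\S(G)$ that descend to discrete metrics on $\beta H_{\b A}$, so $\partial_{\b A}$ is continuous on $M$ exactly when $\tilde\pi_{\b A}[M]$ is finite. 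Both directions then fall out of one lemma, with no need for excellent expansions, extremely amenable subgroups, or a passage to countable substructures.
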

	\vspace{2 mm}

	Fix a $\omega$-homogeneous relational structure $\b K$.
	We write $G$ for $\aut(\b K)$.
	For each $\b A \in \age(\b K)$, let $\iota_{\b A}$ denote the inclusion map $\b A \to \b K$, and $H_{\b A}$ be the collection of embeddings from $\b A$ to $\b K$, which we regard as a discrete space.
	If $\b A \subseteq \b K$ are two finite substructures of $\b K$, there is canonical projection $\pi_{\b A}^{\b B}\colon H_{\b B} \to H_{\b A}$, given by the restriction to $\b A$. Letting $\pi_{\b A}\colon G \to H_{\b A}$ denote the map $g \mapsto g^{-1} \circ \iota_{\b A}$, then $\pi_{\b A}^{\b B} \circ \pi_{\b B} = \pi_{\b A}$ for any $\b A \subseteq \b B \in \age(\b K)$. Note that if $f\in H_\b{A}$ and $g\in G$, we can interpret $f\cdot g$ as an element of $H_{g^{-1}\b{A}}$ in the natural way.

	Let $U_\b{A}$ denote the pointwise stabilizer of $\b{A}$.
	Then $\{U_\b{A}: \b{A} \in \age(\b K)\}$ is a base of clopen neighborhoods of $1_G$. We remark that $\pi_{\b A}$ is right uniformly continuous, as
	\begin{align*}
	gh^{-1} \in U_{\b A} &\Leftrightarrow \pi_{\b A}(g h^{-1}) = \iota_{\b A}\\[1 mm]
	&\Leftrightarrow  h \circ g^{-1} \circ \iota_{\b A} = \iota_{\b A}\\[1 mm]
	&\Leftrightarrow g^{-1} \circ \iota_{\b A} = h^{-1} \circ \iota_{\b A}\\[1 mm]
	&\Leftrightarrow \pi_{\b A}(g) =  \pi_{\b A}(h).
	\end{align*}
	We take the Stone-\v{C}ech compactification $\beta H_{\b A}$.
	The maps $\pi_{\b A}^{\b B}$ extend to continuous maps $\beta H_{\b B} \to \beta H_{\b A}$, under which we form the inverse limit $\varprojlim \beta H_{\b A}$. The group $G$ acts on $\varprojlim \beta H_{\b A}$ on the left as follows, where if $g\in G$ and $x\coloneqq (x_{\b{A}})_{\b{A}}\in \varprojlim \beta H_{\b{A}}$, we have $gx \coloneqq ((gx)_{\b{A}})_{\b{A}}$ given by the formula
	$$S\in (gx)_\b{A} \Leftrightarrow S\cdot g\in x_{g^{-1}\b{A}}.$$
	The following generalizes \cite{ZucAut}, see also \cite{KrPi} or \cite{Pes}.
	\vspace{2 mm}

	\begin{prop}
		\label{Prop:LevelSG}
		$\S(G)\cong \varprojlim \beta H_\b{A}$.
	\end{prop}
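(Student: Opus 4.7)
The plan is to construct a $G$-equivariant continuous bijection $\Phi \colon \S(G) \to \varprojlim \beta H_{\b A}$; as both spaces are compact Hausdorff, $\Phi$ will then automatically be a homeomorphism. Since $H_{\b A}$ is discrete and $\pi_{\b A}$ is constant on right cosets of $U_{\b A}$ by the displayed calculation preceding the statement, each $\pi_{\b A} \colon G \to H_{\b A}$ is right uniformly continuous and so admits a unique continuous extension $\hat{\pi}_{\b A}\colon \S(G) \to \beta H_{\b A}$ by \Cref{Fact:SGUniversal}. The identities $\pi_{\b A}^{\b B} \circ \pi_{\b B} = \pi_{\b A}$ for $\b A \subseteq \b B$ persist through these extensions (by uniqueness of the continuous extension), so the $\hat{\pi}_{\b A}$ assemble into a continuous map $\Phi$ into the inverse limit. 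Equivariance follows from unwinding the near ultrafilter description $\hat{\pi}_{\b A}(p) = \{S \subseteq H_{\b A} \colon \pi_{\b A}^{-1}(S) \in p\}$ provided by \Cref{Fact:NUltBasics}(3), and matching it against the explicit action stated in the proposition; this is a direct but fiddly calculation, and probably the step requiring the most bookkeeping, since the coordinates get permuted via $\b A \mapsto g^{-1}\b A$.

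For injectivity, I would argue via near ultrafilters. Suppose $p \neq q$ in $\S(G)$ and find $A \in p$, $B \in q$, and $U \in \cal{N}_G$ with $UA \cap UB = \emptyset$. Since $\{U_{\b A} : \b A \in \age(\b K)\}$ is a neighborhood base at $1_G$, we may take $U = U_{\b A}$. Because $\pi_{\b A}$ is constant on right $U_{\b A}$-cosets, we have $U_{\b A}A = \pi_{\b A}^{-1}(\pi_{\b A}[A])$ and similarly for $B$, so $\pi_{\b A}[A]$ and $\pi_{\b A}[B]$ are disjoint subsets of $H_{\b A}$. By the description above, $\pi_{\b A}[A] \in \hat{\pi}_{\b A}(p)$ and $\pi_{\b A}[B] \in \hat{\pi}_{\b A}(q)$, so $\Phi(p) \neq \Phi(q)$.

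For surjectivity, it suffices to show that the diagonal image of $G$ is dense in $\varprojlim \beta H_{\b A}$, since $\Phi(\S(G))$ is then a closed set containing this dense image. A basic open set in the limit involves only finitely many indices $\b A_1, \dots, \b A_n$; since $\age(\b K)$ is upward-directed under inclusion (finite unions of finite substructures of $\b K$ remain in $\age(\b K)$), one reduces to a single open condition $V \subseteq \beta H_{\b A}$. Density of $H_{\b A}$ in $\beta H_{\b A}$ yields some $f \in V \cap H_{\b A}$, and by $\omega$-homogeneity of $\b K$ the partial isomorphism $f^{-1} \colon f(\b A) \to \b A$ extends to some $g \in \aut(\b K)$, whereupon $\pi_{\b A}(g) = g^{-1} \circ \iota_{\b A} = f \in V$. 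This establishes density, and hence surjectivity, completing the proof.
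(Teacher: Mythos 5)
Your proposal is correct and follows essentially the same route as the paper: extend the maps $\pi_{\b A}$ to $\beta H_{\b A}$, assemble them into a continuous $G$-map into the inverse limit, and prove injectivity by the identical near-ultrafilter separation argument using $U_{\b A}A = \pi_{\b A}^{-1}(\pi_{\b A}[A])$. The only difference is that you spell out surjectivity (via density of the image of $G$ in the inverse limit, using directedness of $\age(\b K)$ and $\omega$-homogeneity), a step the paper simply asserts.
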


	\begin{proof}
		Notice that $\pi_{\b A}$ continuously extends to a surjection $\tilde \pi_{\b A} \colon \S(G) \to \beta H_{\b A}$, since $\tilde \pi_{\b A}$ is uniformly continuous.
		Thus we obtain a surjective $G$-map $\tilde \pi\colon \S(G)\to \varprojlim \beta H_\b{A}$ by setting $\tilde \pi(p) = (\tilde \pi_\b{A}(p))_{\b{A}}$.

		To see that $\tilde \pi$ is injective, suppose $p\neq q\in \S(G)$.
		Then we can find $S\in p$, $T\in q$, and $\b{A} \in \age(\b K)$ with $U_\b{A}S\cap U_\b{A}T = \emptyset$.
		So $U_{\b A} S = \pi_{\b A}^{-1}(\pi_{\b A}[S])$ belongs to $p$ but not to $q$, which implies that $\tilde \pi_\b{A}(p)\neq \tilde \pi_\b{A}(q)$.
	\end{proof}
	\vspace{2 mm}

	We refer to this as the \emph{level representation} of the Samuel compactification.
	Recall that a neighborhood basis of $p \in \S(G)$ is given by the clopen sets $C_{U_{\b A}B}$, for $\b A \in \age(\b K)$ and $B \in p$.

	A key ingredient in the proof of \Cref{Thm:FiniteRamseyIffCAP} is a lemma which links the combinatorial properties of $\age(\b K)$ to the level representation of any minimal subflow of $\S(G)$.
	\vspace{2 mm}

	\begin{lemma}
		\label{Lem:FiniteRamseyIffFiniteProj}
		Let $M \subseteq \S(G)$ be a minimal subflow.
		Then $\age(\b K)$ has finite Ramsey degrees if and only if $\tilde \pi_{\b A}[M] \subseteq \beta H_{\b A}$ is finite for all $\b A \in \age(\b K)$.
	\end{lemma}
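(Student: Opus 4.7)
My plan is to prove the two directions separately, relying throughout on the following key identity: for any $h \in H_{\b A}$, by $\omega$-homogeneity we may extend $h$ to some $\tilde h \in G$ with $\tilde h|_{\b A} = h$; then $\pi_h(g) := g^{-1} h = \pi_{\b A}(\tilde h^{-1} g)$, and upon passing to Samuel extensions, $\tilde\pi_h(p) = \tilde\pi_{\b A}(\tilde h^{-1} p) \in \tilde\pi_{\b A}[M]$ for every $p \in M$, since $M$ is $G$-invariant. This identity translates information at arbitrary displacements of $\b A$ inside $\b K$ into statements about the set $\tilde\pi_{\b A}[M]$ and is the backbone of both directions.

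For $(\Leftarrow)$, suppose $\tilde\pi_{\b A}[M] = \{u_1, \ldots, u_k\}$ is finite. Given a coloring $\gamma\colon H_{\b A} \to r$, each $u_i$ contains a unique color class $T_{c_i} := \gamma^{-1}(c_i)$; set $Y := \{c_1, \ldots, c_k\}$ and fix $\b B \geq \b A$ in $\age(\b K)$. For each $i' \in \emb(\b A, \b B)$, applying the key identity to $h_{i'} := \iota_{\b B} \circ i' \in H_{\b A}$ and noting $\pi_{h_{i'}}(g) = \pi_{\b B}(g) \circ i'$ yields $\tilde\rho_{i'}(\tilde\pi_{\b B}(p)) \in \tilde\pi_{\b A}[M]$, where $\rho_{i'}\colon H_{\b B} \to H_{\b A}$ is $\rho_{i'}(j) = j \circ i'$. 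Hence $\bigcup_{c \in Y} T_c$ lies in this ultrafilter, so $\{j \in H_{\b B} : \gamma(j \circ i') \in Y\} \in \tilde\pi_{\b B}(p)$. Since $\tilde\pi_{\b B}(p)$ is an ultrafilter on the discrete space $H_{\b B}$ and $\emb(\b A, \b B)$ is finite, intersecting over $i'$ yields $\{j : \tau_\gamma(j) \subseteq Y\} \in \tilde\pi_{\b B}(p)$, which is therefore non-empty. This bounds the Ramsey degree of $\b A$ by $k$.

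For $(\Rightarrow)$, assume $\b A$ has Ramsey degree $k < \infty$, and for contradiction that $|\tilde\pi_{\b A}[M]| \geq k+1$, witnessed by distinct $u_1, \ldots, u_{k+1}$ and pairwise disjoint $S_i \in u_i$. Define $\gamma\colon H_{\b A} \to \{1, \ldots, k+1\}$ by $\gamma^{-1}(i) = S_i$ for $i \leq k$ and $\gamma^{-1}(k+1) = H_{\b A} \setminus \bigcup_{i \leq k} S_i$ (which still contains $S_{k+1}$). Fix $p \in M$. By minimality of $M$, the $G$-orbit of $p$ is dense in $M$, so for each $i$ there is $g_i \in G$ with $g_i p$ close enough to a representative of $u_i$ to force $S_i \in \tilde\pi_{\b A}(g_i p)$; equivalently, $S_i \in \tilde\pi_{h_i}(p)$ for a suitable $h_i \in H_{\b A}$. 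Let $\b B$ be the finite substructure of $\b K$ generated by $\bigcup_i h_i(\b A)$, identify each $h_i$ with an embedding $i'_i \in \emb(\b A, \b B)$, and apply the computation of the previous direction to obtain $\{j \in H_{\b B} : j \circ i'_i \in S_i\} \in \tilde\pi_{\b B}(p)$ for each $i$. Intersecting these finitely many ultrafilter members, any $j$ in the resulting non-empty set has $\tau_\gamma(j) \supseteq \{1, \ldots, k+1\}$, hence uses all $k+1$ colors.

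The main obstacle is that producing one such bad $j$ does not immediately contradict the Ramsey condition, which asserts only the existence of at least one good copy of $\b B$ inside some large enough $\b C$. To close the contradiction, I expect one must pass from $\b B$ to a suitable $\b C \supseteq \b B$ and argue that, along the fiber given by $p \in M$, the ultrafilter $\tilde\pi_{\b C}(p)$ concentrates on embeddings whose restriction to every $i \in \emb(\b B, \b C)$ already uses all $k+1$ colors, thereby trapping the would-be Ramsey-good copies. This refinement exploits the compactness of $M$ and the closure property $\tilde\pi_h(p) \in \tilde\pi_{\b A}[M]$ for all $h \in H_{\b A}$, and is the technically delicate step of the proof.
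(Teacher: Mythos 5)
The paper does not spell out a proof of this lemma (it defers to \cite{ZucThesis}*{Theorem 3.5.5}), so I am judging your argument against the standard one from that source. Your $(\Leftarrow)$ direction is essentially that argument and is correct, modulo one routine omission: you produce, for each colouring of $H_{\b A}$ and each $\b B$, a copy of $\b B$ \emph{in $\b K$} with at most $k$ colours, whereas the Ramsey degree requires a single finite $\b C\in\age(\b K)$ working for all colourings of $\emb(\b A,\b C)$; the usual compactness argument on the space $r^{H_{\b A}}$ of colourings closes this and should be mentioned. The $(\Rightarrow)$ direction, however, has a genuine gap which you acknowledge but whose proposed repair is not yet an argument. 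Exhibiting one $j\in H_{\b B}$ (even a $\tilde\pi_{\b B}(p)$-large set of them) realizing all $k+1$ colours does not contradict Ramsey degree $k$: the Ramsey statement only promises \emph{some} good copy of $\b B$ inside a suitable $\b C$, so you must produce, for every $\b C\geq\b B$, a copy of $\b C$ in $\b K$ \emph{all} of whose copies of $\b B$ realize all $k+1$ colours. Your sketch --- that $\tilde\pi_{\b C}(p)$ concentrates on such copies --- would need $\rho_i^{-1}(W)\in\tilde\pi_{\b C}(p)$ for every $i\in\emb(\b B,\b C)$, where $W\subseteq H_{\b B}$ is the set of all-colour-realizing copies. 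But by your own key identity, $\tilde\rho_i(\tilde\pi_{\b C}(p))=\tilde\pi_{\b B}(q_i)$ for points $q_i\in M$ that vary with $i$ and are not $p$; so you would need $W\in\tilde\pi_{\b B}(q)$ for \emph{every} $q\in M$, while your witnesses $i'_1,\dots,i'_{k+1}$ were chosen for the single point $p$.

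The missing idea is to make the witnesses uniform over $M$. One way: for each $l$, the clopen sets $M\cap C_{\pi_h^{-1}(S_l)}$, $h\in H_{\b A}$, cover $M$ (every $q\in M$ has dense orbit, so some $gq$ lies in the clopen set $C_{\pi_{\b A}^{-1}(S_l)}$, giving $S_l\in\tilde\pi_h(q)$ for $h=\pi_{\b A}(g)$); by compactness of $M$ finitely many $h$ suffice, and enlarging $\b B$ to contain all their images yields $W\in\tilde\pi_{\b B}(q)$ for all $q\in M$ simultaneously, after which your intersection argument at the level of $\b C$ does go through. A cleaner route, closer to the cited proof, is to replace the $S_l$ by the return sets $R_l=\{h\in H_{\b A}: S_l\in\tilde\pi_h(p)\}$: these are pairwise disjoint (each $\tilde\pi_h(p)$ is an ultrafilter and the $S_l$ are disjoint), and each is \emph{syndetic}, since $\pi_{\b A}^{-1}(R_l)=\{g: gp\in C_{\pi_{\b A}^{-1}(S_l)}\}$ is a return-time set to a nonempty clopen subset of the minimal flow $M$, hence syndetic in $G$; combinatorially this gives a finite $\b B_l$ such that every copy of $\b B_l$ in $\b K$ contains a copy of $\b A$ in $R_l$. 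Colouring $H_{\b A}$ by the $R_l$ rather than the $S_l$, \emph{every} copy of a $\b B$ containing all the $\b B_l$ realizes all $k+1$ colours, and the contradiction with Ramsey degree $k$ is immediate. Without one of these steps the forward direction is not proved.
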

\vspace{2 mm}

	The proof of \Cref{Lem:FiniteRamseyIffFiniteProj} is essentially that of \cite{ZucThesis}*{Theorem 3.5.5}.
	The interested reader should be aware that the side of the group actions in \cite{ZucThesis} is reversed with respect to our presentation.

	\begin{proof}[Proof of \Cref{Thm:FiniteRamseyIffCAP}]
		For each $A \in \age(\b K)$ we define a discrete pseudo-metric $d_{\b A}$ on $G$ which on input $g, h$ takes value $0$ if $\pi_{\b A}(g) = \pi_{\b A}(h)$, and $1$ overwise.
		Notice that $d_{\b A}$ is continuous and right-invariant.
		It follows that $\setnew{d_{\b A}}{A \in \age(\b K)}$ is a basis of continuous pseudo-metrics for $G$, as the $U_{\b A}$'s are a basis of neighborhoods of $1_{G}$.
		\vspace{0 mm}

		\begin{claim}
			\label{Claim:InducedPseudoAreDiscrete}
			For each $A \in \age(\b K)$, the pseudo-metric $\partial_{\b A} \coloneqq \partial_{d_{\b A}}$ on $\S(G)$ is discrete and such that $\partial_{\b A}(p, q) = 0$ if and only if $\tilde \pi_{\b A}(p) = \tilde \pi_{\b A}(q)$.
			In particular, $\partial_{\b A}$ induces a discrete metric on $\beta H_{\b A}$.
		\end{claim}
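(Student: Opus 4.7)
The key observation is that because $d_{\mathbf A}$ takes values in $\{0,1\}$, the class $\lip(d_{\mathbf A})$ consists exactly of those functions $F\colon G\to[0,1]$ which are constant on fibers of $\pi_{\mathbf A}$; equivalently, $F = f\circ\pi_{\mathbf A}$ for some $f\colon H_{\mathbf A}\to[0,1]$. The plan is to push the supremum defining $\partial_{\mathbf A}$ through this factorization and reduce the question to separation of points in $\beta H_{\mathbf A}$.

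More concretely, I would argue as follows. Given $F = f\circ\pi_{\mathbf A}\in\lip(d_{\mathbf A})$, the continuous extension $\hatf[F]\colon \S(G)\to[0,1]$ factors as $\tilde f\circ\tilde\pi_{\mathbf A}$, where $\tilde f\colon\beta H_{\mathbf A}\to[0,1]$ is the Stone-\v{C}ech extension of $f$; this is immediate from uniqueness of continuous extensions, since both sides agree on the dense subset $G\subseteq\S(G)$. Therefore
\[
\partial_{\mathbf A}(p,q) \;=\; \sup\bigl\{|\tilde f(\tilde\pi_{\mathbf A}(p))-\tilde f(\tilde\pi_{\mathbf A}(q))|\,:\,f\colon H_{\mathbf A}\to[0,1]\bigr\}.
\]

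If $\tilde\pi_{\mathbf A}(p)=\tilde\pi_{\mathbf A}(q)$, the right-hand side is obviously $0$. Conversely, if $\tilde\pi_{\mathbf A}(p)\neq\tilde\pi_{\mathbf A}(q)$, then since $H_{\mathbf A}$ is discrete, $\beta H_{\mathbf A}$ is a Stone space and we can find $S\subseteq H_{\mathbf A}$ with $S\in\tilde\pi_{\mathbf A}(p)$ and $S\notin\tilde\pi_{\mathbf A}(q)$; taking $f=\chi_S$ yields $\tilde f(\tilde\pi_{\mathbf A}(p))=1$ and $\tilde f(\tilde\pi_{\mathbf A}(q))=0$, so $\partial_{\mathbf A}(p,q)\geq 1$. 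Since $\lip(d_{\mathbf A})\subseteq[0,1]^G$, the supremum is also bounded by $1$, so $\partial_{\mathbf A}(p,q)=1$ exactly. This simultaneously proves that $\partial_{\mathbf A}$ takes only the values $0$ and $1$ and that $\partial_{\mathbf A}(p,q)=0\Leftrightarrow\tilde\pi_{\mathbf A}(p)=\tilde\pi_{\mathbf A}(q)$.

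The ``in particular'' clause then follows: since $\tilde\pi_{\mathbf A}$ is surjective and its fibers coincide with the $\partial_{\mathbf A}$-zero classes, $\partial_{\mathbf A}$ descends to a well-defined metric on $\beta H_{\mathbf A}$ taking values in $\{0,1\}$, which is precisely the discrete metric. There is no real obstacle here; the only subtlety is correctly identifying $\lip(d_{\mathbf A})$ with pullbacks of $[0,1]$-valued functions on $H_{\mathbf A}$, which rests on the fact that a Lipschitz condition with respect to a $\{0,1\}$-valued pseudo-metric becomes a pointwise equality condition on fibers.
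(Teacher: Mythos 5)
Your proof is correct, but it takes a different route from the paper's. You work directly from the definition $\partial_{d}(p,q)=\sup\{|f(p)-f(q)|:f\in\lip(d)\}$, observing that a $1$-Lipschitz condition with respect to the $\{0,1\}$-valued pseudo-metric $d_{\b A}$ is exactly constancy on $\pi_{\b A}$-fibers, so that $\lip(d_{\b A})$ is precisely the set of pullbacks $f\circ\pi_{\b A}$ with $f\colon H_{\b A}\to[0,1]$ arbitrary; the continuous extensions then factor as $\tilde f\circ\tilde\pi_{\b A}$, and separating two distinct points of $\beta H_{\b A}$ by a clopen set (i.e.\ a characteristic function $\chi_S$) finishes the argument. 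The paper instead invokes its near-ultrafilter characterization of $\partial_d$ (Proposition~\ref{Prop:MHPPseudo}): since $d_{\b A}(c+\epsilon)=U_{\b A}$ for $c<1$ and $=G$ for $c\geq 1$, one gets $\partial_{\b A}(p,q)\in\{0,1\}$ with $\partial_{\b A}(p,q)=0$ iff $U_{\b A}B\in q$ for all $B\in p$, and then checks that $U_{\b A}$-saturation of $\pi_{\b A}$-preimages is trivial. The two arguments are morally dual: the paper stays inside the combinatorial (near-ultrafilter) picture of $\S(G)$ and reuses machinery already set up for the main theorem, while your version is more self-contained, makes the identity $\partial_{\b A}(p,q)=\partial(\tilde\pi_{\b A}(p),\tilde\pi_{\b A}(q))$ manifest (which is exactly what the ``in particular'' clause needs), and isolates the clean structural fact that $\lip$ of a discrete pseudo-metric is the unit ball of $\ell^\infty$ of the quotient. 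One small point worth making explicit in your write-up: the identification of $\lip(d_{\b A})$ with pullbacks uses that $\pi_{\b A}$ is surjective onto $H_{\b A}$, which follows from $\omega$-homogeneity of $\b K$ (and is harmless anyway, since one could extend $f$ arbitrarily off the image).
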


		\begin{proof}[Proof of Claim]
			Fix $\b A \in \age(\b K)$.
			For all $c <1$, we have $d_{\b A}(c+\varepsilon) = U_{\b A}$.
			By \Cref{Prop:MHPPseudo}, we therefore have that $\partial_{\b A}(p, q) < 1$ if and only if $\partial_{\b A}(p, q)= 0$ if and only if for all $B \ni_{op} p$ it holds that $q \in \overline{U_{\b A}B}$.
			On the other hand $d_{\b A}(1+\varepsilon) = G$, so $\partial_{\b A}(g, h)\neq 0$ implies $\partial_{\b A}(g, h) =1$.

			If $p, q \in \S(G)$ are such that $\tilde \pi_{\b A}(p) = \tilde \pi_{\b A}(q)$ and $B \in p$, then $\pi_{\b A}[B] \in \tilde \pi_{\b A}(p) = \tilde \pi_{\b A}(q)$, so $q \ni \pi_{\b A}^{-1}(\pi_{\b A}[B]) = U_{\b A} B$,
			that is $q \in C_{U_{\b A}B} = \overline{U_{\b A}C_{B}}$, where the last equality follows from point 4 of
			\Cref{Fact:NUltBasics}.
			By \Cref{Prop:MHPPseudo}, we have that $\partial_{\b A}(p, q) = 0$, since $C_{B}$ is a basic closed set to which $p$ belongs.

			If $\tilde \pi_{\b A}(p) \neq \tilde \pi_{\b A}(q)$, let $S \subseteq H_{\b A}$ be such that $\pi_{\b A}^{-1}(S) \in p$ but $\pi_{\b A}^{-1}(S) \not \in q$.
			Then $U_{\b A} \pi_{\b A}^{-1}(S) = \pi_{\b A}^{-1}(S) \not \in q$.
			So $q \not \in C_{U_{\b A}\pi_{\b A}^{-1}(S)} = \overline{U_{\b A}\pi_{\b A}^{-1}(S)}$. But $\pi_{\b A}^{-1}(S)$ is a neighborhood of $p$, thus
			$\partial_{\b A}(p, q) >0$, by \Cref{Prop:MHPPseudo}
		\end{proof}
\vspace{1 mm}

		Let $M \subseteq \S(G)$ be minimal.
		If we suppose that $\age(\b K)$ does not have finite Ramsey degrees, by \Cref{Lem:FiniteRamseyIffFiniteProj} there is $\b A \in \age(\b K)$ with $\pi_{\b A}[M]$ infinite.
		If $G$ were CAP, then $\partial_{\b A}$ would be continuous on $M$ by point 4 of \Cref{Thm:EquivalentInG} and thus $\partial_{\b A}$ would induce a continuous discrete metric on $\pi_{\b A}[M]$.
		This is a contradiction because $\pi_{\b A}[M]$ is compact and infinite and thus not discrete.

		On the other hand, if $\age(\b K)$ has finite Ramsey degrees, $\pi_{\b A}[M]$ is finite for each $\b A \in \age(\b K)$.
		But then $\partial_{\b A}$ is continuous as all its balls are clopen.
		Since $\setnew{d_{\b A}}{\b A \in \age(\b K)}$ is a basis of continuous pseudo-metrics, $G$ is CAP by condition 5 of \Cref{Thm:EquivalentInG}.
	\end{proof}
\vspace{2 mm}

	We finish the section by recalling a question first posed in \cite{Bar}, which is closely related to \Cref{Que:MNTConverse}.
	\vspace{2 mm}

	\begin{que}
		\label{Que:MNTConverseNonArch}
		Let $\b K$ be a $\omega$-homogeneous relational structure such that $\aut(\b K)$ is CAP.
		Does there exist an excellent expansion $\cal K^*$ of $\age(\b K)$ and $\b K^* \in X_{\cal K^*}$ such that $\b K^*$ is $\omega$-homogeneous?
		Is this the case for any excellent expansion $\age(\b K)$?
	\end{que}
\vspace{2 mm}

	The automorphism group of such $\b K^*$ would indeed be a closed co-precompact and extremely amenable subgroup of $\aut(\b K)$ such that $\M(\aut(\b K)) = X_{\cal K^*} = \widehat{\aut(\b K) / \aut(\b K^*)}$.

	The question is open even in the concrete case in which $\b K $ is an uncountable $\omega$-homogeneous graph which embeds all finite graphs.
	An excellent expansion of the class of finite graphs is the class of all finite linearly ordered graphs.
	It is not known if there is a linear order on $\b K$ such that the resulting structure is $\omega$-homogeneous.

	\subsection{The universal minimal flow of \texorpdfstring{$\homeo(\omega_1)$}{Homeo(w1)}}

	In this section we make use of our results to compute the universal minimal flow of a concrete group.
	In a recent paper \cite{Ghey}, Gheysens investigates the space $\omega_{1}$ with the order topology.
	He proves that its group of homeomorphisms, which is a topological group with the topology of pointwise convergence, is amenable, Roelcke-precompact while not being Baire, and admitting only trivial homomorphism to metrizable groups.
	The result which is of interest for the computation of its universal minimal flow is \cite{Ghey}*{Lemma 11}, whose immediate consequence is:
	\vspace{2 mm}

	\begin{fact}
		\label{Fact:HomeoSym}
		The group $\homeo(\omega_{1})$ densely embeds in $\sym(\omega_{1})^{\omega_{1}}$.
	\end{fact}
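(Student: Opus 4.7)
The plan is to build an explicit topological group embedding $\Phi \colon \homeo(\omega_1) \to \sym(\omega_1)^{\omega_1}$ whose $\alpha$-th coordinate records the restriction of $h$ to $[0,\alpha)$, and then to reduce density of its image to the flexibility statement of \cite{Ghey}*{Lemma 11}. This requires combining some standard club-theoretic bookkeeping on $\omega_1$ with a single appeal to Gheysens' lemma.

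I would begin with the standard observation that for every bijection $h \colon \omega_1 \to \omega_1$, and in particular every $h \in \homeo(\omega_1)$, the set $D_h \coloneqq \{\alpha < \omega_1 : h[[0,\alpha)] = [0,\alpha)\}$ is a club in $\omega_1$: closedness is immediate from $[0,\alpha) = \bigcup_{\beta < \alpha}[0,\beta)$, and unboundedness follows from a short back-and-forth closure argument starting at any $\beta < \omega_1$. Using this, I would define $\Phi_\alpha(h) \in \sym(\omega_1)$ to be the permutation which agrees with $h$ on $[0,\alpha)$ and with the identity on $[\alpha, \omega_1)$ whenever $\alpha \in D_h$, and which is a canonical ``repair'' (for instance, the identity on the finite symmetric difference between $h[[0,\alpha)]$ and $[0,\alpha)$, together with $h$ on their intersection, extended by identity above $\alpha$) otherwise.

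A direct verification would then show that $\Phi$ is a continuous group homomorphism: both source and target carry pointwise convergence topologies, each coordinate $\Phi_\alpha(h)$ is determined by $h|_{[0,\alpha)}$, and the chosen default is canonical enough to respect composition. Injectivity is immediate from the cofinality of $D_h$, and $\Phi$ being a topological embedding follows by matching the standard bases of neighborhoods of the identity, which are given by pointwise stabilizers of finite sets on both sides. For density, a basic open neighborhood in $\sym(\omega_1)^{\omega_1}$ is specified by finitely many coordinates $\alpha_1 < \cdots < \alpha_n$ together with finitely many pointwise conditions $\sigma(x_{ij}) = y_{ij}$ at each $\alpha_i$; after enlarging the $\alpha_i$ to dominate all the $x_{ij}$ and $y_{ij}$, the problem becomes: find $h \in \homeo(\omega_1)$ realizing all the prescribed pointwise conditions and simultaneously stabilizing each $[0,\alpha_i)$. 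This is precisely what \cite{Ghey}*{Lemma 11} is designed to provide.

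The main obstacle I anticipate is not any single step in isolation but the careful choice of the default value of $\Phi_\alpha(h)$ when $\alpha \notin D_h$: it must make $\Phi$ genuinely continuous rather than merely ``continuous on a dense subset'' of $\homeo(\omega_1)$, and must not disrupt the homomorphism property when two sets $D_h$ and $D_{h'}$ fail to coincide. Once this bookkeeping is sorted out and the statement of \cite{Ghey}*{Lemma 11} is placed in the right form, the remaining arguments are essentially formal.
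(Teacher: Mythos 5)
Your overall architecture -- an $\omega_1$-indexed family of coordinates, each recording a ``trace'' of $h$, with density reduced to Gheysens' Lemma~11 -- has the right shape, but the specific choice of trace (restriction to the initial segments $[0,\alpha)$) cannot work, for two independent reasons. First, $[0,\alpha)$ is not invariant under $\homeo(\omega_1)$ for $0<\alpha<\omega_1$ (the transposition of two isolated points on opposite sides of $\alpha$ is a homeomorphism), so no choice of ``default value'' makes $\Phi_\alpha$ a homomorphism: the kernel of any homomorphic $\Phi_\alpha$ extending truncation would be a normal subgroup containing the pointwise stabilizer of $[0,\alpha)$, hence -- conjugating a transposition of two isolated points above $\alpha$ by a homeomorphism moving them below $\alpha$, which exists by Lemma~11 -- it would contain transpositions of isolated points inside $[0,\alpha)$, which truncation does not kill. (Separately, the symmetric difference $h[[0,\alpha)]\,\triangle\,[0,\alpha)$ need not be finite, so the proposed repair is not even a well-defined permutation.) Second, and independently of how the coordinates are repaired, the image of your $\Phi$ cannot be dense: since $\Phi_{\alpha}(h)(0)\in\{h(0),0\}$ for every $\alpha$, the nonempty basic open set $\{(\sigma_\beta)_\beta:\sigma_{\alpha_1}(0)=1,\ \sigma_{\alpha_2}(0)=2\}$ of $\sym(\omega_1)^{\omega_1}$ (with $\alpha_1\neq\alpha_2$) misses the image entirely. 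The coordinates of the target product are independent, whereas yours are all determined by the single function $h$ on overlapping domains; no appeal to Lemma~11 can overcome this, and in any case Lemma~11 produces homeomorphisms realizing finitely many rank-preserving point conditions, not homeomorphisms setwise stabilizing prescribed initial segments.

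The intended construction, which \Cref{Prop:GenScattered} makes explicit in general, indexes the coordinates by Cantor--Bendixson rank rather than by initial segments. Every homeomorphism of $\omega_1$ preserves the CB rank of each point, so for each $\alpha<\mathrm{CB}(\omega_1)=\omega_1$ the level set $L_\alpha$ of points of rank exactly $\alpha$ (written $X^{(\alpha+1)}\setminus X^{(\alpha)}$ in \Cref{Prop:GenScattered}; it has cardinality $\aleph_1$) is invariant, and $h\mapsto (h|_{L_\alpha})_{\alpha}$ is an injective continuous homomorphism into $\prod_{\alpha}\sym(L_\alpha)\cong\sym(\omega_1)^{\omega_1}$ which is open onto its image, because the pointwise stabilizers of finite sets match up on both sides (the $L_\alpha$ partition $\omega_1$, so by \cite{Ghey}*{Proposition 1} the topology of pointwise convergence on $\homeo(\omega_1)$ is exactly the one pulled back from the product). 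Since the $L_\alpha$ are pairwise disjoint, a basic open subset of the product prescribes nothing more than a finite rank-preserving partial injection of $\omega_1$, and \cite{Ghey}*{Lemma 11} states precisely that every such partial injection is realized by a homeomorphism; this gives density. You should redo the argument along these lines.
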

\vspace{2 mm}

	By \Cref{Fact:DenseSubgroup} universal minimal flow of a group coincides with that of any of its dense subgroups, so we are reduced to computing $\M(\sym(\omega_{1})^{\omega_{1}})$.
	By \cite{Pes2} and \Cref{Thm:FiniteRamseyIffCAP}, $\sym(\omega_1)$ is a CAP group and its universal minimal flow is the space $\mathrm{LO}(\omega_{1})$ of linear orders of $\omega_{1}$.

	Since $\sym(\omega_1)$ is CAP, so is $\sym(\omega_1)^{\omega_1}$, by \Cref{Cor:CAPProducts}.
	By \Cref{Fact:DenseSubgroup}, $\homeo(\omega_1)$ is CAP and by \Cref{Prop:ProductFormula} we have that $\M(\sym(\omega_{1})^{\omega_{1}}) = \M(\sym(\omega_{1}))^{\omega_{1}} = \mathrm{LO}(\omega_{1})^{\omega_1}$.
	We have thus proven that:
	\vspace{2 mm}

	\begin{prop}
		\label{Prop:UMFHomeoOmegaOne}
		$\homeo(\omega_1)$ is a CAP group and its universal minimal flow is the space $\mathrm{LO}(\omega_{1})^{\omega_1}$.
	\end{prop}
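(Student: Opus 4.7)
The strategy is to leverage the dense embedding $\homeo(\omega_1) \hookrightarrow \sym(\omega_1)^{\omega_1}$ provided by \Cref{Fact:HomeoSym}, so that the universal minimal flow of $\homeo(\omega_1)$ coincides with that of $\sym(\omega_1)^{\omega_1}$ via \Cref{Fact:DenseSubgroup}. This reduces the problem to computing $\M\bigl(\sym(\omega_1)^{\omega_1}\bigr)$ and verifying that $\sym(\omega_1)^{\omega_1}$ is CAP.

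First, I would establish that $\sym(\omega_1)$ itself is CAP. The cleanest route is via \Cref{Thm:FiniteRamseyIffCAP}: $\sym(\omega_1)$ is naturally the automorphism group of $\omega_1$ as a pure set, an $\omega$-homogeneous relational structure whose age is the class of all finite sets. This class has Ramsey degree $1$ for each finite set (this is the finite Ramsey theorem applied to the excellent expansion by linear orders), so $\age(\omega_1)$ trivially has finite Ramsey degrees. Alternatively, one may simply cite Pestov's computation in \cite{Pes2} showing that $\M(\sym(\omega_1)) = \mathrm{LO}(\omega_1)$ together with \Cref{Fact:BartUncountable} and then appeal to the fact that the canonical expansion by linear orders is excellent, which (combined with the level-set argument in the proof of \Cref{Thm:FiniteRamseyIffCAP}) again forces CAP.

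Next, I would apply the closure results for the class of CAP groups. By \Cref{Cor:CAPProducts}, the product $\sym(\omega_1)^{\omega_1}$ is CAP, and by \Cref{Prop:ProductFormula} its universal minimal flow is given by the product formula:
\[
\M\bigl(\sym(\omega_1)^{\omega_1}\bigr) \;\cong\; \M(\sym(\omega_1))^{\omega_1} \;=\; \mathrm{LO}(\omega_1)^{\omega_1}.
\]
Finally, to transfer these conclusions to $\homeo(\omega_1)$, I would combine \Cref{Fact:HomeoSym} with \Cref{Fact:DenseSubgroup} (for the identification of UMFs as $\homeo(\omega_1)$-flows) and \Cref{Prop:CAPDenseHom} (for the CAP property, applied to the dense inclusion homomorphism), concluding that $\homeo(\omega_1)$ is CAP with $\M(\homeo(\omega_1)) \cong \mathrm{LO}(\omega_1)^{\omega_1}$.

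Essentially all the hard work has been done in earlier sections, so I do not anticipate any real obstacle in the argument itself; the only point that deserves care is that \Cref{Fact:HomeoSym} gives a genuinely dense embedding (rather than, say, just a continuous injection), so that \Cref{Fact:DenseSubgroup} applies and identifies the $\homeo(\omega_1)$-flow structures. Once that is in place, the proof is a short chain of citations.
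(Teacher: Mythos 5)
Your proposal is correct and follows essentially the same route as the paper: reduce to $\sym(\omega_1)^{\omega_1}$ via \Cref{Fact:HomeoSym} and \Cref{Fact:DenseSubgroup}, get CAP for $\sym(\omega_1)$ from \cite{Pes2} and \Cref{Thm:FiniteRamseyIffCAP}, and then apply \Cref{Cor:CAPProducts} and \Cref{Prop:ProductFormula}. One small citation slip: \Cref{Prop:CAPDenseHom} transfers CAP from the dense subgroup to the ambient group, which is the wrong direction for the final step; the transfer you actually need is supplied by \Cref{Fact:DenseSubgroup} (every $\homeo(\omega_1)$-flow extends to a $\sym(\omega_1)^{\omega_1}$-flow with the same orbit closures, so the AP sets coincide), which you also cite and which is what the paper uses.
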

\vspace{2 mm}

	The result holds more generally.
	A topological space is \emph{scattered} if it contains no perfect subspaces.
	If $X$ is scattered, the topologies of pointwise convergence and of discrete pointwise convergence on $\homeo(X)$ coincide \cite{Ghey}*{Proposition 1}.
	It follows that $\homeo(X)$ embeds in $\sym(\lvert X \rvert)$.
	Let $X^{(\alpha)}$ denote the $\alpha$\emph{-th Cantor-Bendixson derivative} of $X$, and $\mathrm{CB}(X)$ be its \emph{Cantor-Bendixson rank} (see \cite{Ghey} for the definitions).
	The same reasoning as above leads to the following.
	\vspace{2 mm}

	\begin{prop}
		\label{Prop:GenScattered}
		Suppose that $X$ is a scattered space such that for any $k < \omega$ and ordinals $\alpha_{1}, \dots, \alpha_{k} < \mathrm{CB}(X)$, whenever $(x_{1}, \dots, x_{k}), (y_{1}, \dots, y_{k})$ are tuples of distinct points such that $x_{i}, y_{i} \in X^{(\alpha_{i} + 1)} \setminus X^{(\alpha_{i})}$,
	 there is $g \in \homeo(X)$ with $g(x_{i}) = y_{i}$, for $1 \le i \le k$.
		Then $\homeo(X)$ is CAP, it embeds densely in $\prod_{\alpha<\mathrm{CB(X)}} \sym\left(\left\lvert X^{(\alpha + 1)} \setminus X^{(\alpha)} \right\vert\right)$, and
		\[
		\M(\homeo(X)) = \prod_{\alpha<\mathrm{CB(X)}} \mathrm{LO}\left(\left\lvert X^{(\alpha + 1)} \setminus X^{(\alpha)} \right\vert\right).
		\]
	\end{prop}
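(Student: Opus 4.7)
The plan is to mimic the proof of \Cref{Prop:UMFHomeoOmegaOne}, replacing the single Cantor-Bendixson layer that suffices for $\omega_1$ with the full filtration of $X$. Set $L_\alpha \coloneqq X^{(\alpha+1)} \setminus X^{(\alpha)}$ for each $\alpha < \mathrm{CB}(X)$, so that $X = \bigsqcup_\alpha L_\alpha$ as a set. Since $X$ is scattered, Gheysens' cited result gives that the pointwise and discrete pointwise convergence topologies on $\homeo(X)$ coincide, so $\homeo(X)$ embeds continuously in $\sym(\lvert X \rvert)$; because every homeomorphism preserves each derivative $X^{(\alpha)}$, it also preserves each level $L_\alpha$. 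Restriction to levels thus yields a continuous injective homomorphism
$$\Phi\colon \homeo(X) \longrightarrow \prod_{\alpha<\mathrm{CB}(X)} \sym(\lvert L_\alpha \rvert),$$
and, since both sides carry the product topology of pointwise-on-discrete convergence coming from the decomposition $X = \bigsqcup_\alpha L_\alpha$, $\Phi$ is in fact a topological embedding.

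Next I would argue that $\Phi$ has dense image. A basic neighborhood in the product is specified by finitely many triples $(\alpha_i, x_i, y_i)$ with $x_i, y_i \in L_{\alpha_i}$; by injectivity of each coordinate, the $x_i$'s sharing a common level are forced distinct, and likewise for the $y_i$'s. The homogeneity hypothesis on $X$ then supplies a single homeomorphism $g \in \homeo(X)$ with $g(x_i) = y_i$ for every $i$, witnessing that $\Phi(g)$ lies in the chosen neighborhood.

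The general theory developed earlier assembles the conclusion. By \cite{Pes2} together with \Cref{Thm:FiniteRamseyIffCAP}, each $\sym(\lvert L_\alpha \rvert)$ is CAP with universal minimal flow $\mathrm{LO}(\lvert L_\alpha \rvert)$; hence by \Cref{Cor:CAPProducts} the product $P \coloneqq \prod_\alpha \sym(\lvert L_\alpha \rvert)$ is CAP, and by \Cref{Prop:ProductFormula} its UMF is $\prod_\alpha \mathrm{LO}(\lvert L_\alpha \rvert)$. Finally, \Cref{Fact:DenseSubgroup} applied to the dense embedding $\Phi$ transfers both the CAP property and the universal minimal flow from $P$ back down to $\homeo(X)$: any $\homeo(X)$-flow extends to $P$, whence $\homeo(X)$-orbit closures coincide with $P$-orbit closures and almost periodic points for the two actions agree. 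The main obstacle I anticipate is the density step: while it is formally a direct application of the proposition's hypothesis, one must be careful that the distinctness required by that hypothesis is genuinely available from the shape of a basic neighborhood in the product, i.e.\ that one can always reduce to the case of pairwise distinct $x_i$'s (and $y_i$'s) at each level before invoking the homogeneity assumption.
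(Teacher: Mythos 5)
Your proposal is correct and follows essentially the same route as the paper, which simply asserts that ``the same reasoning'' as in the computation of $\M(\homeo(\omega_1))$ applies: decompose $X$ into its Cantor--Bendixson levels, embed $\homeo(X)$ densely into the product of the symmetric groups of the levels (with the homogeneity hypothesis supplying density), and then invoke \Cref{Cor:CAPProducts}, \Cref{Prop:ProductFormula}, and \Cref{Fact:DenseSubgroup}. Your explicit treatment of the density step --- reducing a basic neighborhood in the product to level-wise distinct tuples before applying the hypothesis --- is precisely the detail the paper leaves implicit, and it goes through as you describe.
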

\vspace{2 mm}

	The class of scattered spaces which satisfy the hypotheses of \Cref{Prop:GenScattered} contains all ordinals, as remarked in \cite{Ghey}.

% \bib, bibdiv, biblist are defined by the amsrefs package.
\begin{bibdiv}
	\begin{biblist}
	
	\bib{BarMore}{incollection}{
		  author={Barto\v{s}ov\'{a}, Dana},
		   title={More universal minimal flows of groups of automorphisms of
	  uncountable structures},
			date={2013},
	   booktitle={Asymptotic geometric analysis},
		  series={Fields Inst. Commun.},
		  volume={68},
	   publisher={Springer, New York},
		   pages={21\ndash 38},
			 url={https://doi.org/10.1007/978-1-4614-6406-8_2},
		  review={\MR{3076145}},
	}
	
	\bib{Bar}{thesis}{
		  author={Barto\v{s}ov\'{a}, Dana},
		   title={Topological dynamics of automorphism groups of omega-homogeneous
	  structures via near ultrafilters},
			type={Ph.D. Thesis},
	organization={University of Toronto},
			date={2013},
	}
	
	\bib{Bar2}{article}{
		  author={Barto\v{s}ov\'{a}, Dana},
		   title={Universal minimal flows of groups of automorphisms of uncountable
	  structures},
			date={2013},
			ISSN={0008-4395},
		 journal={Canad. Math. Bull.},
		  volume={56},
		  number={4},
		   pages={709\ndash 722},
		  review={\MR{3121680}},
	}
	
	\bib{BY}{article}{
		  author={Ben~Yaacov, Ita\"{\i}},
		   title={Topometric spaces and perturbations of metric structures},
			date={2008},
			ISSN={1863-3617},
		 journal={Log. Anal.},
		  volume={1},
		  number={3-4},
		   pages={235\ndash 272},
		  review={\MR{2448260}},
	}
	
	\bib{BYMT}{article}{
		  author={Ben~Yaacov, Ita\"{\i}},
		  author={Melleray, Julien},
		  author={Tsankov, Todor},
		   title={Metrizable universal minimal flows of {P}olish groups have a
	  comeagre orbit},
			date={2017},
			ISSN={1016-443X},
		 journal={Geom. Funct. Anal.},
		  volume={27},
		  number={1},
		   pages={67\ndash 77},
			 url={https://doi.org/10.1007/s00039-017-0398-7},
		  review={\MR{3613453}},
	}
	
	\bib{Ber}{book}{
		  author={Berberian, Sterling~K.},
		   title={Lectures in functional analysis and operator theory},
	   publisher={Springer-Verlag, New York-Heidelberg},
			date={1974},
			note={Graduate Texts in Mathematics, No. 15},
		  review={\MR{0417727}},
	}
	
	\bib{CO}{article}{
		  author={Cascales, B.},
		  author={Orihuela, J.},
		   title={On compactness in locally convex spaces},
			date={1987},
			ISSN={0025-5874},
		 journal={Math. Z.},
		  volume={195},
		  number={3},
		   pages={365\ndash 381},
			 url={https://doi.org/10.1007/BF01161762},
		  review={\MR{895307}},
	}
	
	\bib{Ellis}{article}{
		  author={Ellis, Robert},
		   title={Universal minimal sets},
			date={1960},
			ISSN={0002-9939},
		 journal={Proc. Amer. Math. Soc.},
		  volume={11},
		   pages={540\ndash 543},
			 url={https://doi.org/10.2307/2034707},
		  review={\MR{117716}},
	}
	
	\bib{Ghey}{misc}{
		  author={Gheysens, Maxime},
		   title={The homeomorphism group of the first uncountable ordinal},
			date={2019},
			note={Preprint: arXiv:1911.09088},
	}
	
	\bib{GW}{article}{
		  author={Glasner, E.},
		  author={Weiss, B.},
		   title={Minimal actions of the group {$\mathbb S(\mathbb Z)$} of
	  permutations of the integers},
			date={2002},
			ISSN={1016-443X},
		 journal={Geom. Funct. Anal.},
		  volume={12},
		  number={5},
		   pages={964\ndash 988},
			 url={https://doi.org/10.1007/PL00012651},
		  review={\MR{1937832}},
	}
	
	\bib{HS}{book}{
		  author={Hindman, Neil},
		  author={Strauss, Dona},
		   title={Algebra in the {S}tone-\v{C}ech compactification},
		  series={De Gruyter Textbook},
	   publisher={Walter de Gruyter \& Co., Berlin},
			date={2012},
			ISBN={978-3-11-025623-9},
			note={Theory and applications, Second revised and extended edition [of
	  MR1642231]},
		  review={\MR{2893605}},
	}
	
	\bib{JZ}{misc}{
		  author={Jahel, Colin},
		  author={Zucker, Andy},
		   title={Topological dynamics of {P}olish group extensions},
			date={2019},
			note={Preprint: arXiv:1902.04901},
	}
	
	\bib{KPT}{article}{
		  author={Kechris, A.~S.},
		  author={Pestov, V.~G.},
		  author={Todorcevic, S.},
		   title={Fra\"{\i}ss\'{e} limits, {R}amsey theory, and topological
	  dynamics of automorphism groups},
			date={2005},
			ISSN={1016-443X},
		 journal={Geom. Funct. Anal.},
		  volume={15},
		  number={1},
		   pages={106\ndash 189},
		  review={\MR{2140630}},
	}
	
	\bib{KoS}{article}{
		  author={Ko\c{c}ak, M.},
		  author={Strauss, D.},
		   title={Near ultrafilters and compactifications},
			date={1997},
			ISSN={0037-1912},
		 journal={Semigroup Forum},
		  volume={55},
		  number={1},
		   pages={94\ndash 109},
		  review={\MR{1446662}},
	}
	
	\bib{KrPi}{misc}{
		  author={Krupinski, Krzysztof},
		  author={Pillay, Anand},
		   title={On the topological dynamics of automorphism groups; a
	  model-theoretic perspective},
			date={2019},
			note={Preprint: arXiv:1909.13285},
	}
	
	\bib{MNT}{article}{
		  author={Melleray, Julien},
		  author={Nguyen Van~Th\'{e}, Lionel},
		  author={Tsankov, Todor},
		   title={Polish groups with metrizable universal minimal flows},
			date={2016},
			ISSN={1073-7928},
		 journal={Int. Math. Res. Not. IMRN},
		  number={5},
		   pages={1285\ndash 1307},
		  review={\MR{3509926}},
	}
	
	\bib{Pachl}{book}{
		  author={Pachl, Jan},
		   title={Uniform spaces and measures},
		  series={Fields Institute Monographs},
	   publisher={Springer, New York; Fields Institute for Research in Mathematical
	  Sciences, Toronto, ON},
			date={2013},
		  volume={30},
			ISBN={978-1-4614-5057-3},
		  review={\MR{2985566}},
	}
	
	\bib{Pes2}{incollection}{
		  author={Pestov, Vladimir},
		   title={Remarks on actions on compacta by some infinite-dimensional
	  groups},
			date={2002},
	   booktitle={Infinite dimensional {L}ie groups in geometry and representation
	  theory ({W}ashington, {DC}, 2000)},
	   publisher={World Sci. Publ., River Edge, NJ},
		   pages={145\ndash 163},
			 url={https://doi.org/10.1142/9789812777089_0011},
		  review={\MR{1926427}},
	}
	
	\bib{Pes}{article}{
		  author={Pestov, Vladimir~G.},
		   title={On free actions, minimal flows, and a problem by {E}llis},
			date={1998},
			ISSN={0002-9947},
		 journal={Trans. Amer. Math. Soc.},
		  volume={350},
		  number={10},
		   pages={4149\ndash 4165},
			 url={https://doi.org/10.1090/S0002-9947-98-02329-0},
		  review={\MR{1608494}},
	}
	
	\bib{Sam}{article}{
		  author={Samuel, Pierre},
		   title={Ultrafilters and compactification of uniform spaces},
			date={1948},
			ISSN={0002-9947},
		 journal={Trans. Amer. Math. Soc.},
		  volume={64},
		   pages={100\ndash 132},
		  review={\MR{25717}},
	}
	
	\bib{ST}{article}{
		  author={Schneider, Friedrich~Martin},
		  author={Thom, Andreas},
		   title={On {F}\o lner sets in topological groups},
			date={2018},
			ISSN={0010-437X},
		 journal={Compos. Math.},
		  volume={154},
		  number={7},
		   pages={1333\ndash 1361},
		  review={\MR{3809992}},
	}
	
	\bib{Snei}{article}{
		  author={\v{S}ne\u{\i}der, V.~E.},
		   title={Continuous images of {S}uslin and {B}orel sets. {M}etrization
	  theorems},
			date={1945},
		 journal={Doklady Akad. Nauk SSSR (N.S.)},
		  volume={50},
		   pages={77\ndash 79},
		  review={\MR{0053493}},
	}
	
	\bib{ZucMHP}{article}{
		  author={Zucker, Andy},
		   title={Maximally highly proximal flows},
		 journal={Ergodic Theory Dyn.\ Sys.},
			note={to appear},
	}
	
	\bib{ZucAut}{article}{
		  author={Zucker, Andy},
		   title={Topological dynamics of automorphism groups, ultrafilter
	  combinatorics, and the generic point problem},
			date={2016},
			ISSN={0002-9947},
		 journal={Trans. Amer. Math. Soc.},
		  volume={368},
		  number={9},
		   pages={6715\ndash 6740},
		  review={\MR{3461049}},
	}
	
	\bib{ZucThesis}{thesis}{
		  author={Zucker, Andy},
		   title={New directions in the abstract topological dynamics of {P}olish
	  groups},
			type={Ph.D. Thesis},
	organization={Carnegie Mellon University},
			date={2018},
	}
	
	\end{biblist}
	\end{bibdiv}

\footnotesize
  \bigskip
  \footnotesize

  \textsc{D\'epartement des Opérations, Universit\'e de Lausanne, Quartier UNIL-Chambronne B\^atiment Anthropole, 1015 Lausanne, Switzerland}\par\nopagebreak
  \textit{Current address:} Institut Camille Jordan, Université Claude Bernard Lyon 1, Université de Lyon, 43, boulevard du 11 novembre 1918, 69622 Villeurbanne cedex, France \par\nopagebreak
 \textit{E-mail address}: \texttt{basso@math.univ-lyon1.fr}\par\nopagebreak

\bigskip
	\textsc{Department of Mathematics, UC San Diego, 9500 Gilman Drive, La Jolla, CA 92093}\par\nopagebreak
 \textit{E-mail address}: \texttt{azucker@ucsd.edu}\par\nopagebreak

\end{document}